\theoremstyle{plain}
\newtheorem{thm}{Theorem}[section]
\newtheorem{prop}[thm]{Proposition}
\newtheorem{cor}[thm]{Corollary}
\newtheorem{lemma}[thm]{Lemma}
\newtheorem{remark}[thm]{Remark}
\theoremstyle{definition}
\newtheorem{defn}[thm]{Definition}
\newtheorem{assump}[thm]{Assumption}
\theoremstyle{remark}
\theoremstyle{plain}
\newcommand{\R}{\mathbb{R}}
\newcommand{\C}{\mathbb{C}}
\newcommand{\N}{\mathbb{N}}
\newcommand{\ind}{\mathrm{ind}}
\newcommand{\supp}{\mathrm{supp}}
\newcommand{\cohr}{\overline{H}_{(2)}}
\newcommand{\coh}{H_{(2)}}
\newcommand{\trace}{\mathrm{tr}}
\newcommand{\Tr}{\mathrm{Tr}}
\newcommand{\image}{\mathrm{im}~}
\DeclareMathOperator{\spec}{spec}
\DeclareMathOperator{\cU}{\mathscr{U}}
\DeclareMathOperator{\cH}{\mathscr{H}}
\DeclareMathOperator{\A}{\alpha}
\DeclareMathOperator{\w}{\omega}
\DeclareMathOperator{\W}{\Omega}
\DeclareMathOperator{\calC}{\mathscr{C}}
\DeclareMathOperator{\dom}{\mathscr{D}}
\numberwithin{equation}{section}
\definecolor{qqwuqq}{rgb}{0,0,0}
\definecolor{darkgreen}{cmyk}{1,0,1,.2}
\definecolor{m}{rgb}{1,0.1,1}
\definecolor{b}{rgb}{0,0.1,1}
\begin{document}

\date{\today}

\title[%$\textbf{L}^2-$Signature of stratified spaces with boundary
Signatures of Witt spaces with boundary]
{%$\textbf{L}^2-$Signature of stratified spaces with boundary
Signatures of Witt spaces with boundary}

\author{Paolo Piazza}
\address{Sapienza University, Rome, Italy} 
\email{paolo.piazza@uniroma1.it} 

\author{Boris Vertman} 
\address{Universit\"at Oldenburg, Germany} 
\email{boris.vertman@uni-oldenburg.de}

\subjclass[2010]{Primary 53C21; Secondary 58J35; 35K08.}
\keywords{index theorem, eta-invariants, signature, edge singularities}

\begin{abstract}
Let $\overline{M}$ be a compact smoothly stratified pseudomanifold  with boundary, satisfying the Witt assumption.
In this paper we introduce the de Rham signature and the Hodge signature of 
$\overline{M}$, and prove their equality. Next, building also on recent work of Albin and Gell-Redman,
we extend the Atiyah-Patodi-Singer index theory established in our previous work under the hypothesis that $\overline{M}$
has stratification depth $1$ to the general case, establishing in particular a signature formula on Witt spaces with boundary.

In a parallel way we also pass to the case of a Galois covering $\overline{M}_\Gamma$ of
$\overline{M}$ with  Galois group $\Gamma$. Employing von Neumann algebras
we introduce the de Rham $\Gamma$-signature and the Hodge
$\Gamma$-signature and 
prove their equality, thus extending to Witt spaces a result proved by  L\"uck and Schick
in the smooth case. Finally, extending work of Vaillant in the smooth case,
we establish a formula for the Hodge $\Gamma$-signature. As a consequence we deduce
the fundamental result that equates  the Cheeger-Gromov rho-invariant of the boundary $\partial \overline{M}_\Gamma$ with
the difference of the signatures of and $\overline{M}$ and  $\overline{M}_\Gamma$:
\begin{align*}
\textup{sign}_{{\rm dR}} (\overline{M}, \partial \overline{M}) - 
\textup{sign}^{\Gamma}_{{\rm dR}} (\overline{M}_\Gamma, \partial \overline{M}_\Gamma)
=\rho_{\Gamma} (\partial \overline{M}_\Gamma).
\end{align*}
We end the paper with two
geometric applications of our results.
\end{abstract}

\maketitle \ \\[-18mm]

\tableofcontents

%%%%%%%%%%%%%%%%%%%%%%%%%%%%%%%%%%
\section{Introduction and statement of the main results}
%%%%%%%%%%%%%%%%%%%%%%%%%%%%%%%%%%

%\textcolor{cyan}{
%We still need to talk about the first geometric application I had in mind.
%Unfortunately this depends on my paper with Albin under revision.\\
%Also, we should insist already in the Introduction about the differences and the challanges that one
%encounters in the stratified case.}
%
%%%%%%%%%%%%%%%%%%%%%%%%%%%%%%%%%%
\subsection{Signatures on closed compact manifolds}
%%%%%%%%%%%%%%%%%%%%%%%%%%%%%%%%%%

Let $(M,g)$ be a compact oriented Riemannian manifold of dimension $4n$ without boundary.
There are various equivalent notions of what the signature of $M$ is. We have
\begin{itemize}
\item the topological signature $\textup{sign}_{{\rm top}} (M)\in\mathbb{Z}$ obtained by considering the signature
of the non-degenerate bilinear symmetric form $H^{2n} (M,\mathbb{R})\times H^{2n} (M,\mathbb{R})\to \mathbb{R}$ assigning to
$\alpha$ and $\beta$ the real number $\langle \alpha\cup\beta,[M]\rangle$. Here we consider, e.g. the singular cohomology
with real coefficients;
\item the de Rham signature  $\textup{sign}_{{\rm dR}} (M)\in\mathbb{Z}$ obtained by considering
the signature of the non-degenerate bilinear symmetric form $H^{2n}_{dR} (M)\times H^{2n}_{dR} (M)\to \mathbb{R}$ assigning to
$\omega$ and $\eta$ the real number $\int_M \omega\wedge \eta$;
\item the Hodge signature $\textup{sign}_{{\rm Ho}} (M)\in\mathbb{Z}$, defined by the same formula as for the de Rham signature but on the
Hodge cohomology $\cH^{2n}(M)$;
\item the index of the signature operator $D$, $\ind (D)\in\mathbb{Z}$;
\item the integral of the Hirzebruch $L$-class $ \int_M L(M)$.
\end{itemize}
A fundamental result in Mathematics is the following chain of equalities:
\begin{equation}\label{equality}
\textup{sign}_{{\rm top}} (M)=\textup{sign}_{{\rm dR}} (M)=\textup{sign}_{{\rm Ho}} (M)=\ind (D)=\int_M L(M)\,.
\end{equation}
The first equality follows from the de Rham theorem and the compatibility between cup product and wedge product, 
the second from the Hodge theorem, the third is a simple 
computation and the fourth follows from the Atiyah-Singer formula applied to the signature operator. 
The equality of the first and last term, $\textup{sign}_{{\rm top}} (M)=\int_M L(M)$, is the celebrated 
Hirzebruch's signature theorem. \medskip

We recall that
the signature operator is defined as follows: we consider 
 $d+d^*$ acting on the space $\Omega^*(M)$ of differential forms of all degrees but with the grading
 $\Omega^*(M)=\Omega^+(M)\oplus \Omega^-(M)$ induced by the involution
  $\tau$ on $\Omega^*(M)$ acting on a form $\omega$ of degree $p$ as
\begin{equation}\label{tau}
\tau \omega := i^{p(p-1)+2n} * \omega\,.
\end{equation}
Here  $*$ denotes the Hodge star operator and we have written $\Omega^+(M)$ and
$\Omega^-(M)$ for the $(+1)$- and $(-1)$-eigenspaces of $\tau$, respectively.
The operator $d+d^*$ anti-commutes with $\tau$ and hence interchanges $\Omega^+(M)$ and
$\Omega^-(M)$. We have set
\begin{align}\label{D}
D:= d+d^* |_{\Omega^+ (M)}: \Omega^+(M) \to \Omega^-(M).
\end{align}

%%%%%%%%%%%%%%%%%%%%%%%%%%%%%%%%%%
\subsection{Signatures on manifolds with boundary}
%%%%%%%%%%%%%%%%%%%%%%%%%%%%%%%%%%

Let $(M,g)$ now be a compact $4n$-dimensional oriented Riemannian manifold with boundary $\partial M$. 
We denote the relative and absolute singular cohomologies of $M$ by $H^*(M,\partial M)$ and $H^*(M)$,
respectively\footnote{All our cohomologies are with real coefficients if not otherwise stated; thus we do not carry along
the coefficients $\mathbb{R}$  in the notation.}. Remark that there is a  natural homomorphism  $\iota:H^{*}(M,\partial M) 
\to H^{*}(M)$. % induced by the inclusion $\partial M\to M$
 The (relative) topological signature of $M$, denoted $\textup{sign}_{{\rm top}} (M, \partial M)$, is defined as the signature 
of the (degenerate) bilinear form given
by the cup product on $H^{2n}(M,\partial M)$. The radical of this symmetric bilinear form is equal to the kernel
of $\iota:H^{*}(M,\partial M) 
\to H^{*}(M)$ and so this signature can be defined directly 
on the image of $\iota:H^{2n}(M,\partial M) \to H^{2n}(M)$.

\medskip
\noindent
A similar definition can be given for the de Rham signature $\textup{sign}_{{\rm dR}} (M, \partial M)$.

\medskip
\noindent
Finally consider the manifold with cylindrical ends $M_\infty$ associated to
$M$ and let $ \cH_{(2)}^{2n}(M_\infty)$ be the Hodge $L^2$-cohomology of $M_\infty$.
The Hodge $L^2$-signature, denoted $\textup{sign}_{{\rm Ho}} (M_\infty)$, is defined as the signature  of the non-degenerate bilinear form
\begin{equation*}
\begin{split}
\cH_{(2)}^{2n}(M_\infty) \times  \cH_{(2)}^{2n}(M_\infty) \to \mathbb{R},
\quad (\w, \eta) \mapsto \int_M \w \wedge \, \eta.
\end{split}
\end{equation*}
One can prove, see  \cite{APSa}, that 
\begin{equation}\label{equal-aps}
\textup{sign}_{{\rm top}} (M, \partial M)=\textup{sign}_{{\rm dR}} (M, \partial M)=\textup{sign}_{{\rm Ho}}  (M_\infty)\,.
\end{equation}
We remark that while the first equality is standard, the second is not
and requires a delicate Hodge-theoretic argument.
Remark also  that this common value is {\it not} equal to the index of the signature operator with Atiyah-Patodi-Singer 
boundary condition. Still, following again the seminal work of Atiyah, Patodi and Singer \cite{APSa}, one can extend Hirzebruch's signature theorem to manifolds with boundary, giving an explicit formula for the signature.
To state their result, assume that $g$ is a product $g = dx^2 \oplus g_{\partial M}$ in a collar neighborhood 
$\cU := [0,1) \times \partial M$ of the boundary, where $x\in [0,1)$ is the normal variable. Then the signature operator 
$D$ takes the following form over $\cU$
\begin{align}\label{D-product}
D =  \sigma \left( \frac{d}{dx} + B\right),
\end{align}
where $\sigma$ is a bundle isomorphism and 
the so-called tangential operator $B$ acts as $B \omega = (-1)^{n+p+1} ((-1)^p *_{\partial M} d_{\partial M} - d_{\partial M} *_{\partial M}) \omega$ on 
a differential form $\omega$ of degree $p$. Here, $*_{\partial M}$ and $d_{\partial M}$ denote the Hodge star operator and
the exterior derivative on the boundary, respectively. \medskip

%The operator $B$ is a self-adjoint operator on $\partial M$ and 
%preserves the splitting of differential forms at the boundary
%$$
%\Omega^*(\partial M) = \Omega^{\textup{even}}(\partial M) \oplus \Omega^{\textup{odd}}(\partial M),
%$$ 
%into forms of even and odd degree. We denote its restriction to $\Omega^{\textup{even}}(\partial M)$
%by $B_{\textup{even}}$. 

The operator $B$ is a self-adjoint operator on $\partial M$ with discrete spectrum. Consider an enumeration $\{\lambda_n\}_{n\in \N_0}$ of 
the non-zero eigenvalues of $B$, counted with their multiplicities and ordered in ascending order. 
Denote by $\textup{sign}(\lambda)$ the sign of an eigenvalue $\lambda$. Then the eta function of $B$ is defined by 
\begin{align*}
\eta(B,s) := \sum_{n=0}^\infty \textup{sign}(\lambda_n) \, | \lambda_n| ^{-s}, \quad \Re(s) \gg 0.
\end{align*}
This series is absolutely convergent for $\Re(s) \gg 0$ sufficiently large, and as a consequence 
of the short time asymptotics of the trace $\textup{Tr} \, B e^{-tB^2}$, the 
eta function $\eta(B,s)$ extends to a meromorphic 
function on the whole of $\C$ by the following integral expression
\begin{equation*}
\eta(B,s) = \frac{1}{\Gamma((s+1)/2)}
\int_0^\infty t^{(s-1)/2} \, \textup{Tr} \, B e^{-tB^2} dt
\end{equation*}
Atiyah, Patodi and Singer \cite{APSa} assert the regularity of $\eta(B,s)$ at zero
and define the eta invariant 
$$
\eta(B):= \eta(B,s=0).
$$

\begin{remark}
Since $B$ preserves the splitting 
$\Omega^*(\partial M) = \Omega^{\textup{even}}(\partial M) \oplus \Omega^{\textup{odd}}(\partial M)$
into forms of even and odd degree, we can denote its restriction to $\Omega^{\textup{even}}(\partial M)$
by $B_{\textup{even}}$. Same constructions as above apply and we define $\eta(B_{\textup{even}}):= \eta(B_{\textup{even}},0)$.
Both eta invariants are related by 
$$
\eta(B) = 2\eta(B_{\textup{even}}).
$$
\end{remark}

\noindent
Atiyah, Patodi and Singer \cite{APSa} then prove, that
\begin{equation}\label{aps-sign-th-compact}
\textup{sign}_{{\rm Ho}}  (M_\infty) = \int_M L(M) - \frac{\eta(B)}{2} =  \int_M L(M) - \eta(B_{\textup{even}}).
\end{equation}
Summarizing, we obtain the following identities
\begin{equation}\label{equal-aps-final}
\textup{sign}_{{\rm top}} (M, \partial M)=\textup{sign}_{{\rm dR}} (M, \partial M)=\textup{sign}_{{\rm Ho}}  (M_\infty)=\int_M L(M) - \eta(B_{\textup{even}})\,.
\end{equation}
Notice that on a manifold with boundary neither of the two summands on the right hand side is of topological nature, but their difference is.
\medskip

To complete the picture, we also point out that while it is true that 
the index of the signature operator with APS boundary condition is {\it not} equal 
to the signature $\textup{sign}_{{\rm dR}} (M, \partial M)$, one can prove that  
there is a  {\it generalized} APS boundary condition, defined in terms
of the so called scattering Lagrangian  $\Lambda\subset {\rm Ker}\, B$, having the signature
$\textup{sign}_{{\rm dR}} (M, \partial M)$
as a Fredholm index. Equivalently, there is a {\it perturbed} signature operator on the manifold with
cylindrical ends $M_\infty$ which is Fredholm  and with index equal to $\textup{sign}_{{\rm dR}} (M, \partial M)$.
See  \cite{BWoj} and \cite{Loya-contemporary}.

%%%%%%%%%%%%%%%%%%%%%%%%%%%%%%%%%%
\subsection{Signatures  for Galois coverings of closed compact manifolds}
%%%%%%%%%%%%%%%%%%%%%%%%%%%%%%%%%%

Let $\Gamma$ be a finitely generated discrete group and let $M_\Gamma$ be a Galois covering of a 
closed $4n$-dimensional manifold $M$ with Galois group $\Gamma$.
Using  von Neumann algebra  techniques  it is possible to define 
\begin{itemize}
\item the von Neumann topological $\Gamma$-signature, ${\rm sign}^{\Gamma}_{{\rm top}}(M_\Gamma)\in\mathbb{R}$;
\item the von Neumann de Rham $\Gamma$-signature  ${\rm sign}^{\Gamma}_{{\rm dR}} (M_\Gamma)\in\mathbb{R}$;
\item the  von Neumann Hodge $\Gamma$-signature  
$ {\rm sign}^{\Gamma}_{{\rm Ho}}(M_\Gamma)\in\mathbb{R}$;
\item the von Neumann index of the $\Gamma$-equivariant signature operator $\widetilde{D}$ on $M_\Gamma$, an element in $\mathbb{R}$.
%\item the integral of the $L$-class on $M$, $\int_M L(M)\in\mathbb{Z}$
\end{itemize}
The following result holds:
\begin{equation}\label{equality-vonNeumann}
\textup{sign}^{\Gamma}_{{\rm top}} (M_\Gamma)=\textup{sign}^{\Gamma}_{{\rm dR}} (M_\Gamma)=
\textup{sign}^{\Gamma}_{{\rm Ho}} (M_\Gamma)=\ind^{\Gamma} (\widetilde{D})=\ind (D)=\int_M L(M)\,.
\end{equation}
The first two equalities follow from the extension of the de Rham-Hodge theorem to Galois coverings, due to Dodziuk 
\cite{Dodziuk}, and an extra argument having to do with the pairings themselves (this additional argument can be found in 
\cite[proof of Theorem 3.10]{Lueck-Schick}); the third equality rests again on a simple algebraic argument, the fourth equality 
is precisely Atiyah's theorem, asserting the equality of the $\Gamma$-index of $\widetilde{D}$ on $M_\Gamma$ 
and the index of the signature operator $D$ on the base $M$ and the fifth equality follows again from the Atiyah-Singer index formula. Notice that, consequently,
the $\Gamma$-signature of $M_\Gamma$ equals the signature of $M$:
$$\textup{sign} (M)=\textup{sign}^{\Gamma}(M_\Gamma),$$
where we omit the lower indices top, Ho, dR, since all the corresponding signatures coincide.

%%%%%%%%%%%%%%%%%%%%%%%%%%%%%%%%%%%%%%%%%%%
\subsection{Signatures  for Galois coverings of compact manifolds with boundary}
%%%%%%%%%%%%%%%%%%%%%%%%%%%%%%%%%%%%%%%%%
Let now $M_\Gamma$ be a Galois $\Gamma$-cover of a smooth compact manifold $M$ 
with boundary $\partial M$. Using reduced $L^2$-cohomology, both in the topological and in the de 
Rham case, as well as von Neumann algebra techniques,
one can define 
\begin{itemize}
\item the von Neumann topological $\Gamma$-signature, ${\rm sign}^{\Gamma}_{{\rm top}}(M_\Gamma, \partial M_\Gamma)$;
\item the von Neumann de Rham $\Gamma$-signature  ${\rm sign}^{\Gamma}_{{\rm dR}} (M_\Gamma, \partial M_\Gamma)$;
\item the  von Neumann Hodge $\Gamma$-signature ${\rm sign}^{\Gamma}_{{\rm Ho}}(M_{\Gamma, \infty})$
of the Galois cover $M_{\Gamma, \infty}$ of the manifold $M_\infty$ with cylindrical end, with Galois group $\Gamma$.
\end{itemize}
One can prove that
\begin{equation}\label{equality-vonNeumann-bdry}
\textup{sign}^{\Gamma}_{{\rm top}} (M_\Gamma, \partial M_\Gamma)=\textup{sign}^{\Gamma}_{{\rm dR}} (M_\Gamma, \partial M_\Gamma)=\textup{sign}^{\Gamma}_{{\rm Ho}} (M_{\Gamma, \infty})\,.\end{equation}
See L\"uck and Schick \cite{Lueck-Schick} for a careful proof of these two equalities; the second one 
has a rather intricate proof.\medskip

%We denote the relative and absolute $L^2$-cohomologies by $H^{*}_{(2)}(\widetilde{M}, \partial \widetilde{M})$ and 
%$H^{*}_{(2)}(\widetilde{M})$, respectively. The inclusion $\iota:\partial \widetilde{M} \to \widetilde{M}$ 
%yields the canonical map 
%$$
%\iota^*:H^{*}_{(2)}(\widetilde{M}, \partial \widetilde{M})
%\to H^{*}_{(2)}(\widetilde{M}).
%$$ 
%Consider the quadratic form $s$ on the image of 
%$\iota^*:H^{2n}_{(2)}(\widetilde{M}, \partial \widetilde{M}) \to H^{2n}_{(2)}(\widetilde{M})$, induced by the cup product
%on the relative $L^2$-cohomology. Denote by $\beta_\Gamma^+(s)$ and $\beta_\Gamma^-(s)$ the $\Gamma$-dimensions
%of the the positive and negative eigenspaces of $s$, respectively. Then the (relative) $\Gamma$- signature of $\widetilde{M}$ is defined by
%$$
%\textup{sign}_\Gamma (\widetilde{M}, \partial \widetilde{M}) = \beta_\Gamma^+(s) - \beta_\Gamma^-(s).
%$$
%In case of an empty boundary $\partial \widetilde{M}= \varnothing$ 
%the seminal result by Atiyah \cite{Ati} asserts that the signatures of the Galois covering and the base are equal
%$$
%\textup{sign}_\Gamma (\widetilde{M}) = \textup{sign}(M).
%$$

The signature formula \eqref{aps-sign-th-compact}
in this von Neumann context is a highly non-trivial result
and it is  due to Vaillant \cite{Va}. We briefly describe it: 
we lift $B$ and its even part $B_{\textup{even}}$ to $\Gamma$-invariant operators
$\widetilde{B}$ and $\widetilde{B}_{\textup{even}}$, respectively. By work of Ramachandran \cite{Ram} we can associate to these
operator their respective $\Gamma$-eta invariants $\eta_\Gamma (\widetilde{B})$ and
 $\eta_\Gamma (\widetilde{B}_{\textup{even}}) = \eta_\Gamma (\widetilde{B})/2$. Vaillant \cite{Va} proves
$$
\textup{sign}^\Gamma_{{\rm Ho}} 
(M_{\Gamma, \infty}) = \int_M L(M) - \frac{\eta_\Gamma (\widetilde{B})}{2}
=\int_M L(M) - \eta_\Gamma (\widetilde{B}_{\textup{even}}).
$$

\medskip
\noindent
Summarizing, on a Galois $\Gamma$-cover of a compact manifold with boundary the following 
equalities hold:

\begin{equation}\label{equality-vonNeumann-bdry-final}
\begin{split}
\textup{sign}^{\Gamma}_{{\rm top}} (M_\Gamma, \partial M_\Gamma) &=
\textup{sign}^{\Gamma}_{{\rm dR}} (M_\Gamma, \partial M_\Gamma) =\textup{sign}^{\Gamma}_{{\rm Ho}} (M_{\Gamma, \infty})
\\&= \int_M L(M) - \eta_\Gamma (\widetilde{B}_{\textup{even}}).
\end{split}
\end{equation}
Notice that for manifolds with boundary, in contrast with the closed case (we omit the lower indices
top, Ho and dR, since the corresponding signatures coincide)
$$\textup{sign}^{\Gamma} (M_\Gamma, \partial M_\Gamma)-\textup{sign} (M,\partial M)\not= 0.$$
Indeed, because of the two signature formulae, this difference is equal up to a sign to
$$
\rho_{\Gamma} (\partial M_\Gamma):=\eta_\Gamma (\widetilde{B}_{\textup{even}})- \eta(B_{\textup{even}})\,,
$$
the celebrated Cheeger-Gromov rho invariant, a secondary invariant of the signature operator
which is well-known to be in general different from zero. Hence, the following fundamental equation
holds:
\begin{equation}\label{difference=rho}
\textup{sign} (M,\partial M)- \textup{sign}^{\Gamma} (M_\Gamma, \partial M_\Gamma) =
\rho_{\Gamma} (\partial M_\Gamma).
\end{equation}

%%%%%%%%%%%%%%%%%%%%%%%%%%%%%%%%%%
\subsection{Goals of this article and main results}
%%%%%%%%%%%%%%%%%%%%%%%%%%%%%%%%%%
Let now $M$ be the regular part (smooth open interior) of a smoothly stratified pseudomanifold $\overline{M}$
of dimension $4n$. For the time being we assume that $\overline{M}$ is without boundary.
Following the fundamental work of Goresky and MacPherson \cite{intersection}, there is a pairing between the degree-$2n$ intersection cohomology group for the 
 {\it upper} middle perversity and the degree-$2n$ intersection cohomology group for the 
 {\it lower} middle perversity. Even though this pairing is non-degenerate, it does not give us a symmetric
 bilinear form on a single vector space; put it differently, we do not have, in general, a well-defined
 signature. However,
 if $\overline{M}$ satisfies the Witt condition  these two vector spaces 
 are isomorphic and so we do have a well-defined intersection cohomology signature
 denoted  $\textup{sign}_{{\rm top}} (\overline{M})$.
 Here we recall that $\overline{M}$ is Witt if  each link $L$ is odd dimensional or, otherwise,
 the upper middle perversity intersection homology group of $L$ vanishes in degree $\dim L/2$. Using $L^2$-cohomology
 with respect to an iterated conic metric $g$, we can also define  de Rham  and Hodge versions of this signature and prove that
 \begin{equation}\label{witt-compact-signatures}
\textup{sign}_{{\rm top}} (\overline{M}) = \textup{sign}_{{\rm dR}} (\overline{M}) =\textup{sign}_{{\rm Ho}} (\overline{M}) =\ind (D),
\end{equation}
where $D$ is the (unique closure) of the signature operator on the regular part $M$ endowed with the metric $g$,
cf. \cite{Cheeger} and \cite{ALMP1}. Moreover, thanks to the recent work of Albin and Gell-Redman \cite{AGR17} this chain of equalities 
 can be complemented with an explicit index formula 
 \begin{equation}\label{A-GR-signature formula}
\begin{split}
\ind (D) = \int\limits_M L(M) + 
\sum_{\alpha \in A} \int\limits_{\ Y_\alpha} b_{\alpha}.
\end{split}
 \end{equation}
where $\{Y_\alpha\}_{\alpha \in A}$ is the set of singular strata of $\overline{M}$ and 
we refer the reader to \cite{AGR17} for the precise expression of $b_\alpha$. In depth $1$ case,
an index formula for Witt spaces already appears in the work of Br\"uning \cite{Bruning}.
\medskip

 The definition of these signatures  on the Galois $\Gamma$-cover $\overline{M}_\Gamma$ of the Witt space
 $\overline{M}$  is in the literature, although somewhat implicitly:
 we refer to  \cite{Friedman-McLure} for the topological 
 one and again \cite{ALMP1} for the analytic ones. Notice that as discussed
 in \cite[Section 7.2]{AP} there is an Atiyah's theorem, equating the index
 and  the $\Gamma$-index of the signature operator on $M$ (the regular part of $\overline{M}$) 
 and $\widetilde{M}$ (the regular part of $\overline{M}_\Gamma$), respectively.
 The analogue of \eqref{witt-compact-signatures} for these $\Gamma$-signatures  is also implicitly established in the literature. The first equality
 follows from Proposition 11.1 in \cite{ALMP1}, the other equalities are a consequence of the analysis presented
 in \cite{ALMP1, ALMP3, AP} and will be further discussed in this article.
 
 \medskip
\noindent
All these properties hold on a Witt space {\it without} boundary. Now, as we have made clear
in the previous discussion, the extension of these relations to the case in which our Witt space
has a  boundary cannot be straightforward, given that it is already rather involved in the smooth case.
This brings us to the main theme of this article:

\medskip
\noindent
{\it Can one generalize the work of Atiyah-Patodi-Singer, L\"uck-Schick and Vaillant to stratified  Witt spaces {\bf with boundary}
and their $\Gamma$-covers? We shall establish in this article that for the analytically defined signatures this is indeed the case.}

 \medskip
\noindent
%In this work we extend the signature theorem of Atiyah, Patodi and Singer \cite{APSa} and 
%Vaillant \cite{Va} to the setting of stratified spaces with iterated cone-edge singular metrics. 
%This class of spaces is discussed in \S \ref{stratified-section}. 

Let us state more precisely our main results. Following a well established pattern, we give ourselves 
the freedom to rescale the metric so as to avoid issues related to small eigenvalues. We shall not mention this rescaling in the sequel.

\noindent
Our first  result is about  {\it compact} Witt spaces with boundary.

\begin{thm}\label{main-1} Let $(M,g)$ be the regular part of a compact smoothly 
stratified pseudomanifold $\overline{M}$, with boundary $\partial \overline{M}$, 
endowed with an incomplete iterated cone-edge 
metric $g$ which is of product type near the boundary. We assume that $\overline{M}$ is Witt.
Let $\overline{M}_\infty$ be the Witt-space with cylindrical ends associated to 
$\overline{M}$. Then there exist a well defined $L^2$-de Rham signature $\textup{sign}_{{\rm dR}}(\overline{M},\partial \overline{M})$ and a well defined
$L^2$-Hodge signature $\textup{sign}_{{\rm Ho}} (\overline{M}_\infty)$ and the following equality holds:
\begin{equation}\label{equality-dR-hodge}
\textup{sign}_{{\rm dR}}(\overline{M},\partial \overline{M}) = \textup{sign}_{{\rm Ho}} (\overline{M}_\infty).
\end{equation}
%Assume that the signature operator $D$ satisfies a spectral Witt-condition. 
Moreover, the eta invariant $\eta(B_{\textup{even}})$ for the operator $B_{\textup{even}}$
on the regular part $\partial M$ of $\partial \overline{M}$ is well-defined and %the signature $\textup{sign} (M, \partial M)$ and 
 the following formula holds
$$
\textup{sign}_{{\rm Ho}} (\overline{M}_\infty) = \int_{M} L(M) + \sum_{\alpha \in A} \int\limits_{\ Y_\alpha} b_{\alpha} - \eta(B_{\textup{even}}),
$$
where $\{Y_\alpha\}_{\alpha \in A}$ is the set of singular strata of $\overline{M}$,
and the integrands $b_\alpha$ are given explicitly in \cite{AGR17}.
\end{thm}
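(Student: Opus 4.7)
The plan is to extend the Atiyah-Patodi-Singer strategy to the Witt setting, combining the iterated edge calculus used for closed Witt spaces in \cite{ALMP1, AGR17} with a $b$-calculus analysis of the cylindrical end in the style of Melrose and APS. Since $g$ is of product type near $\partial \overline{M}$, attaching the cylinder $[0,\infty)\times \partial M$ produces $\overline{M}_\infty$, on which the signature operator has the normal form \eqref{D-product} with tangential operator $B$ equal (up to sign) to the signature operator of the closed Witt space $\partial \overline{M}$.

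I would first address well-definedness. For $\textup{sign}_{{\rm dR}}(\overline{M},\partial \overline{M})$ one works with the $L^2$-de Rham complex on $M$ with relative boundary conditions at $\partial M$; the Witt hypothesis forces agreement of the minimal and maximal closures of $d+d^*$ on the stratified interior, so the $L^2$ relative and absolute cohomologies in middle degree are finite-dimensional and the wedge pairing descends to a non-degenerate symmetric form on the image of the relative-to-absolute map. For $\textup{sign}_{{\rm Ho}}(\overline{M}_\infty)$ the combined iterated edge and $b$-calculus framework yields Fredholmness, hence a finite-dimensional space of middle-degree $L^2$-harmonic forms on which $(\omega,\eta)\mapsto \int_M \omega\wedge\eta$ is non-degenerate and symmetric via the Hodge-star involution. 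Finiteness of $\eta(B_{\textup{even}})$ follows from the short-time heat expansion on closed Witt spaces established in \cite{AGR17}, which yields meromorphic continuation of $\eta(B,s)$ with regularity at $s=0$.

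The equality $\textup{sign}_{{\rm dR}}=\textup{sign}_{{\rm Ho}}$ would follow from a Hodge-theoretic identification adapted from \cite{APSa}: restriction of an $L^2$-harmonic form on $\overline{M}_\infty$ to the compact piece identifies the Hodge space with the image of $H^{2n}_{(2)}(\overline{M},\partial\overline{M})\to H^{2n}_{(2)}(\overline{M})$, and integration by parts along the cylinder, combined with the vanishing of the $\ker B$-contribution to the wedge pairing, shows the two pairings coincide. The Witt-specific inputs are the $L^2$-Poincar\'e lemma on truncated iterated cones and the $L^2$-de Rham theorem for Witt spaces from \cite{ALMP1}.

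Finally, for the signature formula I would realize $\textup{sign}_{{\rm Ho}}(\overline{M}_\infty)$ as a Fredholm index on $\overline{M}_\infty$ (via a perturbation built from the scattering Lagrangian, as in \cite{BWoj, Loya-contemporary}) and apply the McKean-Singer heat equation method. The short-time supertrace feeds in the Albin-Gell-Redman local index densities giving $\int_M L(M)+\sum_{\alpha}\int_{Y_\alpha}b_\alpha$, while the long-time supertrace, governed by $B$ on the cylinder, produces $-\eta(B_{\textup{even}})$ via the standard $b$-calculus computation. The main obstacle is the uniform heat kernel construction on $\overline{M}_\infty$: one must simultaneously handle the iterated edge singularities at strata meeting the cylindrical end and the large-time $b$-behavior induced by $B$ within one calculus, and identify the $t\to\infty$ supertrace limit as the APS-type eta contribution without spurious terms from the singular strata. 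Merging the short-time expansion of \cite{AGR17} with the Melrose long-time analysis is the technical heart of the argument.
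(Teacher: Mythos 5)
Your proposal for the equality $\textup{sign}_{{\rm dR}}(\overline{M},\partial\overline{M}) = \textup{sign}_{{\rm Ho}}(\overline{M}_\infty)$ follows essentially the same route the paper takes: one shows that restriction of an $L^2$-harmonic form on $M_\infty$ to the compact piece induces a bijection onto the image of the map $\coh^{2n}(M,\partial M)\to\coh^{2n}(M)$ that intertwines the intersection pairings, using the $L^2$-Stokes theorem for Witt spaces and a separation-of-variables argument along the cylinder. In the paper this is carried out in \S\ref{signature-section} via the chain $\image[r]=\ker[q]=\image[\iota]$ and Proposition \ref{prop_6}; the Witt-specific inputs are the uniqueness of ideal boundary conditions on $\partial M$ and on the double $M_d$, and the Fredholmness of $(\dom^*_{\max}(M),d)$ from Bei's work, rather than an $L^2$-Poincar\'e lemma per se, but the skeleton of the argument is the same.

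For the signature formula, however, you take a genuinely different route from the paper. You propose to realize $\textup{sign}_{{\rm Ho}}(\overline{M}_\infty)$ directly as a Fredholm index on the manifold with cylindrical ends, via a scattering-Lagrangian perturbation in the spirit of \cite{BWoj,Loya-contemporary}, and then run McKean--Singer with a unified iterated-edge/$b$-calculus heat kernel. The paper deliberately avoids this. It instead works on the compact piece $M$ with Atiyah--Patodi--Singer boundary conditions $P_+(B)$ (\S\ref{index-section}), proves Fredholmness there, obtains $\ind D$ via the Albin--Gell-Redman heat asymptotics and Getzler rescaling, and then relates $\ind D$ to $\textup{sign}_{{\rm Ho}}(\overline{M}_\infty)$ through the extended $L^2$-solution counts $h_\infty(D^\pm)$, culminating in $h_\infty(D)=h_\infty(D^*)=\tfrac12\dim\ker B$ (Propositions \ref{prop3_2}--\ref{prop3_6}), exactly as in the original APS paper. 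The paper's remark after Theorem \ref{main-1} explicitly notes that a $b$-edge-calculus proof of your kind ``should be possible,'' but chooses the APS route because the same extended-solution apparatus is what is needed again on Galois coverings in \S\ref{main-2-section}. Your version would need the full construction of a heat calculus that simultaneously resolves the iterated cone-edge singularities and the $b$-end, together with the identification of the scattering Lagrangian and the proof that the perturbed index computes the Hodge $L^2$-signature on a Witt space; you correctly identify this merge as the technical heart, but you leave it as an unresolved obstacle rather than carrying it out, so as written this part of your argument is a plausible program rather than a complete proof.
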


The existence of the eta invariant for the boundary operator follows from the proof of the index formula 
as in Atiyah-Patodi-Singer but it can also proved in general, without assuming that the operator is a boundary-operator,
by employing Getzler rescaling. See Albin and Gell-Redman \cite[\S 6.2]{AGR17}. \medskip

Our stratified spaces with boundary (see Definition \ref{psuedo-bdry-def}) have singular strata of 
codimension greater or equal to $2$. By declaring  the boundary to be  a stratum of codimension $1$ we can define  a new stratification
for which our space becomes a stratified space with strata of codimension greater or equal to 
 one in the sense of \cite[Definition 4 and Definition 9]{Bei}. We can then 
use a general de Rham theorem due to Bei, see \cite[Theorem 4]{Bei} and \cite[(95)]{Bei2},
in order to complement our result \eqref{equality-dR-hodge} with the following equality of signatures 
 \begin{equation}\label{equality-dR-GM}
\textup{sign}_{{\rm top}}(\overline{M},\partial \overline{M})=\textup{sign}_{{\rm dR}}(\overline{M},\partial \overline{M}).
\end{equation}
Here on the right hand side  the topological signature defined by Friedman and Hunsicker \cite{FH} appears.
This means that on a compact Witt pseudomanifold with boundary we have the following fundamental
chain of equalities
\begin{equation}\label{equality-total}
\begin{split}
\textup{sign}_{{\rm top}}(\overline{M},\partial \overline{M}) &=\textup{sign}_{{\rm dR}}(\overline{M},\partial \overline{M})
=\textup{sign}_{{\rm Ho}} (\overline{M}_\infty) \\ &= \int_{M} L(M) + \sum_{\alpha \in A} \int\limits_{\ Y_\alpha} b_{\alpha} - \eta(B_{\textup{even}}).
\end{split}
\end{equation}
Notice in particular that $\textup{sign}_{{\rm dR}}(\overline{M},\partial \overline{M})$ and $\textup{sign}_{{\rm Ho}} (\overline{M}_\infty)$ are
metric independent within iterated cone-edge metrics,
 satisfying the Witt condition.

\medskip
Our second main result is about Galois $\Gamma$-coverings of 
stratified Witt spaces with boundary.
\begin{thm}\label{main-2} Let $\overline{M}$ and $(M,g)$ %and the corresponding signature operator $D$ 
be as in the previous theorem. 
Consider a Galois covering $\overline{M}_\Gamma$ of $\overline{M}$ with structure group $\Gamma$.
Let $\overline{M}_{\Gamma, \infty}$ be the Witt space with cylindrical ends associated to $\overline{M}_\Gamma$.
Then there exist a  well-defined $L^2$-$\Gamma$-signature $\textup{sign}^{\Gamma}_{{\rm dR}} (\overline{M}_\Gamma, \partial \overline{M}_\Gamma)$ and a well defined $L^2$-Hodge signature 
 $\textup{sign}^{\Gamma}_{{\rm Ho}} (\overline{M}_{\Gamma,\infty})$ and the following equality holds:
 \begin{equation}\label{equal-witt-gamma}
 \textup{sign}^{\Gamma}_{{\rm dR}} (\overline{M}_\Gamma, \partial \overline{M}_\Gamma)=\textup{sign}^{\Gamma}_{{\rm Ho}} (\overline{M}_{\Gamma,\infty}).
 \end{equation}
Moreover, the  $\Gamma$-eta-invariant $\eta_\Gamma (\widetilde{B}_{\textup{even}})$ 
for the operator $\widetilde{B}_{\textup{even}}$ on the regular part $\partial \widetilde{M}$
of $\partial \overline{M}_\Gamma$ is well-defined and  the following $\Gamma$-signature
formula holds:
\begin{align*}
%\textup{sign}_\Gamma 
%(\widetilde{M}, \partial \widetilde{M})
\textup{sign}^{\Gamma}_{{\rm Ho}} (\overline{M}_{\Gamma,\infty})
= \int_{M} L(M) + \sum_{\alpha \in A} \int\limits_{\ Y_\alpha} b_{\alpha} - \eta_\Gamma (\widetilde{B}_{\textup{even}}),
\end{align*}
\end{thm}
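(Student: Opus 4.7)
The plan is to establish Theorem \ref{main-2} by lifting every step of the proof of Theorem \ref{main-1} to the Galois $\Gamma$-cover, combining the Witt-space analysis of \cite{ALMP1, ALMP3, AP, AGR17} with the $\Gamma$-equivariant techniques of L\"uck--Schick \cite{Lueck-Schick}, Ramachandran \cite{Ram} and Vaillant \cite{Va}. As in the closed case, all analytic objects (the signature operator, its tangential operator $B_{\textup{even}}$, heat kernels) lift canonically to $\Gamma$-invariant operators on the regular part $\widetilde M$ of $\overline M_\Gamma$, and $\mathrm{tr}$ is replaced by the von Neumann $\Gamma$-trace $\mathrm{tr}_\Gamma$.

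\textbf{Step 1: Well-posedness of the $\Gamma$-signatures.} First, I would introduce the reduced $L^2$-de Rham cohomology on $\overline M_\Gamma$ (relative to $\partial\overline M_\Gamma$) as a Hilbert $\mathcal N(\Gamma)$-module, using the Witt condition to identify it with the image of the natural map from the relative to the absolute reduced complex. The intersection form obtained by wedge-and-integrate is well defined thanks to the $L^2$-Stokes theorem, already proved in this stratified setting in \cite{ALMP1}. Its $\Gamma$-signature is defined via von Neumann dimension, exactly as in \cite{Lueck-Schick}. Parallelly, on the manifold with cylindrical ends $\overline M_{\Gamma,\infty}$ I would define the Hodge $\Gamma$-signature from the reduced $L^2$-Hodge cohomology of the (unique closed extension of the) signature operator; the key analytic input is that the signature operator on $\overline M_{\Gamma,\infty}$ has a well-defined Fredholm-in-$\mathcal N(\Gamma)$-sense theory, which follows by lifting the mapping properties of Albin--Gell-Redman \cite{AGR17} and our previous depth-$1$ work.

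\textbf{Step 2: The equality \eqref{equal-witt-gamma}.} The strategy is that of L\"uck--Schick \cite{Lueck-Schick}: produce, for each class in the image of the reduced relative $L^2$-cohomology inside the absolute one, a canonical harmonic representative on $\overline M_{\Gamma,\infty}$ of $L^2$-type, and prove that the pairing is preserved. This requires a Kodaira-type decomposition of $\Gamma$-invariant $L^2$-forms on $\overline M_{\Gamma,\infty}$ into harmonic, exact and co-exact parts, where ``exact'' must be understood in the reduced sense. The stratified version of this decomposition rests on the self-adjointness and closed range properties of $d+d^*$ in the Witt setting, ultimately going back to the analysis in \cite{ALMP1, AP} and its extension to manifolds with cylindrical Witt ends carried out through \cite{AGR17}. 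Once the decomposition is in hand, the identification of the two signatures is, as in \cite{Lueck-Schick}, a purely algebraic matter about pairings on Hilbert $\mathcal N(\Gamma)$-modules.

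\textbf{Step 3: The $\Gamma$-signature formula.} The plan is to mimic Vaillant's heat-equation proof \cite{Va} on $\overline M_{\Gamma,\infty}$, with $\mathrm{tr}$ replaced by $\mathrm{tr}_\Gamma$. I would write
\begin{equation*}
\textup{sign}^{\Gamma}_{{\rm Ho}}(\overline M_{\Gamma,\infty})
= \lim_{t\to\infty}\mathrm{str}_\Gamma\bigl(e^{-tD_\Gamma^2}\bigr)
= \mathrm{str}_\Gamma\bigl(e^{-t_0 D_\Gamma^2}\bigr) + \int_{t_0}^\infty \frac{d}{dt}\,\mathrm{str}_\Gamma\bigl(e^{-tD_\Gamma^2}\bigr)\,dt,
\end{equation*}
and split $\mathrm{str}_\Gamma(e^{-tD_\Gamma^2})$ via a Duhamel/parametrix argument into an interior contribution and a boundary contribution. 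Because heat kernels on Galois covers are locally identical to those on the base (up to $\Gamma$-fundamental-domain integration), the local interior integrals reduce to those on $\overline M$, so the short-time analysis of \cite{AGR17} yields the same $\int_M L(M)+\sum_\alpha \int_{Y_\alpha} b_\alpha$ as in Theorem \ref{main-1}. The boundary contribution is recognised, after a change of variables and integration, as $-\eta_\Gamma(\widetilde B_{\textup{even}})$, exactly Ramachandran's definition \cite{Ram} adapted to our boundary operator.

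\textbf{Main obstacle.} The serious difficulty, as for Vaillant in the smooth case, is the \emph{large-time} convergence. Unlike the closed compact setting, $D_\Gamma^2$ has no spectral gap at $0$, so neither the convergence of $\mathrm{str}_\Gamma(e^{-tD_\Gamma^2})$ to $\textup{sign}^\Gamma_{{\rm Ho}}$ nor the convergence of the $\Gamma$-eta integral at $t=\infty$ is automatic. Controlling this requires Novikov--Shubin type polynomial estimates for the $\Gamma$-spectral density of $\widetilde B_{\textup{even}}$ near zero, together with uniform large-time estimates for the $\Gamma$-heat kernel on the stratified cylindrical end. Establishing these estimates in the iterated cone-edge setting—building on the microlocal edge calculus of \cite{ALMP1, AGR17} and the $\Gamma$-equivariant extensions in \cite{AP}—is where most of the technical work will go; once this is in place, the assembly of Steps 1--3 produces both \eqref{equal-witt-gamma} and the displayed signature formula.
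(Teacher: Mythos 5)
Your overall route—lift the APS/Vaillant argument to the cover, replace $\mathrm{tr}$ by $\mathrm{tr}_\Gamma$, invoke $L^2$-Stokes and reduced cohomology, and use the interior heat kernel asymptotics of Albin--Gell-Redman—is indeed the paper's strategy, and your identification of large-time convergence as the central obstacle is correct. But two concrete gaps remain.

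First, in Step 2, comparing $\textup{sign}^\Gamma_{\rm dR}$ with $\textup{sign}^\Gamma_{\rm Ho}$ is not the ``purely algebraic'' matter you describe once you pass to the cover. All cohomology there is reduced, so a class in $\ker[q]$ has boundary value in $\overline{\mathrm{im}\,d}\subset L^2(\partial\widetilde M)$, not in $\mathrm{im}\,d$, and the construction of a closed primitive (needed to build the desired harmonic representative on $\widetilde M_\infty$) fails. The paper circumvents this with a spectral cutoff: subspaces $E_\lambda = \mathrm{im}\bigl(d\circ\chi_{(\lambda,\infty)}(d^*d)\bigr)$ on $\partial\widetilde M$ and the preimages $K_\lambda$, $H_\lambda$ on $\widetilde M$ and $\widetilde M_\infty$; the weak Kodaira decomposition and the compact-case argument run inside these, and their $\Gamma$-codimensions shrink to $0$ as $\lambda\to 0$ by a monotone-limit theorem for $\mathcal{N}\Gamma$-Hilbert modules. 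Some such device is indispensable; without it the ``canonical harmonic representative'' step has no content.

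Second, and more seriously, in Step 3 your starting identity $\textup{sign}^\Gamma_{\rm Ho} = \lim_{t\to\infty}\mathrm{str}_\Gamma(e^{-tD_\Gamma^2})$ is not available: since $\widetilde B$ is not invertible, $D_\Gamma$ on the cylinder has continuous spectrum at $0$ and the heat super-trace need not converge to the von Neumann super-dimension of the kernel. Proposing Novikov--Shubin positivity as the remedy is the wrong direction: determinant-class for $\widetilde B_{\rm even}$ cannot be assumed in general, and Theorem \ref{main-2} carries no such hypothesis. Following Vaillant, the paper instead perturbs to $\widetilde D_{\varepsilon,u} = \widetilde D_\infty + \theta(x)\,\widetilde\sigma\bigl(u - \widetilde B\,\Pi_\varepsilon(\widetilde B)\bigr)$, which is $\Gamma$-Fredholm (proved here by a gluing parametrix rather than Vaillant's argument, since $\widetilde M_\infty$ is not of bounded geometry), derives the McKean--Singer formula and an index theorem for $\widetilde D_{\varepsilon,u}$, and only then sends $u\to 0$ and $\varepsilon\to 0$, tracking extended kernels $h^\pm_{\Gamma,\varepsilon}$ and using the Cheeger--Gromov/Ramachandran bound for convergence of the $\Gamma$-eta integral at $t=\infty$. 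The stratified geometry injects a further obstacle your proposal glosses over: the cone-edge ends destroy bounded geometry, so the finite-propagation-speed estimates must be converted to off-diagonal heat kernel bounds via weighted Sobolev spaces $H^s_\delta$ and a parametrix-based Garding inequality, and these bounds blow up at the strata with a weight $\rho^{-m/2+\delta}$ that is only $L^1$-integrable (not uniform). Without the perturbation device the large-time analysis has no anchor, and Novikov--Shubin estimates would impose an unwarranted restriction on the theorem.
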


Directly from the previous two theorems we obtain the following corollary, which 
constitutes the most important geometric result of our paper.

\begin{cor}\label{cor-rho} With hypothesis as above, the following  formula holds
\begin{align}\label{relative-signature-theorem}
\textup{sign}_{{\rm dR}} (\overline{M}, \partial \overline{M}) - \textup{sign}^{\Gamma}_{{\rm dR}} (\overline{M}_\Gamma, \partial \overline{M}_\Gamma)
=\rho_{\Gamma} (\partial \widetilde{M}),
\end{align}
where $\rho_{\Gamma} (\partial \widetilde{M}) \in \R$ is the Cheeger-Gromov (signature) rho invariant associated to
$\partial \widetilde{M}$, which denotes the regular part of $\partial \overline{M}_\Gamma$
$$
\rho_{\Gamma} (\partial \widetilde{M})= \eta_\Gamma (\widetilde{B}_{\textup{even}}) -  \eta(B_{\textup{even}})\,.
$$
\end{cor}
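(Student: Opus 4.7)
The plan is to obtain the corollary by direct subtraction of the two signature formulas provided by Theorems \ref{main-1} and \ref{main-2}. The crucial observation that makes this work is that the local interior contribution
\[
\int_{M} L(M) + \sum_{\alpha \in A} \int\limits_{\, Y_\alpha} b_{\alpha}
\]
is the \emph{same} in both formulas. This is because $L(M)$ and the edge correction terms $b_\alpha$ are constructed out of local Riemannian data, and the metric on the regular part of $\overline{M}_\Gamma$ is, by definition, the pullback of the metric on the regular part of $\overline{M}$ under the covering projection. Equivalently, one may appeal to Atiyah's $\Gamma$-covering principle in the Witt setting (see \cite[Section 7.2]{AP}) which guarantees that the local index densities for the signature operator on the base and on the $\Gamma$-cover coincide.

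Concretely, the first step is to invoke Theorem \ref{main-1} to write
\[
\textup{sign}_{{\rm dR}}(\overline{M},\partial \overline{M}) = \int_{M} L(M) + \sum_{\alpha \in A} \int\limits_{\, Y_\alpha} b_{\alpha} - \eta(B_{\textup{even}}),
\]
using the chain of equalities therein. The second step is to invoke Theorem \ref{main-2} to write
\[
\textup{sign}^{\Gamma}_{{\rm dR}} (\overline{M}_\Gamma, \partial \overline{M}_\Gamma) = \int_{M} L(M) + \sum_{\alpha \in A} \int\limits_{\, Y_\alpha} b_{\alpha} - \eta_\Gamma(\widetilde{B}_{\textup{even}}).
\]
The third step is to subtract the two identities; the common interior terms cancel exactly, yielding
\[
\textup{sign}_{{\rm dR}}(\overline{M},\partial \overline{M}) - \textup{sign}^{\Gamma}_{{\rm dR}}(\overline{M}_\Gamma, \partial \overline{M}_\Gamma) = \eta_\Gamma(\widetilde{B}_{\textup{even}}) - \eta(B_{\textup{even}}).
\]
The right-hand side is, by the definition recalled in the statement, precisely $\rho_\Gamma(\partial \widetilde{M})$, and the proof is complete.

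There is no substantive obstacle at this stage, because all of the analytic difficulties---well-definedness of $\eta(B_{\textup{even}})$ and $\eta_\Gamma(\widetilde{B}_{\textup{even}})$ on the stratified boundary, equality of de Rham and Hodge signatures in both the compact and the $\Gamma$-equivariant settings, and the validity of the explicit APS-type index formulas with the Albin--Gell-Redman edge corrections $b_\alpha$---have already been absorbed into the statements of Theorems \ref{main-1} and \ref{main-2}. The only point that must be checked with care, and which I would make explicit in the write-up, is that the interior integrands on $\overline{M}$ and on $\overline{M}_\Gamma$ agree after the covering projection; this is immediate from the locality of $L(M)$ and $b_\alpha$ together with the $\Gamma$-invariance of the lifted metric.
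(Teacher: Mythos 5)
Your proposal is correct and is exactly what the paper does: the corollary is obtained, as the authors say, ``directly from the previous two theorems'' by subtracting the signature formulas of Theorems \ref{main-1} and \ref{main-2}, so that the common interior term $\int_M L(M) + \sum_\alpha \int_{Y_\alpha} b_\alpha$ cancels and leaves $\eta_\Gamma(\widetilde{B}_{\textup{even}}) - \eta(B_{\textup{even}}) = \rho_\Gamma(\partial \widetilde{M})$. Your additional remark that this hinges on the locality of the interior index density and $\Gamma$-invariance of the lifted metric correctly identifies the only point needing care, and matches the paper's setup.
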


Extending to arbitrary depth the (classic) argument given in our previous article, using the results
in  \S \ref{index-section}, one can
prove that if $\Gamma\to \widetilde{N} \to N$ is the universal covering of a closed compact Witt space, then
 $\rho_{\Gamma} (\widetilde{N})$ is metric independent and in fact a stratified diffeomorphism invariant.
  See \cite[Theorem 1.6]{PiVe}. Notice that formula \eqref{relative-signature-theorem} together with
the metric independence of $\textup{sign}_{{\rm dR}}(\overline{M},\partial \overline{M})$ and of $\rho_{\Gamma} (\partial \widetilde{M})$, imply easily that $\textup{sign}^{\Gamma}_{{\rm dR}} (\overline{M}_\Gamma, \partial \overline{M}_\Gamma)$ is also metric 
independent \footnote{It would be more 
natural to extend Bei's work \cite{Bei, Bei2} to Galois coverings and obtain in this way the metric invariance for the $L^2$-$\Gamma$-signature 
$\textup{sign}^{\Gamma}_{{\rm dR}} (\overline{M}_\Gamma, \partial \overline{M}_\Gamma)$. This extension, a Dodziuk theorem \cite{Dodziuk} in the Witt case, lies outside of the scope 
of this paper, which is why we provide an alternative argument.}.
\medskip

As already remarked,  the rho-invariant $\rho_{\Gamma} (\widetilde{N})$ is well-defined in arbitrary depth.
Using this fundamental property we observe that the following result, due to the first author and Albin in depth 1, 
see \cite[Corollary  7.6 and Remark 12]{AP}, holds in fact in general depth.

\begin{thm}
Let $\overline{M}$ be a Witt space of dimension $4\ell-1,$ $\ell>1,$ with regular part equal to $M$. 
We assume  that $\pi_1 (M)$ has an element of finite order and that
$i_*: \pi_1(M)\longrightarrow \pi_1(\overline{M})$ is injective.
Then, there is an infinite number of Witt spaces $\{\overline{N}_j\}_{j\in \mathbb{N}}$ that are stratified-homotopy equivalent 
to $\overline{M}$ but such that  $\overline{N} _i$ is not stratified diffeomorphic to $\overline{N}_k$ for $i\not= k$.
\end{thm}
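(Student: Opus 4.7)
The plan is to use the Cheeger--Gromov $\rho$-invariant as a distinguishing secondary invariant, extending to arbitrary depth the argument given in \cite[Corollary 7.6 and Remark 12]{AP} for depth one. Thanks to Theorem \ref{main-2} and Corollary \ref{cor-rho} above, the reasoning of \cite[Theorem 1.6]{PiVe} now establishes the stratified-diffeomorphism invariance of the $\Gamma$-rho invariant $\rho_\Gamma(\overline{N})$ of any closed odd-dimensional Witt space $\overline{N}$ in arbitrary depth. Setting $\Gamma := \pi_1(\overline{M})$, it therefore suffices to exhibit Witt spaces $\{\overline{N}_j\}_{j\in \mathbb{N}}$ that are stratified-homotopy equivalent to $\overline{M}$ but have pairwise distinct values of $\rho_\Gamma$.

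By the injectivity of $i_*$, the torsion element in $\pi_1(M)$ descends to a torsion element in $\Gamma$. Using the classical Atiyah--Patodi--Singer variation formula for the $\rho$-invariant associated to this torsion element (applied in the smooth setting by Chang--Weinberger), we produce a family $\{M_j\}_{j\in\mathbb{N}}$ of closed smooth $(4\ell-1)$-manifolds, all homotopy equivalent to $M$, whose smooth $\rho$-invariants $\rho_\Gamma(M_j)$ --- computed with the classifying map $M_j\to B\Gamma$ induced by $i_*$ --- are pairwise distinct. We arrange the construction so that each $M_j$ differs from $M$ only inside a small open ball $D\subset M$ whose closure is disjoint from every singular stratum of $\overline{M}$.

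We then define $\overline{N}_j$ by excising $D$ from $\overline{M}$ and gluing in the corresponding piece of $M_j$ along $\partial D$. Because the surgery is performed in the top stratum and away from the singular locus, the links of $\overline{N}_j$ agree with those of $\overline{M}$, so $\overline{N}_j$ inherits the Witt condition. The homotopy equivalence $M_j\simeq M$ on the smooth side, together with the injectivity of $i_*$, extends by the identity across the singular part to a stratified-homotopy equivalence $\overline{N}_j\simeq \overline{M}$ whose composition with the classifying map to $B\Gamma$ agrees with that of $\overline{M}$ up to homotopy.

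To compare rho-invariants we construct a Witt cobordism $\overline{W}_{jk}$ between $\overline{N}_j$ and $\overline{N}_k$ which is a trivial product along every singular stratum: concretely, we take the surgery trace between $M_j$ and $M_k$ in the top stratum and cross it with the unchanged cone-bundle structure around the singular set. Applying Corollary \ref{cor-rho} to $\overline{W}_{jk}$, the singular-stratum contributions coming from the two boundary components cancel and we obtain
\[
\rho_\Gamma(\overline{N}_j) - \rho_\Gamma(\overline{N}_k) = \rho_\Gamma(M_j) - \rho_\Gamma(M_k) \neq 0,
\]
so the $\overline{N}_j$ are pairwise not stratified diffeomorphic. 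The main obstacle in this plan is precisely the extension of the stratified-diffeomorphism invariance of $\rho_\Gamma$ from depth one to arbitrary depth; this is exactly what Theorem \ref{main-2} and Corollary \ref{cor-rho} of the present paper unlock, while the remaining steps adapt the smooth Chang--Weinberger argument essentially verbatim by keeping all modifications inside a single chart of the regular part.
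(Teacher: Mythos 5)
Your plan to use $\rho_\Gamma$ as the distinguishing invariant, and your observation that the missing ingredients (existence and stratified-diffeomorphism invariance of $\rho_\Gamma$ in arbitrary depth) are supplied by the present paper, match the paper's approach. But the explicit construction you propose breaks down at a fundamental point.

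You assert that the Chang--Weinberger family $\{M_j\}$ can be arranged so that each $M_j$ differs from $M$ ``only inside a small open ball $D\subset M$ whose closure is disjoint from every singular stratum.'' Any smooth closed manifold differing from $M$ only inside an embedded disk is a connected sum $M\#\Sigma$ with a homotopy sphere $\Sigma$. On the $\Sigma$-summand the classifying map to $B\Gamma$ is null-homotopic, and therefore $\rho_\Gamma(M\#\Sigma)=\rho_\Gamma(M)$; moreover $\Theta_{4\ell-1}$ is finite, so this cannot even produce infinitely many non-diffeomorphic manifolds. More structurally, a modification supported in a disk realizes at best an element in the image of $L_{4\ell}(\mathbb{Z})\to L_{4\ell}(\mathbb{Z}\Gamma)$ under Wall realization, and the Higson--Roe map $\alpha$ (equivalently the difference of $\rho$-invariants across a normal cobordism) vanishes on that image. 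The Chang--Weinberger construction is forced to realize classes detected by the multi-signature of a finite cyclic subgroup $\mathbb{Z}/p\subset\Gamma$; the corresponding surgeries are along loops representing non-trivial elements of $\pi_1$, which cannot be confined to a ball. As a result your Witt cobordism $\overline{W}_{jk}$ --- a trivial product along every singular stratum --- also does not exist with the claimed properties.

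The paper's argument is therefore necessarily more abstract: \S 9.1 extends the argument of \cite[Cor.\ 7.6 and Rem.\ 12]{AP} (which rests on a limit argument of Piazza--Schick for the structure set) from depth $1$ to arbitrary depth, once $\rho_\Gamma$ is shown to exist and be a stratified-diffeomorphism invariant; and \S 9.2 sketches an alternative using the Higson--Roe homomorphism $\alpha$ on the Browder--Quinn $L$-group of $\overline{M}$, producing the $\overline{N}_j$ as boundary pieces of Witt cobordisms $(\overline{M}_j,\overline{N}_j\sqcup\overline{M})\to(\overline{M}\times I,\partial)$ and computing $\alpha(x_i)-\alpha(x_k)=\rho_\Gamma(\widetilde{N}_i)-\rho_\Gamma(\widetilde{N}_k)$ via the two analytic signature formulae of Theorems \ref{main-1} and \ref{main-2}. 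In both routes the modification is global and the Witt structure is handled via Witt bordism / Browder--Quinn surgery, not by keeping everything in a single chart of the regular part. To repair your proof you would need to replace the ball-local surgery by this global structure-set realization and handle the passage from the regular part to $\overline{M}$ and its cobordisms through that machinery.
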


On the other hand, if $\pi_1 (\overline{M})$ is torsion free, then building on ideas of Weinberger
\cite{weinberger-pnas}
and Chang \cite{Chang}
and on the main result of \cite{ALMP3}, we prove in this article the following result:

\begin{thm}\label{main-3} 
Let $\overline{N}$ be Witt, compact and without boundary and let 
$\overline{N}_\Gamma$ be its universal cover. Let $N$ and $\widetilde{N}$ be
the associated regular parts.. 
Assume that $\pi_1 (\overline{N})$ is torsion-free and satisfies the Baum-Connes conjecture. Then 
 $\rho_{\Gamma} (\widetilde{N})$ is a stratified homotopy invariant.
  \end{thm}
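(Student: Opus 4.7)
The plan is to adapt to the stratified Witt setting the Chang--Weinberger--Piazza--Schick strategy for the homotopy invariance of rho-invariants in the smooth case, with the main theorem of \cite{ALMP3} (stratified homotopy invariance of the analytic signature $K$-theory class of a closed Witt space) playing the role of its smooth counterpart.

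Given a stratified homotopy equivalence $f\colon \overline{N}' \to \overline{N}$, I would first construct a compact Witt cobordism $\overline{W}$ with $\partial \overline{W} = \overline{N}\sqcup(-\overline{N}')$, equipped with a reference map $\overline{W}\to B\Gamma$ extending the classifying maps of the two ends. A natural candidate is a stratified thickening of the mapping cylinder of $f$, which inherits the Witt condition from its boundary and carries an iterated cone-edge metric of product type near $\partial\overline{W}$. Pulling back the universal cover of $\overline{N}$ along the classifying map produces a $\Gamma$-cover $\overline{W}_\Gamma$ whose boundary regular part is $\widetilde{N}\sqcup(-\widetilde{N}')$. Applying Corollary~\ref{cor-rho} to $\overline{W}$ then yields
\begin{equation*}
\textup{sign}_{{\rm dR}}(\overline{W},\partial\overline{W}) - \textup{sign}^{\Gamma}_{{\rm dR}}(\overline{W}_\Gamma,\partial\overline{W}_\Gamma) = \rho_{\Gamma}(\widetilde{N}) - \rho_{\Gamma}(\widetilde{N}'),
\end{equation*}
so it suffices to show that the left-hand side vanishes.

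The vanishing of $\textup{sign}_{{\rm dR}}(\overline{W},\partial\overline{W})$ is essentially formal: since $f$ is a stratified homotopy equivalence, the cup-product pairing on the image of the natural map from the relative to the absolute middle-degree intersection cohomology of $\overline{W}$ is identically zero on the mapping cylinder, and Bei's theorem recalled in \eqref{equality-dR-GM} transfers this vanishing to the $L^2$ de Rham side.

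The heart of the argument, and the main obstacle, is the vanishing of $\textup{sign}^{\Gamma}_{{\rm dR}}(\overline{W}_\Gamma,\partial\overline{W}_\Gamma)$. Here I would interpret this $\Gamma$-signature, after a Lagrangian perturbation of the type recalled after \eqref{equal-aps-final}, as the $\Gamma$-trace of a $K$-theoretic signature index class in $K_*(C^*_r\Gamma)$ associated to $\overline{W}_\Gamma$ together with its boundary data. Stratified homotopy invariance of the analytic signature class from \cite{ALMP3}, combined with torsion-freeness of $\Gamma$ and the bijectivity of the Baum--Connes assembly map (which for torsion-free $\Gamma$ satisfying Baum--Connes rules out any contribution coming from finite subgroups), forces the relevant boundary $K$-theory class, and hence its $\Gamma$-trace, to vanish for a cobordism of mapping-cylinder type. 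The technical difficulty is precisely this last passage: the rigorous setup in the Witt context of a $C^*$-algebraic refinement of Corollary~\ref{cor-rho}, identifying $\textup{sign}^{\Gamma}_{{\rm dR}}(\overline{W}_\Gamma,\partial\overline{W}_\Gamma)$ with the trace of such a boundary-corrected signature class, and the corresponding transfer to stratified Witt pseudomanifolds of the Higson--Roe analytic surgery machinery on which the Chang--Weinberger vanishing ultimately rests.
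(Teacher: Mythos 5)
The overall plan---produce a Witt cobordism $\overline W$ between $\overline N$ and $\overline N'$ over $B\Gamma$, and then conclude via the two signature formulae (i.e., Corollary~\ref{cor-rho}) that $\rho_\Gamma(\widetilde N)=\rho_\Gamma(\widetilde N')$---is indeed the right skeleton, and it matches the paper. However both concrete steps you fill in are not the ones the paper uses, and each has a genuine problem.

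First, the cobordism. The mapping cylinder of a stratified homotopy equivalence $f\colon \overline N'\to \overline N$ is \emph{not} a pseudomanifold with boundary; it is only a Poincar\'e pair, and no ``stratified thickening'' produces a Witt pair with the two given ends in general (this is exactly the content of surgery theory: such a geometric cobordism exists only when an obstruction vanishes). The paper circumvents this by never touching the mapping cylinder. Instead it uses the injectivity of the maximal Baum--Connes assembly map together with \cite[Theorem~6.8]{ALMP3} to show $f_* L^{\mathrm{GM}}_*(\overline N)=f'_*L^{\mathrm{GM}}_*(\overline N')$ in $H_*(B\Gamma,\mathbb{Q})$, and then Chang's isomorphism $\Omega^{\mathrm{Witt}}_n(B\Gamma)\otimes\mathbb{Q}\cong \bigoplus_k H_{n-4k}(B\Gamma,\mathbb{Q})$ to conclude $[\overline N,f]=[\overline N',f']$ in rational Witt bordism. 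This abstractly supplies a genuine Witt cobordism $(\overline W,F)$ over $B\Gamma$ (after passing to multiples), but one has \emph{no control} over its internal topology: it is certainly not a mapping cylinder, and the inclusion of a boundary component is not a homotopy equivalence.

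Second, and consequently, your vanishing argument breaks down. You try to show $\textup{sign}_{\mathrm{dR}}(\overline W,\partial\overline W)=0$ and $\textup{sign}^\Gamma_{\mathrm{dR}}(\overline W_\Gamma,\partial\overline W_\Gamma)=0$ separately, the first by a ``formal'' cohomological argument on the mapping cylinder (which does not apply, for the reason above, and is false for a general $\overline W$: one can connected-sum with a closed Witt space of nonzero signature while staying in the same bordism class over $B\Gamma$), and the second by invoking a $C^*$-algebraic refinement you acknowledge you have not set up. The paper does something different: it shows that the \emph{difference}
\begin{equation*}
\textup{sign}^\Gamma(\overline W_\Gamma,\partial\overline W_\Gamma)-\textup{sign}(\overline W,\partial\overline W)
\end{equation*}
vanishes, using \emph{surjectivity} of the maximal Baum--Connes assembly map together with Atiyah's theorem for Witt Galois coverings \cite[Theorem~7.1]{AP}. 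Neither term need be zero; only their difference is controlled. Once that difference vanishes, applying the two signature formulae of Theorems~\ref{main-1} and~\ref{main-2} (equivalently Corollary~\ref{cor-rho}) immediately gives $\rho_\Gamma(\widetilde N)=\rho_\Gamma(\widetilde N')$.

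So: the broad strategy is the same, but you are missing the actual mechanism (L-class homotopy invariance from \cite{ALMP3} $+$ Witt bordism to get $\overline W$; surjectivity of $\mu$ $+$ Atiyah-for-Witt to kill the difference of signatures), and the mapping-cylinder shortcut is a genuine gap, not a technicality.
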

 
We wish to end this Introduction with a remark: we establish in this article that
 most of the classic results on smooth manifolds with boundary do extend to Witt spaces, 
 once the right tools and right definitions are given. However, the actual proofs are far from obvious due to 
 the intricacies of doing  analysis on stratified spaces. For example, the non-uniform behaviour of the heat kernel 
 near the strata of $\overline{M}_\infty$ and $\overline{M}_{\Gamma,\infty}$, see Theorem \ref{off-diagonal}, needs a careful 
 treatment, see also Remark \ref{Vai-differences}.
  Still, some of the arguments do carry over verbatim from the smooth case; whenever this is the case, we simply state the results
 and concentrate instead on those steps that need new arguments because of the singularities.
  
\medskip
The rest of our paper is structured as follows. We continue with a brief review of stratified
spaces with iterated  conic  metrics in \S \ref{stratified-section}. We  call
the pair $(M,g)$ with $M$ smoothly stratified and $g$ an iterated conic metric on its regular part 
a {\it wedge} pseudomanifolds. In \S
\ref{index-section} and in \S \ref{index-section2} we review results of our previous joint work \cite{PiVe} on index theorems on Witt wedge
pseudomanifolds of depth one and their Galois coverings and explain the extension of these results to arbitrary depth
in view of the recent results in \cite{AGR17}. In \S \ref{signature-section} and \S \ref{signature-2-section} we study signatures of stratified Witt spaces with 
boundary and of their Galois coverings. We then prove our first main result, Theorem \ref{main-1}, in \S \ref{main-section}. 
The proof of Theorem \ref{main-2} is obtained in section \S \ref{main-2-section}. We end the paper 
with a section devoted to geometric properties of rho-invariants on Witt spaces in \S \ref{sect:applications}.

%%%%%%%%%%%%%%%%%%%%%%%%%%%%%%%%%%
\subsection*{Acknowledgements}
%%%%%%%%%%%%%%%%%%%%%%%%%%%%%%%%%%

The authors gratefully acknowledge financial and inspirational support of the
Priority Programme "Geometry at Infinity" of DFG. The second author also thanks
Sapienza University for hospitality and financial support. We are glad to thank Pierre Albin,
Francesco Bei and Jesse Gell-Redman for useful discussions. We thank the referee for very useful 
remarks.

%%%%%%%%%%%%%%%%%%%%%%%%%%%%%%%%%%
\section{Stratified spaces with iterated cone-edge metrics}\label{stratified-section}
%%%%%%%%%%%%%%%%%%%%%%%%%%%%%%%%%%

We recall basic elements in the iterative definition of a compact smoothly stratified (Thom-Mather) space 
of depth $d\in \N_0$. For a full definition, including Thom-Mather conditions, we refer the reader 
e.g. to \cite{ALMP1,ALMP2,Alb}. 

%%%%%%%%%%%%%%%%%%%%%%%%%%%%%%%%%%%%%%%%%%%%%%%%%%%%
\subsection{Smoothly stratified spaces of depth zero and one}
%%%%%%%%%%%%%%%%%%%%%%%%%%%%%%%%%%%%%%%%%%%%%%%%%%%%

A compact smoothly stratified space of depth $d=0$ is by definition a smooth compact manifold. 
A compact smoothly stratified space $\overline{M}$ of depth $d=1$ is a compact manifold with an edge singularity,
defined explicitly as follows. By definition $\overline{M}$ consists of a smooth open 
dense stratum $M$ and single singular stratum $Y$, which is itself a closed compact manifold.
There exists a tubular neighborhood $\cU \subset \overline{M}$ which is the total space of a fibration $\phi: \cU \to Y$ 
with fibres given by $\overline{\mathscr{C}(F)} := [0,1) \times F /_{\sim}$, where $(0,\theta_1) \sim (0,\theta_2)$ 
and $F$ is a smooth compact manifold. 
The open smooth stratum $M$ is equipped with an incomplete edge metric $g$ which is by definition 
a smooth Riemannian metric given in the singular neighborhood by 
\begin{equation}
g \restriction \cU \cap M  = dx^2 + \phi^{\ast} g_{Y} + x^2 g_{F} + 
h=:g_0 + h.
\end{equation}
Here, $g_{Y}$ is a smooth Riemannian metric on the stratum $Y$,
$g_F$ is a symmetric two tensor on the level set $\{x=1\}$, which defines a smooth family
of Riemannian metrics on the fibres $F$. The higher order term $h$ satisfies $|h|_{g_0}=O(x)$, as $x\to 0$.

%%%%%%%%%%%%%%%%%%%%%%%%%%%%%%%%%%%%%%%%%%%%%%%%%%%%
\subsection{Smoothly stratified spaces of arbitrary 
depth d}\label{Section-SSSApdepth}
%%%%%%%%%%%%%%%%%%%%%%%%%%%%%%%%%%%%%%%%%%%%%%%%%%%%

A compact smoothly stratified space $\overline{M}$
of depth $d\geq 2$  without boundary, with strata $S:=\{Y_{\alpha}\}_{\alpha\in A}$ is a compact space with 
the following inductively defined properties. Identify each stratum with its open interior. Then
\begin{enumerate}
	\item[i)] If $Y_\alpha \cap \overline{Y}_\beta \neq \varnothing$ for any $\alpha, \beta \in A$, then 
	$Y_{\alpha}\subset \overline{Y}_\beta$.	
	\item[ii)] The depth of a stratum $Y \in S$ is defined to be the largest integer $j \in \N_0$ such 
	that there exists a chain of pairwise distinct strata 
	$$
	\{Y=Y_j,\;  Y_{j-1},\ldots, Y_1, Y_0 = M\} \subset S,
	$$
	with $Y_i \subset \overline{Y}_{i-1}$ for all $1	\leq i \leq j$. 
	\item[iii)] The depth $d$ of the stratified space $\overline{M}$ is defined as the maximal depth of any stratum. 
	The stratum of maximal depth is smooth and compact. 	
	\item[iv)] Consider a stratum $Y_\alpha \in S$ of depth $j \in \N_0$. Then any point of $Y_\alpha$ has a tubular 
	neighborhood $\cU_\alpha \subset M$, which is the total space of a fibration 
	$\phi_\alpha: \cU_\alpha \to \phi_\alpha (\cU_\alpha) \subseteq Y_\alpha$ 
	with fibers given by cones $\overline{\mathscr{C}(F_\alpha)}$ with link $F_\alpha$ being a compact 
	smoothly stratified space of depth $(j-1)$.
	\item[v)] Denote by $X_j$ the union of all strata of dimension less or equal than $j\in \N_0$. 
	Denote by $n \in \N_0$ the maximal dimension of any stratum, so that $\overline{M} = X_n$. 
	Then we require that $M:=X_n \setminus X_{n-2}$ is an open smooth manifold dense in $\overline{M}$.
	We call $X_{n-2}$ the singular stratum. Note that $\dim M = n$. We also write $\dim \overline{M} = n$.
\end{enumerate}

The precise definition of compact smoothly stratified spaces 
is more involved, due to additional Thom-Mather conditions. See \cite{Mather}.
The Thom-Mather conditions guarantee that such $\overline{M}$ can be resolved into a compact manifold
with {\it fibered} corners. See \cite{ALMP1,ALMP2} and \cite{Alb}. \medskip

We also need to extend this definition to include stratified spaces with boundary. We follow 
the discussion provided in Banagl \cite[Definition 6.1.3]{Banagl}. 

\begin{defn}\label{psuedo-bdry-def} A compact smoothly stratified space $\overline{M}$ of $\dim \overline{M} = n$ with boundary $\partial \overline{M}$
is a pair $(\overline{M}, \partial \overline{M})$ such that  
\begin{enumerate}
\item $\partial \overline{M}$ is a compact smoothly stratified space of dimension $(n-1)$. 
\item $\overline{M}$ satisfies (i) - (v) as above, with (v) changed insofar that 
$\overline{M} \backslash (X_{n-2} \cup \partial \overline{M})$ is a smooth oriented $n$-dimensional manifold, dense 
in $\overline{M}$. 
\item $\partial \overline{M} \subset \overline{M}$ is collared,
i.e. there exists $\partial \overline{M} \subset \cU \subset \overline{M}$, $\cU$ closed, and an orientation- 
and stratum-preserving isomorphism $\phi: \partial \overline{M} \times [0,1] \xrightarrow{\sim} \cU$.
\item Moreover, writing $S:=\{Y_{\alpha}\}_{\alpha\in A}$ for the strata of $\overline{M}$, 
$\{Y_{\alpha}\cap \partial \overline{M}\}_{\alpha\in A}$ are the strata of $\partial \overline{M}$, and 
$\{Y_{\alpha}\backslash (Y_{\alpha} \cap \partial \overline{M})\}_{\alpha\in A}$ are the strata of 
$\overline{M}\setminus\partial \overline{M}$.
\end{enumerate}
\end{defn}

We finish the review by introducing some terminology. 

\begin{defn} Let  $(\overline{M}, \partial \overline{M})$ be a smoothly stratified space with boundary. 
\begin{enumerate}
\item We write $X_{n-3}(\partial \overline{M})$ for the singular locus of $\partial \overline{M}$.
Then the regular part of $\partial \overline{M}$ is defined as $\partial M := \partial \overline{M} \setminus X_{n-3}(\partial \overline{M})$. 
\item The regular part of $\overline{M}$ is given by $M:=\overline{M} \backslash (X_{n-2} \cup \partial \overline{M})$.
\item The singular part of $\overline{M}$ is given by $X_{n-2}$. 
\end{enumerate}
\end{defn}

We define an iterated cone-edge metric $g$
on $M$ of arbitrary depth   by inductively asking $g$ to be a smooth Riemannian metric away 
from the singular strata, and requiring in each tubular neighborhood $\cU_\alpha$ of any point 
in the singular stratum $Y_\alpha \in X_{n-2}$ that $g$  be of the form
\begin{equation}\label{iemetricUa}
g|_{\cU_\alpha \cap M} = dx^2 + \phi^{\ast}_\alpha g_{Y_\alpha} + x^2 g_{F_\alpha} + 
h=:g_0 + h,
\end{equation}
where $g_{Y_\alpha}$ is a smooth Riemannian metric on $\phi_{\alpha} (\cU_\alpha)$,
$g_{F_\alpha}$ is a symmetric two tensor on the level set $\{x=1\} \equiv \partial \cU_\alpha$, whose restriction to the links
$F_\alpha$ (smoothly stratified spaces of depth at most $(j-1)$) 
is a smooth family of iterated cone-edge metrics. The higher order term $h$ satisfies as before $|h|_{g_0}=O(x)$, when $x\to 0$.
The existence of such iterated cone-edge metrics is discussed e.g. in \cite[Proposition 3.1]{ALMP1}.
Such an open neighborhood $\cU_\alpha$ can be illustrated as in Figure \ref{figure3} (in the next page). \medskip

\begin{figure}[h]
	
\definecolor{cffffff}{RGB}{255,255,255}

\begin{tikzpicture}[y=0.80pt, x=0.80pt, yscale=-1.000000, xscale=1.000000, 
inner sep=0pt, outer sep=0pt]
\begin{scope}[shift={(-106.01298,298.05834)}]
\path[draw=black,line join=miter,line cap=butt,even odd rule,line width=0.800pt]
(240.0967,648.7071) .. controls (208.7675,646.7061) and (175.4286,646.1279) ..
(149.0017,659.4675) .. controls (121.5210,673.3390) and (97.3212,698.3503) ..
(109.9416,726.4275) .. controls (126.1149,762.4088) and (188.7937,735.5132) ..
(226.4242,747.3526) .. controls (271.9113,761.6637) and (330.3244,751.1131) ..
(363.8318,725.7300) .. controls (386.4782,708.5746) and (381.6304,683.3618) ..
(365.2268,660.1650) .. controls (346.4257,633.5776) and (303.4705,651.9467) ..
(270.9215,650.9842);
\path[draw=black,fill=cffffff,line join=round,line cap=round,miter
limit=4.00,even odd rule,line width=2.000pt] (252.1362,698.6361)node[left=1.5cm, 
=80pt]{$Y_\alpha$} -- (312.9573,545.7708)
node[left=3.5cm,below=-1cm]{$F_\alpha$};
\path[draw=black,line join=miter,line cap=butt,even odd rule,line width=0.800pt]
(209.5942,517.3408) -- (251.6996,697.0405);
\begin{scope}[cm={{0.48166,0.0,0.0,0.47789,(116.19249,317.73604)}}]
\path[draw=black,line join=miter,line cap=butt,even odd rule,line width=.8pt]
(343.2858,639.2604) .. controls (343.2858,639.2604) and (269.9420,564.0695) ..
(250.5061,585.6094) .. controls (207.6797,633.0720) and (305.4157,656.0409) ..
(343.2858,639.2604) -- cycle;
\path[draw=black,fill=cffffff,line join=miter,line cap=butt,even odd rule,line
width=.8pt] (303.8643,605.1702) .. controls (299.0602,592.7724) and
(274.9892,631.3365) .. (282.6137,638.9261) .. controls (299.4140,655.6496) and
(309.4403,619.5599) .. (303.8643,605.1702) -- cycle;
\end{scope}
\begin{scope}[cm={{0.68304,0.0,0.0,0.68074,(59.34603,157.6366)}},fill=cffffff]
\path[draw=black,fill=cffffff,line join=miter,line cap=butt,even odd rule,line
width=.8pt] (343.2858,639.2604) .. controls (343.2858,639.2604) and
(269.9420,564.0695) .. (250.5061,585.6094) .. controls (207.6797,633.0720) and
(305.4157,656.0409) .. (343.2858,639.2604) -- cycle;
\path[draw=black,fill=cffffff,line join=miter,line cap=butt,even odd rule,line
width=.8pt] (303.8643,605.1702) .. controls (299.0602,592.7724) and
(274.9892,631.3365) .. (282.6137,638.9261) .. controls (299.4140,655.6496) and
(309.4403,619.5599) .. (303.8643,605.1702) -- cycle;
\end{scope}
\begin{scope}[shift={(-30.25595,-93.66699)},fill=cffffff]
\path[draw=black,fill=cffffff,line join=miter,line cap=butt,even odd rule,line
width=0.800pt] (343.2858,639.2604) .. controls (343.2858,639.2604) and
(269.9420,564.0695) .. (250.5061,585.6094) .. controls (207.6797,633.0720) and
(305.4157,656.0409) .. (343.2858,639.2604) -- cycle;
\path[draw=black,fill=cffffff,line join=miter,line cap=butt,even odd rule,line
width=0.800pt] (303.8643,605.1702) .. controls (299.0602,592.7724) and
(274.9892,631.3365) .. (282.6137,638.9261) .. controls (299.4140,655.6496) and
(309.4403,619.5599) .. (303.8643,605.1702) -- cycle;
\end{scope}
\begin{scope}[cm={{0.21261,0.0,0.0,0.20684,(191.99188,530.9718)}},fill=cffffff]
\path[draw=black,fill=cffffff,line join=miter,line cap=butt,even odd rule,line
width=.8pt] (343.2858,639.2604) .. controls (343.2858,639.2604) and
(269.9420,564.0695) .. (250.5061,585.6094) .. controls (207.6797,633.0720) and
(305.4157,656.0409) .. (343.2858,639.2604) -- cycle;
\path[draw=black,fill=cffffff,line join=miter,line cap=butt,even odd rule,line
width=.8pt] (303.8643,605.1702) .. controls (299.0602,592.7724) and
(274.9892,631.3365) .. (282.6137,638.9261) .. controls (299.4140,655.6496) and
(309.4403,619.5599) .. (303.8643,605.1702) -- cycle;
\end{scope}
\end{scope}

\end{tikzpicture}

	\caption{Tubular neighborhood $\cU_\alpha$ of depth $2$.}
	\label{figure3}
\end{figure}
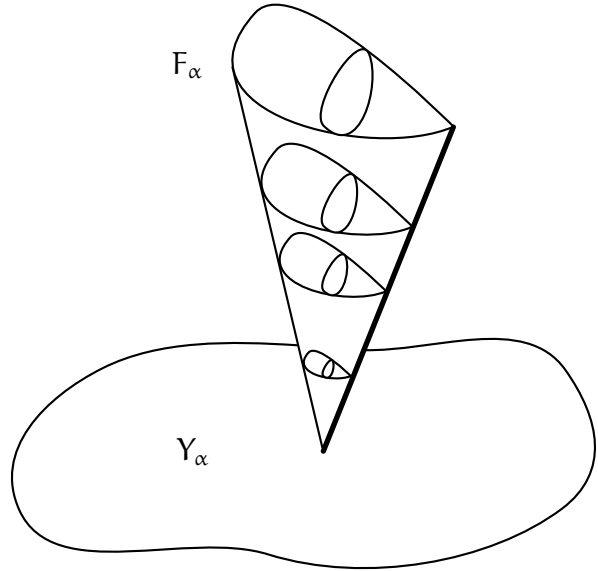

We also assume that on the level set level set $\{x=1\} \equiv \partial \cU_\alpha$
the fibration $\phi_\alpha|_{\partial \cU_\alpha}: (\partial \cU_\alpha, g_{F_\alpha} + \phi^{\ast}_\alpha g_{Y_\alpha})
\to (\phi_{\alpha} (\cU_\alpha), g_{Y_\alpha})$ is a Riemannian submersion. 
More precisely, we may split the tangent bundle $T_p \partial \cU_\alpha$ 
into vertical and horizontal subspaces $T^V_p \partial \cU_\alpha \oplus T^H_p \partial \cU_\alpha$. 
The vertical subspace $T^V_p \partial \cU_\alpha$ is the tangent space to the fibre of 
$\phi_\alpha|_{\partial \cU_\alpha}$ through $p$, and the 
horizontal subspace $T^H_p \partial \cU_\alpha$ is a corresponding complement.  
Then $\phi$ is a Riemannian submersion if $g_{F_\alpha}$ restricted to $T^H_p \partial \cU_\alpha$ vanishes. 
Any level set $(\{x\}\times \partial \cU_\alpha, x^2 g_{F_\alpha} + \phi^*g_B)$ yields then a 
Riemannian submersion as well. We put the same condition in the lower depth. Such metrics
always exist, see \cite[Proposition 3.1]{ALMP1}.
\medskip

In a collar neighborhood $\cU = [0,1) \times \partial M$ of the boundary $\partial M$, writing $x \in [0,1)$ for the radial function, we assume that $g$
is of the product form $g= dx^2 \oplus g_{\partial M}$, where $g_{\partial M}$ is an iterated cone-edge
metric on $\partial M$. Notice again that $\partial M$ is itself a regular part of a smoothly stratified space of depth $n-1$, 
without boundary. 

%%%%%%%%%%%%%%%%%%%%%%%%%%%%%%%%%%%%%%%%%%%%%%%%%%%%
\subsection{Geometric Witt assumption}
%%%%%%%%%%%%%%%%%%%%%%%%%%%%%%%%%%%%%%%%%%%%%%%%%%%%
Consider a smoothly stratified space $M$ with an iterated cone-edge 
metric $g$ as above. Assume that $\partial M = \varnothing$ for simplicity here.
Consider a tubular neighborhood $\cU_\alpha$ of any point in a singular stratum $Y_\alpha \in X_{n-2}$.
Then as in \eqref{iemetricUa}, $g$ takes the form
\begin{equation}
g|_{\cU_\alpha \cap M} = dx^2 + \phi^{\ast}_\alpha g_{Y_\alpha} + x^2 g_{F_\alpha} + h,
\end{equation}
where $h$ is a higher order term and the symmetric $2$-tensor 
$g_{F_\alpha}$ restricts to an iterated cone-edge metric $g_{F_\alpha}(y)$ on the link $F_\alpha$ at any 
base point $y \in  \phi_\alpha (\cU_\alpha)$. \medskip

Consider the Hodge Dirac, also called Gauss-Bonnet, operator 
$\slashed{\partial} := d+d^*$ associated to $(M,g)$, acting on compactly supported differential
forms $\Omega^*_c(\cU_\alpha)$. By \cite[(4.9)]{ALMP1}, $\slashed{\partial}$ takes the following form near the singular stratum
$$
\slashed{\partial} \equiv d + d^*  \sim \Gamma_\alpha \left( \frac{d}{dx} + \frac{1}{x} Q_\alpha(y) + T_\alpha(y) \right), 
$$
up to terms which are higher order in a certain sense. Here, $\Gamma_\alpha$ is a self-adjoint and 
unitary operator in $L^2\Omega^*((0,1) \times Y_\alpha, L^2\Omega^*(F_\alpha))$, $T_\alpha$ is a Dirac operator on $Y_\alpha$
and $Q_\alpha(y)$ is a family of symmetric operators in $L^2\Omega^*(F_\alpha, g_{F_\alpha}(y))$.
The geometric Witt assumption is a condition on the tangential operators $Q_\alpha(y)$ for any $y\in Y_\alpha$
and any $\alpha \in A$.

\begin{assump}\label{Witt} (Geometric Witt assumption)
The smoothly stratified space $\overline{M}$ with an iterated cone-edge metric $g$ on $M$ satisfies the geometric
Witt assumption if at each $y\in Y_\alpha \in X_{n-2}$ the tangential operator family $Q_\alpha(y)$ satisfies
$$
\spec Q_\alpha(y) \cap \left(-\frac{1}{2}, \frac{1}{2}\right) = \varnothing.
$$
The geometric Witt assumption implies the topological Witt assumption, 
that is the vanishing of the middle-degree intersection cohomology $IH^\nu(F_\alpha), \nu= \dim F_\alpha / 2$, for all links $F_\alpha$. 
Conversely, the topological Witt assumption yields the geometric Witt assumption after scaling of the metric.

\medskip
\noindent
We refer to $(M,g)$ in an abbreviated form as a stratified Witt space.
\end{assump}

\begin{remark}
We finish the section by pointing out that the related work \cite{HaLeVe1, HaLeVe}
uses a larger spectral gap to conclude that the domain of the unique 
self-adjoint extension of $\slashed{\partial}$ in $L^2\Omega^*(M,g)$ is 
identified with an {\em explicit} weighted edge Sobolev space. \end{remark}

%%%%%%%%%%%%%%%%%%%%%%%%%%%%%%%%%%
\subsection{Galois coverings of stratified spaces}
%%%%%%%%%%%%%%%%%%%%%%%%%%%%%%%%%%

In this subsection we recall for convenience of the reader 
 the preliminaries on Galois coverings of stratified Witt spaces, 
as discussed e.g. in \cite[\S 7.1]{PiVe} and \cite{PiZen}.
Consider as before the smoothly stratified Witt space $\overline{M}$ with boundary and 
the Riemannian metric $g$ on its regular part including boundary. 
Consider a Galois covering $\pi: \overline{M}_{\Gamma} \to \overline{M}$
with Galois group $\Gamma$ and  fundamental domain $\mathscr{F}_{\Gamma}$. 
\medskip

The topological and the smooth (Thom-Mather) stratifications on 
$\overline{M}_{\Gamma}$ are obtained in the following canonical way.
Decompose the covering $\overline{M}_{\Gamma}$ into the
preimages of the strata in $ \overline{M}$ under the projection $\pi$ . 
Surjectivity of $\pi$ ensures that each stratum in the covering is mapped 
surjectively onto the corresponding stratum in $ \overline{M}$. 
Since $\pi$ is  a local homeomorphism, it is 
straightforward to check that  $\overline{M}_{\Gamma}$ and 
its fundamental domain  are again topological 
stratified spaces. Pulling up the smooth stratification from the base, we obtain
a smooth Thom-Mather stratification on $\overline{M}_{\Gamma}$. 
By definition, the link of a point $\widetilde{p}\in \overline{M}_{\Gamma}$ is
equal to the link of its image in the base.
\medskip

We denote by $\widetilde{M}$ the regular part of 
$\overline{M}_{\Gamma}$ and observe that it is a Galois covering of the regular part $M$ of 
$\overline{M}$ with fundamental domain $\mathscr{F}$ equal to the regular part of $\mathscr{F}_{\Gamma}$.
Any singular stratum $\widetilde{Y}_\alpha$ of $\overline{M}_{\Gamma}$
is a Galois covering of a singular stratum $Y_\alpha$ of same depth with fundamental domain $\mathscr{F}_{Y_\alpha}$.
Similarly, the boundary $\partial \overline{M}_{\Gamma}$ of $\overline{M}_{\Gamma}$ is a Galois covering of $\partial \overline{M}$,
with regular part $\partial \widetilde{M}$ and fundamental domain $\mathscr{F}_{\partial M}$. 
The lift $\widetilde{g}$ of the iterated cone-edge metric $g$
defines a $\Gamma$-invariant iterated cone-edge metric on $\widetilde{M}$, which is still product near the boundary. 
Finally, there are isometric embeddings of $\mathscr{F}, \mathscr{F}_{Y_\alpha}, \mathscr{F}_{\partial M}$ into 
$M, Y_\alpha, \partial M$, respectively, with complements of measure zero.

%%%%%%%%%%%%%%%%%%
\subsection{$L^2$-Stokes theorem for stratified spaces}\label{stokes}
%%%%%%%%%%%%%%%%%%

Let $(X,h)$ be (the regular part of) a smoothly stratified space with an iterated cone-edge 
metric $h$ and boundary $\partial X$, such that $h$ is of a product form
$h= dx^2+ h_{\partial X}$ near the boundary with $(\partial X, h_{\partial X})$ being a smoothly 
stratified space with an iterated cone-edge metric as well. We do not assume here that $X$ or its boundary $\partial X$
is compact, in order to encompass the following three cases of interest: $(X,g)$ may be either a compact 
smoothly stratified space $(M,g)$ as introduced above, its Galois covering $(\widetilde{M},\widetilde{g})$,
or the half-cylinders $\partial M \times (-\infty,0]$ and $\partial \widetilde{M} \times (-\infty,0]$. 
\medskip

Denote by $L^2 \Omega^* (X)$ the $L^2$-completion (with respect to $h$) of smooth compactly 
supported differential forms $\Omega^*_c(X)$ on $X$. The space $L^2  \Omega^* (\partial X)$ is defined similarly
as the $L^2$-completion of smooth compactly supported differential forms 
on $\partial X$ with respect to $h_{\partial X}$. \medskip

We define the maximal domains for
the exterior differential $d$ and its formal adjoint $d^t$ acting on $\Omega^*_c(X)$ by
\begin{equation}
\begin{split}
&\dom_{\max}(d) := \{u \in L^2\Omega^*(X) \mid du \in L^2\Omega^*(X)\}, \\
&\dom_{\max}(d^t) := \{u \in L^2\Omega^*(X) \mid d^tu \in L^2\Omega^*(X)\}, 
\end{split}
\end{equation}
where $du$ and $d^tu$ are defined in the distributional sense. We write $d_{\max}$ and 
$d^t_{\max}$ for the corresponding closed extensions. We also introduce the minimal 
closed extensions $d_{\min}$ and $d^t_{\min}$ with respective domains
\begin{equation}
\begin{split}
&\dom_{\min}(d):= \{u\in \dom_{\max}(d) \mid \exists (u_n) \subset \Omega^*_c(X): 
u_n \xrightarrow{L^2} u, du_n \xrightarrow{L^2} du \}, \\
&\dom_{\min}(d^t):= \{u\in \dom_{\max}(d^t) \mid \exists (u_n) \subset \Omega^*_c(X): 
u_n \xrightarrow{L^2} u, d^tu_n \xrightarrow{L^2} d^tu \}.
\end{split}
\end{equation}
The maximal and minimal closed extensions are related by 
\begin{align}\label{minmax-relation}
(d^t)_{\max} \equiv (d_{\min})^*\,.
\end{align}
Our next theorem is well known to the experts, 
however the authors are not aware of any such result written up {\it explicitly} elsewhere.
We shall build on Cheeger \cite[Theorem 2.1]{Cheeger}.

\begin{thm}\label{Stokes-thm} \textup{($L^2$-Stokes theorem on Witt stratified spaces)}
Assume that $(X,h)$ and $(\partial X, h_{\partial X})$ satisfy the geometric Witt assumption.
Consider any smooth $u \in \dom_{\max}(d)$ and $v \in \dom_{\max}(d^t)$ such that their pullbacks to the 
boundary are $qu, qv \in L^2\Omega^*(\partial X)$. Then 
$$
\langle du, v\rangle_{L^2\Omega^*(X)} - \langle u, d^t v\rangle_{L^2\Omega^*(X)} = \langle qu, qv \rangle_{L^2\Omega^*(\partial X)}.
$$
\end{thm}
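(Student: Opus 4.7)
The plan is to reduce the claimed $L^2$-Stokes identity to the classical Stokes theorem on a smooth manifold with boundary, via an approximation argument that exploits the geometric Witt assumption to cut off neighborhoods of all singular strata of $X$ without disturbing the graph norms of $d$ and $d^t$. This is the same mechanism that drives Cheeger's closed-case theorem \cite[Theorem 2.1]{Cheeger}, adapted here to the collared boundary setting.

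\medskip
\noindent
\textbf{Step 1 (Witt cutoffs).} Building on \cite[Section 2]{Cheeger}, I would construct a family of smooth functions $\chi_\varepsilon: X \to [0,1]$, supported in the regular part $X \setminus X_{n-2}$ (but meeting the regular part of $\partial X$), such that: (i) $\chi_\varepsilon \to 1$ pointwise on the regular part; (ii) for every $\omega \in L^2(X)$, both $\|d\chi_\varepsilon \wedge \omega\|_{L^2(X)} \to 0$ and $\|d\chi_\varepsilon \lrcorner\, \omega\|_{L^2(X)} \to 0$ as $\varepsilon \to 0$; and (iii) the restriction $\chi_\varepsilon|_{\partial X}$ enjoys the analogous approximation properties on $(\partial X, h_{\partial X})$. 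The construction is inductive on the depth. In a tubular neighborhood $\cU_\alpha$ of a singular stratum $Y_\alpha$ one takes $\chi_\varepsilon$ equal to $\rho_\varepsilon(x)$ for a one-variable cutoff supported in $\{x \geq \varepsilon\}$; the spectral gap of the tangential operator family $Q_\alpha(y)$ (Assumption \ref{Witt}) forces any $L^2$ form to vanish sufficiently strongly as $x \to 0$ to absorb the $O(\varepsilon^{-1})$ blow-up of $d\rho_\varepsilon$. Cutoffs associated to the lower-depth singular strata inside the link $F_\alpha$ are then composed in using the inductive hypothesis. Finally, inside the collar $[0,1) \times \partial X$ the cutoffs are chosen to depend only on the cross-section variable, so that their restrictions to $\partial X$ inherit (ii) on $\partial X$ directly from the Witt condition on $(\partial X, h_{\partial X})$.

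\medskip
\noindent
\textbf{Step 2 (Classical Stokes and passage to the limit).} Fix smooth $u \in \dom_{\max}(d)$ and $v \in \dom_{\max}(d^t)$ with $qu, qv \in L^2(\partial X)$. Then $\chi_\varepsilon u$ is smooth and its support is contained in a smooth oriented manifold with boundary sitting inside $X$, so the classical Stokes theorem applies:
$$
\langle d(\chi_\varepsilon u), v\rangle_{L^2(X)} - \langle \chi_\varepsilon u, d^t v\rangle_{L^2(X)} = \langle q(\chi_\varepsilon u), qv\rangle_{L^2(\partial X)}.
$$
Expanding $d(\chi_\varepsilon u) = \chi_\varepsilon\, du + d\chi_\varepsilon \wedge u$, dominated convergence yields $\chi_\varepsilon\, du \to du$ and $\chi_\varepsilon u \to u$ in $L^2(X)$, while Step 1 gives $d\chi_\varepsilon \wedge u \to 0$ in $L^2(X)$. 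For the boundary term one uses $q(\chi_\varepsilon u) = (\chi_\varepsilon|_{\partial X})\, qu$, and since $qu \in L^2(\partial X)$, dominated convergence together with (iii) gives $q(\chi_\varepsilon u) \to qu$ in $L^2(\partial X)$. Sending $\varepsilon \to 0$ produces the claimed identity.

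\medskip
\noindent
\textbf{Main obstacle.} The heart of the argument is Step 1: verifying that the Witt-type cutoffs exist with the listed approximation properties on a stratified space of \emph{arbitrary} depth, in a manner compatible with the collared boundary. The inductive step relies on the Witt spectral gap of $Q_\alpha(y)$ to produce the precise $x$-decay rate of $L^2$ forms near each singular stratum required to dominate the derivative of the cutoff; carrying this through simultaneously for $X$ and its boundary is the reason one assumes the geometric Witt condition on both $(X,h)$ and $(\partial X, h_{\partial X})$. Once those cutoffs are in hand, the rest of the argument is routine approximation.
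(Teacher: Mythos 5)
Your proposal takes a genuinely different route from the paper, and unfortunately it has a serious gap in Step 1 that cannot be repaired as stated. The central problem is item (ii): you claim there exist cutoffs $\chi_\varepsilon$ with $\chi_\varepsilon \to 1$ pointwise on the regular part and $\|d\chi_\varepsilon \wedge \omega\|_{L^2(X)} \to 0$ for \emph{every} $\omega \in L^2(X)$. This is impossible. The map $\omega \mapsto d\chi_\varepsilon \wedge \omega$ is a bounded operator on $L^2(X)$ with operator norm $\|d\chi_\varepsilon\|_{L^\infty}$; if it tended to zero strongly on all of $L^2(X)$ then by Banach--Steinhaus the operator norms $\|d\chi_\varepsilon\|_{L^\infty}$ would be uniformly bounded. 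But any cutoff that transitions from $0$ to $1$ in a neighborhood of width $\sim \varepsilon$ has $\|d\chi_\varepsilon\|_{L^\infty} \sim \varepsilon^{-1}$, which is unbounded. So (i) and (ii) are mutually incompatible. The sentence ``the spectral gap of $Q_\alpha(y)$ forces any $L^2$ form to vanish sufficiently strongly as $x \to 0$'' is where the argument goes wrong: the Witt assumption is a statement about the cohomology of the links, not about decay of generic $L^2$ forms near the strata; an arbitrary square-integrable form has no pointwise decay at all. Even restricting (ii) to $\omega \in \dom_{\max}(d)$ does not help: the existence of a single universal sequence of cutoffs approximating every element of $\dom_{\max}(d)$ in graph norm is a \emph{parabolicity}-type condition that is strictly stronger than $\dom_{\min}(d) = \dom_{\max}(d)$ and does not follow automatically from the Witt condition. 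Cheeger's proof of the closed-space $L^2$-Stokes theorem is not a single-scale cutoff argument of this kind; it relies on a spectral decomposition along the link and a mode-by-mode analysis in which only the problematic middle-degree harmonic modes (excluded by the Witt condition) would fail.

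The paper's proof avoids this issue entirely by working at the level of closed extensions rather than explicit cutoffs. It forms the double $X_d = X \cup_{\partial X} X$, a closed Witt space, and quotes $\dom_{\min}(d) = \dom_{\max}(d)$ there and on $\partial X$ as a black-box consequence of Cheeger's theorem. This yields the boundaryless $L^2$-Stokes identity on $X_d$ and on $\partial X$. Cheeger \cite[Theorem 2.1]{Cheeger} then upgrades the $\partial X$ result to an $L^2$-Stokes identity with boundary term on the half-cylinder $\partial X \times (-\infty,0]$. Finally, \emph{fixed} collar cutoffs (not $\varepsilon$-dependent ones near the strata) are used to split $u$ and $v$ into a cylinder piece and a doubleable piece, and the two Stokes identities are patched together. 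If you want to salvage your approach you would need to replace your Step~1 with an honest proof that $\chi_\varepsilon u \to u$ in graph norm for the \emph{given} $u \in \dom_{\max}(d)$ (and likewise for $v$), which is exactly the content of $\dom_{\min} = \dom_{\max}$, and whose proof is the Fourier/spectral analysis near each stratum that you have essentially elided.
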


\begin{proof}
We define the double $X_d := X \cup_{\partial X} X$ with the natural involutive diffeomorphism 
$\alpha$, interchanging the two copies of $X$. This is again a smoothly stratified space, satisfying 
the geometric Witt assumption. Due to the product structure of $h$ near the boundary $\partial X$,
there exists an iterated cone-edge metric $h_d$ such that $\alpha^* h_d = h_d$ and $h_d\restriction X = h$.
\medskip

The notions of minimal and maximal closed extensions extends verbatim to the exterior
derivatives on $X_d$, $\partial X$ and $\partial X \times (-\infty,0]$. Whenever necessary, we indicate which space $d$
and $d^t$ act on, by putting that space in the brackets behind, e.g. $d_{\max}(X_d)$ denotes the maximal 
closed extension of the exterior derivative on $X_d$. Then by the geometric Witt assumption on 
$(X_d, h_d)$ and  $(\partial X, h_{\partial X})$
$$
d_{\max}(X_d) = d_{\min}(X_d), \quad d_{\max}(\partial X) = d_{\min}(\partial X).
$$
In particular, we obtain by \eqref{minmax-relation} the following equalities of closed extensions
\begin{equation}
\begin{split}
&d^t_{\max}(X_d) \equiv d^*_{\min}(X_d) = d^*_{\max}(X_d), \\
&d^t_{\max}(\partial X) \equiv d^*_{\min}(\partial X) = d^*_{\max}(\partial X), 
\end{split}
\end{equation}
This can be reformulated as the $L^2$-Stokes theorems
\begin{equation}\label{Stokes}
\begin{split}
&\forall \w \in \dom ( d_{\max}(X_d)), \eta \in \dom (d^t_{\max}(X_d)): \quad 
\langle d\w, \eta \rangle_{L^2\Omega^*(X_d)} = \langle \w, d^t \eta\rangle_{L^2\Omega^*(X_d)}, \\
&\forall \w' \in \dom (d_{\max}(\partial X)), \eta' \in \dom (d^t_{\max}(\partial X)): \ 
 \langle d\w', \eta' \rangle_{L^2\Omega^*(\partial X)} = \langle \w', d^t \eta' \rangle_{L^2\Omega^*(\partial X)}.
\end{split}
\end{equation}
The latter property, the $L^2$-Stokes theorem on $\partial X$ implies by Cheeger \cite[Theorem 2.1]{Cheeger} 
that for any $\alpha \in \dom (d_{\max}(\partial X\times (-\infty,0]))$ and $\beta \in \dom (d^t_{\max}(\partial X \times (-\infty,0]))$ 
with compact support, such that their pullbacks to the 
boundary $q\alpha, q\beta$ are in $L^2\Omega^*(\partial X)$, the $L^2$-Stokes theorem for collars holds
\begin{align}\label{Stokes-collars}
\langle d\alpha, \beta \rangle_{L^2\Omega^*(\partial X\times (-\infty,0])} - 
\langle \alpha, d^t \beta \rangle_{L^2\Omega^*(\partial X\times (-\infty,0])} = 
\langle q\alpha, q\beta \rangle_{L^2\Omega^*(\partial X)}.
\end{align}
Consider now the collar neighborhood $[0,1) \times \partial X \subset X$ of the boundary and
cutoff functions $\phi, \psi , \chi \in C^\infty_c[0,1)$ as in Figure \ref{fig:CutOff}.

\begin{figure}[h]
\begin{center}

\begin{tikzpicture}[scale=1.3]
%coordinate axes
\draw[->] (-0.2,0) -- (9.7,0);
\draw[->] (0,-0.2) -- (0,2.2);

% 1 tick on vertical axis
\draw (-0.2,2) node[anchor=east] {$1$};
%delta, delta' ticks

\draw (9.3,-0.2) node[anchor=north] {$1$} -- (9.3,0.2);
\draw (0,-0.45) node {$0$};
\draw (0,2) -- (7,2);
%phi 
\draw (1,2) .. controls (2.4,2) and (1.6,0) .. (3,0);
\draw[dashed] (1,-0.2) -- (1,2.2);
\draw[dashed] (3,-0.2) -- (3,2.2);
%psi
\draw (4,2) .. controls (5.4,2) and (4.6,0) .. (6,0);
\draw[dashed] (4,-0.2) -- (4,2.2);
\draw[dashed] (6,-0.2) -- (6,2.2);
%psi
\draw (7,2) .. controls (8.4,2) and (7.6,0) .. (9,0);
\draw[dashed] (7,-0.2) -- (7,2.2);
\draw[dashed] (9,-0.2) -- (9,2.2);

\draw (1.5,1) node {$\phi$};
\draw (4.5,1) node {$\psi$};
\draw (7.5,1) node {$\chi$};

\end{tikzpicture}

\caption{The cutoff functions $\phi, \psi$ and $\chi$.}
\label{fig:CutOff}
\end{center}
\end{figure}

The cutoff functions define functions on the collar $[0,1) \times \partial X \subset X$
and extend trivially to the interior of $X$. We still denote these extensions by 
$\phi, \psi , \chi \in C^\infty(X)$. We compute for any $u \in \dom_{\max}(d)$ and $v \in \dom_{\max}(d^t)$
\begin{equation}\label{Stokes-0}
\begin{split}
\langle du, v\rangle_{L^2\Omega^*(X)} &= \langle d (\psi u), v\rangle_{L^2\Omega^*(X)} + \langle d (1-\psi) u, v\rangle_{L^2\Omega^*(X)}
\\ &= \langle d (\psi u), \chi v\rangle_{L^2\Omega^*(X)} + \langle d (1-\psi) u, (1-\phi) v\rangle_{L^2\Omega^*(X)}.
\end{split}
\end{equation}
Before we proceed, we need a substitute of the Leibniz rule for the co-differential $d^t$
acting on a product of a smooth function and a differential form.
Taking such a product $\chi v$ from above as an example, we compute using linearity
of the Hodge star operator $*$ and the Leibniz rule for the exterior derivative $d$
\begin{equation}\label{Leibniz-dt}
\begin{split}
d^t (\chi v) &= \pm * d * (\chi v) = \pm * d (\chi  (* v))
\\ &= \pm * ( d\chi \wedge * v + \chi  d * v) = \pm * (d\chi \wedge * v) +  \chi  d^t v.
\end{split}
\end{equation}
The function $\psi u$ can be viewed as an element of $\dom (d_{\max}(\partial X\times (-\infty,0]))$.
The function $\chi v$ can be viewed as an element of $\dom (d^t_{\max}(\partial X\times (-\infty,0]))$.
Hence by the $L^2$-Stokes theorem on collars \eqref{Stokes-collars} we compute 
\begin{equation}\label{Stokes-1}
\begin{split}
\langle d (\psi u), \chi v\rangle_{L^2\Omega^*(X)} 
&= \langle \psi u, d^t (\chi v) \rangle_{L^2\Omega^*(X)} + \langle qu , qv \rangle_{L^2\Omega^*(\partial X)} \\
& = \langle \psi u, \chi d^t v \rangle_{L^2\Omega^*(X)} + \langle \psi u, d^t (\chi v) - \chi d^t v \rangle_{L^2\Omega^*(X)}
+ \langle qu , qv \rangle_{L^2\Omega^*(\partial X)}  \\
& = \langle \psi u, d^t v \rangle_{L^2\Omega^*(X)} +  \langle qu , qv \rangle_{L^2\Omega^*(\partial X)},
\end{split}
\end{equation}
where in the last equality we used that by construction $\chi \psi = \psi$, and by \eqref{Leibniz-dt}
the support of $d^t (\chi v) - \chi d^t v$ is contained in the support of $d\chi$, which is disjoint from support of $\psi$. 
This is why $\langle \psi u, d^t (\chi v) - \chi d^t v \rangle_{L^2\Omega^*(X)}$ vanishes.
\medskip

The function $(1-\psi) u$ can be viewed as an element of $\dom (d_{\max}(X_d))$.
The function $(1-\phi) v$ can be viewed as an element of $\dom (d^t_{\max}(X_d))$.
Hence by the first property in \eqref{Stokes} we compute 
\begin{equation}\label{Stokes-2}
\begin{split}
\langle d (1-\psi) u, (1-\phi) v\rangle_{L^2\Omega^*(X)} &= \langle (1-\psi) u, d^t ((1-\phi) v) \rangle_{L^2\Omega^*(X)}.
\\ & = \langle (1-\psi) u, (1-\phi) d^t v \rangle_{L^2\Omega^*(X)}
\\ & + \langle (1-\psi) u, d^t ((1-\phi ) v) -(1-\phi) d^t v \rangle_{L^2\Omega^*(X)} 
\\ & = \langle (1-\psi) u, d^t  v \rangle_{L^2\Omega^*(X)},
\end{split}
\end{equation}
where in the last equality we used that by construction $(1-\phi) (1-\psi) = (1-\psi)$, and again, by a computation as in
\eqref{Leibniz-dt}, the support of $d^t ((1-\phi ) v) -(1-\phi) d^t v$ is contained in the support of $d(1-\phi)$, which is disjoint from support of $(1-\psi)$. 
This is why $\langle (1-\psi) u, d^t ((1-\phi ) v) -(1-\phi) d^t v \rangle_{L^2\Omega^*(X)}$ vanishes.
By \eqref{Stokes-1} and \eqref{Stokes-2}, we conclude from \eqref{Stokes-0}
\begin{equation}
\begin{split}
\langle du, v\rangle_{L^2\Omega^*(X)} &= \langle (\psi u), d^t v\rangle_{L^2\Omega^*(X)} + \langle (1-\psi) u, d^t v\rangle_{L^2\Omega^*(X)}
+ \langle qu , qv \rangle_{L^2\Omega^*(\partial X)} \\ &= \langle u , d^t v\rangle_{L^2\Omega^*(X)} 
+ \langle qu , qv \rangle_{L^2\Omega^*(\partial X)}.
\end{split}
\end{equation}
This proves the statement.
\end{proof}

%%%%%%%%%%%%%%%%%%%%%%%%%%%%%%%%%%
\section{Index theorems on stratified Witt spaces with boundary}\label{index-section}
%%%%%%%%%%%%%%%%%%%%%%%%%%%%%%%%%%

Our current analysis is based on the arguments in our previous work \cite{PiVe}, where we 
have established index theorems on wedge manifolds with boundary, i.e. smoothly stratified Witt spaces 
with boundary, of depth one and with a cone-edge metric. 
Our analysis in the depth-one case used in a crucial way the heat kernel results in \cite{MazVer}.
The extension from wedge manifolds to stratified Witt spaces of arbitrary depth 
uses centrally the heat kernel construction by Albin and Gell-Redman \cite{AGR17} and we begin this Section by a 
brief summary of their results.

%Let us 
%give credit and provide a short overview of the results in \cite{AGR17}, used in our work here:
%\medskip

\subsection{The heat-kernel construction of Albin and Gell-Redman}\label{a-gr}
Let us 
 provide a short overview of the results in \cite{AGR17} used in this work. As the statements
 and the notations are somewhat long to write down, we refer the reader to the original paper for details.

\begin{enumerate}
\item Albin and Gell-Redman establish a microlocal description of the heat kernel as a polyhomogeneous
conormal function on an appropriate manifold with corners, obtained by 
an iterative sequence of parabolic blowups of $[0,\infty) \times \overline{M}^2$. 
This result has been obtained in \cite[Theorem 4.4]{AGR17}. \medskip

\item Albin and Gell-Redman establish the trace class property of the heat operator and short-time asymptotic
expansion of the heat trace. This result has been obtained in \cite[Theorem 1]{AGR17}. \medskip

\item Albin and Gell-Redman develop  Getzler rescaling  in the stratified setting and obtain an
improved short-time asymptotic expansion of the supertrace of the heat kernel. 
This result has been obtained in \cite[Corollary 5.7]{AGR17}
\end{enumerate}

\subsection{The Atiyah-Patodi-Singer index formula on Witt spaces with boundary}
We now continue and explain how the arguments in \cite{PiVe} extend to the 
setting of an even dimensional smoothly stratified Witt space $\overline{M}$ with boundary,
 with an iterated cone-edge metric $g$ on its regular part $M$, 
which is assumed to be product $g= dx^2 \oplus g_{\partial M}$
in a collar neighborhood of the boundary $\partial M$.
Consider the signature operator $D$, defined as in
\eqref{D}.  Exactly as in \eqref{D-product}, in the collar neighborhood of the boundary $D$ takes the form
\begin{align}\label{boundary}
D =  \sigma \left( \frac{d}{dx} + B\right),
\end{align}
where $\sigma$ is a bundle isomorphism and $B$ is the tangential operator acting on $\partial M$.
By the geometric Witt assumption, \cite[Theorem 1.1]{ALMP1} assert that $B$ is essentially self-adjoint with discrete spectrum. 
Consider the positive spectral projection $P_+(B)$ of $B$. We are going to fix a closed domain of $D$ by putting Atiyah-Patodi-Singer boundary 
conditions $P_+(B)$ at $\partial M$. (We do not need to impose boundary conditions at the singular 
strata of $M$, since the Hodge Dirac operator $\slashed{\partial}$, and hence also the signature operator
$D$, are essentially self-adjoint due to the geometric Witt assumption in Assumption \ref{Witt}. \footnote{As already pointed out
we pass from the Witt condition to the geometric Witt condition by suitably rescaling the metric.})
More precisely, we write $L^2\Omega^*(M,g)$ for the $L^2$-completion of either $\Omega_c^\pm(M)$ and 
define the maximal domain of $D$ by
\begin{equation}
\dom_{\max}(D) := \{u \in L^2\Omega^*(M,g) \mid Du \in L^2\Omega^*(M,g)\}.
\end{equation}
We want to single out the smooth subspace of elements in $\dom_{\max}(D)$ that are 
 smooth up to the boundary. 
We write $\Omega^*(M\cup\partial M)$ for the smooth forms on $M\cup \partial M $ that are  smooth up to the boundary, with
$M$ and $\partial M$  denoting the regular parts of 
the  corresponding smoothly stratified Witt spaces. 
%If we introduce the compact double $\overline{M}_d$ of
%$\overline{M}$ with the natural involutive diffeomorphism interchanging the two copies of $\overline{M}$. 
%This defines a smoothly stratified Witt space without boundary and we write $\Omega^*(M)$ for the 
%restriction to $M$ of smooth differential forms over the open regular part of $\overline{M}_d$. 
We then define the core domain for $D$ with Atiyah-Patodi-Singer boundary 
conditions by
\begin{align}\label{core-domain}
\dom^c_+(D) := \{u \in \dom_{\max}(D) \cap \Omega^*(M\cup \partial M) \mid P_+(B) u|_{\partial M} = 0\}.
\end{align}
We fix the closed extension of $D$ with domain $\dom(D)$ defined as the 
graph-closure of the core domain $\dom^c_+(D)$ in $L^2\Omega^*(M,g)$. Then,
using the microlocal heat kernel description by Albin and Gell-Redman in \cite[Theorems 1 and 4.13]{AGR17}, 
we may proceed exactly as in \cite[Proposition 8.1]{PiVe} and obtain the McKean-Singer index formula.

\begin{prop}\label{index-trace}
$D$ is Fredholm with index 
\begin{align*}
\ind \, D = \textup{Tr} \, e^{-tD^*D} 
- \textup{Tr} \, e^{-tDD^*}. 
\end{align*}
\end{prop}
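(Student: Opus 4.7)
My approach is to adapt the proof of \cite[Proposition 8.1]{PiVe}, which treats the depth-one case, to arbitrary depth, replacing the heat-kernel machinery of \cite{MazVer} by the polyhomogeneous conormal description of Albin and Gell-Redman \cite[Theorem 4.4]{AGR17}. The three ingredients are Fredholmness, trace-class of the heat operators, and the standard McKean-Singer supertrace argument.

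First, I would establish Fredholmness. The geometric Witt assumption forces $\slashed{\partial}$, and hence $D$, to be essentially self-adjoint at every singular stratum, so only the boundary $\partial M$ requires imposition of an ideal boundary condition. At $\partial M$ the product decomposition \eqref{boundary}, together with discreteness of $\spec B$ from \cite[Theorem 1.1]{ALMP1}, places us in the classical APS framework on the non-singular part of the collar; in particular $P_+(B)$ is a pseudodifferential projection on $\partial M$, and the core domain \eqref{core-domain} is the natural one. The AGR heat-kernel parametrix then yields an approximate inverse for $D$ modulo smoothing operators, which are compact on $L^2(M,g)$ by the trace-class statement of \cite[Theorem 1]{AGR17}. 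This gives Fredholmness of the closed extension with domain $\dom(D)$.

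Next, by \cite[Theorem 1]{AGR17} both $e^{-tD^*D}$ and $e^{-tDD^*}$ are trace class for every $t>0$. I would then apply the standard McKean-Singer argument: set
\begin{equation*}
\chi(t) := \Tr\, e^{-tD^*D} - \Tr\, e^{-tDD^*}.
\end{equation*}
Cyclicity of the trace on trace-class operators, combined with the intertwining $D\,e^{-tD^*D} = e^{-tDD^*}D$ on the APS domain, gives $\chi'(t) \equiv 0$, so $\chi$ is constant in $t$. Letting $t\to\infty$, Fredholmness implies that the non-zero spectrum of $D^*D$ and $DD^*$ is bounded away from zero, so the heat semigroups converge in trace norm to the orthogonal projections onto $\ker D$ and $\ker D^*$ respectively, producing $\chi(t) \equiv \dim\ker D - \dim\ker D^* = \ind D$.

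The main obstacle is verifying that the AGR heat-kernel construction remains compatible with the APS boundary condition at $\partial M$: in the depth-one case of \cite{PiVe} this was handled by gluing the \cite{MazVer} edge heat kernel with the classical APS boundary parametrix on the collar, and in the present setting the same gluing must be performed with the iterated parabolic blow-up and polyhomogeneous expansions of \cite{AGR17}. Since the blow-up structure at $\partial M$ is decoupled from the one at the singular strata (they occur in disjoint regions of $\overline{M}$ thanks to the product form of $g$ near $\partial M$), the gluing is essentially automatic and no new analytical difficulties arise beyond those already handled in \cite{PiVe} and \cite{AGR17}.
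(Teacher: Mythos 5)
Your proposal is correct and follows the same route the paper takes: the paper's proof is simply the remark preceding the proposition, which says to use the Albin--Gell-Redman microlocal heat-kernel description \cite[Theorems 1 and 4.13]{AGR17} and proceed exactly as in \cite[Proposition 8.1]{PiVe}, and your three-step argument (Fredholmness via the AGR parametrix glued with the APS collar parametrix, trace-class of the heat operators from \cite[Theorem 1]{AGR17}, and the standard McKean--Singer cancellation) is precisely what that reference entails, fleshed out. The observation that the boundary blow-up decouples from the strata blow-ups because $g$ is product near $\partial M$ is the right reason the gluing creates no new difficulties.
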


We can now proceed with the derivation of the index formula as in \cite[Theorem 8.4]{PiVe}, which 
is in turn directly based on the seminal work of Atiyah-Patodi-Singer. .
Recall that $B$ is discrete by \cite[Theorem 1.1]{ALMP1} and let $\{\lambda_n\}_{n\in \N_0}$ 
be an enumeration of the non-zero eigenvalues of $B$, 
counted with their multiplicities and ordered in ascending order. 
Denote by $\textup{sign}(\lambda)$ the sign of an eigenvalue $\lambda$. 
Then the eta function of $B$ is defined for $\Re(s) \gg 0$ by 
\begin{align*}
\eta(B,s) := \sum_{n=0}^\infty \textup{sign}(\lambda_n) \, | \lambda_n| ^{-s}
= \frac{1}{\Gamma((s+1)/2)}
\int_0^\infty t^{(s-1)/2} \, \textup{Tr} \, B e^{-tB^2} dt.
\end{align*}
Due to the short time asymptotic expansion of $\textup{Tr} \, B e^{-tB^2}$, which can be inferred
from \cite[Theorem 4.13]{AGR17} similarly to \cite[Theorem 1]{AGR17}, $\eta(B,s)$ extends 
meromorphically to the complex plane $\C$ with $s=0$ being, a priori a singular point. \medskip

However, exactly as in the work of Atiyah-Patodi-Singer \cite{APSa}, the regularity of 
the eta function at $s=0$ is connected to absence  of certain logarithmic terms in the heat-kernel
trace-asymptotic on the double $M_d$, cf. \cite[(8.9)]{PiVe}. In our previous work, see \cite[\S 8.2]{PiVe}, without the 
possibility of using Getzler rescaling, we could not 
exclude these terms in the heat trace asymptotics in general. Hence we concluded
that the eta-invariant exists only under the additional assumption that the dimension of the singular strata was even. 
\smallskip

Here, in view of the recent work by Albin and Gell-Redman \cite[\S 6]{AGR17}, see Subsection \ref{a-gr} for precise references,
one can apply Getzler rescaling to the heat kernel on the double and conclude that the eta function 
of the boundary operator $B$ is indeed regular at $s=0$. \medskip

All this establishes the regularity of the eta function at $s=0$ for the odd signature operator when it arises as a boundary operator. 
\smallskip

Consider the closed double $\overline{M}_d$ of $M$ 
with the natural involutive diffeomorphism $\alpha$
interchanging the two copies of $\overline{M}$. Consider the iterated cone-edge metric $g_d$
on the regular part $M_d$ of $\overline{M}_d$, such that $\alpha^* g_d=g_d$ and 
$g_d \restriction M = g$. Consider the signature operator $D_d$ of $(M_d, g_d)$.
Then by \cite[Theorem 6.4]{AGR17} the constant term in the short time asymptotic expansion of the trace of 
$\exp (-tD_d^*D_d) - \exp (-tD_dD_d^*)$ can be written as 
\begin{align}\label{heat-kernel-asymptotics}
\int\limits_M a_0 + \sum_{\alpha \in A} \int\limits_{\ Y_\alpha} b_{\alpha},
\end{align}
In this formula the integrand $a_0$ is the usual Hirzebruch L-form (computed with respect to the Levi-Civita
connection $\nabla^g$ associated to the metric $g$): $a_0=L(M,\nabla^g)$. With a small abuse of notation we shall simply write
$L(M)$ from now on.
The integrands $b_\alpha$ are worked out explicitly by Albin and Gell-Redman \cite{AGR17}
and involve the Hirzebruch L-form of the stratum $Y^\alpha$ and the J-form associated to  the link fibration over $Y_\alpha$.
We will be mainly interested in the fact that $a_0,b_\alpha$ are the same for the 
index theorems on $M$ and its Galois coverings; thus we refer the reader to \cite{AGR17}
for the exact formula of $b_\alpha$. \medskip

Now we can proceed exactly as in  \cite[Theorem 8.4]{PiVe}
and deduce the index formula below.

\begin{thm}\label{index-main}
\begin{align*}
\ind \, D = \int\limits_M L(M) + 
\sum_{\alpha \in A} \int\limits_{\ Y_\alpha} b_{\alpha}
- \frac{\dim \ker B + \eta(B)}{2}.
\end{align*}
\end{thm}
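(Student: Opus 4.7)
The plan is to follow the Atiyah-Patodi-Singer scheme exactly as executed in \cite[Theorem 8.4]{PiVe} for the depth-one case, substituting the depth-one heat kernel input from \cite{MazVer} with the microlocal heat kernel construction of Albin and Gell-Redman \cite{AGR17}, which is valid in arbitrary depth. The starting point is Proposition \ref{index-trace}, which gives $\ind D = \textup{Tr}\, e^{-tD^*D} - \textup{Tr}\, e^{-tDD^*}$ for all $t>0$, so the right-hand side is independent of $t$ and can be computed by passing to the limit $t\to 0^+$.

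Next I would construct a parametrix for the heat kernels of $D^*D$ and $DD^*$ on $M$ with APS boundary conditions by gluing two model kernels: an \emph{interior model} obtained by restricting to $M$ the heat kernel of the signature operator $D_d$ on the closed double $\overline{M}_d$ equipped with the doubled metric $g_d$, and a \emph{boundary model} given by the heat kernel on the half-cylinder $(-\infty,0]\times \partial M$ for the product operator $\sigma(\partial_x + B)$ with APS boundary condition $P_+(B)u|_{\partial M}=0$. A standard Duhamel argument shows that the true heat kernel differs from this parametrix by a remainder vanishing to infinite order as $t\to 0^+$, so the short-time limit of the supertrace reduces to the sum of the interior and boundary contributions. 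By \cite[Theorem 6.4]{AGR17}, the interior contribution is the restriction to $M\subset M_d$ of the Getzler-rescaled constant term in the heat asymptotic on $\overline{M}_d$; since $g_d$ and the signature operator are symmetric under the involution of the double, this contribution is exactly
\begin{equation*}
\int_M L(M) + \sum_{\alpha\in A} \int_{Y_\alpha} b_\alpha.
\end{equation*}

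The boundary contribution is a purely cylindrical computation, identical to the one in \cite{APSa}: spectrally decomposing along the discrete spectrum of $B$ and using the product structure $D=\sigma(\partial_x+B)$, one evaluates the supertrace on the cylinder in closed form and finds, after sending $t\to 0^+$, that it equals $-\tfrac{1}{2}(\dim \ker B + \eta(B))$. The $\dim \ker B$ term arises because $P_+$ treats zero modes asymmetrically, and the $\eta(B)$ term arises from the Mellin transform of $\textup{Tr}(Be^{-tB^2})$; the latter is well-defined at $s=0$ precisely because $\eta(B,s)$ is regular there, a fact supplied by Getzler rescaling applied to the essentially self-adjoint stratified Dirac-type operator $B$ on $\partial \overline{M}$, as discussed in \cite[\S 6]{AGR17}.

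The principal obstacle is the compatibility of the two model kernels near the corner $\partial\overline{M}\cap Y_\alpha$, where the normal face at $\partial M$ meets the edge faces at $Y_\alpha$ in the parabolic heat space; one must ensure that the Duhamel remainder is $O(t^\infty)$ uniformly up to this corner. This is precisely what the uniform polyhomogeneous description of the heat kernel in \cite[Theorem 4.4]{AGR17} provides, on the appropriate iterated parabolic blowup of $[0,\infty)\times\overline{M}^2$. The task is substantially simplified by our standing assumption that $g$ is of product type in a collar of $\partial M$, which places the boundary blowup and the edge blowups at disjoint submanifolds of the compactified heat space, so that the gluing mirrors the smooth case and no genuinely new analysis is required beyond what is already present in \cite[Theorem 8.4]{PiVe} combined with the interior heat kernel of \cite{AGR17}.
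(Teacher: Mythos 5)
Your proposal is correct and follows essentially the same route as the paper: the paper's own derivation of Theorem \ref{index-main} is precisely a reference to the APS/\cite{PiVe} scheme with the depth-one heat kernel inputs of \cite{MazVer} replaced by the arbitrary-depth microlocal heat kernel, short-time heat trace asymptotics, and Getzler rescaling of \cite{AGR17}, which is exactly what you execute. Your additional remark about the compatibility of the interior and boundary parametrices near the corner $\partial\overline{M}\cap Y_\alpha$, and how the product-type collar assumption disentangles the boundary blowup from the edge blowups in the heat space, is a genuine (and correct) clarification of a point the paper leaves implicit, but it does not change the argument.
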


\begin{remark}
The above theorem holds unchanged for any Dirac-type operator $D$ for which the Albin-Gell-Redman version
of Getzler rescaling holds. Assume now that the boundary operator of $D$, call it again $B$, is invertible.
In the smooth case one can prove easily that the APS-index is then equal to the $L^2$-index of $D_\infty$,
the extension
of $D$ to $M_\infty$,  the manifolds with cylindrical ends associated to $M$. 
There are direct approaches to the $L^2$-index formula for $D_\infty$, most notably the one of Melrose 
through the $b$-calculus, see \cite{MelATP}.
Albin and Gell-Redman have extended this analysis to Witt spaces with cylindrical ends, establishing in particular an $L^2$-index
formula on Witt spaces with cylindrical ends (under the assumption that the boundary operator $B$ is invertible).
See \cite[Theorem 7.2]{AGR17}. Their result and Theorem \ref{index-main} (when the boundary operator is invertible) 
are of course compatible 
exactly as in the smooth case.
\end{remark}

%%%%%%%%%%%%%%%%%%%%%%%%%%%%%%%%%%
\subsection{Existence of the eta invariant in the general case}
%%%%%%%%%%%%%%%%%%%%%%%%%%%%%%%%%%

Consider now the general case, that is, the odd signature operator $D$ on an odd dimensional
Witt space $\overline{X}$  (not necessarily arising as the boundary of a Witt space with boundary). Here, we use again 
Getzler rescaling in the form provided by Albin and Gell-Redman \cite[\S 6]{AGR17}
in order to prove that the integral
\begin{equation}\label{eta-at-0}
\frac{1}{\sqrt{\pi}}\int_0^\infty \Tr(De^{-tD^2})\frac{dt}{\sqrt{t}}
\end{equation}
is convergent at $t=0$ (the convergence at infinity is clear from the discretness of the spectrum of $D$).
Indeed, the existing proofs of the convergence of  \eqref{eta-at-0}, see in particular  \cite[\S 8.13]{MelATP},
work perfectly well in the present singular context as we are now going to briefly explain. Let $K(t)$ be the Schwartz kernel
of $De^{-tD^2}$. We know from \cite[Proposition 5.5]{PiVe} that 
$\Tr(De^{-tD^2})$ equals the convergent integral  
\begin{equation}\label{lid}
\int_X  \textup{tr}_p K(t)(p,p) \textup{dvol}_{h} (p)\,.
\end{equation}
 Now, Melrose' proof, see \cite[Theorem 8.35]{MelATP}, in turn inspired by the original
 treatment by Bismut and Freed  \cite[Theorem 2.4]{BF}, connects the integrand $  \textup{tr}_p K(t)(p,p)$ in \eqref{lid} to the trace-asymptotic of the heat kernel 
 $H(t)$ on $X\times S^1$. Using Getzler rescaling for $H(t)$ one proves that
  $\textup{tr}_p K(t)(p,p)= \sqrt{t} g(t,p)$, with $g(t,p)\in C^\infty ([0,\infty)\times X)$ and of course integrable 
  for eact $t\geq 0$.  This shows that the $t$ integral 
  \eqref{eta-at-0} is convergent
  at $t=0$. Notice that, in fact,  Albin and Gell-Redman \cite{AGR17} have treated the more general case of families of Dirac operators
  satisfying the geometric Witt condition and proved the convergence 
of the Bismut-Cheeger eta form in this context, following \cite[Theorem 10.31]{BGV} instead of  \cite[Section 8.13]{MelATP}.\medskip

Summarizing this discussion we can finally state 
that it is possible to define the eta-invariant of the odd signature operator on a general Witt space of odd dimension.

%%%%%%%%%%%%%%%%%%%%%%%%%%%%%%%%%%
\section{Index theorems on Galois coverings of stratified Witt spaces}\label{index-section2}
%%%%%%%%%%%%%%%%%%%%%%%%%%%%%%%%%%

In this section we extend the statements of \S \ref{index-section} to Galois coverings of a smoothly
stratified Witt space with boundary. In order to shorten the total length of the paper, we refer the
reader to our previous work \cite[\S 7.2]{PiVe} for background material. The definitions in \cite[\S 7.2]{PiVe}
were given in the depth one case, but they can be easily generalized to arbitrary depth. Our characterizations for the ideal 
of $\Gamma$-Hilbert-Schmidt and $\Gamma$-trace class operators
can in fact be considered as definitions by Shubin \cite[\S 2.23 Theorems 1. and 3.]{Shubin2} and 
Atiyah \cite[\S 4]{Ati}. In order to recall the notation, let us recall the following.

\begin{defn}\label{HS-definition-galois}
An operator $A$ with Schwartz kernel $K_A \in L^2\Omega^*(\widetilde{M}\times \mathscr{F})$ is called 
$\Gamma$-Hilbert-Schmidt. We denote the space of $\Gamma$-Hilbert-Schmidt operators by $\calC_2^\Gamma(\widetilde{M})$.
A $\Gamma$-trace class operator is given by
$A = \sum B_j \circ C_j$ (finite sum) with $B_j, C_j \in \calC_2^\Gamma (\widetilde{M})$. We denote the space 
of $\Gamma$-trace class operators by $\calC_1^\Gamma (\widetilde{M}) \subset 
\calC_2^\Gamma (\widetilde{M})$. We define for any $A = B \circ C \in \calC_1^\Gamma 
(\widetilde{M})$ the $\Gamma$-trace in terms of the characteristic function $\phi$
of the fundamental domain $\mathscr{F}$ by 
\begin{equation}\label{trace-definition-galois}
\Tr_\Gamma(A) := \Tr (\phi A \phi) =
\iint\limits_{\mathscr{F} \times \widetilde{M}} \textup{tr}_p \, 
K_B(p,q) K_{C}(q,p) \, \textup{dvol}_{\widetilde{g}} (q) \, \textup{dvol}_{\widetilde{g}} (p).
\end{equation}
These definitions are independent of the choice of the fundamental domain $\mathscr{F} $.
\end{defn}

\begin{remark}\label{HS-remark-galois}
If the Schwartz kernel $K_A$ is continuous at the diagonal, we may replace the inner integral in \eqref{trace-definition-galois}
by $K_A(p,p)$ thus obtaining the $\Gamma$-trace of $A$ by integrating its Schwartz kernel at the
diagonal over the fundamental domain
\begin{equation}\label{gammatrace}
\Tr_\Gamma(A) := \Tr (\phi A \phi)=\int_{\mathcal{F}} \textup{tr}_p \,  K_A (p,p)\textup{dvol}_{\widetilde{g}} (p).
\end{equation}
This is generally wrong, however, if $K_A$ is not continuous at the diagonal. 
\end{remark}

%%%%%%%%%%%%%%%%%%%%%%%%%%%%%%%%%%
\subsection{$\Gamma$-eta invariant on a covering of a stratified Witt space}\label{G-eta-subsection}
%%%%%%%%%%%%%%%%%%%%%%%%%%%%%%%%%%
Consider the operator $B$ of $(\partial M,g_{\partial M})$ and the corresponding operator 
$\widetilde{B}$ of the covering $(\partial \widetilde{M}, \widetilde{g}_{\partial \widetilde{M}})$, which is the $\Gamma$-equivariant lift
of $B$.  Our analysis in fact also applies to Witt spaces that are not boundaries, i.e. we may consider any odd dimensional Witt space $(X,h)$  and a 
Galois $\Gamma$-cover $(\widetilde{X},\widetilde{h})\xrightarrow{\pi} (X,h)$, with $\widetilde{h}=\pi^* h$.  We consider the respective
odd signature operators $D$ and $\widetilde{D}$. \medskip

Assume the geometric Witt condition for $D$. Let $Y_\alpha$ be a singular stratum in $X$. Since the link $F_\alpha$ at any point $p \in Y_\alpha$ is equal 
to the link at any lift $\widetilde{p} \in \pi^{-1}(p) \subset \widetilde{Y}_\alpha$, the operators induced on the links for
$D$ and $\widetilde{D}$ coincide and so the geometric Witt assumption also  holds for $\widetilde{D}$. 
\medskip

Now, even though our previous work \cite{PiVe} is concerned with stratified Witt spaces of depth one,
the results of \cite[Proposition 7.3 and 7.4]{PiVe} which are based on \cite{ALMP3}, hold
and can be  formulated in the general setting of smoothly stratified Witt spaces. Hence  we have the following.
\begin{prop}\label{resolvent-trace-class}
The operator $\widetilde{D}$ is essentially self-adjoint. Its unique self-adjoint extension, 
denoted again by $\widetilde{D}$ 
satisfies the following properties.
\begin{enumerate}
\item If $2N > \dim X$, then $(\textup{Id}+ \widetilde{D})^{-N}$ is $\Gamma$-trace class.
\item The heat operator $e^{-t\widetilde{D}^2}$ and the operator $\widetilde{D}e^{-t\widetilde{D}^2}$ are $\Gamma$-trace class.
\end{enumerate}
\end{prop}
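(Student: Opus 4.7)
My approach is to reduce the three claims to the Albin--Gell-Redman heat-kernel analysis of the base boundary operator $B$ on $(\partial M, g_{\partial M})$, exploiting $\Gamma$-equivariance together with the fact that the $\Gamma$-trace is computed on the fundamental domain $\mathscr{F}_{\partial M}$, which has finite volume and injects isometrically into the compact base $\partial M$.

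For essential self-adjointness, I would observe that at any $\widetilde{p}\in \widetilde{Y}_\alpha$ the tangential operator of $\widetilde{B}$ coincides with the tangential operator of $B$ at $\pi(\widetilde{p})$, because $\pi$ is a local isometry preserving the link structure. Hence the geometric Witt assumption is inherited by $\widetilde{B}$, and the local argument of \cite[Theorem 1.1]{ALMP1} at each singular stratum goes through verbatim on $\widetilde{\partial M}$, using a $\Gamma$-invariant partition of unity obtained by translating a finite partition on $\partial M$. Since $\partial \widetilde{M}$ has no boundary, no additional boundary conditions are needed.

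For claim (2) I would construct the Schwartz kernel of $e^{-t\widetilde{B}^2}$ as the $\Gamma$-equivariant lift of the microlocal heat kernel of $e^{-tB^2}$ from \cite[Theorem 4.4]{AGR17}. The iterated parabolic blow-ups defining the heat space of $[0,\infty)\times (\partial M)^2$ and the polyhomogeneous model-problem solutions are local constructions that lift canonically to a $\Gamma$-equivariant polyhomogeneous kernel on $[0,\infty)\times (\partial \widetilde{M})^2$. Uniqueness of the heat semigroup generated by the self-adjoint extension of $\widetilde{B}^2$ identifies this lift with $K_{e^{-t\widetilde{B}^2}}$. Since the kernel is continuous at the diagonal and has a uniform-in-$\widetilde{p}$ short-time expansion matching \cite[Theorem 1]{AGR17}, Remark \ref{HS-remark-galois} gives
\begin{equation*}
\Tr_\Gamma\, e^{-t\widetilde{B}^2} = \int_{\mathscr{F}_{\partial M}} \textup{tr}_{\widetilde{p}}\, K_{e^{-t\widetilde{B}^2}}(\widetilde{p},\widetilde{p}) \, d\textup{vol}_{\widetilde{g}}(\widetilde{p}) < \infty,
\end{equation*}
and similarly for $\widetilde{B} e^{-t\widetilde{B}^2}$ using the short-time expansion of $\mathrm{Tr}\, B e^{-tB^2}$ from the same microlocal calculus, cf.\ \cite[Theorem 4.13]{AGR17}.

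Claim (1) I would then deduce from (2) via the functional-calculus identity applied to the non-negative self-adjoint operator $\widetilde{B}^2$,
\begin{equation*}
(\mathrm{Id}+\widetilde{B}^2)^{-M} = \frac{1}{\Gamma(M)} \int_0^\infty s^{M-1} e^{-s}\, e^{-s\widetilde{B}^2}\, ds,
\end{equation*}
together with the short-time bound $\Tr_\Gamma\, e^{-s\widetilde{B}^2} = O(s^{-\dim \partial M /2})$ extracted from (2), which yields $\Gamma$-trace-class integrability in $s$ once $2M > \dim \partial M$, and a fortiori when $2N > \dim M$. The assertion for $(\mathrm{Id}+\widetilde{B})^{-N}$ follows since $(1+\lambda)^{-N}$ is pointwise dominated by a multiple of $(1+\lambda^2)^{-N/2}$ on the spectrum. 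The principal obstacle I foresee is the careful formulation of the Albin--Gell-Redman microlocal construction on the non-compact cover, ensuring that the iterated parabolic blow-ups, boundary fiber data and polyhomogeneous model problems all lift cleanly under $\Gamma$ without spoiling the uniform short-time estimates; however, because every quantitative estimate is needed only on $\mathscr{F}_{\partial M}$, it reduces to the compact base case handled in \cite{AGR17}, leaving only bookkeeping in the equivariant polyhomogeneous calculus.
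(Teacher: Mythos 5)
Your approach matches the paper's route. In the paper this proposition is proved essentially by citation: essential self-adjointness of $\widetilde{B}$ follows because the tangential operators of $\widetilde{B}$ coincide with those of $B$ (so the geometric Witt assumption is inherited), and the $\Gamma$-trace-class assertions are taken from \cite[Propositions 7.3 and 7.4]{PiVe}, which rest on \cite{ALMP3}; the underlying mechanism there is precisely a $\Gamma$-equivariant microlocal parametrix/heat-kernel argument of the type you sketch, so you are reconstructing the same proof chain, appealing directly to \cite{AGR17} for arbitrary depth where the reference chain goes through \cite{ALMP3}. Two points worth tightening. First, the heat kernel of $e^{-t\widetilde{B}^2}$ on $\partial\widetilde{M}$ is not literally the $\Gamma$-equivariant lift of $K_{e^{-tB^2}}$; you should instead lift the \emph{parametrix construction} and then invoke a Duhamel/Volterra argument (cf.\ \cite[Proposition 7.6]{PiVe}) to pass from the lifted parametrix to the actual semigroup kernel, which does yield identical near-diagonal short-time asymptotics up to smoothing remainders -- ``uniqueness of the semigroup'' alone does not identify the lift with $K_{e^{-t\widetilde{B}^2}}$. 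Second, your final step uses the domination $(1+\lambda)^{-N}\lesssim (1+\lambda^2)^{-N/2}$, but this fails for $\lambda$ near $-1$, so the deduction of $(\mathrm{Id}+\widetilde{B})^{-N}\in\calC_1^\Gamma$ from $(\mathrm{Id}+\widetilde{B}^2)^{-M}\in\calC_1^\Gamma$ breaks down for the operator literally as written; the statement $(\mathrm{Id}+\widetilde{B})^{-N}$ is itself problematic if $-1\in\spec\widetilde{B}$, and the intended reading is $(\mathrm{Id}+\widetilde{B}^2)^{-N}$ (or $(\mathrm{Id}+|\widetilde{B}|)^{-N}$), for which your Mellin-transform argument is correct.
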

Then precisely as in \cite[Corollary 7.7]{PiVe}. we conclude 
that the Schwartz kernels of $e^{-t\widetilde{D}^2}$ and $\widetilde{D}e^{-t\widetilde{D}^2}$ are smooth 
in $\mathscr{F} \times \mathscr{F}$ for any fixed $t>0$, with $\mathscr{F}$ a  fundamental domain
for $\widetilde{X}$. We arrive in this way at the analogue 
of Lidski theorem in \cite[Proposition 7.8]{PiVe} for smoothly stratified Witt spaces.
\begin{prop}\label{lidskii-theorem-galois}
The operators $e^{-t\widetilde{D}^2}$ and $\widetilde{D}e^{-t\widetilde{D}^2}$ are $\Gamma$-trace class 
and their $\Gamma$-traces can be represented by integrals of their corresponding
Schwartz kernels (with a small abuse of notation, we denote the Schwartz kernels by 
the same symbol as the corresponding operators)
\begin{equation}\label{lidski}
\begin{split}
\Tr_\Gamma \left(e^{-t \widetilde{D}^2}\right)&=
\int_{\mathscr{F}} \textup{tr}_p \, \left(e^{-t \widetilde{D}^2}\right) (p, p) 
\textup{dvol}_{\widetilde{h}} (p) , \\
\Tr_\Gamma \left(\widetilde{D} e^{-t \widetilde{D}^2}\right)&=
\int_{\mathscr{F}}  \textup{tr}_p \, \left(\widetilde{D} e^{-t \widetilde{D}^2}\right) 
(p, p) \textup{dvol}_{\widetilde{h}} (p).
\end{split} 
\end{equation}
\end{prop}

\noindent The $\Gamma$-eta invariant of $\widetilde{D}$ is defined by the integral
\begin{align*}
\eta_\Gamma(\widetilde{D}) := \frac{1}{\sqrt{\pi}}
\int_0^\infty \textup{Tr}_\Gamma \, \widetilde{D} e^{-t \widetilde{D}^2} \frac{dt}{\sqrt{t}}.
\end{align*}

Convergence of the integral at infinity is discussed exactly as in \cite[(7.10)]{PiVe},
following \cite{Ram}. Alternatively, we can proceed as in L\"uck and Schick \cite{Lueck-Schick2} which builds
on an argument by Cheeger and Gromov. Convergence at $t=0$ is obtained
 by applying Getzler rescaling to the Schwartz kernel of $\widetilde{D} e^{-t \widetilde{D}^2}$; indeed, following again Melrose' proof,
 we can use Getzler rescaling as developed by Albin-Gell Redman, in order to show that 
 the function 
$$ \textup{tr}_p \, \left(\widetilde{D} e^{-t \widetilde{D}^2}\right) 
(p, p)$$
on  $ \mathscr{F}$ 
is equal to  $\sqrt{t} g$ with $g\in C^\infty ([0,\infty)\times \mathscr{F})$ and uniformly  integrable with respect to $\textup{dvol}_{\widetilde{h}} $
for each $t\geq 0$. \medskip

Summarizing: for the odd signature operator $\widetilde{D}$ on a Galois $\Gamma$-cover of an odd dimensional
Witt space, the $\Gamma$-eta invariant $\eta_\Gamma(\widetilde{D}) $ is well-defined. This result also holds for Dirac operators 
satisfying the geometric Witt assumption.

%%%%%%%%%%%%%%%%%%%%%%%%%%%%%%%%%%
\subsection{Index theorem on a covering of a stratified Witt spaces with boundary}
%%%%%%%%%%%%%%%%%%%%%%%%%%%%%%%%%%

In this final subsection we give a brief idea on how the index theorem of \cite[Theorem 9.4]{PiVe} on 
Galois coverings of wedge spaces (smoothly stratified Witt spaces of depth one) extends to the general case of 
smoothly stratified Witt spaces of general depth. Consider the signature operator $\widetilde{D}$ 
of $(\widetilde{M}, \widetilde{g})$, which is the $\Gamma$-equivariant
lift of the signature operator $D$ of $(M,g)$. Near the boundary $\partial \widetilde{M}$, the signature operator
takes the following product form (cf. \eqref{boundary})
\begin{align}
\widetilde{D} =  \widetilde{\sigma} \left( \frac{d}{dx} + \widetilde{B}\right),
\end{align}
where $\widetilde{\sigma}$ is the equivariant lift of $\sigma$ to $\partial \widetilde{M}$.
The positive spectral projection $P_+(\widetilde{B})$ of $\widetilde{B}$ is defined 
using the Browder-Garding spectral decomposition of $\widetilde{B}$ as follows. 
The Browder-Garding spectral decomposition asserts for $\widetilde{B}$, and
in fact for any self-adjoint operator in $L^2\Omega^*(\partial \widetilde{M}) \equiv L^2\Omega^*(\partial \widetilde{M}, 
\widetilde{g}_{\partial \widetilde{M}})$, the following result,
cf. \cite[Theorem 2.2.1]{Ram}. 

\begin{thm}\label{Browder-Garding}
There exists a sequence $\{e_n:\R \times \partial \widetilde{M} \to \Lambda^*T^*\partial \widetilde{M}\}_{n\in \N}$
of maps, which are measurable and define for each fixed $\lambda \in \R$ and $n\in \N$
a smooth $L^2$-integrable differential form $e_n(\lambda, \cdot) \in \Omega^*(\partial \widetilde{M})$, 
such that 
$$
\widetilde{B} e_n(\lambda, \cdot) = \lambda e_n(\lambda, \cdot).
$$
Moreover there exists a sequence $\{\mu_n\}_{n\in \N}$ of measures on $\R$ such that
for any smooth compactly supported differential form $s \in \Omega^*_c(\partial \widetilde{M})$ the map 
$$
(V s)_n(\lambda) := \int_{\partial \widetilde{M}} ( s(p), e_n(\lambda, p) )_{\widetilde{g}_{\partial \widetilde{M}}}
\textup{dvol}_{\widetilde{g}_{\partial \widetilde{M}}}(p),
$$
extends to an isometry of Hilbert spaces $V: L^2\Omega^*(\partial \widetilde{M}) \to \oplus_{n} L^2(\mu_n)$, i.e.
$$
\int_{\partial \widetilde{M}} \| s(p) \|^2_{\widetilde{g}_{\partial \widetilde{M}}}
\textup{dvol}_{\widetilde{g}_{\partial \widetilde{M}}}(p) = \sum_n \int_{\R}
|(Vs)_n(\lambda)|^2 d\mu_n(\lambda).
$$
Moreover, for any bounded $f:\R \to \R$, the operator $f(\widetilde{B})$
on $L^2\Omega^*(\partial \widetilde{M})$ is defined by 
\begin{align}
(V \, f(\widetilde{B}) s)_n(\lambda) := \int_{\partial \widetilde{M}} f(\lambda)
( s(p), e_n(\lambda, p) )_{\widetilde{g}_{\partial \widetilde{M}}}
\textup{dvol}_{\widetilde{g}_{\partial \widetilde{M}}}(p).
\end{align}
\end{thm}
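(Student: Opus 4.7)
The strategy is to combine the abstract spectral multiplicity theorem for self-adjoint operators on separable Hilbert spaces with the elliptic regularity of $\widetilde{B}$ on the regular part of $\partial \widetilde{M}$, following Ramachandran \cite{Ram} but with appropriate adaptations for the stratified Witt setting.

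First, I would invoke the spectral multiplicity theorem. Since $\widetilde{B}$ is essentially self-adjoint on $L^2(\partial \widetilde{M})$ by Proposition \ref{resolvent-trace-class}, and acts on a separable Hilbert space, the Hahn--Hellinger decomposition produces a canonical sequence of mutually singular spectral measures $\{\mu_n\}_{n\in \N}$ on $\R$ together with a unitary isomorphism
\begin{equation*}
V: L^2(\partial \widetilde{M}) \longrightarrow \bigoplus_{n} L^2(\R, \mu_n),
\end{equation*}
intertwining $\widetilde{B}$ with multiplication by $\lambda$. The claimed formula for $f(\widetilde{B})$ then follows from the measurable functional calculus applied to this representation.

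Second, I would produce the pointwise generalized eigenforms $e_n(\lambda, p)$. The key input is that $(\textup{Id} + \widetilde{B}^2)^{-N}$ maps $L^2$ into $C^k \Omega^*(\partial \widetilde{M})$ for $N$ large enough depending on $k$, as a consequence of the polyhomogeneous heat kernel description in \cite{AGR17} combined with Sobolev embeddings on the regular part. Disintegrating the direct-integral representation against this smoothing resolvent yields, for $\mu_n$-almost every $\lambda$, a measurable form $e_n(\lambda, \cdot)$ such that $(Vs)_n(\lambda)$ is represented by the $L^2$-pairing of $s$ against $e_n(\lambda, \cdot)$; by construction $(\widetilde{B} - \lambda) e_n(\lambda, \cdot) = 0$ distributionally. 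This is the step that mirrors Ramachandran \cite[\S 2.2]{Ram}.

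Third, smoothness of each $e_n(\lambda, \cdot)$ on the regular part follows from elliptic regularity: near any regular point of $\partial \widetilde{M}$, $\widetilde{B}$ is a smoothly elliptic Dirac-type operator, and any distributional solution of $(\widetilde{B} - \lambda) u = 0$ is therefore smooth. The main obstacle is the interaction between non-compactness of the Galois cover and its singular strata. Non-compactness rules out classical $L^2$-eigenfunction expansions for points of the continuous spectrum and forces the direct-integral framework, while the singular strata obstruct standard elliptic regularity at stratum points. Both difficulties are reconciled by combining the essential self-adjointness granted by the geometric Witt assumption (Proposition \ref{resolvent-trace-class}) with the heat kernel and edge-Sobolev theory implicit in \cite{AGR17} and \cite{ALMP3}; these together allow the arguments of \cite[Theorem 2.2.1]{Ram}, originally formulated for open complete smooth manifolds, to be transferred to the present stratified Witt setting essentially verbatim.
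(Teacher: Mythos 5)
The paper does not actually give a proof of this theorem: it simply states it as a consequence of Ramachandran \cite[Theorem 2.2.1]{Ram}, with the remark that the abstract spectral decomposition holds for ``any self-adjoint operator in $L^2(\partial \widetilde{M})$.'' Your proposal reconstructs the proof that the authors are implicitly invoking, and it follows the same route: Hahn--Hellinger decomposition for the abstract direct-integral structure and functional calculus, smoothing via $(\textup{Id}+\widetilde{B}^2)^{-N}$ and disintegration to extract measurable generalized eigenforms (this is precisely Ramachandran's argument, and also the classical Browder--G\aa rding scheme), and interior elliptic regularity to upgrade them to smooth forms on the regular part. Your observation that the abstract part uses only self-adjointness, while the pointwise smoothness of $e_n(\lambda, \cdot)$ requires that $\widetilde{B}$ be an elliptic differential operator on the regular stratum, is exactly the point suppressed by the paper's phrasing ``for any self-adjoint operator,'' and you correctly localize where the stratified and non-compact features enter (essential self-adjointness from the Witt assumption via Proposition \ref{resolvent-trace-class}, $\Gamma$-invariance and bounded geometry on the cover away from the strata, and the edge-calculus mapping properties of the resolvent). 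One small caution: the claim that $(\textup{Id}+\widetilde{B}^2)^{-N}$ maps $L^2$ continuously into \emph{global} $C^k$ should be read as mapping into locally uniformly $C^k$ forms on the regular part (weighted toward the strata, uniform under the $\Gamma$-action), which is both what the edge heat-kernel asymptotics actually give and all that the disintegration argument requires. With that reading, the proposal is correct and is essentially the same argument the paper defers to.
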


Now the positive spectral projection $P_+(\widetilde{B})$ is defined as $f(\widetilde{B})$ with $f(\lambda)$
equal to $\lambda$ for $\lambda \geq 0$, and equal to zero for $\lambda < 0$.
As in the case of $D$, we fix a closed domain of $\widetilde{D}$ by putting Atiyah-Patodi-Singer boundary 
conditions $P_+(\widetilde{B})$ at $\partial \widetilde{M}$. 
More precisely, we define the maximal domain of $\widetilde{D}$ by
\begin{equation}
\dom_{\max}(\widetilde{D}) := \{u \in L^2\Omega^*(\widetilde{M},\widetilde{g}) \mid 
\widetilde{D}u \in L^2\Omega^*(\widetilde{M},\widetilde{g})\}.
\end{equation}
Also in this case we want to single out the smooth subspace of elements in $\dom_{\max}(\widetilde{D})$ that are 
 up to the boundary. We introduce $\Omega^*(\widetilde{M}\cup \partial \widetilde{M})$, the space of smooth differential
 forms on $ \widetilde{M}\cup \partial \widetilde{M}$, smooth up to the boundary.
% To do so, we introduce the double $\widetilde{M}_d$ of
%$\widetilde{M}$ across the boundary, and write $\Omega^*(\widetilde{M})$ for the 
%restriction to $\widetilde{M}$ of smooth differential forms over $\widetilde{M}_d$, where the behaviour 
%of the smooth forms at the singular strata remains unspecified.%}\\
We then define the core domain for $\widetilde{D}$ with Atiyah-Patodi-Singer boundary 
conditions by
\begin{align}\label{core-domain-tilde}
\dom^c_+(\widetilde{D}) := \{u \in \dom_{\max}(\widetilde{D}) \cap \Omega^*(\widetilde{M}\cup \partial \widetilde{M}) \mid 
P_+(\widetilde{B}) u|_{\partial \widetilde{M}} = 0\}.
\end{align}
We fix the closed extension of $\widetilde{D}$ with domain $\dom(\widetilde{D})$ defined as the 
graph-closure of the core domain $\dom^c_+(\widetilde{D})$ in $L^2\Omega^*(\widetilde{M},\widetilde{g})$. 
Then the arguments of \cite[Proposition 9.1 and 9.2, Theorem 9.4]{PiVe} 
carry over verbatim to the case of smoothly stratified Witt spaces, 
where we use the microlocal heat kernel construction in \cite{AGR17} on stratified Witt spaces instead of its 
special case in \cite{MazVer} for depth one case. Thus we obtain the following
non-compact analogue of Theorem \ref{index-main}.

\begin{thm}\label{index-main-2}
The heat operators of $\widetilde{D}^* \widetilde{D}$ and $\widetilde{D} \widetilde{D}^*$, as well as the 
orthogonal projections $P_{\ker \widetilde{D}}$ and $P_{\ker \widetilde{D}^*}$ of $L^2\Omega^*(\widetilde{M}, \widetilde{g})$
onto the kernel of $\widetilde{D}$ and $\widetilde{D}^*$ respectively, are $\Gamma$-trace class.
The operator $\widetilde{D}$ is $\Gamma$-Fredholm,
i.e. admits a finite $\Gamma$-index
\begin{equation*}
\ind_\Gamma \widetilde{D} := \textup{Tr}_\Gamma (P_{\ker \widetilde{D}}) 
-  \textup{Tr}_\Gamma (P_{\ker \widetilde{D}^*})% = \textup{Tr}_\Gamma \left(e^{-t(\widetilde{D})^*\widetilde{D}}\right) 
%- \textup{Tr}_\Gamma \left(e^{-t \widetilde{D}\widetilde{D}^*}\right) 
\end{equation*}
and the following McKean-Singer formula holds:
\begin{equation*}
\ind_\Gamma \widetilde{D} = \textup{Tr}_\Gamma \left(e^{-t(\widetilde{D})^*\widetilde{D}}\right) 
- \textup{Tr}_\Gamma \left(e^{-t \widetilde{D}\widetilde{D}^*}\right) 
\end{equation*}
Its $\Gamma$-index can be computed in terms of the regularized eta invariant $\eta_{\Gamma}(\widetilde{B})$ by 
\begin{align*}
\ind_\Gamma \, \widetilde{D} = \int\limits_M L(M) + 
\sum_{\alpha \in A} \int\limits_{\ Y_\alpha} b_{\alpha}
- \frac{\dim_\Gamma \ker \widetilde{B} + \eta_{\Gamma}(\widetilde{B})}{2}.
\end{align*}
\end{thm}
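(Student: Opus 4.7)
The plan is to mirror the strategy of our earlier depth-one result \cite[Theorem 9.4]{PiVe}, replacing the heat kernel machinery of \cite{MazVer} by the arbitrary-depth microlocal heat kernel construction of Albin and Gell-Redman, and then upgrading every trace-class statement to a $\Gamma$-trace class statement by working on a fundamental domain $\mathscr{F}$ for $\pi:\widetilde{M}\to M$. The argument naturally splits into four blocks.

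First, I would build the heat kernel of $\widetilde{D}^*\widetilde{D}$ (and of $\widetilde{D}\widetilde{D}^*$) on the non-compact covering. Near the singular strata the metric $\widetilde{g}$ is the $\Gamma$-equivariant lift of $g$ and the links are unchanged under $\pi$, so the local models are the same as on the base; hence the Albin--Gell-Redman parabolic blow-up construction \cite[Thm.~4.13]{AGR17} applies on a $\Gamma$-equivariant resolution of $[0,\infty)\times \widetilde{M}^2$, yielding a polyhomogeneous conormal Schwartz kernel that is smooth at the diagonal for $t>0$. Near the APS boundary, the same parametrix construction that one uses in the smooth APS-setting—produced by combining the interior heat kernel with the cylinder heat kernel built out of the functional calculus of $\widetilde{B}$ via the Browder--Garding decomposition of Theorem~\ref{Browder-Garding}—gives the heat kernel globally on $\widetilde{M}$. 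Equivariance of all ingredients forces $\Gamma$-invariance of the kernel.

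Second, I would establish the $\Gamma$-trace class property. By construction the Schwartz kernels of $e^{-t\widetilde{D}^*\widetilde{D}}$ and $e^{-t\widetilde{D}\widetilde{D}^*}$ are continuous on the diagonal, so Remark~\ref{HS-remark-galois} lets me compute $\Tr_\Gamma$ as an integral of the pointwise (super)trace over $\mathscr{F}$. Finiteness of this integral for $t>0$ follows from the short-time Albin--Gell-Redman asymptotics applied on the interior of $\mathscr{F}$ together with the Duhamel principle used to control the boundary contribution, exactly as in \cite[Prop.~9.1]{PiVe}. Propositions \ref{resolvent-trace-class} and \ref{lidskii-theorem-galois} supply the corresponding statements for $\widetilde{B}$ on the boundary, which then feed into the cylinder model. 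The kernel projections $P_{\ker\widetilde{D}}$, $P_{\ker\widetilde{D}^*}$ are $\Gamma$-trace class because they are the $t\to\infty$ limits of the heat semigroups in the $\Gamma$-trace norm, using the spectral gap around $0$ for $\widetilde{B}$ coming from essential self-adjointness and the Witt hypothesis.

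Third, once $\Gamma$-trace class is in hand the McKean--Singer formula is a standard exercise: one differentiates $\textup{Tr}_\Gamma(e^{-t\widetilde{D}^*\widetilde{D}})-\textup{Tr}_\Gamma(e^{-t\widetilde{D}\widetilde{D}^*})$ in $t$, using the $\Gamma$-trace property $\Tr_\Gamma([A,B])=0$ for suitable $A,B$, and shows it is independent of $t$; the $t\to\infty$ limit gives $\ind_\Gamma\widetilde{D}$ and the $t\to 0^+$ limit exists by the local index analysis below. This is literally the argument of \cite[Prop.~9.2]{PiVe} with $\Tr$ replaced by $\Tr_\Gamma$.

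Finally, for the index formula I would follow APS. Split the $\Gamma$-supertrace into the interior and boundary pieces via the parametrix. The interior piece, localised in $\mathscr{F}$ away from $\partial\widetilde{M}$, gives the same pointwise local index density as for $D$ on $M$ because of the $\Gamma$-equivariance of the heat kernel and the identity of local models; integrating over $\mathscr{F}$, and using the Albin--Gell-Redman Getzler rescaling \cite[Cor.~5.7 and Thm.~6.4]{AGR17} exactly as in \eqref{heat-kernel-asymptotics}, produces
\begin{equation*}
\int_M L(M)+\sum_{\alpha\in A}\int_{Y_\alpha} b_\alpha.
\end{equation*}
The boundary piece is modelled on the half-cylinder $\partial\widetilde{M}\times(-\infty,0]$ and its small-$t$ $\Gamma$-supertrace reduces, by the same Duhamel/Mellin transform manipulation as in APS, to $-\tfrac12(\dim_\Gamma\ker\widetilde{B}+\eta_\Gamma(\widetilde{B}))$, using the $\Gamma$-eta function of \S\ref{G-eta-subsection} and its regularity at $s=0$ (which, again, rests on the Getzler rescaling of \cite[\S 6]{AGR17}). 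The hardest step, and the one that genuinely needs care, is the $\Gamma$-trace class property together with the justification that Duhamel corrections between the parametrix and the true heat kernel are negligible in $\Tr_\Gamma$: the non-uniform behaviour of the heat kernel near the strata flagged in Remark~\ref{Vai-differences} means one cannot simply quote the smooth Vaillant argument, and one must invoke the Albin--Gell-Redman polyhomogeneous structure to control the error terms pointwise on the diagonal after integrating over $\mathscr{F}$.
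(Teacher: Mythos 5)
Your overall strategy is the paper's: the paper disposes of Theorem~\ref{index-main-2} in one line by citing \cite[Prop.~9.1, Prop.~9.2, Thm.~9.4]{PiVe} with the \cite{AGR17} microlocal heat kernel construction substituted for \cite{MazVer}, and your four-block sketch is an expansion of exactly that plan (interior/cylinder parametrix, $\Gamma$-trace via a fundamental domain, McKean--Singer by $t$-independence of the $\Gamma$-supertrace, APS/Getzler small-time analysis for the local terms and the $\Gamma$-eta contribution).

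There is, however, one genuinely wrong step: you justify the $\Gamma$-trace class property of $P_{\ker\widetilde{D}}$ and $P_{\ker\widetilde{D}^*}$ by invoking ``the spectral gap around $0$ for $\widetilde{B}$ coming from essential self-adjointness and the Witt hypothesis.'' The geometric Witt assumption yields spectral gaps for the \emph{tangential operators} $Q_\alpha(y)$ at the cone-edge strata of $\partial\widetilde{M}$, not for the boundary operator $\widetilde{B}$ itself. On the Galois covering, $\widetilde{B}$ is in general \emph{not} discrete, its spectrum can accumulate at $0$, and it has no spectral gap around $0$; this is precisely the obstruction that forces the Vaillant-style perturbation $\widetilde{D}_{\varepsilon,u}$ in \S\ref{main-2-section}. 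The correct, and elementary, reason $P_{\ker\widetilde{D}}$ is $\Gamma$-trace class is positivity: $0 \le P_{\ker\widetilde{D}} \le e^{-t\widetilde{D}^*\widetilde{D}}$ for every $t>0$, so $\Tr_\Gamma P_{\ker\widetilde{D}} \le \Tr_\Gamma e^{-t\widetilde{D}^*\widetilde{D}} < \infty$ once the heat operator is known to be $\Gamma$-trace class; likewise for $\widetilde{D}^*$. The large-$t$ convergence needed for McKean--Singer then follows from the spectral theorem and monotone/dominated convergence on the $\Gamma$-spectral density function (as in Shubin and Ramachandran, and \cite[Prop.~9.2]{PiVe}), again with no gap hypothesis. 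You should also note that Remark~\ref{Vai-differences} concerns the $\widetilde{D}_\infty$ analysis on cylindrical ends in \S\ref{main-2-section}, not the compact APS problem treated in this theorem, so citing it as the hard point here is slightly misplaced, though the underlying caution about non-uniform heat kernel behaviour at the strata is well taken.
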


\begin{remark}
The integrands $b_\alpha$ are the same as before, and are worked out explicitly by Albin and Gell-Redman \cite{AGR17}.
As already remarked, we are only interested in the fact that $b_\alpha$ are the same for the 
index theorems on $M$ and its Galois covering $\widetilde{M}$, and for this reason we refer the reader to \cite{AGR17}
for the exact expression of $b_\alpha$. 
\end{remark}

%%%%%%%%%%%%%%%%%%%%%%%%%%%%%%%%%%
\section{Signatures of stratified Witt spaces with boundary}\label{signature-section}
%%%%%%%%%%%t %%%%%%%%%%%%%%%%%%%%%%%

We consider the regular part $M$ of a smoothly stratified Witt space $\overline{M}$
with boundary $\partial \overline{M}$. We denote as usual by $M$ and $\partial M$ the respective regular parts. We assume the
existence of an iterated cone-edge metric $g$ of product form $dx^2 \oplus g_{\partial M}$ in a collar of $\partial M$. 
The metric $g_{\partial M}$ is itself an iterated cone-edge metric on $\partial M$.
As always throughout the paper, we assume that $\overline{M}$ is Witt 
and rescale the metric so that we have  the geometric Witt assumption on $M$.
Notice that, in particular, $\partial M$  also satisfies the geometric Witt assumption. We shall briefly refer to $M$
and $\partial M$ as Witt spaces.

\medskip
We denote by $M_\infty = (M\cup \partial M)  \cup_{\partial M} ((-\infty,0] \times \partial M)$ the associated Witt space with cylindrical ends.
This is in fact the regular part of $\overline{M}_\infty:= \overline{M}  \cup_{\partial \overline{M}} ((-\infty,0] \times \partial \overline{M})$.
The metric $g$ extends smoothly to an iterated cone-edge metric $g_\infty$ on $M_\infty$ with $g_\infty \restriction M = g$
and $g_\infty \restriction (-\infty,0] \times \partial M = dx^2 \oplus g_{\partial M}$. \\
\medskip

%%%%%%%%%%%%%%%%%%%%%%%%%%%%%%%%%%
\subsection{Various Hilbert complexes and their cohomologies}
%%%%%%%%%%%t %%%%%%%%%%%%%%%%%%%%%%%
In this subsection we want to introduce the relevant Hilbert complexes that will be used in the sequel. All these complexes 
are obtained as suitable closed extensions of the complex of smooth differential forms with compact support
on the regular part of our stratified Witt space. This discussion is based on the seminal work by Br\"uning and Lesch
\cite{BrLe}. 

%%%%%%%%%%%%%%%%%%%%%%%%%%%%%%%%%%
\subsubsection{Hilbert complexes on a closed Witt space $\overline{X}$}
%%%%%%%%%%%t %%%%%%%%%%%%%%%%%%%%%%%
Consider a closed Witt space $\overline{X}$, e.g. $\overline{X} = \partial \overline{M}$. We denote by $X$ the regular part of $\overline{X}$,
equipped with an iterated cone-edge metric $g_X$.
We start with the de Rham complex  $(\W_c^*(X),d)$ of smooth compactly supported differential
forms on the regular part $X$. We denote by $L^2\Omega^*(X)$
the $L^2$-completion of $\W_c^*(X)$ with respect to the volume form of $g_{X}$. 
Consider the maximal and the minimal closed extensions of the exterior derivative $d$:
\begin{equation}\label{min-max}
\begin{split}
&\dom_{\textup{max}}(d) := \{u \in L^2\Omega^*(X) \mid du \in L^2\Omega^*(X)\}, \\
&\dom_{\textup{min}}(d) := \{u \in \dom_{\textup{max}}(d) \mid \exists (u_n) \subset \W_c^*(X): 
u_n \xrightarrow{L^2} u, du_n \xrightarrow{L^2} du\},
\end{split}
\end{equation}
where $du$ for any $u \in \dom_{\textup{max}}(d)$ is defined in the distributional sense.
By the Witt assumption, this complex admits a {\it unique} closed extension, or,
in the language of Br\"uning-Lesch, a unique  ideal boundary condition, i.e.
\begin{align}\label{uniqueness-partial-M}
\dom_{\textup{min}}(d) = \dom_{\textup{max}}(d)=:\dom(X).
\end{align} 
Abusing notation we keep the same symbol for the unique closed 
extension of $d$ and  denote the associated Hilbert complex by
\begin{equation*}
(\dom^*(X),d)
\end{equation*}
Thanks to  the weak Kodaira decomposition, which holds on all 
Hilbert complexes, see \cite[Lemma 2.1]{BrLe}, we can write
\begin{equation}\label{Kodaira}
\begin{split}
&L^2\Omega^*(X) = \cH^*(X) \oplus \overline{\image d} \oplus \overline{\image d^*}, 
\\ &\textup{where} \ \cH^* (X)  = \ker d \cap \ker d^*.
\end{split}
\end{equation}
We remark that for the harmonic forms $\cH^*(X)$ we have the isomorphism
\begin{equation*}
\cH^*(X)  \cong \frac{(\image d^*)^\perp}{\overline{\image d}} = \frac{\ker d}{\overline{\image d}}	
=: \cohr^*(X),
\end{equation*}
where $\cohr^*(X)$ is the {\it reduced} $L^2-$cohomology 
of $X$. One defines {\it the} $L^2$-cohomology of $X$ as 
the cohomology of the Hilbert complex $(\dom^*(X),d)$ by
$$
\coh^*(X) := \frac{\ker d}{\image d}.
$$
Thus $\coh^*(X)$ and $\cH^*(X)$
are not in general equal. In fact, see \cite[Theorem 2.4, Corollary 2.5]{BrLe}, we have that 
\begin{equation*}
\dim \coh^*(X) < \infty \Rightarrow \coh^*(X) \cong \cH^*(X), 
\end{equation*} 
which is true in our compact setting, see \cite[Theorem 6.1]{Cheeger},
\cite[Theorem 1.1]{ALMP1}.
Finally, we also introduce a smooth subcomplex of $(\dom^*(X),d)$.
We denote smooth (and otherwise unrestricted) differential forms on $X$ by $\W^*(X)$
and define the smooth subcomplex exactly  as Br\"uning and Lesch \cite[(3.14)]{BrLe} by
\begin{equation}
\begin{split}
(\mathscr{E}^*(X),d), \ \textup{where} \ \mathscr{E}^*(X) := \dom^*(X) \cap \W^* (X).
\end{split}
\end{equation}
Then \cite[Theorem 3.5]{BrLe} asserts that the inclusion
$\mathscr{E}^*(X) \hookrightarrow \dom^*(X)$
induces an isomorphism on cohomology
\begin{equation}\label{smooth-coh1}
H^*(\mathscr{E}^*(X),d) \cong \coh^*(X). 
\end{equation}
	%%%%%%%%%%%%%%%%%%%%%%%%%%%%%%%%%%
\subsubsection{Hilbert complexes on a Witt space with boundary $\overline{M}$}
%%%%%%%%%%%t %%%%%%%%%%%%%%%%%%%%%%%
We consider the de Rham complex $(\W^*_c (M),d)$ of smooth compactly supported 
differential forms over $M$, the regular part of a Witt space with boundary $\overline{M}$. This complex admits different ideal boundary conditions,
among which we single out the following two, defined exactly as in \eqref{min-max}
\begin{equation*}
(\dom_{{\rm min}}^*(M),d)
\;\;\text{and}\;\;(\dom_{{\rm max}}^*(M),d). 
\end{equation*}
We denote the cohomology associated to the first Hilbert complex $(\dom_{{\rm min}}^*(M),d)$, the minimal one,
as  $\coh^*(M,\partial M)$; we denote the cohomology associated to the second Hilbert complex $(\dom_{{\rm max}}^*(M),d)$,
the maximal one, as $\coh^*(M)$. The notation is inspired from the case where $M$ is
a smooth manifold with boundary, see \cite[Theorem 4.1]{BrLe}. In fact, as already remarked in the Introduction, these $L^2$-cohomologies can be 
identified with relative and absolute intersection cohomologies, see Bei \cite[Theorem 4]{Bei}. 
 \medskip

Exactly as in \eqref{Kodaira}, we have a Kodaira decomposition for each Hilbert complex 
$(\dom_{{\rm min}}^*(M),d)$ and $(\dom_{{\rm max}}^*(M),d)$, defining each the corresponding space of 
harmonic forms. We denote these two spaces of harmonic forms as 
\begin{equation*}
 \cH^*(M,\partial M) \;\;\;\text{and}\;\;\;\cH^*(M),
\end{equation*}
respectively. As before, harmonic forms and $L^2$-cohomologies coincide if the 
latters are finite-dimensional, which is true in the compact setting considered here, see \cite[Theorem 4]{Bei}. 
Notice that later in this subsection we shall give a purely analytic proof of the finite dimensionality of 
 $\coh^*(M,\partial M)$ and  $\coh^*(M)$, see Remark \ref{remark: finite-dimensionality}.
Hence we find
\begin{equation*}
 \cH^*(M,\partial M) \cong \coh^*(M,\partial M) \;\;\;\text{and}\;\;\;\cH^*(M) \cong \coh^*(M).
\end{equation*}

We also need to consider  smooth subcomplexes of $(\dom_{{\rm min} / {\rm max}}^*(M),d)$.
To do so, we consider the smooth differential forms $\Omega^*(M \cup \partial M)$. The differential forms $\Omega^*(M \cup \partial M)$
are smooth up to the boundary $\partial M$ and otherwise unrestricted in their behaviour at the 
singular strata. Now we can define, in a slightly different way than in
Br\"uning and Lesch \cite[(3.14)]{BrLe}, core complexes of minimal and maximal differential forms that
are smooth in $M$ and extend smoothly up to the (regular) boundary $\partial M$:
\begin{equation}\label{core-up-to-bdry}
\begin{split}
&(\mathscr{E}_{{\rm min}}^*(M),d), \ \textup{where} \ \mathscr{E}_{{\rm min}}^*(M) := \dom_{{\rm min}}^*(M) \cap \W^* (M \cup \partial M), \\
&(\mathscr{E}_{{\rm max}}^*(M),d),  \ \textup{where} \ \mathscr{E}_{{\rm max}}^*(M) := \dom_{{\rm max}}^*(M) \cap \W^* (M \cup \partial M).
\end{split}
\end{equation}
Note that these \emph{core complexes} are different from those in \cite[(3.14)]{BrLe}, which are
defined by requiring smoothness only in the open interior. However, our choice here allows to 
apply the restriction to the boundary map. We claim in the next lemma, using \cite[(4.12)]{BrLe}, that the core complexes $(\mathscr{E}_{{\rm min}}^*(M),d)$ and 
$(\mathscr{E}_{{\rm max}}^*(M),d)$ have the same cohomology as $\dom_{{\rm min}}^*(M)$ and 
$\dom_{{\rm max}}^*(M)$, respectively. 

\begin{prop}\label{core-subcomplexes-up-to-bdry}
\begin{equation}\label{smooth-coh2}
H^*(\mathscr{E}_{{\rm min}}^*(M),d) \cong \coh^*(M,\partial M) \;\;\;\text{and}\;\;\;
H^*(\mathscr{E}_{{\rm max}}^*(M),d) \cong \coh^*(M).
\end{equation}
\end{prop}

\begin{proof}
We employ an argument of \cite[(4.12)]{BrLe}. 
Consider the double manifold 
$$M_d = (M \cup \partial M) \cup_{\partial M} (M \cup \partial M)$$
with the natural involutive diffeomorphism $\alpha$
interchanging the two copies of $M$. The double $M_d$ is again a smoothly 
stratified Witt space. Equip $M_d$ with a metric $g_d$ such that $\alpha^*g= g$ and $g_d \restriction M = g$. 
Due to the product structure of $g$ in a collar of the boundary $\partial M$, $g_d$ exists and is again an iterative cone-edge metric. 
As in the case of the closed Witt space $(X, g_{X})$, the double $(M_d, g_d)$ has a unique ideal boundary condition, i.e. 
\begin{align}
\dom_{\textup{min}}(M_d) = \dom_{\textup{max}}(M_d)=:\dom_{(2)}^*(M_d).
\end{align} 

The diffeomorphism $\alpha$ induces an involution on the complex $(\dom_{(2)}^*(M_d), d)$
and hence we obtain a decomposition into $(\pm 1)$-eigenspaces of $\alpha$
$$
\dom_{(2)}^*(M_d) = \dom_{(2)}^+(M_d) \oplus \dom_{(2)}^-(M_d),
$$
where $\alpha \restriction \dom_{(2)}^\pm(M_d) = \pm \textup{Id}$. We want to prove that 
for $\W^*(M_d)$ denoting the space of smooth differential forms on $M_d$, 
$\mathscr{E}^-(M_d) := \dom_{(2)}^-(M_d) \cap \W^*(M_d)$ satisfies exactly as in 
\cite[(4.12)]{BrLe}
\begin{equation}\label{different-restrictions}\begin{split}
\mathscr{E}^-(M_d) \restriction M \cup \partial M &= \dom_{{\rm min}}^*(M) \cap \W^* (M \cup \partial M) \Bigl( = \mathscr{E}_{\min}^*(M) \Bigr), \\
\mathscr{E}^-(M_d) \restriction M &= \dom_{{\rm min}}^*(M) \cap \W^* (M)
\end{split}\end{equation}
It obviously suffices to prove the first equality.
Consider any $\widetilde{\w} \in \mathscr{E}^-(M_d)$. In a collar neighborhood 
$\cU \cong (-\varepsilon, \varepsilon) \times \partial M$
of the join $\partial M \subset M_d$, the form $\widetilde{\w}$ decomposes as 
$$
\widetilde{\w} = \w_0(x) + \w_1(x) \wedge dx, \quad \w_0, 
\w_1 \in C^\infty((-\varepsilon, \varepsilon), \Omega^*(\partial M)).
$$
Since $\alpha^*\widetilde{\w} = -\widetilde{\w}$ by construction, $\w_0(x) = -\w_0(-x)$ and $\w_1(x)=\w_1(-x)$. 
In particular, $\w_0(0)=0$. Pick any $\phi \in C^\infty_0(\R)$, with $\phi \equiv 1$
over $(-\varepsilon / 2, \varepsilon / 2)$. We define $\phi_n(x):= \phi (nx)$ for any $n \in \N$. 
This defines a smooth function on $\cU \cong (-\varepsilon, \varepsilon) \times \partial M$
which we extend trivially by zero to a smooth function on $M_d$. We set $\widetilde{\w}_n:= (1-\phi_n) \widetilde{\w}$.
Clearly $\widetilde{\w}_n \to \widetilde{\w}$ in $L^2\Omega^*(M_d)$. Moreover, we compute over $\cU$
$$
d\widetilde{\w}_n = (1-\phi_n) d\widetilde{\w} - \frac{\partial \phi_n}{\partial x} \, dx \wedge \w_0.
$$
The second summand converges to zero in $L^2\Omega^*(M_d)$, since $\w_0$ is smooth at $x=0$ with $\w_0(0)=0$. 
Hence $d\widetilde{\w}_n$ converges to $d\widetilde{\w}$ in $L^2\Omega^*(M_d)$. Consequently, setting 
$\w := \widetilde{\w} \restriction M \in \Omega^*(M)$, and $\w_n := \widetilde{\w}_n \restriction M$,
we conclude that $\w_n \to \w$ and $d\w_n \to d\w$ in $L^2\Omega^*(M)$ and hence $\w \in \mathscr{E}_{\min}^*(M)$ by definition.
Thus we find
$$
\mathscr{E}^-(M_d) \restriction M \cup \partial M \subseteq \mathscr{E}_{\min}^*(M).
$$
The converse inclusion is obtained as follows. Consider any $\w \in \mathscr{E}_{\min}^*(M)$. By definition there
exists a sequence $(\w_n)$ of smooth compactly supported differential forms on $M$ that converges in graph norm to $\w$.
Since the forms are compactly supported away from $\partial M$, we extend $(\w_n)$ to a sequence of forms $(\widetilde{\w}_n) \subset \mathscr{E}^-(M_d)$ on $M_d$.
Now, $\mathscr{D}^-_{(2)}(M_d)$ is a Hilbert complex and thus $(\widetilde{\w}_n)$ converges in graph norm to $\widetilde{\w} \in \mathscr{D}^-_{(2)}(M_d)$.
Since $\widetilde{\w}$ restricts to a smooth form $\w$, we conclude $\widetilde{\w} \subset \mathscr{E}^-(M_d)$.
This proves \eqref{different-restrictions}. Now the restriction maps 
\begin{align*}
&\mathscr{E}^-(M_d) \to \mathscr{E}^-(M_d) \restriction M \cup \partial M = \mathscr{E}_{\min}^*(M), 
\\ &\mathscr{E}^-(M_d) \to \mathscr{E}^-(M_d) \restriction M = \dom_{{\rm min}}^*(M) \cap \W^* (M),
\end{align*}
are well defined and are isomorphisms of complexes. Thus the cohomologies of $\mathscr{E}_{\min}^*(M)$
and $\dom_{{\rm min}}^*(M) \cap \W^* (M)$ agree. The latter is a core subcomplex 
exactly as in \cite[Theorem 3.5]{BrLe} and hence the first statement follows. The second statement follows
similarly by relating $ \dom_{(2)}^+(M_d) \cap \W^*(M_d)$ to $ \mathscr{E}_{\max}^*(M)$ 
as in \cite[(4.12)]{BrLe}.
\end{proof}

\begin{remark}\label{remark: finite-dimensionality}
Notice that exactly as in \cite[Theorem 4.1]{BrLe} the above proof establishes analytically, without using
Bei's result,  that 
$H^*(\mathscr{E}_{{\rm min}}^*(M),d)$ (and thus  $\coh^*(M,\partial M)$) and 
$H^*(\mathscr{E}_{{\rm max}}^*(M),d)$  (and thus $\coh^*(M)$) are finite dimensional; indeed, we have 
identified these cohomology groups with subcomplexes of the $L^2$-de Rham complex on the double and we 
know that the latter  has finite dimensional cohomology.
\end{remark}

%%%%%%%%%%%%%%%%%%%%%%%%%%%%%%%%%%
\subsubsection{Hilbert complexes on the space $M_\infty$ with cylindrical ends}
%%%%%%%%%%%t %%%%%%%%%%%%%%%%%%%%%%%
Next we consider the de Rham complex $(\W_c^*(M_\infty),d)$ of smooth compactly supported 
differential forms in $M_\infty$. As before we may define the minimal and maximal closed extensions
$(\dom_{{\rm min} / {\rm max}}^*(M_\infty),d)$ and begin with the following fundamental observation.

\begin{prop}
Under the Witt assumption on $M$, 
the de Rham complex $(\W_c^*(M_\infty),d)$ has a unique closed extension in $L^2$, denoted
by $(\dom_{(2)}^*(M_\infty),d)$, i.e.
\begin{align}
\dom_{\min}(M_\infty) = \dom_{\max}(M_\infty)=:\dom_{(2)}^*(M_\infty).
\end{align} 
\end{prop}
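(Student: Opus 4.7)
The plan is to verify that $M_\infty$ is itself a smoothly stratified Witt space (non-compact, with empty finite boundary), and then invoke the known uniqueness results for closed extensions of the de Rham complex on Witt spaces, adapted to the cylindrical end via a truncation argument.

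First, I would identify the singular strata of $M_\infty$. They come in two families: (i) the singular strata of $\overline{M}$ inherited from the stratification of $M \subset M_\infty$; and (ii) strata of the form $(-\infty, 0] \times Z$, where $Z$ ranges over the singular strata of $\partial \overline{M}$. The geometric Witt condition on $\overline{M}$ handles (i) directly. For (ii), the product structure $g_\infty = dx^2 \oplus g_{\partial M}$ along the cylinder implies that the link at any point of $(-\infty, 0] \times Z$ agrees with the link of the corresponding point of $Z$ in $\partial \overline{M}$, and the tangential operator of $d+d^*$ in the stratified direction coincides with the tangential operator of $\partial M$. Since $\partial M$ satisfies the Witt assumption by hypothesis, its tangential operators have spectrum avoiding $(-\frac{1}{2},\frac{1}{2})$, so the geometric Witt condition passes from $\partial M$ to the full cylindrical end, and hence to all of $M_\infty$.

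Next, I would reduce uniqueness to a local statement at each singular stratum via a truncation of the cylinder. Given $u \in \dom_{\max}(d)$, choose cutoffs $\chi_R$ on $M_\infty$ equal to $1$ on $M \cup ([-R, 0] \times \partial M)$, vanishing on $(-\infty, -R-1] \times \partial M$, with $\|d\chi_R\|_\infty$ bounded uniformly in $R$. Since $u, du \in L^2(M_\infty)$, dominated convergence gives $\chi_R u \to u$ and $d(\chi_R u) = \chi_R\, du + d\chi_R \wedge u \to du$ in $L^2$ as $R \to \infty$. Each $\chi_R u$ is compactly supported in the truncated Witt space $M \cup ([-R-1, 0] \times \partial M)$, which is essentially a compact Witt space with a smooth interior cylindrical boundary at $\{-R-1\}\times \partial M$. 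The Witt condition on $M_\infty$ together with the local uniqueness results of Cheeger \cite{Cheeger} and \cite{ALMP1} implies that near every singular stratum $\chi_R u$ can be approximated in the graph norm by smooth compactly supported forms via the Friedrichs-type mollification of \cite{BrLe}; away from the strata the approximation is standard mollification on a smooth Riemannian manifold, and the smooth interior boundary requires no additional ideal boundary condition since the support can be pushed slightly further into the interior.

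The main obstacle is patching these local mollifications into a single global approximating sequence in the presence of the non-compact end. This is resolved by combining the cutoff argument with the partition-of-unity mollification scheme of \cite{BrLe, ALMP1}: the crucial point is that on the product cylinder the exterior derivative decouples as $\partial_x + d_{\partial M}$, so uniqueness on $(-\infty,0] \times \partial M$ reduces to uniqueness on $\partial M$, which is precisely \eqref{uniqueness-partial-M}. A diagonal subsequence in $R$ and in the mollification parameter then yields a sequence in $\W_c^*(M_\infty)$ converging to $u$ in the graph norm of $d$, establishing $\dom_{\max}(d) \subseteq \dom_{\min}(d)$ and thereby the proposition.
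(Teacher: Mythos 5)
Your proof is correct but takes a genuinely different and more self-contained route than the paper's. The paper's proof is a one-line reduction: uniqueness $\dom_{\min}(M_\infty) = \dom_{\max}(M_\infty)$ is equivalent to the $L^2$-Stokes identity with no boundary term, and this follows by invoking Theorem \ref{Stokes-thm} for $X=M_\infty$ with $\partial X=\emptyset$; that theorem in turn is proved by doubling over the boundary and taking as known the uniqueness of closed extensions on the resulting boundaryless Witt space (from Cheeger and \cite{ALMP1}). Your proposal instead re-proves that underlying uniqueness statement directly for $M_\infty$: you check that $M_\infty$ is itself a Witt space whose cylindrical strata inherit the Witt condition from $\partial M$, truncate along the end with cutoffs $\chi_R$ of uniformly bounded gradient (so that $\chi_R u \to u$ and $d(\chi_R u)\to du$ in $L^2$), and then mollify locally near the strata and patch — correctly noting that the product structure along the end reduces the cylindrical-stratum analysis to \eqref{uniqueness-partial-M} for $\partial M$, in the spirit of Cheeger's Theorem 2.1, and that the artificial cut at $\{-R-1\}\times\partial M$ causes no trouble because the support of $\chi_R u$ can be kept a fixed distance away from it. The trade-off is what you would expect: the paper's route is shorter and more modular, leaning on \S\ref{stokes}; yours is more explicit and makes visible the truncation and patching that the paper's proof of Theorem \ref{Stokes-thm} would otherwise leave implicit, at the cost of more bookkeeping. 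Both are sound.
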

	
\begin{proof}
Let us write $L^2\Omega^*(M_\infty)$ for the $L^2$-completion of $\W_c^*(M_\infty)$.
Uniqueness of domains $\dom_{\textup{min}}(M_\infty) = \dom_{\textup{max}}(M_\infty)$ is equivalent to the 
$L^2$-Stokes theorem, which states that for any $u \in \dom_{\min}(d) \equiv \dom_{(2)}(M_\infty)$ and 
any $v \in \dom_{\min}(d^t)$ we have 
$$
(du, v)_{L^2\Omega^*(M_\infty)} = (u, d^t v)_{L^2\Omega^*(M_\infty)}.
$$
This holds by Theorem \ref{Stokes-thm} since the boundary of $M_\infty$ is empty.
\end{proof}

As before, we write $d$ for the unique closed extension of the exterior derivative.
Define the $L^2$-harmonic forms, the reduced and non-reduced $L^2$-cohomologies of the 
Hilbert complex $(\dom_{(2)}^*(M_\infty),d)$ by 
\begin{equation*}
\cH_{(2)}^*(M_\infty) := \ker d \cap \ker d^*, \quad \cohr^*(M_\infty):= \frac{\ker d}{\overline{\image d}},
\quad \coh^*(M_\infty) := \frac{\ker d}{\image d},
\end{equation*}
respectively. We have as before $\cH_{(2)}^*(M_\infty) \cong \cohr^*(M_\infty)$.
Notice that $\cohr^*(M_\infty)$ and $\coh^*(M_\infty)$ do not 
coincide in general, due to non-compactness of  $M_\infty$.
By Proposition \ref{prop3_2} below, see also \eqref{useful0} and \eqref{useful1}, we know that $\cH_{(2)}^*(M_\infty)$ is finite dimensional.

%%%%%%%%%%%%%%%%%%%%%%%%%%%%%%%%%%
\subsection{Maps between various Hilbert complexes and their cohomologies}\label{subsect:maps}
%%%%%%%%%%%t %%%%%%%%%%%%%%%%%%%%%%%

The discussion in this subsection is inspired by L\"uck and Schick \cite{Lueck-Schick}.
We shall be interested in defining certain homomorphisms between the cohomology groups introduced above.
As one of these homorphisms is given by "restriction to the boundary", which is problematic 
in $L^2$, we will use the smooth subcomplexes for the definition of that restriction. 

\begin{defn}\label{define-maps} 
Obvious inclusions and restrictions define the following maps.
\begin{enumerate}
\item Consider the natural map $r: \dom_{(2)}^*(M_\infty) \to \dom_{{\rm max}}^*(M)$ given by
restriction to $M\subset M_\infty$. The map $r$ commutes with $d$ and hence for any $\w \in \cH_{(2)}^*(M_\infty)$,
$r \w \in \ker d \subset \dom_{{\rm max}}^*(M)$. Taking the corresponding $L^2$-cohomology class 
$[r \w] \in \coh^* (M)$, we obtain a well-defined map 
\begin{equation*}
[r]: \cH_{(2)}^*(M_\infty) \to \coh^* (M).
\end{equation*}
\item Consider the natural map $\iota: \dom_{{\rm min}}^*(M) \hookrightarrow \dom_{{\rm max}}^*(M)$ 
given by inclusion. $\iota$ commutes with $d$ and hence yields a well-defined map on $L^2$-cohomology
\begin{equation*}
[\iota]: \coh^*(M,\partial M) \rightarrow \coh^*(M).
\end{equation*}
\item Consider the natural map $q: \mathscr{E}_{{\rm max}}^*(M) \to \mathscr{E}^*(\partial M)$
on smooth subcomplexes, given by restriction to the boundary. $q$ commutes with $d$ and hence yields
a well-defined map on cohomology 
\begin{equation*}
[q]: H^*(\mathscr{E}_{{\rm max}}^*(M), d) \rightarrow H^*(\mathscr{E}^*(\partial M), d).
\end{equation*}
By \eqref{smooth-coh1} and \eqref{smooth-coh2} we obtain the map on $L^2$-cohomology
\begin{equation*}
[q]: \coh^*(M) \rightarrow \coh^*(\partial M).
\end{equation*}
\end{enumerate}
\end{defn}

\begin{prop}\label{prop_2}
	$\ker [q] = \image [\iota].$
\end{prop}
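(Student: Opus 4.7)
The plan is to pass to the smooth subcomplexes via the Brüning–Lesch isomorphisms \eqref{smooth-coh1} and \eqref{smooth-coh2}, represent the cohomology classes by smooth forms, and handle each inclusion by a collar argument, using Theorem \ref{Stokes-thm} and the geometric Witt assumption.

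For $\image [\iota] \subseteq \ker [q]$, I would represent $[\eta] \in \coh^*(M,\partial M)$ by a smooth form $\eta \in \mathscr{E}^*_{\min}(M)$, so that $[\iota][\eta]$ is represented by the same $\eta$ viewed in $\mathscr{E}^*_{\max}(M)$, and the task reduces to showing $q\eta = 0$. To do so, I would pair against an arbitrary test form $\varphi \in \mathscr{E}^{*+1}(\partial M)$: let $\widetilde{\varphi} \in \mathscr{E}^{*+1}_{\max}(M)$ be any smooth collar extension with compact support and $q\widetilde{\varphi} = \varphi$. Since $\eta \in \dom_{\min}(d)$ is a graph-norm limit of $\eta_n \in \W^*_c(M)$ supported in the interior, the plain integration-by-parts identity $\langle d\eta_n, \widetilde{\varphi}\rangle = \langle \eta_n, d^t\widetilde{\varphi}\rangle$ passes to the limit, yielding $\langle d\eta, \widetilde{\varphi}\rangle = \langle \eta, d^t\widetilde{\varphi}\rangle$. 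Comparing with Theorem \ref{Stokes-thm} applied to $\eta$ and $\widetilde{\varphi}$ forces $\langle q\eta, \varphi\rangle_{L^2(\partial M)} = 0$ for every $\varphi$, hence $q\eta = 0$.

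For the reverse inclusion $\ker [q] \subseteq \image [\iota]$, I would represent $[\omega]$ by $\omega \in \mathscr{E}^*_{\max}(M)$, so that by hypothesis $q\omega = d\beta$ for some $\beta \in \mathscr{E}^{*-1}(\partial M)$. On the collar $\mathscr{U} = [0,1) \times \partial M$ I would choose $\psi \in C_c^\infty([0,1))$ with $\psi(0) = 1$ and set $\widetilde{\beta}(x,y) := \psi(x)\beta(y)$ on $\mathscr{U}$, extended by zero; then $\widetilde{\beta} \in \mathscr{E}^{*-1}_{\max}(M)$ with $q\widetilde{\beta} = \beta$, and $\omega' := \omega - d\widetilde{\beta}$ represents the same class as $\omega$ in $\coh^*(M)$ but now satisfies $q\omega' = 0$. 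The crucial remaining step is to show that any such smooth $\omega'$ with $q\omega' = 0$ in fact lies in $\mathscr{E}^*_{\min}(M)$, for then $[\omega'] \in \coh^*(M,\partial M)$ maps to $[\omega]$ under $[\iota]$. To approximate $\omega'$ in the graph norm by forms compactly supported in the regular interior, I would combine two truncations: away from the singular strata, the geometric Witt assumption (which underlies the uniqueness of closed extensions recorded in \eqref{uniqueness-partial-M} and its bulk analogue) permits approximation by forms supported away from the singular set; near $\partial M$, cutoffs $\chi_\varepsilon$ vanishing on $[0,\varepsilon]$ and equal to $1$ on $[2\varepsilon,1)$ produce the sole error $d\chi_\varepsilon \wedge \omega' = \chi'_\varepsilon\, dx \wedge \gamma$, where $\omega' = dx\wedge\alpha + \gamma$ in the collar with $\gamma(0,\cdot) = q\omega' = 0$, whose $L^2$-norm is $o(1)$ by a Hardy-type estimate using $\gamma(0,\cdot) = 0$ and the product structure of $g$.

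The main obstacle will be this last approximation: the two cutoffs must be executed compatibly on the wedge metric, and the Hardy-type collar estimate must remain uniform as one approaches the singular strata of $\partial M$. Both the product structure of $g$ near $\partial M$ and the geometric Witt assumption on $\partial \overline{M}$ are needed essentially here---the latter both to ensure that $q\omega'$ is a genuine $L^2$ pullback on the Witt space $\partial M$ (required for Theorem \ref{Stokes-thm}) and to supply the unique-closure property on $\partial M$ that controls the transverse approximation.
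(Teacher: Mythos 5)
Your overall architecture is reasonable and the first inclusion goes through: representing $[\eta]$ by a smooth $\eta\in\mathscr{E}^*_{\min}(M)$, you compare the Green's formula obtained as a graph-limit of compactly supported approximants (no boundary term) with the one Theorem \ref{Stokes-thm} supplies for the smooth pair $(\eta,\widetilde\varphi)$, and conclude $q\eta=0$; this is a genuinely different and rather clean route to the inclusion $\image[\iota]\subseteq\ker[q]$. The paper instead first proves the \emph{characterization} $\mathscr{E}^*_{\min}(M)=\{\w\in\Omega^*(M)\mid q\w=0\}$ outright (via the double $M_d$) and then feeds the resulting short exact sequence $0\to\mathscr{E}^*_{\min}(M)\to\mathscr{E}^*_{\max}(M)\to\mathscr{E}^*(\partial M)\to 0$ into the long exact sequence in cohomology, obtaining both inclusions at once.

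The gap sits exactly in your second inclusion, at the step where you must show that a smooth $\omega'\in\mathscr{E}^*_{\max}(M)$ with $q\omega'=0$ actually lies in $\mathscr{E}^*_{\min}(M)$. You propose to combine two cutoffs and invoke ``the geometric Witt assumption (which underlies\ldots \eqref{uniqueness-partial-M} and its bulk analogue)'' to get approximation away from the singular strata. But there is no such ``bulk analogue'' of \eqref{uniqueness-partial-M} on a space with boundary: the whole point of this section is that for $M$ with $\partial M\neq\varnothing$ the de Rham complex admits (at least) two distinct ideal boundary conditions, $\dom_{\min}\subsetneq\dom_{\max}$. The Witt assumption gives $d_{\min}=d_{\max}$ only for \emph{closed} Witt spaces ($\partial M$, $M_d$, $X_d$), never for $M$ itself. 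Consequently, after cutting $\omega'$ off at $\{x<2\varepsilon\}$, you cannot just ``use Witt on $M$'' to approximate by forms with compact support in the regular interior; the needed approximants $\zeta_j\in\Omega^*_c$ that the Witt assumption produces live on a \emph{closed} Witt space, and to locate one you must either pass to the double $M_d$ or cap off the truncated cylinder $\{x\geq\varepsilon\}$ into a boundaryless Witt space — exactly the construction the paper's proof carries out. Your Hardy-type collar estimate itself is fine (it only needs $\gamma(0,\cdot)=0$ and $\partial_x\gamma\in L^2$, not pointwise uniformity up to the strata of $\partial M$), so that is not the obstruction; the obstruction is that the ``away from the singular strata'' half of your two-cutoff plan rests on a uniqueness-of-extension statement that does not hold on $M$. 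To close the gap you should make the double-manifold (or collar-capping) step explicit; once you do, your argument becomes a variant of the paper's proof of the inclusion $\mathscr{E}^-(M_d)\restriction M\subseteq\mathscr{E}^*_{\min}(M)$ rather than an independent route.
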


\begin{proof}
By the first statement in Proposition \ref{core-subcomplexes-up-to-bdry}, we can characterize
\begin{align*}
\mathscr{E}_{\min}^*(M) &= \mathscr{E}^-(M_d) \restriction M \cup \partial M 
\\ &=\{\w \in \mathscr{D}_{\max}^*(M) \cap \Omega^*(M \cup \partial M) \mid q\w \equiv \w_0(0)=0\}.
\end{align*}
Thus, the maps $\iota$ and $q$ yield a short exact sequence of smooth subcomplexes 
\begin{equation*}
\xymatrix{0 \ar[r] & \mathscr{E}_{{\rm min}}^* (M) \ar@{^{(}->}[r]^{\iota} & 
\mathscr{E}_{{\rm max}}^*(M) \ar[r]^{q} & \mathscr{E}^*(\partial M) \ar[r] & 0}
\end{equation*}
This yields an exact sequence in cohomology
\begin{equation*}
\xymatrix{...  \ar[r] & \coh^*(\mathscr{E}^*_{{\rm min}}(M), d) \ar[r]^{[\iota]} & \coh^*(\mathscr{E}^*_{{\rm max}}(M), d)
 \ar[r]^{[q]} & \coh^*(\mathscr{E}^*(\partial M), d) \ar[r] & ...}
\end{equation*}
Exactness means in particular $\ker [q] = \image [\iota]$ as maps on cohomologies of the smooth subcomplexes.
The statement follows from the fact that cohomologies
of the Hilbert complexes and their corresponding subcomplexes coincide. see \eqref{smooth-coh1}
and \eqref{smooth-coh2}.
\end{proof}

\begin{prop}\label{prop_3}
	$\image [r] \subseteq \ker [q]$.
\end{prop}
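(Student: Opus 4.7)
The plan is to show, for any $\omega \in \cH_{(2)}^*(M_\infty)$, that the restriction $q(r\omega)$ is $d$-exact in the Hilbert complex $(\dom^*(\partial M), d)$, by constructing an explicit primitive out of the tail of $\omega$ along the cylindrical end. On the end $(-\infty, 0] \times \partial M$ the metric is a product, so I would decompose
\begin{equation*}
\omega = \omega_0(x) + dx \wedge \omega_1(x),
\end{equation*}
where $\omega_0(x), \omega_1(x)$ are smooth $x$-dependent differential forms on $\partial M$ (smoothness away from the singular strata being guaranteed by elliptic regularity for the harmonic form $\omega$). Separating the tangential and normal parts of $d\omega = 0$ yields
\begin{equation*}
d_{\partial M} \omega_0(x) = 0, \qquad \partial_x \omega_0(x) = d_{\partial M} \omega_1(x) \quad \text{for all } x \in (-\infty, 0].
\end{equation*}

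Next I would use the harmonicity $d^*\omega = 0$ together with the $L^2$-condition to produce exponential decay as $x \to -\infty$. By the Witt assumption on $\partial M$ and \cite[Theorem 1.1]{ALMP1}, the tangential operator on $\partial M$ is essentially self-adjoint with discrete spectrum; separation of variables then expresses $\omega$ on the cylinder as a series in eigenmodes of the form $e^{\mu x}(\phi + dx \wedge \psi)$, and $L^2$-integrability on $(-\infty, 0]$ rules out all modes with $\mu \leq 0$. Consequently both $\omega_0(x)$ and $\omega_1(x)$ tend to zero in $L^2(\partial M)$ exponentially fast as $x \to -\infty$. Setting
\begin{equation*}
\eta := -\int_{-\infty}^0 \omega_1(x)\, dx,
\end{equation*}
the exponential decay guarantees $\eta \in L^2(\partial M)$, and integrating $\partial_x \omega_0 = d_{\partial M} \omega_1$ from $-\infty$ to $0$ produces
\begin{equation*}
\omega_0(0) = -d_{\partial M} \eta.
\end{equation*}
Because $d_{\partial M}\eta = -\omega_0(0) \in L^2(\partial M)$ and the uniqueness of ideal boundary conditions on the Witt space $\partial M$, see \eqref{uniqueness-partial-M}, identifies $\dom_{\min}(d) = \dom_{\max}(d) = \dom(\partial M)$, we obtain $\eta \in \dom(\partial M)$. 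Hence $q(r\omega) = \omega_0(0)$ is $d$-exact in $(\dom^*(\partial M), d)$, which means $[q][r\omega] = 0$ in $\coh^*(\partial M)$.

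The hard part, I expect, will be to make the exponential decay estimate rigorous in the singular setting, and to verify that the harmonic representative $\omega$ is regular enough near $\{0\}\times \partial M$ for the pointwise separation $\omega = \omega_0 + dx\wedge \omega_1$ and the integration in $x$ to be justified. An alternative, more functional-analytic route would approximate $\omega$ by its truncations on finite cylinders $[-T,0]\times\partial M$, apply the $L^2$-Stokes Theorem \ref{Stokes-thm} on the truncation, and use the Hodge decomposition \eqref{Kodaira} on $\partial M$ to pass to the limit $T \to \infty$; this sidesteps the mode-by-mode argument while still relying crucially on the Witt condition to ensure the relevant closed extensions are unique.
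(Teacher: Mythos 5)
Your proposal is correct and follows essentially the same route as the paper's proof: decompose $\omega$ on the cylindrical end, extract $\partial_x\omega_0 = \pm d_{\partial M}\omega_1$ from $d\omega = 0$, establish exponential decay using discreteness of the tangential operator on $\partial M$ (via \cite[Theorem 1.1]{ALMP1}) together with the $L^2$-condition, and integrate to exhibit $\omega_0(0)$ as $d_{\partial M}$ applied to an $L^2$-primitive. The one small point you add beyond the paper's write-up --- explicitly verifying $\eta \in \dom(\partial M)$ via $\dom_{\min}(d) = \dom_{\max}(d)$ --- is a useful piece of bookkeeping that the paper leaves implicit.
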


\begin{proof}
We adapt an argument of \cite[Lemma 3.12]{Lueck-Schick}.
	Consider a harmonic form $\w \in \cH_{(2)}^*(M_\infty)$. Notice that $\w \in \Omega^*(M_\infty)$ is smooth by elliptic regularity and 
	hence $q (r (\omega))$ makes sense, since $\omega$ and $r (\omega)$ are smooth. 
	Over the cylinder $(-\infty,0] \times \partial M \subset M_\infty$ we can decompose $\w$ as 
	$$
	\w(x) = \w_0(x) + \w_1(x) \wedge dx, \quad 
	\w_0, \w_1 \in C^\infty((-\infty,0], \Omega^*(\partial M)).
	$$
	Noting $\w \in \ker d$, we obtain for $d_{\partial M}$ denoting the exterior derivative on $\partial M$
	\begin{align*}
	0 = d\w = d_{\partial M}\w_0(x) \pm \frac{\partial \w_0(x)}{\partial x} \wedge dx + d_{\partial M}\w_1(x) \wedge dx. 
	\end{align*}
	From linear independence of summands with and without $dx$, we conclude 
	\begin{align*}
	\frac{\partial \w_0(x)}{\partial x} = \pm  d_{\partial M} \w_1(x). 
	\end{align*}
	Integration with respect to $x$ yields for any $x_0 \in (-\infty,0]$
	\begin{align}\label{q}
	q[x] \w - q[x_0] \w \equiv \w_0(x) - \w_0(x_0) =  \pm  d_{\partial M} \int_{x_0}^{x} \w_1(u) du,
	\end{align}
	where we also write $q[x]\w \equiv \w_0(x)$ for the pullback of $\w$ to $\{x\}\times \partial M \subset M_\infty$.
	Consider now the Hodge Dirac operator $\slashed{\partial} \equiv d + d^*$. 
	Over the cylinder $(-\infty,0] \times \partial M \subset M_\infty$
	the operator $\slashed{\partial}$ takes the product form 
	\begin{equation*}
		\slashed{\partial} = \Gamma(\partial_x + \slashed{\partial}_{\partial M}),
	\end{equation*}
	where $\Gamma$ is a bundle homomorphism on $\partial M$ and $\slashed{\partial}_{\partial M}$
	is essentially self-adjoint in $L^2\Omega^*(\partial M)$ with discrete spectrum, 
	due to the geometric Witt assumption on the Witt space $\partial M$.
	Writing $(\lambda, \phi_\lambda)$ for the eigenvalues and eigensections (counted with their multiplicities)
	of the unique self-adjoint extension of $\slashed{\partial}_{\partial M}$, we conclude from $\slashed{\partial} \w = 0$
	\begin{equation*}
		\w(x)  = 
		\sum_{(\lambda, \phi_\lambda)} [a_\lambda e^{-\lambda x} + b_\lambda e^{\lambda x}] \phi_\lambda (\cdot).
	\end{equation*}
	Since $\w \in L^2\Omega^*(M_\infty)$ we conclude that $a_\lambda = 0$ for $\lambda \le 0$ and $b_\lambda = 0$
	for $\lambda \ge 0$. Hence $\w(x)$ is exponentially vanishing as $x \to -\infty$ and taking $x\to -\infty$ in \eqref{q}, 
	we conclude
	$$
	q[x] \w =  \pm  d_{\partial M} \int_0^{\infty} \w_1(u) du. 
	$$
	Consequently, $q (r \w)) \equiv q[0]\w$ defines the zero-class in cohomology $\coh^*(\partial M)$.
	Thus $[r][\w]\equiv [r\w] \in \ker [q]$ and the statement follows.
\end{proof}

\begin{prop}\label{prop_5}
	$\image [r] = \ker [q]$.
\end{prop}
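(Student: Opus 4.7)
Having proven $\image[r]\subseteq \ker[q]$ in Proposition \ref{prop_3}, my plan is to focus on the reverse inclusion. Using Proposition \ref{prop_2} to identify $\ker[q]=\image[\iota]$, it suffices to show that every class of the form $[\iota\w]$, with $\w\in\dom_{\min}^*(M)\cap\ker d$, lies in the image of $[r]$. The strategy is to extend $\w$ by zero across $\partial M$ into the cylinder of $M_\infty$, and then invoke the Kodaira decomposition on $M_\infty$ to produce a harmonic representative of the same class.

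Concretely, my first step would be to pick, by definition of $\dom_{\min}^*(M)$, a sequence $\w_n\in\W_c^*(M)$ with $\w_n\to\w$ and $d\w_n\to 0$ in $L^2(M)$. Since each $\w_n$ has compact support in the open regular stratum, hence vanishes in a neighborhood of both $\partial M$ and the singular strata, it extends trivially to a smooth compactly supported form $E\w_n$ on $M_\infty$, and the limit $E\w$ is a closed element of $\dom_{(2)}^*(M_\infty)$ (using uniqueness of the ideal boundary condition on the boundaryless Witt space $M_\infty$). Next, applying the weak Kodaira decomposition of \cite[Lemma 2.1]{BrLe} to the Hilbert complex on $M_\infty$, which yields $\ker d=\cH_{(2)}^*(M_\infty)\oplus\overline{\image d}$, I would write $E\w=\widetilde{\w}+\beta$ with $\widetilde{\w}\in\cH_{(2)}^*(M_\infty)$ and $\beta=\lim_n d\alpha_n$ for some $\alpha_n\in\dom_{(2)}^*(M_\infty)$.

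Restricting to $M$ then gives $\w = r\widetilde{\w} + r\beta$ in $\dom_{\max}^*(M)$, with $r\beta = \lim_n d(r\alpha_n)$ lying in the $L^2$-closure of $\image d$ inside $\dom_{\max}^*(M)$. Since $M$ is a compact Witt space, $\coh^*(M)$ is finite-dimensional by \cite[Theorem 4]{Bei} and \cite[Theorem 1.1]{ALMP1}, which (as in \cite[Theorem 2.4, Corollary 2.5]{BrLe}) forces $\image d=\overline{\image d}$ in the Hilbert complex; hence $r\beta$ is actually exact and represents zero in $\coh^*(M)$, giving $[\iota\w]=[r\widetilde{\w}]\in\image[r]$. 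The step requiring the most attention will be the extension-by-zero: it works only because the characterization of $\dom_{\min}^*(M)$ through approximations in $\W_c^*(M)$ simultaneously handles the smooth boundary $\partial M$ and the singular locus $X_{n-2}$, so that no boundary trace needs to be controlled when passing to $M_\infty$; any weaker notion of "minimal forms" would re-introduce a trace term exactly of the type that Theorem \ref{Stokes-thm} was devised to eliminate.
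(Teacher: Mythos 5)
Your argument is correct, and it reaches the same conclusion as the paper by a genuinely different (and somewhat more economical) route. The paper takes an arbitrary $[\w]\in\ker[q]$ with a closed smooth representative $\w\in\mathscr{E}^*_{\max}(M)$, writes $q\w=d\alpha$, and builds the $L^2$ form $W$ on $M_\infty$ by gluing $\w$ on $M$ to $d(\psi\alpha)$ on the cylinder; to show $W\perp\overline{\image d^*}$ it must invoke the $L^2$-Stokes Theorem \ref{Stokes-thm} twice (once on $M$, once on the half-cylinder) and observe that the two boundary terms cancel. You instead first pass through Proposition~\ref{prop_2}, $\ker[q]=\image[\iota]$, which lets you pick a \emph{minimal} closed representative; the approximating sequence from $\W^*_c(M)$ that witnesses membership in $\dom^*_{\min}(M)$ automatically lives away from $\partial M$ and from $X_{n-2}$, so extension by zero to $M_\infty$ is immediate and produces a closed element of $\dom^*_{(2)}(M_\infty)$ without any Stokes/trace argument at the seam. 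From that point on the two proofs coincide: weak Kodaira decomposition on $M_\infty$, restriction to $M$, and the Fredholmness of $(\dom^*_{\max}(M),d)$ (Bei) to upgrade $\overline{\image d_M}$ to $\image d_M$, finishing with the other inclusion from Proposition~\ref{prop_3}. The trade-off is that your proof leans on Proposition~\ref{prop_2} as an input rather than deriving $\ker[q]\subseteq\image[r]$ directly; since \ref{prop_2} is proved independently just before and is needed anyway for the chain \eqref{rqi}, this is a legitimate and clean shortcut.
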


\begin{proof}
Consider $[\w] \in \ker [q] \subset \coh^* (M) \cong H^*(\mathscr{E}_{{\rm max}}^*(M),d)$ 
with a closed smooth representative $\w \in \mathscr{E}_{{\rm max}}^*(M)$. Note that 
by smoothness, $q\w \in \mathscr{E}^*(\partial M)$ is well-defined and $[q][\w] = [q \w]$. 
Since $[q\w]=0$, we find $q\w = d \A$ for some $\A \in \mathscr{E}^*(\partial M)$. 
Consider a cutoff function $\psi:(-\infty,0]\to \mathbb{R}$,
with $\psi \restriction [0,\delta]\equiv 1$ for some $\delta \in (0,1)$ 
and compact support in $[0,1)$. We can define $\tilde{\A} := \A \cdot \psi \in 
\mathscr{E}^*_{\rm max}(\partial M \times (-\infty,0])$, where $\mathscr{E}^*_{\rm max}(\partial M \times (-\infty,0])$
is defined similar to $\mathscr{E}^*_{\rm max}(M)$. 
We also have the inclusion $j_-: \partial M \equiv \partial M \times \{0\} \hookrightarrow \partial M \times \left[ 0,\infty \right)$.
By construction $j_-^*\,\tilde{\A} = \A$ and $j_-^*\,d\tilde{\A} = d\A$. \medskip
We define $W \in L^2\Omega^*(M_\infty) \equiv L^2\Omega^*(\W^*(M_\infty))$ as follows:
	\begin{equation*}
	W\vert_M := \w; \ W\vert_{\partial M \times \left[ 0, \infty \right)} := d\tilde{\A}
	\end{equation*}
Note that $W$ is smooth, except possibly at $\partial M \times \{0\}$.
Consider the smooth subcomplex $(\mathscr{E}^*_{\max}(\partial M \times \left[ 0, \infty \right)),d)$
and the restriction $q_+: (\mathscr{E}^*_{\max}(\partial M \times \left[ 0, \infty \right)),d)\to (\mathscr{E}^*(\partial M),d)$
defined by the pullback of $j: \partial M \equiv \partial M \times \{0\} 
\hookrightarrow \partial M \times \left[ 0,\infty \right)$. We find by construction,
	\begin{equation*}
		q \left( W\vert_M \right)  = q_+ \left( W \vert_{\partial M \times \left[ 0, \infty \right)} \right).
	\end{equation*}
From the $L^2$-Stokes theorem in Theorem \ref{Stokes-thm} we conclude (note that 
$dW=0$ weakly, and hence $W$ lies in the maximal domain of $d$)
\begin{equation*}
\begin{split}
&\langle \, W,d^t\phi \, \rangle_{L^2\Omega^*(M)}  = - \int_{\partial M} d\A \, \wedge \,q (\ast \phi)\\
&\langle \, W,d^t\phi \, \rangle_{L^2\Omega^*(\partial M \times \left[ 0, \infty \right))}  = - 
\int_{\partial M} d\A \, \wedge \, q_+ (\ast \phi)
\end{split}
\end{equation*}
where $d^t$ is the formal adjoint of $d$ and $\phi \in \W_c^*(M_\infty)$ is a 
smooth form  with compact support. Since $\partial M$ is included into 
$M$ and $\partial M \times (-\infty,0]$ with opposite orientations, we conclude
\begin{equation*}
\langle \, W,d^t\phi \, \rangle_{L^2\Omega^*(M_\infty)}  = 0,
\end{equation*}
which impies that $W \perp \overline{\image d^*}$. 
Due to the weak Kodaira decomposition
$L^2\Omega^*(M_\infty) = \cH_{(2)}^* (M_\infty) \oplus \overline{\image d} \oplus \overline{\image d^*}$
we conclude
\begin{equation*}
\begin{split}
W \perp \overline{\image d^*} & \Rightarrow W 
\in \cH_{(2)}^* (M_\infty) \oplus \overline{\image d}\\
& \Rightarrow W = h + \eta
\end{split}
\end{equation*}
Now, $h \in \cH_{(2)}^* (M_\infty)$ and $[r]h \equiv [rh] \in \coh^*(M)$. 	
Furthermore, $\eta \in \overline{\image d}$ and hence the restriction to $M$ defines 
$r \eta \equiv \eta \vert_M \in \overline{\image d_M}$, where the lower index $M$ 
indicates that $d_M$ refers to the maximal closed extension of $d$ in $L^2\Omega^*(M)$. 
The Hilbert complex $(\dom_{\textup{max}}^*(M),d = d_M)$ is Fredholm by Proposition \ref{core-subcomplexes-up-to-bdry} 
and Remark  \ref{remark: finite-dimensionality}
%Bei \cite[Theorem 4]{Bei}\\
 and hence $\image d_M$ is closed. Hence $r \eta \in \overline{\image d_M} = \image d_M$ and 
	\begin{equation*}
	 [rh]  = [rh + r\eta] \in \coh^*(M).
	\end{equation*}
On the other hand we compute by construction
	\begin{equation*}
		[rh] = [r (h+ \eta)] = [rW] = [\w].
	\end{equation*}
We conclude: $[\w] = [rh] \in  \coh^*(M)$, i.e. for any $[\w] \in \ker [q]$ 
we found a harmonic representative $h \in \cH_{(2)}^*(M_\infty)$ so that
$[r]h = [\w]$, establishing the statement $\ker [q] \subseteq \image [r]$.
Equality follows from Proposition \ref{prop_3}.
\end{proof}

Proposition \ref{prop_2} and Proposition \ref{prop_5} imply
\begin{align}\label{rqi}
\image [r] =  \ker [q] = \image [\iota].
\end{align}

%%%%%%%%%%%%%%%%%%%%%%%
\subsection{Various signatures}
%%%%%%%%%%%%%%%%%%%%%%%
Let $M$ now have dimension $4n$. In this final subsection we are now in the position 
to introduce different signatures on $M$ and compare them. In this section we restrict the 
actions of the maps $[\iota]$, $[r]$ and $[q]$ to differential forms and cohomology classes of 
degree $2n$ and hence write e.g.
\begin{equation}
\begin{split}
&\image [r] \equiv \image ([r]: \cH^{2n}_{(2)} (M_\infty)\to \coh^{2n} (M)), \\
&\image [\iota] \equiv \image ([\iota]: \coh^{2n} (M,\partial M)\to \coh^{2n} (M)), \\
&\ker [\iota] \equiv \ker ([\iota]: \coh^{2n} (M,\partial M)\to \coh^{2n} (M)).
\end{split}
\end{equation}
We recall at this point that 
$$ \cH^{*}_{(2)} (M_\infty)\,,\quad  \coh^{*} (M,\partial M)\,,\quad 
\coh^{*} (M)$$
have been shown  to be finite dimensional.

\medskip

We begin with studying certain bilinear forms, whose signatures give equivalent definitions
for the signature of $(M, \partial M)$.
\begin{lemma}\label{deRham-signature}
The de Rham signature pairing 
\begin{equation}
\begin{split}
s :  \coh^{2n} (M,\partial M) \times    \coh^{2n} (M,\partial M) \to \mathbb{C}, 
\quad  ([v], [w]) \mapsto \int_M v \wedge w,
\end{split}
\end{equation}
is a well-defined degenerate bilinear symmetric form with radical 
given by $\ker [\iota]$.
\end{lemma}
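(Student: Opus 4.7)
The assertion splits into three parts: (i) the integral $\int_M v\wedge w$ descends to a well-defined function on cohomology classes; (ii) the resulting map is a symmetric bilinear form; (iii) its radical coincides with $\ker[\iota]$. I will represent cohomology classes by elements of the smooth subcomplexes $\mathscr{E}_{\min}^*(M)$ and $\mathscr{E}_{\max}^*(M)$ via \eqref{smooth-coh2}, and recall from the proof of Proposition \ref{prop_2} that smooth representatives of relative classes in $\mathscr{E}_{\min}^{*}(M)$ have vanishing pullback to the boundary. Absolute convergence of $\int_M v\wedge w$ is then automatic because $v,w\in L^2(M)$. For well-definedness, if $v'=v+d\alpha$ with $\alpha\in \mathscr{E}_{\min}^{2n-1}(M)$, so that $q\alpha=0$, then a direct application of the $L^2$-Stokes theorem (Theorem \ref{Stokes-thm}) to $d(\alpha\wedge w)$, using $dw=0$ and $q\alpha=0$, yields $\int_M d\alpha\wedge w = \int_{\partial M} q\alpha \wedge qw = 0$. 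An analogous computation in the second slot completes the proof of (i), and (ii) is immediate from $v\wedge w = w\wedge v$ in degree $2n$.

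\textbf{Identifying the radical.} The easy inclusion $\ker[\iota]\subseteq \operatorname{Rad}(s)$ follows from the same Stokes argument: if $v=d\eta$ with $\eta\in \mathscr{E}_{\max}^{2n-1}(M)$ and $w\in \mathscr{E}_{\min}^{2n}(M)$ is closed with $qw=0$, then $s([v],[w])=\int_{\partial M} q\eta\wedge qw=0$. For the reverse inclusion I introduce the Poincar\'e--Lefschetz pairing
\begin{equation*}
\mathcal{P}:\coh^{2n}(M)\times \coh^{2n}(M,\partial M)\to \mathbb{R},\qquad ([u],[w])\mapsto \int_M u\wedge w,
\end{equation*}
which is well-defined by the same Stokes computation (now $u$ is unrestricted on $\partial M$ but $qw=0$), and which factorizes $s$ as $s([v],[w])=\mathcal{P}\bigl([\iota][v],[w]\bigr)$. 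Non-degeneracy of $\mathcal{P}$ in its first slot will then force $[\iota][v]=0$ whenever $[v]\in \operatorname{Rad}(s)$. I intend to establish this non-degeneracy through Hodge duality: the pointwise Hodge star $\ast$ is an $L^2$-isometry satisfying $d^{t}=\pm \ast d \ast$, so together with $(d_{\max})^{\ast}=(d^{t})_{\min}$ from \eqref{minmax-relation} and its companion $(d_{\min})^{\ast}=(d^{t})_{\max}$, the star $\ast$ interchanges the minimal and maximal closed extensions of $d$. It consequently induces an isomorphism $\cH^{2n}(M)\xrightarrow{\sim}\cH^{2n}(M,\partial M)$, and for any nonzero harmonic $u\in \cH^{2n}(M)$ one has $\mathcal{P}([u],[\ast u])=\int_M u\wedge \ast u=\|u\|^{2}_{L^{2}(M)}>0$. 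Combined with the identification of $L^{2}$-cohomology with harmonic forms in the compact Witt setting, provided by Bei \cite[Theorem 4]{Bei}, this yields the required non-degeneracy, and hence the equality $\operatorname{Rad}(s)=\ker[\iota]$.

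\textbf{Main obstacle.} The technical crux is the assertion that the Hodge star genuinely exchanges the minimal and maximal ideal boundary conditions of $d$ in this Witt-with-boundary context. In the smooth case this is classical, but here one must simultaneously control the behaviour of $\ast$ at the geometric boundary $\partial M$ and at the singular strata of $\overline{M}$. The latter is under control thanks to the Witt assumption---which forces a unique closed extension of $d$ near the singular set---while the former is governed by the $L^2$-Stokes theorem of Theorem \ref{Stokes-thm}. Once the Hodge-duality isomorphism $\cH^{2n}(M)\cong \cH^{2n}(M,\partial M)$ is in hand, the remainder of the argument is a standard Poincar\'e--Lefschetz/Hodge computation adapted to the $L^{2}$-setting.
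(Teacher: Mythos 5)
Your argument is correct and rests on the same two pillars as the paper's: the $L^2$-Stokes theorem (Theorem \ref{Stokes-thm}) for well-definedness and the fact that the Hodge star intertwines the minimal and maximal ideal boundary conditions for the radical statement. The difference is one of packaging. You factor $s$ through an auxiliary Poincar\'e--Lefschetz pairing $\mathcal{P}:\coh^{2n}(M)\times\coh^{2n}(M,\partial M)\to\R$ as $s([v],[w])=\mathcal{P}([\iota][v],[w])$, and prove $\mathcal{P}$ non-degenerate in its first slot by picking a harmonic representative $u\in\cH^{2n}(M)$ (via Bei's isomorphism $\coh^{2n}(M)\cong\cH^{2n}(M)$) and pairing with $*u\in\cH^{2n}(M,\partial M)$ to get $\|u\|_{L^2}^2>0$. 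The paper avoids introducing $\mathcal{P}$ altogether and argues directly at the level of closed extensions: for $[v]\in\operatorname{Rad}(s)$ one has $v\perp *(\ker d_{\min})=\ker d_{\max}^*$ in $L^2$, hence $v\in\overline{\image d_{\max}}=\image d_{\max}$ by Fredholmness, i.e. $[v]\in\ker[\iota]$. So the paper trades your use of harmonic representatives for the closedness of $\image d_{\max}$ -- two faces of the same Fredholm fact from Bei's theorem -- and reaches the same conclusion a bit more economically. Your route has the advantage of explicitly exhibiting Poincar\'e--Lefschetz duality for Witt spaces with boundary, which is of independent interest, at the minor cost of an extra well-definedness check for $\mathcal{P}$. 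One small expository point: the claim that $*$ sends $\cH^{2n}(M)$ to $\cH^{2n}(M,\partial M)$ deserves a sentence of justification (it follows from \eqref{minmax-relation} and $d^t=\pm*d*$, exactly the relation \eqref{star-min-max} used in the paper), as you yourself flag under ``Main obstacle''; but the assertion is true and the argument closes.
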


\begin{proof}
We first show that $s$ is well-defined. Consider $[v], [w] \in \coh^{2n} (M,\partial M)$
with representatives $v+ d\alpha, w+ d\beta \in \mathscr{E}^*_{\rm min}(M)$, respectively;
$v,w$ as well as $\alpha, \beta$ are elements of $\mathscr{E}^*_{\rm min}(M)$.
Note that we can equivalently work with smooth subcomplexes, since their cohomologies coincide with 
the cohomologies of the corresponding Hilbert complexes. 
Observe that 
$$
d(v \wedge \beta) = (-1)^{2n} v \wedge d\beta, \quad 
d(\alpha \wedge w) = d\alpha \wedge w, \quad
d(\alpha \wedge d\beta) = d\alpha \wedge d\beta.
$$
We compute using the $L^2$-Stokes theorem in Theorem \ref{Stokes-thm} for the Witt space $M$
\begin{align*}
&\int_M (v+ d\alpha) \wedge (w+ d\beta) \\
&= \int_M v \wedge w + (-1)^{2n} \int_M d(v\wedge \beta) + \int_M d(\alpha \wedge w) + \int_M d(\alpha \wedge d\beta)
\\ &= \int_M v \wedge w + (-1)^{2n} \int_{\partial M} q (v\wedge \beta) + \int_{\partial M} q(\alpha \wedge w)
+\int_{\partial M} q(\alpha \wedge d\beta).
\end{align*}
Note that $qv, q\alpha = 0$ since $v,\alpha \in \mathscr{E}^*_{\rm min}(M)$. 
Hence we conclude
\begin{align*}
\int_M (v+ d\alpha) \wedge (w+ d\beta) = \int_M v \wedge w.
\end{align*}
Thus $s$ is indeed well-defined. We now identify the radical of $s$ 
\begin{equation*}
\textup{Rad}(s) := \{ [v] \in  \coh^{2n} (M,\partial M) \mid \forall [w] \in  \coh^{2n} (M,\partial M): 
s([v],[w]) = 0\}.
\end{equation*}
Denote the Hodge star operator of $(M,g)$ by $*$. 
We note that the adjoint $d_{\max}^*$ of the maximal closed extension $d_{\max}$
is given by the minimal closed extension of the formal adjoint $d^t = \pm * d*$, cf. \cite[p. 105]{BrLe}.
Thus the Hodge star $*$, preserving smoothness and support, yields an isomorphism between the minimal domain of
$d$ and the minimal domain of $d^t$
\begin{align}\label{star-min-max}
*: \ker d_{\min} \to \ker \bigl( d^t\bigr)_{\min} \equiv \ker d_{\max}^*.
\end{align}
Consider now any $[v] \in \textup{Rad}(s)$. By definition, $v \perp * (\ker d_{\min})$ with respect
to the $L^2$-inner product. Thus $v \perp \ker d_{\max}^*$ by \eqref{star-min-max}. Consequently,
$v \in \image d_{\max}$. This proves the second statement of the lemma
\begin{equation*}
\textup{Rad}(s) = \ker [\iota].
\end{equation*}
\end{proof}

\begin{lemma}\label{Hodge-signature}
The Hodge $L^2$-signature pairing is defined by 
\begin{equation}
\begin{split}
s_{\infty}:  \cH_{(2)}^{2n}(M_\infty) \times  \cH_{(2)}^{2n}(M_\infty) \to \mathbb{C},
\quad (\w, \eta) \mapsto \int_M \w \wedge \, \eta.
\end{split}
\end{equation}
It is a non-degenerate symmetric bilinear form.
\end{lemma}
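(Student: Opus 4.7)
The approach is to separate the two easy properties---well-definedness and symmetry---from the only piece with real content, namely non-degeneracy, which I plan to handle by the Hodge star trick combined with the cylindrical-end ODE argument already exploited in the proof of Proposition \ref{prop_3}.

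Well-definedness and symmetry will be disposed of in a single paragraph. For any $\omega,\eta\in\cH_{(2)}^{2n}(M_\infty)$, the restrictions to $M$ lie in $L^2(M)$, so Cauchy--Schwarz gives $\left|\int_M\omega\wedge\eta\right|\le\|\omega\|_{L^2(M)}\|\eta\|_{L^2(M)}<\infty$ and the pairing is finite. Symmetry is immediate since $2n$ is even and hence $\omega\wedge\eta=\eta\wedge\omega$.

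For non-degeneracy, the key observation is that the Hodge star $*$ of $(M_\infty,g_\infty)$ preserves $\cH_{(2)}^{2n}(M_\infty)$: it is an $L^2$-isometry, preserves degree $2n$, and intertwines $d$ with $\pm d^*$, so it maps $\ker d\cap\ker d^*$ to itself. Given $\omega\in\cH_{(2)}^{2n}(M_\infty)$ in the radical of $s_\infty$, substituting $\eta=*\omega$ yields
\[
0=s_\infty(\omega,*\omega)=\int_M\omega\wedge *\omega=\|\omega\|_{L^2(M)}^2,
\]
so $\omega$ vanishes on $M$ in $L^2$, and by elliptic regularity on the regular part, pointwise as well. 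It then remains to upgrade this to vanishing on all of $M_\infty$: on the cylinder $(-\infty,0]\times\partial M$ the equation $\slashed{\partial}\omega=0$ reduces, after stripping off the bundle isomorphism $\Gamma$, to the ODE $(\partial_x+\slashed{\partial}_{\partial M})\omega=0$. Exactly as in the proof of Proposition \ref{prop_3}, every $L^2$-integrable solution admits an exponentially decaying spectral expansion
\[
\omega(x)=\sum_\lambda c_\lambda\,e^{\mp\lambda x}\phi_\lambda
\]
in the eigenbasis of $\slashed{\partial}_{\partial M}$, which is essentially self-adjoint with discrete spectrum by the geometric Witt assumption on $\partial M$ and \cite[Theorem 1.1]{ALMP1}; the coefficients $c_\lambda$ are uniquely determined by the boundary trace $\omega(0)$. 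Since $\omega$ is smooth across the regular part of $\{0\}\times\partial M$ by elliptic regularity and vanishes on $M$, one has $\omega(0)=0$, so every $c_\lambda$ vanishes and hence $\omega\equiv 0$ on the cylinder.

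The main obstacle is making this last step rigorous in the stratified setting: one has to verify that the trace of an $L^2$-harmonic form on the regular part of $\{0\}\times\partial M$ is meaningful and uniquely captures the behaviour on the cylinder via the spectral expansion. This is where the geometric Witt assumption enters essentially, both to secure the discreteness of the spectrum of $\slashed{\partial}_{\partial M}$ and to allow the $L^2$-Stokes theorem of Theorem \ref{Stokes-thm} to be applied cleanly across the singular strata.
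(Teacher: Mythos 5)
Your argument is correct, and the crux---using the Hodge star to produce $s_\infty(\omega,*\omega)=\|\omega\|^2$---is exactly the paper's. The one divergence is that you take the displayed integral $\int_M$ literally, which yields only $\|\omega\|_{L^2(M)}=0$, and therefore you conscientiously supply a unique-continuation step on the cylinder via the spectral expansion of Proposition~\ref{prop_3}. The paper's one-line proof instead reads the pairing as $\int_{M_\infty}\omega\wedge\eta$ (as its later use in Proposition~\ref{prop_6} makes explicit, where the claim reduces to $\int_M v\wedge w=\int_{M_\infty}\omega\wedge\eta$), in which case $\|\omega\|_{L^2(M_\infty)}=0$ finishes immediately and the cylinder step is superfluous. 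Both readings give a valid proof; yours is the more careful one given the literal formula in the lemma statement, and the well-definedness and symmetry observations are correct though treated as obvious in the paper.
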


\begin{proof}
Consider $h \in \cH_{(2)}^{2n}(M_\infty)$ in the radical of $s_{\infty}$. 
Then $*h \in \cH_{(2)}^{2n}(M_\infty)$ and 
$$
0=s_{\infty}(h,*h)= \|h\|_{L^2}^2.
$$ 
Hence $h=0$ and thus the radical of $s_{\infty}$ is trivial.
\end{proof}

\begin{defn}
\begin{enumerate}
\item  The de Rham signature $\textup{sign}(s)$ is defined as the signature of the non-degenerate bilinear form
\begin{equation}
s : \image [\iota] \times   \image [\iota] \to \mathbb{C},
\end{equation}
induced by the de Rham signature pairing $s$ on 
$$
\frac{\coh^{2n} (M,\partial M) }{\textup{Rad}(s)} \equiv \frac{\coh^{2n} (M,\partial M) }{\ker [\iota]} 
\cong \image [\iota].$$
\item The Hodge $L^2$-signature $\textup{sign}(s_\infty)$ is defined as the signature
of the non-degenerate bilinear form $s_\infty$. 
\end{enumerate}
\end{defn}

Next we prove that the two signatures $\textup{sign}(s)$ and $\textup{sign}(s_\infty)$ coincide.

\begin{prop}\label{prop_6}
The Hodge $L^2$-signature $s_\infty$ descends to the de Rham signature $s$ in the following sense. 
Given $\w, \eta \in \cH^{2n}_{(2)}(M_\infty)$ and $[r\w], [r\, \eta] \in \image [r] = \image [\iota]$
we have the following equality
$$
s_\infty(\w, \eta) = s([r\w], [r\, \eta]).
$$
\end{prop}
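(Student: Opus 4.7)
The plan is to construct explicit representatives $v, w \in \mathscr{E}^*_{\min}(M)$ of the classes corresponding to $[r\omega]$, $[r\eta]$ under the identification $\coh^{2n}(M,\partial M)/\textup{Rad}(s) \cong \image[\iota]$ from Lemma \ref{deRham-signature}, and then to compute $\int_M v \wedge w$ directly via the $L^2$-Stokes theorem of Theorem \ref{Stokes-thm}. The residual boundary term will be identified with the integral of $\omega \wedge \eta$ over the cylindrical end, thereby closing the computation.

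First, on the cylindrical end $(-\infty,0]\times \partial M$ I decompose $\omega = \omega_0(x) + \omega_1(x)\wedge dx$. Exactly as in the proof of Proposition \ref{prop_3}, $d\omega=0$ together with $L^2$-integrability force exponential decay of $\omega_0, \omega_1$ as $x \to -\infty$, and give $q(r\omega) = \omega_0(0) = -d_{\partial M}\gamma_\omega$, where $\gamma_\omega := \int_{-\infty}^0 \omega_1(u)\,du \in \Omega^{2n-1}(\partial M)$. Moreover, setting $\rho_\omega(x) := -\int_{-\infty}^x \omega_1(u)\,du$, a direct computation using $\partial_x \omega_0 + d_{\partial M}\omega_1 = 0$ shows that $\rho_\omega \in L^2$ and $d\rho_\omega = \omega$ on the cylindrical end, with $\rho_\omega(0) = -\gamma_\omega$. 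These two observations package the entire boundary behavior of $\omega$ at $\partial M$ as an explicit exact extension on the end.

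Next I choose a smooth extension $\sigma_\omega \in \mathscr{E}^*_{\max}(M)$ with $q\sigma_\omega = \gamma_\omega$ (by multiplying $\gamma_\omega$ by a cutoff in a boundary collar) and set $v := r\omega + d\sigma_\omega$. Then $qv = q(r\omega) + d_{\partial M}\gamma_\omega = 0$ and $dv = 0$, so $v \in \mathscr{E}^*_{\min}(M)$ is a closed representative with $[\iota v] = [r\omega]$. Construct $w$ from $\eta$ analogously using $\gamma_\eta, \sigma_\eta$. By the definition of the descended pairing, $s([r\omega],[r\eta]) = \int_M v \wedge w$. Expanding $\int_M r\omega \wedge r\eta = \int_M (v - d\sigma_\omega)\wedge(w - d\sigma_\eta)$ and applying Theorem \ref{Stokes-thm} to each cross term, two of them vanish because $qv = qw = 0$ and the third reduces to a single boundary contribution, yielding
\[
\int_M v \wedge w = \int_M r\omega \wedge r\eta + \int_{\partial M}\gamma_\omega \wedge d_{\partial M}\gamma_\eta.
\]

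Finally I identify this boundary integral with the cylindrical one: since $\omega = d\rho_\omega$ on $(-\infty,0]\times \partial M$ and $d\eta = 0$, Stokes theorem on the cylinder gives
\[
\int_{(-\infty,0]\times \partial M}\omega \wedge \eta = \int_{(-\infty,0]\times \partial M} d(\rho_\omega \wedge \eta) = -\int_{\partial M}\gamma_\omega \wedge d_{\partial M}\gamma_\eta,
\]
the minus sign reflecting the fact that $\partial M$ inherits opposite orientations as the boundary of the cylinder and as the boundary of $M$. Combining the last two displays,
\[
\int_M v \wedge w = \int_M r\omega \wedge r\eta + \int_{(-\infty,0]\times \partial M}\omega \wedge \eta = \int_{M_\infty}\omega \wedge \eta = s_\infty(\omega,\eta),
\]
which is the claimed identity. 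The main obstacle will be the careful application of the $L^2$-Stokes theorem in the singular stratified setting: verifying that $\rho_\omega, \sigma_\omega, v, w$ and all the relevant wedge products lie in the correct maximal/minimal domains with square-integrable pullbacks to the boundary as required by Theorem \ref{Stokes-thm}, and tracking orientations at $\partial M$ consistently throughout the Stokes manipulations on $M$ and on the cylinder.
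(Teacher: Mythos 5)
Your proposal is correct, and the first half of it runs parallel to the paper: you correct $r\omega$ and $r\eta$ by exact forms with prescribed cylindrical primitives so that the corrected representatives $v,w$ have vanishing pullback to $\partial M$, and then push the computation of $\int_M v\wedge w$ through the $L^2$-Stokes theorem, collecting a $\partial M$-boundary term. Where you genuinely diverge is in the closing step. The paper identifies that boundary term with $\int_{(-\infty,0]\times\partial M}\omega\wedge\eta$ via the weak Kodaira decomposition on the half-cylinder: it writes $\omega - d\tilde\alpha$ and $\eta - d\tilde\beta$ as (Dirichlet-type) harmonic plus $\overline{\image\,d_{\min}}$, passes to a limit along approximating sequences in $\image\,d_{\min}$, and then uses the product structure of $\partial M\times\R^+$ to show the Dirichlet harmonic pieces all live in the $dx$-component, so their wedge product vanishes pointwise. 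You instead exhibit an explicit $L^2$ primitive on the \emph{full} cylinder, $\rho_\omega(x) = -\int_{-\infty}^x\omega_1(u)\,du$, whose existence, decay and equality $d\rho_\omega=\omega$ follow exactly from the exponential decay that was already established in the proof of Proposition~\ref{prop_3}; then a single application of Stokes on the half-cylinder, with $\rho_\omega$ paired against the closed form $\eta$, produces the boundary term directly with the right sign from the reversed orientation of $\partial M$. This avoids the Kodaira decomposition of the half-cylinder and the analysis of Dirichlet harmonic forms there entirely, making the identification more elementary. The technical debt you flag at the end is real but manageable: one must check $\rho_\omega\in\dom_{\max}(d)$ with square-integrable pullback $q\rho_\omega = -\gamma_\omega$, and that $*\eta\in\dom_{\max}(d^t)$ with $L^2$ pullback, so that Theorem~\ref{Stokes-thm} applies, all of which follow from the smoothness and exponential decay of harmonic $L^2$-forms on the cylinder. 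So this is a valid and arguably cleaner route to the same identity.
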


\begin{proof}
Consider $\w, \eta \in \cH_{(2)}^{2n}(M_\infty)$. 
As shown in Proposition \ref{prop_3}, $q\w \equiv q (r\w)$ and 
$q\eta \equiv q(r\, \eta)$ define zero-classes in cohomology $\coh^*(\partial M)$ and hence 
there exist some $\alpha, \beta \in \dom^*(\partial M)$ such that 
	\begin{equation*}
		q\w = d\A, \quad q \eta = d \beta.
	\end{equation*}
Consider a cutoff function $\psi \in C^\infty_0[0,1)$ which is identically $1$ near $0$.
Then $\psi \cdot \alpha$ and $\psi \cdot \beta$ extend smoothly to the interior of $M$ and define 
	\begin{equation*}
	 \tilde{\A}' := \psi \cdot \alpha \in \dom^*_{\rm max}(M), \quad \tilde{\beta}' := \psi \cdot \beta \in \dom^*_{\rm max}(M).
	\end{equation*}
Set $v:= r \w -d\tilde{\A}', \ w:=r\, \eta-d\tilde{\beta}'$. By construction 
	\begin{equation*}
		\begin{split}
		[v] = [r\w] \in \coh^{2n}(M), \quad  & qv = 0, \\
		[w] = [r\, \eta] \in \coh^{2n}(M), \quad  & qw = 0.
		\end{split}
	\end{equation*}
The claim of the proposition is now
\begin{equation}\label{star}
\int_{M} v \land w = \int_{M_\infty} \omega \land \eta.
\end{equation}
To prove \eqref{star} we begin with its left hand side and compute
using $L^2$-Stokes theorem in Theorem \ref{Stokes-thm} on the Witt space $M$
\begin{equation*}
\begin{split}
\int_{M} v \land w &= \int_{M} \w \wedge \, \eta - \int_{M} w \land d \tilde{\beta}' - \int_{M} d \tilde{\A}' \land \eta
\\ &= \int_{M} \w \land \, \eta - \int_{\partial M} \A \land d \beta.
\end{split}
\end{equation*}	
Hence \eqref{star} is equivalent to showing (write $\R^+:= (-\infty,0]$)
	\begin{equation}\label{6.1}
		\int_{\partial M \times \R^+} \w \land \, \eta = - \int_{\partial M} \A \land d \beta.
	\end{equation}
Consider the weak Kodaira decomposition
	\begin{equation*}
		\begin{split}
		L^2\Omega^*(\partial M \times \R^+) = \cH_{(2)}^*(\partial M \times \R^+, \partial M)
		\oplus \overline{\image d_{\textup{min}}}  \oplus \overline{\image d_{\textup{min}}^*}.
		\end{split}
	\end{equation*}
Extend $\A,\beta$ to $\tilde{\A},\tilde{\beta} \in \dom_{\max}^*(\partial M \times \R^+)$ 
exactly as we did in Proposition \ref{prop_5}, by multiplying with a smooth cutoff function that is identically $1$
in an open neighborhood of $\partial M \times \{0\}$. 	By construction $(\w - d\tilde{\A}),(\eta-d\tilde{\beta})$ 
both pull back to zero at $\partial M \times {0}$ and hence $(\w-d\tilde{\A}),(\eta -d\tilde{\beta}) \in 
\dom_{\min}(\partial M \times \R^+)$. Moreover, $d(\w - d\tilde{\A}) = d(\eta-d\tilde{\beta}) = 0$ and hence 
\begin{equation*}
	\begin{split}
(\w-d\tilde{\A}),(\eta-d\tilde{\beta}) &\in \ker d_{\min} = 
(\image d_{\min}^*)^\perp \\ &= \cH_{(2)}^* (\partial M \times \R^+, \partial M) \oplus \, \overline{\image d_{\min}}.
\end{split}
\end{equation*}
With respect to this decomposition, we write
	\begin{equation*}
		\begin{split}
		\w -d\tilde{\A} = h_1 + x, & \text{ with } qh_1 = 0, \\
		\eta - d \tilde{\beta} = h_2 + y, & \text{ with } qh_2 = 0.
		\end{split}
	\end{equation*}
Since $x,y \in \overline{\image d_{\min}}$, there exist $(t_n),(s_n) \subset 
\dom_{\min}^* (\partial M \times \R^+)$ such that $dt_n \to x$ and $ds_n \to y$
in $L^2$. Hence we compute using in the second equality the $L^2$-Stokes theorem in Theorem \ref{Stokes-thm} 
on the cylinder over the Witt space $\partial M$
\begin{equation*}
\begin{split}
\int_{\partial M \times \R^+} (h_1 + x) \land (h_2 +y) & 
= \lim\limits_{n \to \infty} \int_{\partial M \times \R^+} (h_1 + d t_n) \land (h_2 + ds_n)\\
& = \int_{\partial M \times \R^+} h_1 \land h_2.
\end{split}
\end{equation*}
Consider for each $i=1,2$ the decomposition $h_i = a_i+b_i \wedge dx$ with
contractions $\partial_x \, \lrcorner \, a_i,  \partial_x \, \lrcorner \, b_i = 0$.
Now, by the product structure of the Hodge-Laplacian on $\partial M \times \R^+$,
the forms $a_i,b_i$ are harmonic individually. Since $qh_i=a_i(0)=0$, we conclude that
$a_i\equiv 0$ identically. Hence
$$
h_1 \land h_2 = \left(b_1 dx\right) \wedge \left(b_2 dx\right) = 0.
$$
This proves \eqref{6.1} as follows
\begin{equation*}
0 = \int_{\partial M \times \R^+} (\w-d\tilde{\A}) \land (\eta -d\tilde{\beta}) 
= \int_{\partial M \times \R^+} \w \land \, \eta + \int_{\partial M} \A \land d\beta.
\end{equation*}
\end{proof}

This fact also proves the following.
\begin{cor}\label{image-r}
	$[r]: \cH_{(2)}^{2n}(M_\infty) \to \image [\iota]$ is bijective
\end{cor}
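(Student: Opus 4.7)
The plan is to deduce both surjectivity and injectivity of $[r]: \cH_{(2)}^{2n}(M_\infty) \to \image [\iota]$ directly from the results already established, without doing any further analytic work.

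First I would observe that the map $[r]$ does land in $\image[\iota]$: by the equality \eqref{rqi} we have $\image[r] = \ker[q] = \image[\iota]$, so $[r]: \cH_{(2)}^{2n}(M_\infty) \to \image[\iota]$ is well-defined and surjective. This takes care of half of the statement for free.

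For injectivity, the key tool is Proposition \ref{prop_6} combined with the non-degeneracy statements from Lemmas \ref{deRham-signature} and \ref{Hodge-signature}. Suppose $\w \in \cH_{(2)}^{2n}(M_\infty)$ satisfies $[r]\w = 0$ in $\image[\iota] \subset \coh^{2n}(M)$. Then for every $\eta \in \cH_{(2)}^{2n}(M_\infty)$, Proposition \ref{prop_6} yields
\begin{equation*}
s_\infty(\w,\eta) = s([r\w],[r\eta]) = s(0,[r\eta]) = 0.
\end{equation*}
Since the Hodge $L^2$-signature pairing $s_\infty$ on $\cH_{(2)}^{2n}(M_\infty)$ is non-degenerate by Lemma \ref{Hodge-signature}, we conclude $\w = 0$. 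Hence $[r]$ is injective.

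Combining these two observations gives bijectivity. There is no real obstacle here, since all the analytic work (the $L^2$-Stokes theorem, the weak Kodaira decomposition on $M_\infty$, Fredholmness of the maximal complex via Bei's theorem, and the identification of $\textup{Rad}(s)$ with $\ker[\iota]$) was already invested in Propositions \ref{prop_2}, \ref{prop_3}, \ref{prop_5} and \ref{prop_6}. The corollary is essentially a book-keeping consequence: the non-degeneracy of $s_\infty$ upgrades the already-known surjectivity $\image[r] = \image[\iota]$ to a bijection, which is exactly what one needs in order to transport the Hodge $L^2$-signature on $\cH_{(2)}^{2n}(M_\infty)$ to the de Rham signature on $\image[\iota]$ in the forthcoming proof of Theorem \ref{main-1}.
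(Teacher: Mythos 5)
Your proposal is correct and follows essentially the same route as the paper: surjectivity comes from $\image[r]=\image[\iota]$ (equation \eqref{rqi}), and injectivity is derived from Proposition~\ref{prop_6}. The only cosmetic difference is that the paper inlines the non-degeneracy argument by choosing $\eta=\ast h$ to get $s_\infty(h,\ast h)=\|h\|_{L^2}^2$, whereas you invoke Lemma~\ref{Hodge-signature} (which is proved by exactly that computation).
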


\begin{proof}
	Let $h\in \cH_{(2)}^*(M_\infty)$ with $[r]h\equiv [rh] = 0$. Then by Proposition \ref{prop_6}
	\begin{equation*}
		0 = s(0,0) = s([rh],[r\ast h]) = s_\infty(h,\ast h) = \Vert h \Vert_{L^2} \Rightarrow h = 0.
	\end{equation*}
This proves that $[r]$ is injective. It is surjective since by \eqref{rqi}
	\begin{equation*} 
	\image [r] =  \image [\iota].
	\end{equation*}
\end{proof}
We conclude that the de Rham and the Hodge $L^2$-signatures coincide
\begin{equation}\label{s-s-infty}
	\left( \textup{sign}_{\textup{dR}}(\overline{M},\partial \overline{M}):= \right) \ \textup{sign}(s) = \textup{sign}(s_\infty)
		\left( =: \textup{sign}_{\textup{Ho}}(\overline{M}_\infty)\right),
\end{equation}
which is the first statement in our main Theorem \ref{main-1}. As already remarked
in the introduction in \eqref{equality-total}, we also have the equality of the topological signature 
$\textup{sign}_{\textup{top}}(\overline{M},\partial \overline{M})$ of Friedman and Hunsicker \cite{FH} with the
de Rham signature considered here, see \cite[(95)]{Bei2}. Summarizing, we find
\begin{equation}
	\textup{sign}_{\textup{top}}(\overline{M},\partial \overline{M})= 
	 \textup{sign}_{\textup{dR}}(\overline{M},\partial \overline{M}) = \textup{sign}_{\textup{Ho}}(\overline{M}_\infty).
\end{equation}
%%%%%%%%%%%%%%%%%%%%%%%%%%%%%%%%%%
\section{Signatures of coverings of stratified spaces with boundary}\label{signature-2-section}
%%%%%%%%%%%%%%%%%%%%%%%%%%%%%%%%%%

We extend the analysis of \S \ref{signature-section} to the setting of non-compact Galois coverings. 
Consider the Galois coverings $\overline{M}_\Gamma, \overline{M}_{\Gamma,\infty},\partial \overline{M}_\Gamma$ of 
$\overline{M}, \overline{M}_{\infty},\partial \overline{M}$, respectively, all with Galois group $\Gamma$. We denote
the regular part of $\overline{M}_\Gamma$ by $\widetilde{M}$, the regular part of $\overline{M}_{\Gamma,\infty}$
by $\widetilde{M}_\infty$, and the regular part of $\partial \overline{M}_\Gamma$ by $\partial \widetilde{M}$.
The Witt assumption still holds on the coverings. \medskip

In this section we shall build heavily on the analysis of L\"uck-Schick \cite{Lueck-Schick}.

%%%%%%%%%%%%%%%%%%%%%%%%%%%%%%%%%%
\subsection{Hilbert complexes on Galois coverings}
%%%%%%%%%%%t %%%%%%%%%%%%%%%%%%%%%%%

%%%%%%%%%%%%%%%%%%%%%%%%%%%%%%%%%%
\subsubsection{Hilbert complexes on a covering of a closed compact Witt space $\overline{X}$}\label{X-Gamma-subsection}
%%%%%%%%%%%t %%%%%%%%%%%%%%%%%%%%%%%
Let $\overline{X}$  be a (closed) Witt space, e.g.  $\overline{X}= \partial \overline{M}$.  %$X= \partial \widetilde{M}$. 
Let  $\overline{X}_\Gamma$ be a Galois $\Gamma$-cover of $\overline{X}$. We denote by $X$ the regular part  
of $\overline{X}$ and by $\widetilde{X}$ the regular part of $\overline{X}_\Gamma$.
By the Witt assumption, the de Rham complex $(\Omega_c^*(\widetilde{X}), d)$ of compactly
supported smooth differential forms over $\widetilde{X}$, admits a unique closed extension, 
with the associated Hilbert complex denoted by $(\dom_{(2)}^*(\widetilde{X}),d)$.
Its smooth subcomplex is denoted by $(\mathscr{E}^*(\widetilde{X}),d)$, where as before we set $\mathscr{E}^*(\widetilde{X}) = 
\dom_{(2)}^*(\widetilde{X}) \cap \Omega^*(\widetilde{X})$. Here, $\Omega^*(\widetilde{X})$ denotes smooth differential 
forms on $\widetilde{X}$, unrestricted otherwise.
The weak Kodaira decomposition still holds, and hence
\begin{equation*}
L^2\Omega^*(\widetilde{X}) = \cH_{(2)}^*(\widetilde{X}) \oplus \overline{\image d} \oplus \overline{\image d^*}.
\end{equation*}
The harmonic forms $\cH_{(2)}^*(\widetilde{X})$ can be identified with reduced cohomology
$$
\cH_{(2)}^*(\widetilde{X}) := \ker d \cap \ker d^* 
\cong \frac{\ker d}{\overline{\image d}} =: \cohr^*(\widetilde{X}),
$$ 
where in contrast to the notation $\coh^*(\widetilde{X})$ 
for the $L^2$-cohomology of the Hilbert complex $(\dom_{(2)}^*(\widetilde{X}),d)$, 
$\cohr^*(\widetilde{X})$ refers to the reduced cohomology.
Since the Hilbert complex $(\dom_{(2)}^*(\widetilde{X}),d)$ is not Fredholm,
the reduced cohomology does not equal the $L^2$-cohomology of the complex:
$$
\coh^*(\widetilde{X}) \neq \cohr^*(\widetilde{X})
$$
As explained for example in \cite[Theorem 1]{Dodziuk} the isomorphism $\cH_{(2)}^*(\widetilde{X}) 
\cong \cohr^*(\widetilde{X})$ is in fact an isomorphism of Hilbert $\mathcal{N}\Gamma$-modules.
We have proved in \cite[Prop. 9.1]{PiVe} that the orthogonal projection onto  $\cH_{(2)}^*(\widetilde{X}) $
is of $\Gamma$-trace class. Consequently  $\cH_{(2)}^*(\widetilde{X}) $, and thus $\cohr^*(\widetilde{X})$,
is of finite $\Gamma$-dimension.

%%%%%%%%%%%%%%%%%%%%%%%%%%%%%%%%%%
\subsubsection{Hilbert complexes on $\widetilde{M}$}
%%%%%%%%%%%t %%%%%%%%%%%%%%%%%%%%%%%
The de Rham complex $(\W^*_c(\widetilde{M}),d)$ of smooth compactly supported differential forms
over $\widetilde{M}$ does not admit a unique closed extension, and as before we single out the minimal 
and maximal closed extensions
$$
(\dom_{\min}^*(\widetilde{M}),d), \quad (\dom_{\max}^*(\widetilde{M}),d).
$$
We denote their smooth subcomplexes (defined as in \eqref{core-up-to-bdry}) by 
$(\mathscr{E}_{\min}^*(\widetilde{M}),d)$ and \linebreak $(\mathscr{E}_{\max}^*(\widetilde{M}),d)$.
Let $\cH_{(2)}^*(\widetilde{M},\partial \widetilde{M})$ and $\cohr^*(\widetilde{M},\partial \widetilde{M})$
denote the harmonic forms and reduced cohomology of $(\dom_{\min}^*(\widetilde{M}),d)$, respectively.
Similarly, $\cH_{(2)}^*(\widetilde{M})$ and $\cohr^*(\widetilde{M})$ shall
denote the harmonic forms and reduced cohomology of $(\dom_{\max}^*(\widetilde{M}),d)$, respectively.
Again, by the weak Kodaira decomposition
\begin{equation*}
		\begin{split}
		 \cH_{(2)}^*(\widetilde{M},\partial \widetilde{M}) \cong \cohr^*(\widetilde{M},\partial \widetilde{M}), \quad
		 \cH_{(2)}^*(\widetilde{M}) \cong \cohr^*(\widetilde{M}),
		\end{split}
	\end{equation*}
As before, the complexes $(\dom_{\min}^*(\widetilde{M}),d)$ and $(\dom_{\max}^*(\widetilde{M}),d)$
are not Fredholm and hence the reduced and $L^2$-cohomologies differ
$$
\coh^*(\widetilde{M},\partial \widetilde{M}) \neq \cohr^*(\widetilde{M},\partial \widetilde{M}),
 \quad \coh^*(\widetilde{M}) \neq \cohr^*(\widetilde{M}).
$$
In this article, we shall be exclusively interested in reduced $L^2$-cohomology.

\begin{remark}\label{finite-gamma-dimension} 
We can follow the arguments of \cite[Theorem 4.1]{BrLe} as in Proposition
\ref{core-subcomplexes-up-to-bdry} to identify $(\dom_{\min}^*(\widetilde{M}),d)$ and $(\dom_{\max}^*(\widetilde{M}),d)$
with subcomplexes of $(\dom^*(\widetilde{M}_d),d)$ on the double $\widetilde{M}_d$. 
The reduced $L^2$-cohomology of the latter is of finite $\Gamma$-dimension, as already observed above in 
\S \ref{X-Gamma-subsection}. Thus the reduced  $L^2$-cohomologies $\cohr^*(\widetilde{M},\partial \widetilde{M})$
and $\cohr^*(\widetilde{M})$ are of finite $\Gamma$-dimension as well.
\end{remark}

%%%%%%%%%%%%%%%%%%%%%%%%%%%%%%%%%%
\subsubsection{Hilbert complexes on $\widetilde{M}_\infty$}\label{subsub:cylindrical}
%%%%%%%%%%%t %%%%%%%%%%%%%%%%%%%%%%%
Finally, the de Rham complex $(\W_0^*(\widetilde{M}_\infty),d)$ admits a unique closed extension
$(\dom_{(2)}^*(\widetilde{M}_\infty),d)$.
The corresponding harmonic forms $\cH_{(2)}^*(\widetilde{M}_\infty)$ and 
reduced cohomology $\cohr^*(\widetilde{M}_\infty)$ satisfy again, by the weak Kodaira decomposition
\begin{equation*}
\cH_{(2)}^*(\widetilde{M}_\infty) \cong \cohr^*(\widetilde{M}_\infty) \neq \coh^*(\widetilde{M}_\infty).
\end{equation*}
By the results established  in Subsection \ref{subsection:gamma-index}, see in particular \eqref{finite-dim2}, we know that 
$\cH_{(2)}^*(\widetilde{M}_\infty)$ has finite $\Gamma$-dimension.

%%%%%%%%%%%%%%%%%%%%%%%%%%%%%%%%%%
\subsection{Various maps between Hilbert complexes on Galois coverings}
%%%%%%%%%%%t %%%%%%%%%%%%%%%%%%%%%%%

In this section we define certain homomorphisms between the reduced cohomology groups defined above.
These homorphisms correspond to the maps introduced in Definition \ref{define-maps}, with slight changes in 
the definition of  "restriction to the boundary" $[q]$ in the setting of reduced cohomologies. 

\begin{defn} Obvious inclusions and restrictions define the following maps.
\begin{enumerate}
\item Consider the natural map $r: \dom_{(2)}^*(\widetilde{M}_\infty) \to \dom_{{\rm max}}^*(\widetilde{M})$ given by
restriction to $\widetilde{M}\subset \widetilde{M}_\infty$. The map $r$ commutes with $d$ and hence for any 
$\w \in \cH_{(2)}^*(\widetilde{M}_\infty)$,
$r \w \in \ker d \subset \dom_{{\rm max}}^*(\widetilde{M})$. Taking the corresponding reduced cohomology class 
$[r \w] \in \coh^* (\widetilde{M})$, we obtain a well-defined map 
\begin{equation*}
[r]: \cH_{(2)}^*(\widetilde{M}_\infty) \to \cohr^* (\widetilde{M}).
\end{equation*}
\item Consider the natural map $\iota: \dom_{{\rm min}}^*(\widetilde{M}) \hookrightarrow \dom_{{\rm max}}^*(\widetilde{M})$ 
given by inclusion. $\iota$ commutes with $d$ and is continuous in $L^2$. Due to continuity, $\iota$ not only induces a well-defined
map on (non-reduced) $L^2$-cohomology, but in fact on reduced cohomology as well
\begin{equation*}
[\iota]: \cohr^*(\widetilde{M},\partial \widetilde{M}) \rightarrow \cohr^*(\widetilde{M}).
\end{equation*}
\item Consider the natural map $q: \mathscr{E}_{{\rm max}}^*(\widetilde{M}) \to \mathscr{E}^*(\partial \widetilde{M})$
on smooth subcomplexes, given by restriction to the boundary. $q$ commutes with $d$ and hence its action on
harmonic forms $\cH_{(2)}^*(\widetilde{M})$ maps into $\ker d$ and thus yields
a well-defined map into reduced cohomology 
\begin{equation*}
[q]: \cH_{(2)}^*(\widetilde{M}) \rightarrow \cohr^*(\partial \widetilde{M}).
\end{equation*}
Using $\cH_{(2)}^*(\widetilde{M}) \cong \cohr^*(\widetilde{M})$, we obtain the map on reduced cohomology
\begin{equation*}
[q]: \cohr^*(\widetilde{M}) \rightarrow \cohr^*(\partial \widetilde{M}).
\end{equation*}
\end{enumerate}
\end{defn} 

\begin{remark}
Note that the map $[q]$ is not induced from a map on Hilbert complexes
$q: \dom^*_{\max}(\widetilde{M}) \to \dom^*_{(2)}(\partial \widetilde{M})$, 
since such $q$ a priori does not exist. One may try to salvage the 
situation by considering the restriction map $q[x]$ given by the pullback by 
$\partial \widetilde{M} \times \{x\} \hookrightarrow \widetilde{M}$. By the Fubini theorem, $q[x]\w$ 
makes sense for $\w \in \dom^*_{\max}(\widetilde{M})$, for $x\in (0,1)$ outside of a set of measure zero. 
Even though that set depends on the particular $\w$, the difference $q[x]\w-q[x']\w$ can be shown to be
an exact form, regardless of $x,x'\in (0,1)$, cf. \eqref{q}. This would be enough to define a map on cohomology,
however not on reduced cohomology, since $q[x]$ is not continuous in $L^2$. 
Alternatively, one might try to use Hilbert complexes in Sobolev spaces as in Schick \cite{Schick}.
Then the sequence of maps in reduced cohomology
	\begin{equation}\label{LES}
	\xymatrix{\cohr^*(\widetilde{M},\partial \widetilde{M}) 
	\ar[r]^-{[\iota]} & \cohr^*(\widetilde{M}) \ar[r]^-{[q]} & \cohr^*(\partial \widetilde{M}).}
	\end{equation}
would become part of the usual long weakly exact sequence in reduced 
cohomology due to Cheeger-Gromov \cite[Theorem 2.1]{ChGr}, coming from a short exact sequence of 
Hilbert complexes. In the present discussion we avoid the definition of Hilbert complexes in Sobolev 
spaces of higher order, and hence establish weak exactness of \eqref{LES} directly. This is done
below, starting with Proposition \ref{prop_3'} until the end of the subsection.
\end{remark}

\begin{prop}\label{prop_3'}
	$\image [\iota] \subseteq \image [r] \subseteq \ker [q].$
\end{prop}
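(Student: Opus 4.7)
The plan is to prove the two inclusions separately: the first by extension by zero plus weak Kodaira on $\widetilde{M}_\infty$, the second by a cylindrical spectral argument combined with a Stokes-theoretic bridging lemma.

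For $\image[\iota]\subseteq\image[r]$, I take a closed $\w\in\dom_{\min}^*(\widetilde{M})$, approximate by smooth compactly supported forms and extend each (and $\w$ itself) by zero across $\partial\widetilde{M}$ to $\widetilde{M}_\infty$. The resulting $W$ lies in $\dom_{(2)}^*(\widetilde{M}_\infty)\cap\ker d$. Weak Kodaira gives $W=h+\eta$ with $h\in\cH_{(2)}^*(\widetilde{M}_\infty)$ and $\eta\in\overline{\image d_{(2)}(\widetilde{M}_\infty)}$. Restriction $r$ is $L^2$-continuous and sends $\image d_{(2)}(\widetilde{M}_\infty)$ into $\image d_{\max}(\widetilde{M})$, so $r\eta\in\overline{\image d_{\max}(\widetilde{M})}$. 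Hence $[\iota\w]=[rh]\in\image[r]$ in $\cohr^*(\widetilde{M})$.

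For $\image[r]\subseteq\ker[q]$, given $\w\in\cH_{(2)}^*(\widetilde{M}_\infty)$, I must show $[qh]=0$ in $\cohr^*(\partial\widetilde{M})$ for $h=Pr\w$ the harmonic projection. I first establish $[q(r\w)]=0$ directly: by elliptic regularity $\w$ is smooth, and the decomposition $\w=\w_0(x)+\w_1(x)\wedge dx$ on $(-\infty,0]\times\partial\widetilde{M}$, together with $d\w=0$, yields $d_{\partial M}\w_0(x)=0$ and $\partial_x\w_0(x)=\pm d_{\partial M}\w_1(x)$. The projection of $\w_0(x)$ onto $\cH_{(2)}^*(\partial\widetilde{M})$ is therefore constant in $x$ (its derivative lies in $\image d_{\partial M}\perp\cH_{(2)}^*(\partial\widetilde{M})$), and the $L^2$-bound $\int_{-\infty}^0\|\w_0(x)\|^2\,dx<\infty$ forces this constant to vanish. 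Combined with $\w_0(x)\in\ker d_{\partial M}$, this gives $q(r\w)=\w_0(0)\in\overline{\image d_{\partial\widetilde{M}}}$.

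To bridge from $q(r\w)$ to $qh$, I prove the key lemma: any smooth $\alpha\in\overline{\image d_{\max}(\widetilde{M})}\cap\ker d$ with $q\alpha\in L^2(\partial\widetilde{M})$ satisfies $[q\alpha]=0$. Testing against arbitrary $\phi\in\cH_{(2)}^*(\partial\widetilde{M})$, I apply Theorem~\ref{Stokes-thm} with $u=\alpha$ and test form $v=\chi(x)\phi\wedge dx$, where $\chi\in C^\infty_c([0,1))$ with $\chi(0)=1$. The product structure of the Hodge star near $\partial\widetilde{M}$ gives $q(*v)=\pm *_{\partial\widetilde{M}}\phi$ and $d^tv=\pm\chi'(x)\phi$, reducing Stokes to $\pm(q\alpha,\phi)_{L^2(\partial\widetilde{M})}=-(\alpha,\chi'(x)\phi)_{L^2(\widetilde{M})}$. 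The right-hand side vanishes because $\chi'(x)\phi\in\ker((d^t)_{\min})$: an approximation $\phi_k\to\phi$ in the $d^t$-graph norm on the Witt space $\partial\widetilde{M}$ (valid since $d^t$ admits a unique closed extension there and $d^t_{\partial M}\phi=0$) lifts to $\chi'(x)\phi_k\to\chi'(x)\phi$ in the $d^t$-graph norm on $\widetilde{M}$, and $\overline{\image d_{\max}}$ is orthogonal to $\ker((d^t)_{\min})=(\image d_{\max})^{\perp}$. Thus $(q\alpha,\phi)=0$ for every $\phi\in\cH_{(2)}^*(\partial\widetilde{M})$; together with $q\alpha\in\ker d_{\partial M}$ and Kodaira on $\partial\widetilde{M}$, this yields $[q\alpha]=0$. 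Applying the lemma to $\alpha=r\w-h$ produces $[qh]=[q(r\w)]=0$.

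The main obstacle is the bridging lemma: since boundary restriction is not $L^2$-continuous, the relation $r\w-h\in\overline{\image d_{\max}(\widetilde{M})}$ does not directly transfer to a corresponding statement on $\partial\widetilde{M}$, and the Stokes-theoretic duality against harmonic boundary forms, together with the careful identification of test forms lying in $\ker((d^t)_{\min})$ via the Witt unique-extension property on $\partial\widetilde{M}$, is essential.
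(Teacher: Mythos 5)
Your proof is correct, and while the overall strategy for the first inclusion matches the paper, your treatment of the second inclusion is both cleaner and, in one respect, more complete than the paper's.

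For $\image[\iota]\subseteq\image[r]$ you do exactly what the paper does: extend by zero across $\partial\widetilde M$, check that the extension is weakly closed (you do it by density of $\Omega_c^*(\widetilde M)$ in $\dom_{\min}$; the paper pairs against $d^t\phi$; either way), apply weak Kodaira on $\widetilde M_\infty$, and use $L^2$-continuity of restriction.

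For $\image[r]\subseteq\ker[q]$ there are two points where you diverge. First, to show $q(r\w)=\w_0(0)\in\overline{\image d_{\partial\widetilde M}}$ the paper passes to the limit $x_0\to-\infty$ in the identity $q[0]\w-q[x_0]\w\in\image d$ using Browder--Garding exponential decay of each spectral component; on a $\Gamma$-cover the spectrum accumulates at $0$, so the uniform decay rate is $0$ and the passage to $x_0\to-\infty$ (and the claim that the limit lands in $\image d$ rather than only in $\overline{\image d}$) needs care. Your argument --- $d\w=0$ on the cylinder forces $\partial_x\w_0\in\image d_{\partial\widetilde M}$, hence the harmonic projection $P_{\cH}\w_0(x)$ is $x$-independent, hence zero because $\int_{-\infty}^0\|\w_0(x)\|^2\,dx<\infty$ --- requires no decay rate at all and directly places $\w_0(0)\in\ker d\cap(\cH_{(2)}^*)^\perp=\overline{\image d}$; this is more robust. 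Second, and more importantly, you correctly observe that $[q]$ is \emph{defined} in the paper via the harmonic representative on $\widetilde M$, whereas $r\w$ is harmonic only on $\widetilde M_\infty$, not on $\widetilde M$; so showing $[q(r\w)]=0$ does not immediately give $[q]([r\w])=0$, and the paper writes ``$[q](r\w)\equiv[q[0]\w]=0$'' without addressing this. Since $q$ is not $L^2$-continuous, the fact that $r\w-h\in\overline{\image d_{\max}(\widetilde M)}$ (where $h$ is the harmonic representative) does not trivially give $q(r\w)-qh\in\overline{\image d_{\partial\widetilde M}}$. Your bridging lemma fills this: you test against $\phi\in\cH_{(2)}^*(\partial\widetilde M)$ via the $L^2$-Stokes theorem with $v=\chi(x)\phi\wedge dx$, reduce to $(\alpha,\chi'(x)\phi)_{L^2(\widetilde M)}$, and show $\chi'(x)\phi\in\ker((d^t)_{\min})=(\overline{\image d_{\max}})^\perp$ by lifting an approximation $\phi_k\to\phi$ in $\dom(d^t_{\partial\widetilde M})$. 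That argument is correct, with one cosmetic fix: you need $\chi\equiv 1$ identically near $x=0$ (not merely $\chi(0)=1$), so that $\chi'(x)\phi_k$ is compactly supported in the interior of $\widetilde M$ and genuinely lies in $\Omega_c^*(\widetilde M)$; this is what makes the limit land in $\dom((d^t)_{\min})$. With that fix your proof is complete and arguably closes a small gap in the published argument.
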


\begin{proof}
In order to prove the first inclusion $\image [\iota] \subseteq \image [r]$, consider
$[\w]\in \image [\iota]$. By definition, any representative $\w \in [\w]$ lies in $\ker d_{\min}$, where $d_{\min}$
is the closed extension of $d$ with domain $\dom^*_{\min}(\widetilde{M})$. We define $W\in L^2\Omega^*(\widetilde{M}_\infty)$ by
$$
W \restriction \widetilde{M} := \w, \quad W \restriction \partial \widetilde{M}\times (-\infty,0] := 0.
$$
Then, $dW=0$ in the weak sense. We compute for any $\phi \in \Omega^*_c(\widetilde{M}_\infty)$
$$
\langle W, d^t \phi \rangle_{L^2\Omega^*(\widetilde{M})} =0, \quad \langle W, d^t \phi \rangle_{L^2\Omega^*(\partial \widetilde{M}\times (-\infty,0])} =0.
$$ 
Consequently, $W \in \overline{\image d^*}^\perp = \ker d$. By the weak Kodaira decomposition 
$$
W = h + \eta \in \cH^*_{(2)}(\widetilde{M}_\infty) \oplus \overline{\image d} = \ker d.
$$
Consider the restriction $[r]h = [rh] \in \cohr^*(\widetilde{M})$. By continuity of $r$ in $L^2$, 
the restriction $r\eta$ lies again in $\overline{\image d}$, where $d$ now refers to the closed 
extension of the exterior derivative with domain $\dom^*_{\max}(\widetilde{M})$.
Consequently
$$
[rh] = [rh + r\eta] = [r W] = [\w]. 
$$
This proves that $[\w] \in \image [r]$ and the first inclusion follows. 
For the second inclusion $\image [r] \subseteq \ker [q]$ we proceed almost exactly
as in Proposition \ref{prop_3}. Consider $\w \in \cH^*_{(2)}(\widetilde{M}_\infty)$ 
and the restrictions $q[x], x\geq 0$, defined as pullbacks by the inclusions 
$\partial \widetilde{M}\times \{x\} \hookrightarrow \widetilde{M}_\infty$. Exactly as in 
Proposition \ref{prop_3} we find for any $x,x_0 \leq 0$
$$
q[x] \w - q[x_0]\w \in \image d.
$$
Due to the fact that $\w \in L^2\Omega^*(\widetilde{M}_\infty)$, we conclude that 
$q[x]\w$ vanishes as $x\to -\infty$. This can also be seen using the Browder-Garding decomposition:
consider the Hodge Dirac operator $\widetilde{\slashed{\partial}} \equiv d + d^t$
over the cylinder $\partial \widetilde{M}  \times (-\infty,0] \subset \widetilde{M}_\infty$, where it takes 
the product form 
	\begin{equation*}
		\widetilde{\slashed{\partial}} = \widetilde{\Gamma}(\partial_x + \widetilde{\slashed{\partial}}_{\partial M}),
	\end{equation*}
with $\widetilde{\Gamma}$ being a bundle homomorphism on $\partial \widetilde{M}$ and $\widetilde{\slashed{\partial}}_{\partial M}$
an essentially self-adjoint operator in $L^2\Omega^*(\partial \widetilde{M})$. By the Browder-Garding decomposition on 
$\partial \widetilde{M}$ as stated in Theorem \ref{Browder-Garding}, there exist countably many sections
$e_j: \R \to \dom_{(2)}^*(\partial \widetilde{M})$ such that
$$
\widetilde{\slashed{\partial}}_{\partial M} e_j(\lambda) = \lambda e_j(\lambda),
\quad (V \widetilde{\slashed{\partial}}_{\partial M} \w)_j(\lambda) = \lambda (V\w)_j(\lambda).
$$ 
Using $\widetilde{\slashed{\partial}} \w = 0$, we conclude exactly as in Proposition \ref{prop_3}
that each $(V\w)_j(\lambda)(x)$ is vanishing exponentially as $x\to -\infty$. Consequently, same holds for $\w$
and we find 
$$
q[0]\w \in \image d \subset \overline{\image d}.
$$
Thus $[q] ( r \w ) \equiv [q[0]\w] = 0$. This proves the second inclusion.
\end{proof}

We would like to prove a result corresponding to  Proposition 
\ref{prop_5}. However, for any $[\w] \in \ker [q]$ with harmonic representative $\w \in \cH_{(2)}^*(\widetilde{M})$ we find
$q\w \in \overline{\image d}$, since we use reduced cohomologies. 
Hence $q \w$ is not exact and we cannot continue as on $M$. Instead, we consider the characteristic function 
$\chi_{(0,\lambda]}$ and the Hodge Laplacian $\Delta = d^*d + dd^*$ on $\partial \widetilde{M}$. As noted already in \cite[Proposition 7.3]{PiVe}, the spectral projection $\chi_{(0,\lambda]}(\Delta)$ is $\Gamma$-trace-class 
\begin{equation*}
	\trace_\Gamma \, \chi_{(0,\lambda]}(\Delta) < \infty.
\end{equation*}

In the following we rely on the basic result of the von Neumann theory 
concerning the properties of the $\Gamma$-traces and $\Gamma$-dimensions. Note that the $\Gamma$-dimension 
of a $\mathcal{N}\Gamma$-Hilbert module
$V\subset L^2\Omega^*(\partial \widetilde{M})$ is defined as the $\Gamma$-trace of the
orthogonal projection $\Pi_V$ onto $V$
\begin{equation}
\dim_{\Gamma} V := \trace_\Gamma (\Pi_V).
\end{equation}

\begin{thm}[Theorem 1.12 (4) in \cite{Lueck-book}]\label{Lueck-von-Neumann}
Let $V\subset L^2\Omega^*(\partial \widetilde{M})$ be an $\mathcal{N}\Gamma$-Hilbert module with $\dim_{\Gamma} V < \infty$.
Let $\{V_i \mid i\in I\}$ be a directed system of $\mathcal{N}{\Gamma}$-Hilbert submodules of $V$, 
directed by $\supset$. Then the following holds
$$
\dim_{\Gamma} \bigcap_{i\in I} V_i = \inf_{i\in I} \ \{\dim_{\Gamma} V_i\}.
$$
\end{thm}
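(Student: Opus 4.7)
The statement is a classical result from the Lück dimension theory, so my proof would follow the standard "monotone sequence plus normality of the $\Gamma$-trace" strategy; the only thing that needs honest attention is the passage from the directed index set to a countable cofinal sequence. Let $d := \inf_{i\in I}\dim_\Gamma V_i$. The easy inequality $\dim_\Gamma\bigcap_{i\in I}V_i \le d$ is immediate from monotonicity of $\dim_\Gamma$, since $\bigcap_{i\in I}V_i$ is a closed $\mathcal N\Gamma$-submodule of every $V_j$. The content of the theorem is the reverse inequality, and this is where I would spend all the effort.

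First I would extract a monotone countable sequence. Pick $j_n\in I$ with $\dim_\Gamma V_{j_n}\to d$, and use downward directedness of the system (by $\supset$) recursively to produce indices $i_n\in I$ with $V_{i_1}\supset V_{i_2}\supset\cdots$ and $V_{i_n}\subset V_{j_n}$; then $\dim_\Gamma V_{i_n}\searrow d$. Set $W := \bigcap_n V_{i_n}$ and let $\Pi_{W}, \Pi_{V_{i_n}}$ denote the corresponding orthogonal projections. Since the projections $\Pi_{V_{i_n}}$ form a decreasing net of projections in $\mathcal N\Gamma$, a basic fact is that $\Pi_{V_{i_n}}\to \Pi_W$ strongly. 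Applying normality of the faithful normal semifinite trace $\Tr_\Gamma$ on $\mathcal N\Gamma$ to this monotone sequence of projections yields
\begin{equation*}
\dim_\Gamma W \;=\; \Tr_\Gamma(\Pi_W) \;=\; \lim_{n\to\infty}\Tr_\Gamma(\Pi_{V_{i_n}}) \;=\; \lim_{n\to\infty}\dim_\Gamma V_{i_n} \;=\; d.
\end{equation*}

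The remaining and genuinely delicate step is to show $W\subset V_j$ (up to a submodule of $\Gamma$-dimension zero) for every $j\in I$, so that $W\subset\bigcap_{i\in I}V_i$ and therefore $\dim_\Gamma\bigcap_{i\in I}V_i\ge \dim_\Gamma W = d$. Fix $j\in I$. By downward directedness, for each $n$ there exists $k(j,n)\in I$ with $V_{k(j,n)}\subset V_j\cap V_{i_n}$, hence $d\le \dim_\Gamma V_{k(j,n)} \le \dim_\Gamma (V_j\cap V_{i_n})\le \dim_\Gamma V_{i_n}$, which forces $\dim_\Gamma(V_j\cap V_{i_n})\to d$. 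Applying normality of $\Tr_\Gamma$ again to the decreasing sequence of projections onto $V_j\cap V_{i_n}$, whose intersection is $V_j\cap W$, gives $\dim_\Gamma(V_j\cap W)=d=\dim_\Gamma W$. Since $V_j\cap W\subseteq W$ and both have finite, equal $\Gamma$-dimension, and since the $\Gamma$-dimension function is faithful on $\mathcal N\Gamma$-submodules of a finite-dimensional module (the orthogonal complement of $V_j\cap W$ in $W$ has $\Gamma$-dimension zero, hence is trivial as a Hilbert module), we conclude $W=V_j\cap W\subset V_j$. As $j$ was arbitrary, $W\subset\bigcap_{i\in I}V_i$, completing the proof.

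The main obstacle is conceptual rather than computational: one must leverage directedness twice, first to extract the monotone sequence realizing the infimum, and second, within the fixed sequence, to compare $W$ with an arbitrary member $V_j$ that need not belong to the chosen cofinal chain. Both uses rely crucially on the finiteness hypothesis $\dim_\Gamma V<\infty$, which guarantees that all the relevant traces are finite and that normality of $\Tr_\Gamma$ may be applied; without this hypothesis the statement is false, as one sees already for the regular representation of $\Gamma$ on $\ell^2(\Gamma)$.
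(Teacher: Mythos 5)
Your proof is correct. Note, however, that the paper presents Theorem~\ref{Lueck-von-Neumann} purely as a citation to L\"uck's book \cite{Lueck-book} and supplies no proof of its own, so there is no in-paper argument to compare against; what you have written is a faithful reconstruction of the standard von Neumann dimension-theory argument behind that result. The structure you use --- the trivial $\le$ inequality by monotonicity, extraction of a countable decreasing cofinal chain via directedness, normality of the finite trace $\Tr_\Gamma$ applied through orthogonal complements (this is exactly where the hypothesis $\dim_\Gamma V < \infty$ enters), and finally the squeeze-plus-faithfulness argument to show the countable-chain intersection $W$ actually sits inside every $V_j$ --- is the canonical way to prove continuity of $\dim_\Gamma$ for downward-directed systems, and you have executed it without gaps.

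One small blemish: your closing remark that the statement ``is false, as one sees already for the regular representation of $\Gamma$ on $\ell^2(\Gamma)$'' is not quite right, since $\dim_\Gamma \ell^2(\Gamma) = 1$ is finite and the theorem applies there. The hypothesis $\dim_\Gamma V < \infty$ is indeed essential, but the quickest counterexample without it takes $V$ of \emph{infinite} $\Gamma$-dimension, e.g.\ a countably infinite orthogonal sum $\bigoplus_{k\ge 1} \ell^2(\Gamma)$ with $V_n := \bigoplus_{k\ge n} \ell^2(\Gamma)$: each $V_n$ has $\dim_\Gamma V_n = \infty$ while $\bigcap_n V_n = 0$. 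This is tangential to your proof and does not affect its validity.
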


\begin{cor}\label{codim}
The image of the spectral projection $\chi_{(\lambda,\infty)}(d^*d)$ in $L^2\Omega^*(\partial \widetilde{M})$ 
has finite $\Gamma$-codimension $\trace_\Gamma
 \chi_{\left( 0,\lambda \right] }(d^*d)$. This codimension is monotonous, right-continuous 
 in $\lambda >0$, and tends to zero as $\lambda \downarrow 0$. 
\end{cor}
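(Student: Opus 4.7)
The plan is to reduce the corollary to two facts already in hand: the $\Gamma$-trace class property of $\chi_{(0,\lambda]}(\Delta)$ recalled just before the statement, and the von Neumann continuity result, Theorem \ref{Lueck-von-Neumann}. The crucial preliminary observation I would make is that on the orthogonal complement $(\ker d)^\perp = \overline{\image d^*}$ one has $dd^* = 0$, so $\Delta = d^*d + dd^*$ coincides there with $d^*d$, while on $\ker d$ the operator $d^*d$ vanishes identically. Consequently $\chi_{(0,\lambda]}(d^*d)$ is supported on $\overline{\image d^*}$ and agrees there with the restriction of $\chi_{(0,\lambda]}(\Delta)$. Monotonicity of $\trace_\Gamma$ on positive operators then yields
\begin{equation*}
\trace_\Gamma \chi_{(0,\lambda]}(d^*d) \le \trace_\Gamma \chi_{(0,\lambda]}(\Delta) < \infty,
\end{equation*}
which gives both finiteness and, via the orthogonal decomposition
\begin{equation*}
L^2(\partial \widetilde{M}) = \image \chi_{\{0\}}(d^*d) \oplus \image \chi_{(0,\lambda]}(d^*d) \oplus \image \chi_{(\lambda,\infty)}(d^*d),
\end{equation*}
the value of the $\Gamma$-codimension asserted in the corollary.

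For monotonicity in $\lambda$, I would simply note that $0 < \lambda < \lambda'$ implies $\chi_{(0,\lambda]}(d^*d) \le \chi_{(0,\lambda']}(d^*d)$ as positive operators, and monotonicity of the $\Gamma$-trace passes this inequality to traces. For right-continuity at any $\lambda > 0$ I would write $\chi_{(0,\lambda']}(d^*d) = \chi_{(0,\lambda]}(d^*d) + \chi_{(\lambda,\lambda']}(d^*d)$ and reduce to showing $\trace_\Gamma \chi_{(\lambda,\lambda']}(d^*d) \to 0$ as $\lambda' \downarrow \lambda$. The family $\{\image \chi_{(\lambda,\lambda']}(d^*d)\}_{\lambda' > \lambda}$ of $\mathcal{N}\Gamma$-Hilbert submodules of the $\Gamma$-finite-dimensional ambient module $\image \chi_{(\lambda,\mu]}(d^*d)$ (for any fixed $\mu>\lambda$) is directed by $\supset$, and its intersection is $\{0\}$ because the single point $\lambda$ is excluded from every interval $(\lambda,\lambda']$. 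Theorem \ref{Lueck-von-Neumann} then forces $\trace_\Gamma \chi_{(\lambda,\lambda']}(d^*d) \to 0$. The statement that $\trace_\Gamma \chi_{(0,\lambda]}(d^*d) \to 0$ as $\lambda \to 0^+$ follows by the same argument applied to $\{\image \chi_{(0,\lambda]}(d^*d)\}_{\lambda > 0}$, whose intersection is again trivial since $0 \notin (0,\lambda]$.

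The only real subtlety is that on the Galois cover $\partial \widetilde{M}$ the operator $d^*d$ generally has non-discrete spectrum, so a classical eigenvalue-counting argument is not available; I would instead rely on the Browder--Garding decomposition (Theorem \ref{Browder-Garding}) to make sense of the functional calculus and on Theorem \ref{Lueck-von-Neumann} to handle continuity of $\Gamma$-dimensions along directed systems. These are precisely the von Neumann substitutes for the standard counting arguments, and together with the identification $d^*d = \Delta$ on $\overline{\image d^*}$ they make each of the four assertions of the corollary essentially immediate.
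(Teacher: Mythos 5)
Your proposal is correct and follows essentially the same route as the paper: the key step in both is the Kodaira decomposition underlying $\trace_\Gamma\chi_{(0,\lambda]}(\Delta)=\trace_\Gamma\chi_{(0,\lambda]}(d^*d)+\trace_\Gamma\chi_{(0,\lambda]}(dd^*)$ (which you justify via $d^*d=\Delta$ on $\overline{\image d^*}$ and $d^*d=0$ on $\ker d$), after which Theorem~\ref{Lueck-von-Neumann} applied to the directed family of spectral subspaces gives finiteness, monotonicity, right-continuity, and the $\lambda\to 0$ limit. You spell out the right-continuity and the $\lambda\to 0$ arguments more explicitly than the paper's terse ``from here the statement follows,'' and your passing appeal to Browder--Garding is unnecessary (the spectral theorem for self-adjoint operators already gives the functional calculus), but these are cosmetic differences rather than a different proof.
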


\begin{proof}
The spectral projections $\chi_{(0,\lambda]}(d^*d)$ and $\chi_{(0,\lambda]}(dd^*)$
of $d^*d$ and $dd^*$, respectively, are well-defined, since both operators are self-adjoint. 
Note 
\begin{equation}\label{trace-gamma}
	\trace_\Gamma \, \chi_{(0,\lambda]} (\Delta) 
	= \trace_\Gamma \, \chi_{(0,\lambda]} (d^*d) 
	+ \trace_\Gamma \, \chi_{(0,\lambda]} (dd^*) < \infty.
\end{equation}
Hence the $\Gamma$-traces are finite individually. The fact that $\trace_\Gamma \chi_{\left( 0,\lambda \right] }(d^*d)$ is monotonously
decreasing with $\lambda \to 0$, is a basic property of $\Gamma$-traces, see e.g. 
\cite[Theorem 1.9 (1)]{Lueck-book}. Let us now show right-continuity in $\lambda > 0$.
We apply Theorem \ref{Lueck-von-Neumann}, which yields the following relation 
\begin{align*}
\trace_\Gamma \,
 \chi_{\left( 0,\lambda \right] }(d^*d) &\equiv \dim_{\Gamma}
 \left(  \textup{im} \, \chi_{\left( 0,\lambda \right] }(d^*d)\right) \\ &= 
 \inf \, \left\{ \dim_{\Gamma}
 \left(\textup{im} \, \chi_{\left( 0,\lambda' \right] }(d^*d)\right) \mid \lambda' \geq \lambda\right\}
 \\ &= \inf \, \left\{ \trace_\Gamma \,
 \chi_{\left( 0,\lambda' \right] }(d^*d) \mid \lambda' \geq \lambda\right\} = \lim_{\lambda' \downarrow \, \lambda}
 \trace_\Gamma \, \chi_{\left( 0,\lambda' \right] }(d^*d),
 \end{align*}
where in the last equality we used the fact that infimum and limit of a monotonously decreasing sequence agree.
This proves right continuity in $\lambda > 0$. 
It remains to prove that $\trace_\Gamma \, \chi_{\left( 0,\lambda \right] }(d^*d)$
converges to zero as $\lambda \downarrow 0$. This follows again by Theorem \ref{Lueck-von-Neumann}. Indeed
\begin{equation}\label{computation-gamma}\begin{split}
0 &= \dim_{\Gamma} \Bigl( \{ 0 \} \Bigr) 
= \dim_{\Gamma} \left(  \, \bigcap_{\lambda> 0} \textup{im} \, \chi_{(0,\lambda]}(d^*d)\right) \\ &= 
 \inf \, \left\{ \dim_{\Gamma}
 \left(\textup{im} \, \chi_{\left( 0,\lambda \right] }(d^*d)\right) \mid \lambda > 0\right\}
= \lim_{\lambda \downarrow 0} \, \trace_\Gamma \,
 \chi_{\left( 0,\lambda \right] }(d^*d),
 \end{split}\end{equation}
where in the first equality we used \cite[Theorem 1.12 (1)]{Lueck-book}. In the second equality we used
that the Borel function $\chi_{(0,\lambda]}$ converges pointwise to zero as $\lambda \downarrow 0$, and
hence by the Borel spectral calculus, $\chi_{(0,\lambda]}(d^*d)$ converges strongly to zero. As a consequence, 
for any $\omega \in L^2\Omega^*(\partial \widetilde{M})$, the image $\chi_{(0,\lambda]}(d^*d) \omega$ converges
to $0$ as $\lambda \downarrow 0$ and thus
$$
\bigcap_{\lambda> 0} \textup{im} \, \chi_{(0,\lambda]}(d^*d) = \{0\}.
$$
In the third equality in \eqref{computation-gamma}
we used Theorem \ref{Lueck-von-Neumann}. This finishes the proof. 
\end{proof}

\begin{defn} We introduce for any $\lambda > 0$ the following subspaces
\begin{enumerate}
\item $E_\lambda := \image(d \circ \chi_{(\lambda,\infty)} 
(d^*d)) \subset L^2\Omega^*(\partial \widetilde{M})$.
\item For the restriction $q: \cH^*_{(2)}(\widetilde{M}) \to \ker d \subset \dom^*_{(2)}(\partial \widetilde{M})$ we define
$$
K_\lambda := q^{-1}(E_\lambda) \subset  \cH^*_{(2)}(\widetilde{M}) \cong  \cohr^*(\widetilde{M}).
$$
\item For the restriction $q[0]: \cH^*_{(2)}(\widetilde{M}_\infty) \to \image d \subset \dom^*_{(2)}(\partial \widetilde{M})$ we define
$$
H_\lambda := q[0]^{-1}(E_\lambda) \subset  \cH^*_{(2)}(\widetilde{M}_\infty) \cong  \cohr^*(\widetilde{M}_\infty).
$$
\end{enumerate}
\end{defn}

\noindent By construction, we have an estimate of the $\Gamma$-codimension of
$E_\lambda \subset \overline{\image d}$ by
\begin{equation*}
	\textup{codim}_\Gamma(E_\lambda \subset  \overline{\image d}) \le
         \trace_\Gamma \, \chi_{(0,\lambda]} (d^*d).
\end{equation*}

\noindent By Corollary \ref{codim}, for each $\epsilon > 0$ there exists 
$\lambda > 0$ sufficiently small so that 
\begin{equation}\label{small-codimensions}
\begin{split}
&\textup{codim}_\Gamma(E_\lambda \subset \overline{\image d}) < \epsilon, \\
&\textup{codim}_\Gamma (K_\lambda \subset q^{-1} (\overline{\image d}) \equiv \ker [q] )< \epsilon \\
&\textup{codim}_\Gamma (H_\lambda \subset \cH_{(2)}^*(\widetilde{M}_\infty)) < \epsilon.
\end{split}
\end{equation}

\begin{prop}\label{prop_6'}
	$K_\lambda \subset \image [\iota]$.
\end{prop}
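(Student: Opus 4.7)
The plan is as follows. Given $\omega \in K_\lambda$ with harmonic representative $\omega \in \cH_{(2)}^{*}(\widetilde{M})$ satisfying $q\omega \in E_\lambda$, I will exhibit $\omega' \in \dom_{\min}^{*}(\widetilde{M}) \cap \ker d$ such that $[\iota]([\omega']) = [\omega]$ in $\cohr^{*}(\widetilde{M})$. By the very definition of $E_\lambda$ there exists $\alpha \in L^2(\partial \widetilde{M})$ so that $\gamma := \chi_{(\lambda,\infty)}(d^*d)\,\alpha$ satisfies $d\gamma = q\omega$. Since $\gamma$ lies in the spectral subspace of $d^*d$ on which the spectrum is bounded below by $\lambda$, one has the spectral-gap bound
\[
\|\gamma\|_{L^2}^2 \,\le\, \lambda^{-1}\langle d^*d\,\gamma, \gamma\rangle \,=\, \lambda^{-1}\|d\gamma\|_{L^2}^2 \,=\, \lambda^{-1}\|q\omega\|_{L^2}^2,
\]
which is exactly what the introduction of $K_\lambda$ buys us.

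Next, I would pick a cutoff $\psi \in C^\infty_c[0,1)$ with $\psi \equiv 1$ near $x=0$ and set $\tilde{\gamma}(x,p) := \psi(x)\,\gamma(p)$ in the collar $[0,1) \times \partial \widetilde{M} \subset \widetilde{M}$, extended trivially by zero to the interior. The $L^2$-bound above guarantees $\tilde{\gamma} \in L^2(\widetilde{M})$, and the product structure of $g$ in the collar yields $d\tilde{\gamma} = \psi\, q\omega + \psi'\, dx \wedge \gamma \in L^2(\widetilde{M})$, so $\tilde{\gamma} \in \dom_{\max}^{*}(\widetilde{M})$. Define $\omega' := \omega - d\tilde{\gamma}$; then $\omega' \in \dom_{\max}^{*}(\widetilde{M}) \cap \ker d$, and pulling back to the boundary, $q(d\tilde{\gamma}) = \psi(0)\, d_{\partial\widetilde{M}}\gamma = q\omega$, so that $q\omega' = 0$.

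To finish, I would invoke the Galois-covering analog of the characterization established inside the proof of Proposition \ref{prop_2}: the double $(\widetilde{M}_d, \widetilde{g}_d)$ is again a Witt space, its exterior derivative has a unique closed $L^2$-extension by Theorem \ref{Stokes-thm}, and the $(-1)$-eigenspace of the induced involution, restricted to $\widetilde{M}$, identifies $\mathscr{E}_{\min}^*(\widetilde{M})$ with precisely those forms in $\mathscr{E}_{\max}^*(\widetilde{M})$ whose boundary pullback vanishes. Harmonic forms in $\cH_{(2)}^*(\widetilde{M})$ are smooth up to $\partial \widetilde{M}$ (away from singular strata) by interior elliptic regularity of the Hodge Laplacian, and $\tilde{\gamma}$ is smooth in a neighborhood of $\partial \widetilde{M}$, so $\omega'$ qualifies and lies in $\dom_{\min}^{*}(\widetilde{M})$. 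Consequently $[\omega'] \in \cohr^{*}(\widetilde{M}, \partial \widetilde{M})$ is a well-defined class, and since $d\tilde{\gamma} \in \overline{\image d}$ in the maximal complex, $L^2$-continuity of $\iota$ yields $[\iota]([\omega']) = [\omega' + d\tilde{\gamma}] = [\omega]$ in $\cohr^{*}(\widetilde{M})$, as required.

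The main obstacle is precisely this last step: transferring the doubling characterization of $\mathscr{E}_{\min}^*$ from the compact base $M$ to the non-compact Galois cover $\widetilde{M}$. The Witt condition on $\widetilde{M}_d$ must be leveraged to give uniqueness of ideal boundary conditions across the join, and non-compactness of $\widetilde{M}$ means that, in addition to the doubling argument, one needs a $\Gamma$-invariant compact-exhaustion cutoff to approximate $\omega'$ by smooth compactly supported forms; the $L^2$-integrability of $\omega'$ makes the error terms of the form $|d\chi_n|\cdot|\omega'|$ vanish in the limit.
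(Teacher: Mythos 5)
Your construction---choosing $\gamma \in \image\,\chi_{(\lambda,\infty)}(d^*d)$ with $d\gamma = q\omega$, noting the gap bound $\|\gamma\|_{L^2}^2 \le \lambda^{-1}\|q\omega\|_{L^2}^2$, and setting $\omega' = \omega - d(\psi\gamma)$---matches the paper's proof step for step up to and including $q\omega' = 0$, and you helpfully make the $L^2$-estimate that the spectral gap buys explicit (the paper states only that $q\omega \in E_\lambda \subset \image d$). The divergence, and the gap, is in how you then conclude $\omega' \in \dom^*_{\min}(\widetilde{M})$. The paper finishes with a purely Hilbert-space argument: from $q\omega' = 0$, the $L^2$-Stokes theorem (Theorem~\ref{Stokes-thm}) gives $\langle \omega', d^t v\rangle_{L^2(\widetilde{M})} = 0$ for all smooth $v \in \dom(d_{\min}^*)$, so $\omega' \in (\overline{\image d_{\min}^*})^\perp = \ker d_{\min}$, and the weak Kodaira decomposition then yields $[\iota\omega'] = [\omega]$ in $\cohr^*(\widetilde{M})$. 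No smoothness hypothesis on $\omega'$ enters.

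Your route instead tries to place $\omega'$ in the smooth subcomplex $\mathscr{E}^*_{\min}(\widetilde{M}) = \dom^*_{\min}(\widetilde{M}) \cap \Omega^*(\widetilde{M})$ via a doubling characterization carried over from Proposition~\ref{prop_2}. Besides the obstacle you correctly flag (transferring that characterization to the non-compact cover, plus an exhaustion argument), there is an unaddressed smoothness gap: $\gamma = \chi_{(\lambda,\infty)}(d^*d)\,\alpha$ is an $L^2$-form on $\partial\widetilde{M}$ produced spectrally, and there is no a priori reason it should be smooth. Hence $\tilde\gamma = \psi\gamma$ and $\omega' = \omega - d\tilde\gamma$ need not lie in $\Omega^*(\widetilde{M})$, and the assertion that ``$\tilde\gamma$ is smooth in a neighborhood of $\partial\widetilde{M}$'' is unjustified. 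The doubling characterization only identifies the smooth subcomplex, so ``$q\omega' = 0$ plus doubling'' does not on its own put $\omega'$ in $\dom^*_{\min}(\widetilde{M})$; you would need either a separate regularity argument for $\gamma$ on the non-compact singular boundary, or a non-smooth (e.g.\ Sobolev-level) version of the doubling characterization. The paper's $L^2$-Stokes/Kodaira argument is designed precisely to avoid both of these.
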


\begin{proof}
Consider $[\w] \in K_\lambda \subset \cohr^*(\widetilde{M})$ with harmonic representative $\w \in \cH_{(2)}^*(\widetilde{M})$.
Then $q\w \in E_\lambda \subset \image d$ and hence there exists $\alpha \in \dom_{(2)}^*(\partial \widetilde{M})$
such that $q\w = d\alpha$. Fix any smooth cutoff function $\phi \in C^\infty_0[0,1)$, such that
$\phi \equiv 1$ identically near $x=0$. Consider $\phi \alpha$, extended trivially to a form in $\dom^*_{\max}(\widetilde{M})$.
Set
$$
W:= \w - d(\phi \alpha).
$$
Note that by definition, $qW=0$.
Denote by $d_{\min}$ and $d_{\max}$ the minimal and maximal closed extensions of 
the exterior derivative on $\widetilde{M}$.	Then by the $L^2$-Stokes theorem in Theorem \ref{Stokes-thm}, we find 
for any smooth $v \in \dom_{\max}(d^t) = \dom(d_{\min}^*)$
$$
\langle W, d^t v \rangle_{L^2\Omega^*(\widetilde{M})} = \pm \int_{\partial \widetilde{M}} qW \wedge * qv = 0.
$$	
Consequently, $W \in \overline{\image d_{\min}^*}^\perp = \ker d_{\min}$. 
By the weak Kodaira decomposition
$$
\iota W = \w - d(\phi \alpha) \in \cH^*_{(2)}(\widetilde{M}) \oplus \overline{\image d_{\max}}.
$$
Hence $[\iota] [W] \equiv [\iota W] = [\w]$. This proves the inclusion.
\end{proof}

\begin{prop}
$\overline{\image [\iota]} = \overline{\image [r]} = \ker [q].$
\end{prop}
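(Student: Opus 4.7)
The plan is to prove the double equality by establishing the chain $\ker [q] \subseteq \overline{\image [\iota]} \subseteq \overline{\image [r]} \subseteq \ker [q]$; the forward inclusions are routine consequences of Proposition \ref{prop_3'}, while the reverse one $\ker [q] \subseteq \overline{\image [\iota]}$ is the decisive step and will be extracted from Proposition \ref{prop_6'} together with the $\Gamma$-codimension control of Corollary \ref{codim}.

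By Proposition \ref{prop_3'}, $\image [\iota] \subseteq \image [r] \subseteq \ker [q]$, so once we know that $\ker [q]$ is closed, taking $L^2$-closures yields $\overline{\image [\iota]} \subseteq \overline{\image [r]} \subseteq \ker [q]$. Closedness of $\ker [q]$ in turn follows from the continuity of $[q]$ with respect to the reduced-cohomology Hilbert-space topology, which in the present setting is a consequence of the fact that harmonic representatives are smooth up to $\partial \widetilde{M}$ by interior elliptic regularity of the Hodge Laplacian and that, thanks to the product structure of the metric near $\partial \widetilde{M}$, the trace of a harmonic form is $L^2(\partial \widetilde{M})$-bounded by the $L^2(\widetilde{M})$-norm of the form.

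The nontrivial inclusion $\ker [q] \subseteq \overline{\image [\iota]}$ is established as follows. Fix $\lambda > 0$. By Proposition \ref{prop_6'}, $K_\lambda \subseteq \image [\iota]$, so
$$K_\lambda \subseteq \image [\iota] \subseteq \image [r] \subseteq \ker [q].$$
By Corollary \ref{codim} and the definition of $K_\lambda$, one has $\textup{codim}_\Gamma(K_\lambda \subset \ker [q]) \leq \trace_\Gamma \, \chi_{(0,\lambda]}(d^*d)$, which tends to $0$ as $\lambda \to 0^+$; moreover the family $\{K_\lambda\}_{\lambda>0}$ is increasing as $\lambda$ decreases, because so is $\image\chi_{(\lambda,\infty)}(d^*d)$. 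Applying Theorem \ref{Lueck-von-Neumann} to the decreasing directed system of orthogonal complements $\{K_\lambda^\perp \cap \ker [q]\}_{\lambda>0}$ gives that the intersection $\bigcap_{\lambda>0}\bigl(K_\lambda^\perp \cap \ker [q]\bigr)$ has zero $\Gamma$-dimension and is therefore trivial. Equivalently, $\bigcup_{\lambda > 0} K_\lambda$ is $L^2$-dense in $\ker [q]$, and since every $K_\lambda$ is contained in $\image [\iota]$, we conclude $\ker [q] \subseteq \overline{\image [\iota]}$, completing the chain.

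The main obstacle, in my view, is not the algebraic von Neumann step, which closely parallels L\"uck--Schick \cite{Lueck-Schick} in the smooth case, but rather the analytic prerequisites in the singular setting: the continuity of $[q]$ and the finiteness of $\dim_\Gamma \cohr^*(\widetilde{M})$, both of which require good boundary regularity and $\Gamma$-Fredholmness of the Hodge Laplacian on the Witt cover. These in turn are expected to follow from the microlocal heat kernel construction of Albin--Gell-Redman \cite{AGR17} combined with the $\Gamma$-trace estimates developed in \S\ref{index-section2}, which place reduced cohomology of the Witt covering within the $\mathcal{N}\Gamma$-Hilbert module formalism.
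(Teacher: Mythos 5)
Your proof follows the paper's approach exactly: the forward inclusions come from Proposition \ref{prop_3'}, and the decisive inclusion $\ker[q]\subseteq\overline{\image[\iota]}$ is obtained from Proposition \ref{prop_6'} combined with the shrinking $\Gamma$-codimension of $K_\lambda$ (Corollary \ref{codim}) and Theorem \ref{Lueck-von-Neumann}. Your explicit verification that $\ker[q]$ is closed (via $L^2$-continuity of the trace of harmonic forms) is a sensible supplement to a point the paper uses implicitly, but the route and the key lemmas are identical.
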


\begin{proof}
First note that by Propositions \ref{prop_3'} and \ref{prop_6'}
$$
K_\lambda \subset \image [r] \subset \ker [q].
$$
Since $K_\lambda \subset \ker [q]$ is of arbitrarily small $\Gamma$-codimension as $\lambda \to 0$,
we conclude that $\overline{\image [r]} = \ker [q]$. Moreover, since by Proposition \ref{prop_6'},
$K_\lambda \subset \image [\iota]$, we conclude again from $K_\lambda \subset \ker [q]$ being
of arbitrarily small $\Gamma$-codimension as $\lambda \to 0$, that $\ker [q] \subseteq \image [\iota]$.
Lining up the inequalities, we find
$$
\ker [q] \subseteq \image [\iota] \subseteq \overline{\image [r]} = \ker [q],
$$
where the second inclusion follows from Proposition \ref{prop_3'}. Hence all inclusions are equalities and 
the statement follows. 
\end{proof}

%%%%%%%%%%%%%%%%%%%%%%%
\subsection{Various $\Gamma$-signatures}
%%%%%%%%%%%%%%%%%%%%%%%
Let $M$ and hence also $\widetilde{M}$ now have dimension $4n$ and we restrict the 
actions of $[\iota], [r]$ and $[q]$ to degree $2n$. We then define as before in Lemmas
\ref{deRham-signature} and \ref{Hodge-signature} the signature pairings of $\widetilde{M}$.
We recall, preliminarely, that the $\Gamma$-dimensions of  $\cohr^{2n} (\widetilde{M},\partial \widetilde{M})$
and $\cH_{(2)}^{2n}(\widetilde{M}_\infty)$ are finite.  See Remark \ref{finite-gamma-dimension}
and Subsection \ref{subsub:cylindrical}.

\begin{lemma} The signature pairings on $\widetilde{M}$ are defined as follows.
\begin{enumerate}
\item The de Rham signature pairing on $\widetilde{M}$ is defined by
\begin{equation}
\begin{split}
s :  \cohr^{2n} (\widetilde{M},\partial \widetilde{M}) \times    \cohr^{2n} (\widetilde{M},\partial \widetilde{M}) \to \mathbb{C}, 
\quad  ([v], [w]) \mapsto \int_{\widetilde{M}} v \wedge w,
\end{split}
\end{equation}
and is a well-defined degenerate bilinear form with radical 
given by $\ker [\iota]$.
\item The Hodge $L^2$-signature pairing on $\widetilde{M}$ is defined by 
\begin{equation}
\begin{split}
s_{\infty}:  \cH_{(2)}^{2n}(\widetilde{M}_\infty) \times  \cH_{(2)}^{2n}(\widetilde{M}_\infty) \to \mathbb{C},
\quad (\w, \eta) \mapsto \int_{\widetilde{M}} \w \wedge \, \eta,
\end{split}
\end{equation}
and is a non-degenerate bilinear form. 
\end{enumerate}
\end{lemma}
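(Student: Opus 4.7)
My plan is to adapt the proofs of Lemma 5.3 and Lemma 5.4 from the compact base to the Galois cover setting, the main new subtlety being that cohomology here is \emph{reduced}, so representatives are identified modulo $\overline{\image d}$ rather than $\image d$.

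For the de Rham pairing $s$ on $\widetilde M$, I would first check well-definedness. Pick closed representatives $v, w \in \ker d_{\min}$ and suppose $v$ is changed by $d\alpha$ with $\alpha \in \dom_{\min}^*(\widetilde M)$. Approximating $\alpha$ by forms $\alpha_n \in \Omega^*_c(\widetilde M)$ with $d\alpha_n \to d\alpha$ in $L^2$, the $L^2$-Stokes theorem (Theorem 2.2) gives $\int_{\widetilde M} d\alpha_n \wedge w = \pm\int_{\widetilde M} \alpha_n \wedge dw + \int_{\partial \widetilde M} q\alpha_n \wedge qw$; the boundary term vanishes because $q\alpha_n = 0$ and $dw=0$, so the volume term vanishes in the limit. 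Since $|s(v,w)|\le \|v\|_{L^2}\|w\|_{L^2}$ by Cauchy--Schwarz, the pairing is $L^2$-continuous and hence descends from $\ker d_{\min}$ to the reduced quotient by $\overline{\image d_{\min}}$. Symmetry of the argument in the two variables handles the second representative simultaneously.

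For the radical, I would copy the Hodge-star argument of Lemma 5.3 verbatim. The formal adjoint identity $d^t = \pm *d*$ shows that $*$ restricts to an isometric bijection $*:\ker d_{\min} \to \ker (d_{\max})^*$, since a sequence $w_n \in \Omega_c^*$ approximating $w\in\dom(d_{\min})$ in the graph norm pushes forward to an approximating sequence $*w_n$ in the graph norm of $d^t_{\min} = (d_{\max})^*$. Because $\int_{\widetilde M} v\wedge w = \langle v, *w\rangle_{L^2}$ in middle degree $2n$ (the double-star sign is $(-1)^{2n\cdot 2n} = 1$), one has $[v]\in\textup{Rad}(s)$ iff $v \perp \ker(d_{\max})^*$ iff $v \in \overline{\image d_{\max}}$, which is exactly the statement that $[\iota][v]=0$ in reduced cohomology $\cohr^{2n}(\widetilde M)$. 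Hence $\textup{Rad}(s) = \ker [\iota]$.

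For the Hodge $L^2$-pairing $s_\infty$ on $\widetilde M_\infty$, non-degeneracy proceeds as in Lemma 5.4. If $h \in \cH^{2n}_{(2)}(\widetilde M_\infty)$ is in the radical, then $*h$ is also harmonic and $L^2$, so
\[
0 = s_\infty(h, *h) = \int_{\widetilde M} h \wedge *h = \|h\|^2_{L^2(\widetilde M)},
\]
forcing $h\equiv 0$ on $\widetilde M$ and in particular $h\restriction\partial\widetilde M\times\{0\} = 0$. The main obstacle, compared to the compact case where $\widetilde B$ would have discrete spectrum, is to then propagate vanishing along the cylindrical end. I would use the Browder--Garding spectral decomposition of $\widetilde{\slashed\partial}_{\partial \widetilde M}$ (Theorem 4.1): expressing $h$ on $\partial\widetilde M\times (-\infty,0]$ via the generalized eigenfunctions $e_n(\lambda,\cdot)$ of $\widetilde B$ as in the proof of Proposition 5.11, the $L^2$-condition on the semi-infinite cylinder selects only the modes $e^{\lambda x}$ with $\lambda>0$ (and forces the kernel component to vanish, since constants in $x$ are not $L^2$), and the vanishing trace $h(0)=0$ then kills every spectral coefficient. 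Hence $h=0$ on the entire cylinder, and so on $\widetilde M_\infty$.

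The genuinely new ingredient over the compact case is thus the replacement of two discrete-spectral arguments by their reduced/continuous-spectrum analogues: using $(\ker A^*)^\perp = \overline{\image A}$ for closed (not necessarily Fredholm) operators in identifying the radical, and using the Browder--Garding decomposition in place of an eigenbasis to get vanishing on the cylinder. Everything else, including the verification that $*$ respects the minimal/maximal extensions and that $s$ is $L^2$-continuous, is formal and follows line by line from Lemmas 5.3 and 5.4.
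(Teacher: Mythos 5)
Your proof follows essentially the same route as the paper's: well-definedness of $s$ via $L^2$-continuity plus the $L^2$-Stokes argument on approximating sequences, the radical identified by the Hodge-star isomorphism $*\colon\ker d_{\min}\to\ker d_{\max}^*$ and the characterization $(\ker d_{\max}^*)^\perp=\overline{\image d_{\max}}$, and non-degeneracy of $s_\infty$ from pairing a radical element $h$ with $*h$. You are slightly more careful than the printed proof on one point: since $s_\infty$ integrates only over $\widetilde M$, the identity $s_\infty(h,*h)=\|h\|^2_{L^2(\widetilde M)}=0$ gives $h\equiv 0$ on $\widetilde M$ but not a priori on the cylinder, and your Browder--G\aa rding/spectral propagation argument (the vanishing trace at $x=0$ together with the product form of $\widetilde{\slashed\partial}$ kills every spectral mode) correctly closes that gap, which the paper leaves to the compact-case reference.
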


\begin{proof}
In order to see that $s$ is well-defined, consider any $[v], [w] \in \cohr^{2n} (\widetilde{M},\partial \widetilde{M})$
with representatives $v+v'$ and $w+w'$, respectively, where $v',w' \in \overline{\image d_{\min}}$. We may choose
sequences $(\alpha_j), (\beta_j) \subset \mathscr{E}_{\min}(\widetilde{M})$ such that $d\alpha_j \to v'$ and $d\beta_j \to w'$ 
in $L^2\Omega^*(\widetilde{M})$. Then we compute using the argument of Lemma \ref{deRham-signature} in the second equation
$$
\int_{\widetilde{M}} (v+v') \wedge (w+w') = \lim_{j\to \infty} \int_{\widetilde{M}} (v+d\alpha_j) \wedge (w+d\beta_j)
= \int_{\widetilde{M}} v\wedge w.
$$
The proof that the radical of $s$ is given by $\ker [\iota]$ is exactly the same as in Lemma \ref{deRham-signature}
with $\image d_{\min}$ replaced by $\overline{\image d_{\min}}$. The proof of $s_\infty$ being non-degenerate
is exactly the same as in Lemma \ref{Hodge-signature}.
\end{proof}

We can now define the corresponding $\Gamma$-signatures.

\begin{defn}\label{gamma-sign-def}
\begin{enumerate}
\item  The de Rham $\Gamma$-signature $\textup{sign}_\Gamma(s)$ is defined as the 
$\Gamma$-signature of the non-degenerate symmetric bilinear form
\begin{equation}
s : \overline{\image [\iota]} \times \overline{\image [\iota]} \to \mathbb{C},
\end{equation}
induced by the de Rham signature pairing $s$ on 
$$
\frac{\cohr^{2n} (\widetilde{M},\partial \widetilde{M}) }{\textup{Rad}(s)} \equiv 
\frac{\cohr^{2n} (\widetilde{M},\partial \widetilde{M}) }{\ker [\iota]} 
\cong \overline{\image [\iota]}.$$
Recall, this is by definition the $\Gamma$-dimension of $V_+$ minus the $\Gamma$-dimension of $V_-$
with $V_+:=\chi_{(0,\infty)} (A)$ and $V_-:= \chi_{(-\infty,0)} (A)$, where $A$ the unique self-adjoint operator 
associated to $s$. \medskip

\item The Hodge $\Gamma$-signature $\textup{sign}_\Gamma(s_\infty)$ is defined as the $\Gamma$-signature
of the non-degenerate bilinear symmetric form $s_\infty$.  \end{enumerate}
\end{defn}

In order to compare intersection forms, we need a result, 
corresponding to Proposition \ref{prop_6}. However, 
for any $\w \in \cH_{(2)}^*(\widetilde{M}_\infty)$ the restriction $q[0]\w \in \overline{\image d}$
is not necessarily exact and therefore the argument of Proposition \ref{prop_6} doesn't translate 
to the setting of coverings directly. However, if $\w \in H_\lambda$ then 
$q[0] \w \in E_\lambda \subset \image d$ and hence we still get an analogue of Proposition \ref{prop_6}
as follows. Consider for any $\lambda>0$ the subspace 
$$
L_\lambda:=\overline{\image ([r](H_\lambda))},
$$ 
and the pairings
\begin{equation}
\begin{split}
&s_{\infty,\lambda} = s_\infty \restriction H_\lambda: H_\lambda \times H_\lambda \to \C, \\
&s_\lambda: L_\lambda \times L_\lambda \to \C, \quad ([u],[v]) \mapsto \int_{\widetilde{M}} u\wedge v.
\end{split}
\end{equation}

\begin{prop}\label{signatures-lambda} The pairing $s_{\infty,\lambda}$ descends to the pairing $s_\lambda$ 
with same $\Gamma$-signatures 
$$
\textup{sign}_\Gamma(s_{\infty,\lambda})  = \textup{sign}_\Gamma(s_\lambda).
$$
\end{prop}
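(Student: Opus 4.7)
The plan is to mimic the strategy of Proposition~\ref{prop_6} and Corollary~\ref{image-r} in the covering setting.
The whole point of restricting to $H_\lambda$ (resp.\ $L_\lambda = \overline{\image([r](H_\lambda))}$) is precisely that, on this subspace, the boundary restrictions $q[0]\omega$ of $\omega \in H_\lambda$ lie in $E_\lambda \subset \image d$ (rather than just in $\overline{\image d}$), so that the ``primitive'' $\alpha$ with $q[0]\omega = d\alpha$ used in the compact argument again exists. This is exactly what was missing in the general reduced $L^2$-cohomology setting of $\widetilde M$ and motivates the spectral cutoff.

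First, I would prove the intertwining identity
\begin{equation*}
s_\infty(\omega,\eta) = s_\lambda\bigl([r\omega],[r\eta]\bigr),\qquad \omega,\eta \in H_\lambda.
\end{equation*}
Given $\omega,\eta \in H_\lambda$, pick $\alpha,\beta \in \dom_{(2)}^*(\partial\widetilde M)$ with $q[0]\omega = d\alpha$ and $q[0]\eta = d\beta$, and form smooth cutoff extensions $\tilde\alpha',\tilde\beta' \in \dom^*_{\max}(\widetilde M)$ exactly as in the proof of Proposition~\ref{prop_6}. Setting $v := r\omega - d\tilde\alpha'$ and $w := r\eta - d\tilde\beta'$, we have $[v]=[r\omega]$, $[w]=[r\eta]$ in $\cohr^{2n}(\widetilde M)$, with $qv = qw = 0$. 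The $L^2$-Stokes theorem on the Witt space $\widetilde M$ (Theorem~\ref{Stokes-thm}), which applies because the Witt assumption passes to the covering, reduces the equality to
\begin{equation*}
\int_{\partial\widetilde M \times \R^+} \omega\wedge\eta \;=\; -\int_{\partial\widetilde M} \alpha\wedge d\beta.
\end{equation*}
This in turn is verified using the weak Kodaira decomposition on the cylinder and the Browder-Garding spectral decomposition of Theorem~\ref{Browder-Garding} applied to $\widetilde{\slashed\partial}_{\partial M}$: any $L^2$-harmonic form on $\partial\widetilde M \times \R^+$ whose pullback to $\partial\widetilde M \times \{0\}$ vanishes has identically vanishing ``tangential'' component $a_i$, so that the product $h_1\wedge h_2$ integrates to zero.

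Second, I would deduce that $[r]\colon H_\lambda \to L_\lambda$ is injective: if $[r]\omega = 0$ for $\omega \in H_\lambda$ then by the intertwining and the fact that $*\omega \in \cH^{2n}_{(2)}(\widetilde M_\infty)$,
\begin{equation*}
\|\omega\|^2_{L^2(\widetilde M_\infty)} = s_\infty(\omega,*\omega) = s_\lambda\bigl([r\omega],[r{*}\omega]\bigr) = 0,
\end{equation*}
hence $\omega = 0$. (Note that $*\omega$ need not lie in $H_\lambda$ itself, but this is irrelevant: the intertwining identity is symmetric enough to handle a single factor lying in $H_\lambda$ and the other in $\cH^{2n}_{(2)}(\widetilde M_\infty)$, via a minor variant of the same Stokes argument.) Combined with the definition $L_\lambda = \overline{\image([r](H_\lambda))}$, this shows that $[r]$ is a bounded injective $\mathcal N\Gamma$-morphism with dense image from $H_\lambda$ to $L_\lambda$.

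Finally, for the equality of $\Gamma$-signatures, introduce the bounded self-adjoint operators $A$ on $H_\lambda$ and $A'$ on $L_\lambda$ representing $s_{\infty,\lambda}$ and $s_\lambda$ via the respective Hodge inner products. The intertwining identity says that $[r]^* A' [r] = A$ as quadratic forms. Since $[r]$ is an injective $\Gamma$-equivariant map with dense image and $A,A'$ are bounded self-adjoint with spectrum bounded away from $0$ (on the relevant subspaces, by non-degeneracy after passage to quotients), the polar decomposition gives a $\Gamma$-equivariant partial isometry intertwining the positive and negative spectral projections. Applying Theorem~\ref{Lueck-von-Neumann} to the ranges of these projections on each side yields $\dim_\Gamma V_\pm(A) = \dim_\Gamma V_\pm(A')$, i.e.\ $\textup{sign}_\Gamma(s_{\infty,\lambda}) = \textup{sign}_\Gamma(s_\lambda)$.

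The main obstacle, as in the closed case of Proposition~\ref{prop_6}, is the cylinder computation, now carried out with the continuous Browder-Garding spectral measure of $\widetilde{\slashed\partial}_{\partial M}$ instead of a discrete eigenbasis; the second delicate point is that $s_{\infty,\lambda}$ and $s_\lambda$ are a priori only degenerate on $H_\lambda$ and $L_\lambda$ respectively, so the signature comparison must be phrased after passage to suitable non-degenerate quotients, which is why it is important that the kernel of $[r]$ on $H_\lambda$ coincides with the radical of $s_{\infty,\lambda}$ (and analogously for $L_\lambda$).
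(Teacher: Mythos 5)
Your proposal is essentially the paper's approach: the paper's entire proof is the one-line remark that it is ``a verbatim repetition of Proposition~\ref{prop_6},'' which is precisely the intertwining identity you establish in your first step, with the spectral cutoff supplying exactness of $q[0]\omega$ so that a primitive $\alpha$ exists and the cylinder Stokes computation of Proposition~\ref{prop_6} can be repeated unchanged on the covering. The remaining material in your second and third steps (injectivity of $[r]$ via $*\omega$, and the operator-theoretic signature comparison) is left implicit in the paper, and your own caveat that $*\omega$ need not lie in $H_\lambda$ is the only delicate point; but since the proposition only compares $\Gamma$-signatures of $s_{\infty,\lambda}$ and $s_\lambda$ — and the intertwining identity already shows $\ker[r]\cap H_\lambda$ lies in the radical of $s_{\infty,\lambda}$ — the signature equality follows without needing full injectivity, so this is not a genuine gap.
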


\begin{proof}
The proof is a verbatim repetition of Proposition \ref{prop_6}.
\end{proof}

The next result proves the first statement in our main Theorem \ref{main-2}.

\begin{thm} The de Rham $\Gamma$-signature and the Hodge $\Gamma$-signature coincide.
\begin{equation}
	\left(\textup{sign}^{\Gamma}_{{\rm Ho}} (\overline{M}_{\Gamma,\infty})):= \right) \textup{sign}_\Gamma(s_\infty) = \textup{sign}_\Gamma(s)
	\left(=:\textup{sign}^{\Gamma}_{{\rm dR}} (\overline{M}_\Gamma,\partial \overline{M}_\Gamma))\right).
\end{equation}
\end{thm}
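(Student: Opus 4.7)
The strategy is to pass to the limit $\lambda \to 0$ on both sides of the equality $\textup{sign}_\Gamma(s_{\infty,\lambda}) = \textup{sign}_\Gamma(s_\lambda)$ provided by the previous proposition. Thus it suffices to establish the two convergence statements
\begin{align*}
\textup{sign}_\Gamma(s_{\infty,\lambda}) \xrightarrow{\lambda \to 0} \textup{sign}_\Gamma(s_\infty), \qquad
\textup{sign}_\Gamma(s_\lambda) \xrightarrow{\lambda \to 0} \textup{sign}_\Gamma(s).
\end{align*}
Both limits rest on the same analytic principle: the $\Gamma$-signature of a non-degenerate symmetric bilinear form is stable under restriction to a $\Gamma$-invariant closed subspace of arbitrarily small $\Gamma$-codimension. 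More precisely, if $W \subset V$ is such a subspace with $\textup{codim}_\Gamma(W \subset V) < \epsilon$, then the unique self-adjoint operator $A$ representing $s$ and its compression $P_W A P_W$ to $W$ have positive and negative spectral projections whose $\Gamma$-dimensions differ by at most $\epsilon$ (the latter possibly modified by the $\Gamma$-dimension of the kernel of the compression, which again is bounded by $\epsilon$). This is the same stability argument used by L\"uck--Schick \cite[\S 3]{Lueck-Schick} in the smooth case.

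For the first convergence, recall from Corollary \ref{codim} and the definition of $H_\lambda = q[0]^{-1}(E_\lambda)$ that
\begin{align*}
\textup{codim}_\Gamma\bigl(H_\lambda \subset \cH^{2n}_{(2)}(\widetilde{M}_\infty)\bigr) \le \textup{Tr}_\Gamma\, \chi_{(0,\lambda]}(d^*d) \xrightarrow{\lambda \to 0} 0.
\end{align*}
Since $s_\infty$ is non-degenerate on $\cH^{2n}_{(2)}(\widetilde{M}_\infty)$, the stability principle above applied to the inclusion $H_\lambda \hookrightarrow \cH^{2n}_{(2)}(\widetilde{M}_\infty)$ yields $\textup{sign}_\Gamma(s_{\infty,\lambda}) \to \textup{sign}_\Gamma(s_\infty)$.

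For the second convergence, I first need to show that $L_\lambda = \overline{[r](H_\lambda)}$ has vanishing $\Gamma$-codimension in $\overline{\image [\iota]}$ as $\lambda \to 0$. The key intermediate step is to prove that $K_\lambda \subseteq L_\lambda$ modulo a subspace of $\Gamma$-codimension bounded by $\textup{Tr}_\Gamma\, \chi_{(0,\lambda]}(d^*d)$. Concretely, given $[\w] \in K_\lambda$ represented by $\w \in \cH^{2n}_{(2)}(\widetilde{M})$ with $q\w = d\alpha$ for some $\alpha$ in the image of $\chi_{(\lambda,\infty)}(d^*d)$, one constructs a harmonic representative on $\widetilde{M}_\infty$ by solving the associated boundary-value problem on the cylinder $\partial \widetilde{M} \times (-\infty,0]$: using the Browder--Garding decomposition (Theorem \ref{Browder-Garding}) for $\widetilde{\slashed{\partial}}_{\partial M}$, one extends $\alpha$ into the cylinder by exponentially decaying eigensections with eigenvalues bounded away from zero by $\sqrt{\lambda}$, and then applies Hodge decomposition on $\widetilde{M}_\infty$ to produce a harmonic form $h \in H_\lambda$ with $[rh] = [\w]$. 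Combined with Proposition \ref{prop_6'} ($K_\lambda \subset \image[\iota]$) and the codimension estimate for $K_\lambda \subset \ker[q] = \overline{\image[\iota]}$, this gives $\textup{codim}_\Gamma(L_\lambda \subset \overline{\image[\iota]}) \to 0$. The stability principle then yields $\textup{sign}_\Gamma(s_\lambda) \to \textup{sign}_\Gamma(s)$.

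The main obstacle is the second convergence, specifically the construction producing $h \in H_\lambda$ with $[rh] = [\w]$: here the spectral gap at $\sqrt{\lambda}$ is essential in order to ensure that the cylindrical extension lies in $L^2$, and one must carefully apply the unique-closed-extension property (Theorem \ref{Stokes-thm}) of $d$ on $\widetilde{M}_\infty$ to conclude that the extended form represents a true harmonic class. The stability of $\Gamma$-signature under arbitrarily small $\Gamma$-codimension perturbation is a functional-analytic fact about $\Gamma$-Fredholm forms, and follows from the monotonicity and continuity of $\Gamma$-dimensions of spectral projections (Theorem \ref{Lueck-von-Neumann}). Once both convergences are in place, the equality of $\Gamma$-signatures follows immediately from the previous proposition.
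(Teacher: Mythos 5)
Your high-level strategy matches the paper's proof: both pass to the limit $\lambda \to 0$ in the identity $\textup{sign}_\Gamma(s_{\infty,\lambda}) = \textup{sign}_\Gamma(s_\lambda)$ supplied by the preceding proposition, using stability of the $\Gamma$-signature on $\Gamma$-submodules of arbitrarily small $\Gamma$-codimension. The monotonicity--continuity argument via Theorem \ref{Lueck-von-Neumann} that you invoke for this stability is correct and is precisely the paper's implicit reasoning; your treatment of the first codimension bound $\textup{codim}_\Gamma\bigl(H_\lambda \subset \cH^{2n}_{(2)}(\widetilde{M}_\infty)\bigr) < \epsilon$ is the same as the paper's.

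The divergence --- and the gap --- is in how you try to justify the second codimension bound $\textup{codim}_\Gamma\bigl(L_\lambda \subset \overline{\image\,[r]}\bigr) < \epsilon$. You propose to prove $K_\lambda \subseteq L_\lambda$ up to small codimension by re-running the cylindrical extension of Proposition \ref{prop_5}: given $[\w]\in K_\lambda$, extend the primitive $\alpha$ of $q\w$ into $\partial\widetilde{M}\times (-\infty,0]$ using the spectral gap, glue into a closed form $W$ on $\widetilde{M}_\infty$, and take the harmonic component $h$ in the Kodaira decomposition $W = h + \eta$. But you cannot conclude $h \in H_\lambda$: membership in $H_\lambda$ requires $q[0]h \in E_\lambda$, whereas $q[0]h = q[0]W - q[0]\eta = d\alpha - q[0]\eta$ and $\eta \in \overline{\image\, d}$ is only constrained cohomologically, so there is no reason for $q[0]\eta$ to respect the spectral window $E_\lambda$. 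The Kodaira projection on $\widetilde{M}_\infty$ does not interact with the spectral decomposition of $\widetilde{B}$ at the boundary hypersurface. This is exactly the step you flag as the main obstacle, and as written it does not close.

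The paper's (implicit) route is cleaner and avoids the construction entirely. One first shows, as in Corollary \ref{image-r}, that $[r]: \cH^{2n}_{(2)}(\widetilde{M}_\infty) \to \cohr^{2n}(\widetilde{M})$ is $\Gamma$-injective: if $h \in H_\lambda \cap *H_\lambda$ satisfies $[r]h = 0$, then $0 = s_\lambda([rh],[r*h]) = s_{\infty,\lambda}(h,*h) = \|h\|^2_{L^2}$, and since $\textup{codim}_\Gamma(H_\lambda\cap *H_\lambda) \to 0$ as $\lambda \to 0$ this forces $\dim_\Gamma\ker[r] = 0$ via Theorem \ref{Lueck-von-Neumann}. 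For a bounded $\Gamma$-equivariant injection the $\Gamma$-dimension of the closure of the image of a $\Gamma$-submodule equals the $\Gamma$-dimension of the submodule, so $\dim_\Gamma L_\lambda = \dim_\Gamma \overline{[r](H_\lambda)} = \dim_\Gamma H_\lambda$ and likewise $\dim_\Gamma \overline{\image\,[r]} = \dim_\Gamma \cH^{2n}_{(2)}(\widetilde{M}_\infty)$, which gives the required codimension bound directly without any cylindrical construction.
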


\begin{proof}
Since by Corollary \ref{codim} the $\Gamma$-codimensions are right-continuous and in fact arbitrarily small, see \eqref{small-codimensions}, i.e. for any $\epsilon>0$ there
exists $\lambda>0$ sufficiently small such that 
\begin{equation}\label{HLe}
\textup{codim}_\Gamma (H_\lambda \subset \cH_{(2)}^*(\widetilde{M}_\infty)) < \epsilon,
\quad \textup{codim}_\Gamma(L_\lambda \subset \overline{\image [r]}) < \epsilon.
\end{equation}
The first and the second estimate in \eqref{HLe} imply that 
\begin{equation}
	\left| \textup{sign}_\Gamma(s_\infty) - \textup{sign}_\Gamma(s_{\infty, \lambda}) \right| < \epsilon,
	\quad \left| \textup{sign}_\Gamma(s) - \textup{sign}_\Gamma(s_{\lambda}) \right| < \epsilon,
\end{equation}
respectively, By Proposition \ref{signatures-lambda} we conclude
\begin{equation}
	\left| \textup{sign}_\Gamma(s) - \textup{sign}_\Gamma(s_{\infty}) \right| < 2 \epsilon.
\end{equation}
We finally obtain,  by taking $\epsilon \to 0$, that the two $\Gamma$-signatures coincide: $\textup{sign}_\Gamma(s) = \textup{sign}_\Gamma(s_{\infty})$.
\end{proof}

Note that exactly as in Corollary \ref{image-r} the argument shows that 
$[r]$ is injective with dense image.

%%%%%%%%%%%%%%%%%%%%%%%%%%%%%%%%%%
\section{Signature formula on stratified Witt--spaces with boundary}\label{main-section}
%%%%%%%%%%%%%%%%%%%%%%%%%%%%%%%%%%

In this section we prove the signature formula in our main Theorem \ref{main-1}, that is,
the signature theorem on a compact smoothly stratified space $(M,g)$ with boundary, satisfying the geometric Witt assumption.
Our proof adapts the classical argument of Atiyah, Patodi and Singer \cite{APSa} to the singular setting.
Recall that the signature operator $D$ has the following form in the collar neighborhood $[0,1) \times \partial M$
of the boundary
\begin{equation}\label{product-form-D}
D = \sigma \left( \frac{\partial}{\partial x} + B \right),
\end{equation}
where the tangential operator $B$ acting on differential forms over $\partial M$ is essentially
self-adjoint by the geometric Witt assumption. We identify $B$ with its unique self-adjoint extension,
which is discrete by \cite[Theorem 1.1]{ALMP1} and defines the closed extension of $D$ with Atiyah-Patodi-Singer
boundary conditions by taking the graph-closure of the core domain \eqref{core-domain}. $D$ is notationally
identified with the resulting closed extension.\medskip

We write $D_\infty$ for the signature operator on $M_\infty$. Note that the Hodge Dirac 
operator $\slashed{\partial}_\infty = d+d^t$ on $M_\infty$ is related to $D_\infty$ by
$\slashed{\partial}_\infty = D_\infty \oplus D^t_\infty$. 
By the geometric Witt assumption, the minimal and maximal domains for 
$\slashed{\partial}_\infty$ coincide, and hence same holds for $D_\infty$ and $D^t_\infty$
\begin{equation}\label{min-max-same}
\begin{split}
&\dom_{\min}(D_\infty) = \dom_{\max}(D_\infty) \equiv \dom (D_\infty),\\ 
&\dom_{\min}(D^t_\infty) = \dom_{\max}(D^t_\infty) \equiv \dom (D_\infty^*).
\end{split}
\end{equation}
We henceforth identify $D_\infty$ notationally with its unique closed extension. 
\begin{defn}
We call any differential form $u$ on $M_\infty$ an extended $L^2$-section 
	if $u\restriction M \in L^2\Omega^*(M)$ and $u \restriction \partial M \times (-\infty,0] = g + u_\infty$ 
	where $g \in L^2\Omega^*(\partial M \times (-\infty,0])$ and $u_\infty \in \ker B$. We call $u_\infty$
	the limiting value of $u$.
\end{defn}

By construction, the extended $L^2$-solutions have an element of $\ker B$ as a limiting 
value at minus infinity of the cylindrical end $\partial M \times (-\infty,0]$. Furthermore,
in full accordance with the notation of \cite{APSa} we proceed with the following definition.

\begin{defn}\label{h}
We write $D_\infty$ for the signature operator on $M_\infty$. 
\begin{enumerate}
	\item We denote the space
	of extended $L^2$-sections, solving $D_\infty u = 0$	by\\ $\textup{ext-}\ker_{(2)} D_\infty$, where
	$D_\infty u$ is defined in the distributional sense. We call such solutions extended $L^2$-solutions and write
	$$
	h_\infty(D) := \dim \, \{ u_\infty \in \ker B \mid u \in \textup{ext-}\ker_{(2)} D_\infty \}.
	$$
	\item We denote the space of extended $L^2$-sections, solving $D^t_\infty u = 0$	
	by\\ $\textup{ext-}\ker_{(2)} D^*_\infty$, where $D^t_\infty u$ is defined in the distributional sense. 
	We call such solutions extended $L^2$-solutions and write
	$$
	h_\infty(D^*) := \dim \, \{ u_\infty \in \ker B \mid u \in \textup{ext-}\ker_{(2)} D^*_\infty \}.
	$$
	\item We denote the kernel of $D_\infty$ as $\ker_{(2)} D_\infty$. By definition, it 
	stands for the space of $L^2$-sections, solving $D_\infty u = 0$.
	We denote the kernel of $D^*_\infty$ as $\ker_{(2)} D^*_\infty$. By definition, it 
	stands for the space of $L^2$-sections, solving $D^t_\infty u = 0$. We write
	$$
	h(D):= \dim \ker_{(2)} D_\infty, \quad h(D^*):= \dim \ker_{(2)} D^*_\infty.
	$$
\end{enumerate}
\end{defn}

By construction, the various dimensions in the Definition \ref{h} are related by
\begin{equation}\label{hh}
\begin{split}
&\dim \textup{ext-}\ker_{(2)} D_\infty = h(D) + h_\infty(D), \\
&\dim \textup{ext-}\ker_{(2)} D^*_\infty = h(D^*) + h_\infty(D^*).
\end{split}
\end{equation}

\begin{prop}\label{prop3_2}
$$
\ker D \cong \ker_{(2)} D_\infty, \quad \ker D^* \cong \textup{ext-}\ker_{(2)} D^*_\infty.
$$
\end{prop}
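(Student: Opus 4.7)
The plan is to adapt the classical Atiyah-Patodi-Singer extension-restriction argument to the wedge setting. The key analytic inputs are the product structure \eqref{product-form-D} of $D$ on the collar, which extends verbatim to the cylindrical end of $M_\infty$; the discreteness of $\spec B$ from the geometric Witt assumption via \cite[Theorem 1.1]{ALMP1}; and the coincidence \eqref{min-max-same} of minimal and maximal closed extensions of $D_\infty$, so that the unique extension is characterized by the distributional equation $D_\infty u \in L^2(M_\infty)$ with $u \in L^2(M_\infty)$.

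For the first identification $\ker D \cong \ker_{(2)} D_\infty$, I would begin with the extension map. Given $u \in \ker D$, elliptic regularity on the interior and up to the smooth part of the collar of $\partial M$ makes $u$ smooth there, so the trace $u|_{\partial M} \in L^2(\partial M)$ is well defined and admits an $L^2$-expansion $u|_{\partial M} = \sum_\lambda c_\lambda \phi_\lambda$ in the eigenbasis of $B$. The APS condition $P_+(B)\, u|_{\partial M} = 0$ retains precisely those modes which extend to $L^2$-sections on the cylindrical end $(-\infty,0] \times \partial M$ via the separation-of-variables formula $\tilde u(x,y) = \sum c_\lambda\, e^{-\lambda x}\phi_\lambda(y)$, obtained from the ODE $(\partial_x + B)\tilde u = 0$. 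Setting $\tilde u|_M := u$ gives a section $\tilde u$ on $M_\infty$ which is smooth across $\partial M$, lies in $L^2(M_\infty)$, and satisfies $D_\infty \tilde u = 0$ weakly. For the inverse, given $v \in \ker_{(2)} D_\infty$, interior elliptic regularity makes $v$ smooth on the cylindrical end; decomposing $v(x,\cdot) = \sum c_\lambda(x)\phi_\lambda$ reduces $D_\infty v = 0$ to the modewise ODEs $(\partial_x + \lambda)c_\lambda = 0$ with solutions $c_\lambda(x) = c_\lambda(0)\, e^{-\lambda x}$; $L^2$-integrability on $(-\infty,0]$ then forces $c_\lambda(0) = 0$ for every $\lambda$ failing the APS condition, so the restriction $v|_M$ lies in $\ker D$. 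By construction the two maps are mutual inverses.

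For the second identification $\ker D^* \cong \textup{ext-}\ker_{(2)} D^*_\infty$ the argument is strictly parallel, with one additional feature: the boundary condition adjoint to APS admits the $\ker B$ modes in addition to those permitted for $D$, and these $\ker B$-modes extend to the cylindrical end as sections constant in $x$, hence bounded but not $L^2$. They produce precisely the limiting values $u_\infty \in \ker B$ appearing in Definition \ref{h}. Conversely, any extended $L^2$-solution of $D^*_\infty$ decomposes as an $L^2$-part plus a $\ker B$-mode contribution by the same separation-of-variables plus $L^2$-at-$-\infty$ analysis, and the restriction to $M$ satisfies the APS-adjoint condition; the extension/restriction therefore lands precisely in $\textup{ext-}\ker_{(2)} D^*_\infty$, accounting for the shift from $\ker_{(2)}$ to $\textup{ext-}\ker_{(2)}$ in the second isomorphism.

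The main obstacle will be to justify the separation-of-variables expansion for a merely $L^2$-element of $\ker D_\infty$. One must verify that such an element is smooth on the cylindrical end away from the singular strata of $\partial M$, that its slicewise expansion in the eigenbasis of $B$ converges in $L^2(\partial M)$, and that the expansion is differentiable in $x$ in a sense compatible with $D_\infty v = 0$. Because the metric is a Riemannian product on the cylindrical end, the singular strata of $M_\infty$ there lie along constant-$x$ slices and are transverse to the normal direction; standard elliptic interior regularity for $D_\infty$, together with the trace theorem underlying Theorem \ref{Stokes-thm} up to the smooth locus of $\partial M$, reduce the separation-of-variables argument to the classical APS computation.
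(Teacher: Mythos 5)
Your proposal is correct and takes essentially the same approach as the paper: the paper's proof is a one-line reference to \cite[Proposition 3.11]{APSa} together with discreteness of $B$ (from the geometric Witt assumption), and you have simply unwound that classical extension/restriction argument, correctly identifying that the $\ker B$ modes ($x$-constant on the cylinder) are exactly what converts $\ker_{(2)}$ to $\textup{ext-}\ker_{(2)}$ in the second isomorphism. One small inaccuracy worth noting: on the cylinder the singular strata are of the form $(-\infty,0]\times W_\beta$, i.e.\ they are \emph{parallel} to the normal direction (and so meet every constant-$x$ slice), not ``transverse to the normal direction'' as you wrote; what matters for separation of variables is simply the Riemannian product structure together with the discreteness and essential self-adjointness of $B$ from \cite[Theorem 1.1]{ALMP1}, which you do invoke.
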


\begin{proof}
	Proof is exactly as in \cite[Proposition 3.11]{APSa}, using disreteness of $B$. 
\end{proof}

\begin{cor}\label{cor3_3}
	\begin{equation*}
		\ind D = \ker D - \ker D^* = h(D)  - h(D^*) - h_\infty(D^*).
	\end{equation*}
\end{cor}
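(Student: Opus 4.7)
The plan is to assemble the statement as a direct consequence of Proposition \ref{prop3_2} together with the dimension bookkeeping encoded in Definition \ref{h} and equation \eqref{hh}; the substantive analytic work has already been done in proving the Fredholm property (Proposition \ref{index-trace}) and in identifying the kernels on $M$ with extended kernels on $M_\infty$ (Proposition \ref{prop3_2}). The only step left is to read off the right-hand side by matching symbols.

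First, since $D$ with APS boundary conditions is Fredholm by Proposition \ref{index-trace}, its index is well-defined and equal to $\dim \ker D - \dim \ker D^*$; this is the first equality of the corollary. Next, I would apply the first isomorphism of Proposition \ref{prop3_2} to write
\[
\dim \ker D \;=\; \dim \ker_{(2)} D_\infty \;=\; h(D),
\]
where the second equality is precisely the definition of $h(D)$ in Definition \ref{h}(3). For the cokernel contribution, the second isomorphism of Proposition \ref{prop3_2} gives
\[
\dim \ker D^* \;=\; \dim \textup{ext-}\ker_{(2)} D^*_\infty,
\]
and then \eqref{hh} identifies this extended kernel dimension as $h(D^*) + h_\infty(D^*)$. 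Subtracting yields
\[
\ind D \;=\; h(D) \;-\; h(D^*) \;-\; h_\infty(D^*),
\]
as claimed.

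There is essentially no obstacle: each ingredient is already available, and the corollary is a bookkeeping statement. The only subtle point worth emphasizing in the write-up is that the asymmetry between $D$ and $D^*$ is genuine and built into the APS boundary condition, which explains why only $\dim \textup{ext-}\ker_{(2)} D^*_\infty$ (and not $\dim \textup{ext-}\ker_{(2)} D_\infty$) appears; this in turn is consistent with the fact that, under APS conditions on the positive spectral projection of $B$, extended $L^2$-solutions of $D$ with nontrivial limiting value in $\ker B$ are excluded from the domain, while for $D^*$ they are not. The equality \eqref{min-max-same} ensures that the closed extensions of $D_\infty$ and $D_\infty^*$ used here are unambiguous, so no further choice of boundary behaviour at the singular strata is needed.
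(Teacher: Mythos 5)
Your proposal is correct and matches the paper's proof exactly: the paper also deduces the corollary by combining Proposition \ref{prop3_2} with the dimension count \eqref{hh}. Your extra remark on where the $D$/$D^*$ asymmetry comes from is a helpful gloss but does not change the argument.
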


\begin{proof}
The statement follows from Proposition \ref{prop3_2} and \eqref{hh}.
\end{proof}

The next result is concerned with the question if solutions to $D_\infty^t D_\infty u = 0$ are in fact
solutions to $D_\infty u = 0$. While this is classical in the compact smooth setting, this is not obvious
in the singular non-compact case. 	
	
\begin{prop}\label{prop3_4}
Consider the space of extended $L^2$-harmonic forms $\textup{ext-}\cH_{(2)}^*(M_\infty)$, 
which is by definition the space of extended $L^2$-sections $u$ solving $d_\infty u=d^t_\infty u=0$. 
Here $d_\infty$ denotes the exterior derivative on $M_\infty$. Recall the involution $\tau$ from \eqref{tau}. 
Its action on differential forms over $M_\infty$ decomposes the differential forms into the $(+1)$- and 
$(-1)$-eigenspaces of $\tau$, written as $\Omega^+(M_\infty)$ and $\Omega^-(M_\infty)$, respectively. We define 
\begin{align*}
\textup{ext-}\cH_{(2)}^*(M_\infty)^\pm := \textup{ext-}\cH_{(2)}^*(M_\infty) \cap \Omega^\pm(M_\infty).
\end{align*}
Then the following statements hold
	\begin{equation}\label{useful0}
		\begin{split}
		&\ker_{(2)} D_\infty = \ker_{(2)} D_\infty^t D_\infty \\
		&\textup{ext-}\ker_{(2)} D_\infty = \textup{ext-}\ker_{(2)} D^t_\infty D_\infty = \textup{ext-}\cH_{(2)}^*(M_\infty)^+.
		\end{split}
	\end{equation}
Same holds with $D,D^t$ interchanged
\begin{equation}\label{useful1}
		\begin{split}
		&\ker_{(2)} D^*_\infty = \ker_{(2)} D_\infty D_\infty^t, \\
		&\textup{ext-}\ker_{(2)} D^*_\infty = \textup{ext-}\ker_{(2)} D_\infty D^t_\infty = \textup{ext-}\cH_{(2)}^*(M_\infty)^-.
		\end{split}
	\end{equation}
\end{prop}

\begin{proof}
The proof is partly an adaptation of \cite[Proposition 3.15]{APSa}, using 
discreteness of $B^2$ and the $L^2$-Stokes Theorem \ref{Stokes-thm} for $D$, which is non-trivial in our singular setting.
Note first that over the cylindric part $(-\infty, 0] \times \partial M \subset M_\infty$ we have
$$
D_\infty = \sigma \left( \frac{\partial}{\partial x} + B \right), 
\quad D_\infty^t D_\infty = - \frac{\partial^2}{\partial x^2} + B^2.
$$
Consider the spectrum $\{\mu, \psi_\mu\}$
of eigenvalues and eigensections of $B^2$, where the eigensections 
$\{\psi_\mu\}_\mu$ form an orthonormal basis of $L^2\Omega^*(\partial M)$ and all eigenvalues $\mu \geq 0$.
Then any $u \in L^2\Omega^*(M_\infty)$ with $D_\infty^t D_\infty u = 0$ can be written
over the cylindric part $(-\infty, 0]_x \times \partial M \subset M_\infty$ as
\begin{align}\label{u-plus-minus-expansion}
u(x) = \sum_{\mu \geq 0} \left(a_\mu e^{\mu x} + b_\mu e^{-\mu x}\right) \psi_\mu.
\end{align}
Since $u \in L^2\Omega^*(M_\infty)$, $a_0=0$ and $b_\mu = 0$ for all eigenvalues $\mu$.
Applying $D$ to the  expansion \eqref{u-plus-minus-expansion}, we conclude
\begin{equation}\label{exponential-decay}
\|u(x)\|_{L^2\Omega^*(\partial M)}, \|Du(x)\|_{L^2\Omega^*(\partial M)} \leq C e^{\alpha x}, \quad \textup{for some} \quad \alpha, C > 0.
\end{equation}
We now want to apply $L^2$-Stokes Theorem \ref{Stokes-thm} for $D$ on the compact stratified space 
$M_x := [x,0] \times \partial M \cup_{\partial M} M
\subset M_\infty$ for almost all $x<0$. Similar to Figure \ref{fig:CutOff} we consider the following set of 
cutoff functions. The cutoff functions define functions on the collar $[x,0] \times \partial M \subset M_x$
and extend trivially to the interior of $M_x$. We still denote these extensions by 
$\phi, \psi , \chi \in C^\infty(M_x)$.

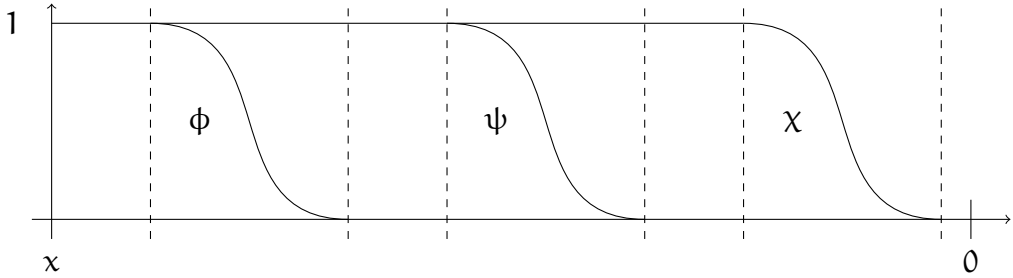
\begin{figure}[h]
\begin{center}

\begin{tikzpicture}[scale=1.3]
%coordinate axes
\draw[->] (-0.2,0) -- (9.7,0);
\draw[->] (0,-0.2) -- (0,2.2);

% 1 tick on vertical axis
\draw (-0.2,2) node[anchor=east] {$1$};
%delta, delta' ticks

\draw (9.3,-0.2) node[anchor=north] {$0$} -- (9.3,0.2);
\draw (0,-0.45) node {$x$};
\draw (0,2) -- (7,2);
%phi 
\draw (1,2) .. controls (2.4,2) and (1.6,0) .. (3,0);
\draw[dashed] (1,-0.2) -- (1,2.2);
\draw[dashed] (3,-0.2) -- (3,2.2);
%psi
\draw (4,2) .. controls (5.4,2) and (4.6,0) .. (6,0);
\draw[dashed] (4,-0.2) -- (4,2.2);
\draw[dashed] (6,-0.2) -- (6,2.2);
%psi
\draw (7,2) .. controls (8.4,2) and (7.6,0) .. (9,0);
\draw[dashed] (7,-0.2) -- (7,2.2);
\draw[dashed] (9,-0.2) -- (9,2.2);

\draw (1.5,1) node {$\phi$};
\draw (4.5,1) node {$\psi$};
\draw (7.5,1) node {$\chi$};

\end{tikzpicture}

\caption{The cutoff functions $\phi, \psi$ and $\chi$.}
\label{fig:CutOff2}
\end{center}
\end{figure}

We compute for any $u \in \dom_{\max}(D)$ and $v \in \dom_{\max}(D^t)$
(where $D$ stands for the signature operator on $M_x$ here) 
\begin{equation}\label{Green-a}
\begin{split}
\langle Du, v\rangle_{L^2\Omega^*(M_x)} &= \langle D (\psi u), v\rangle_{L^2\Omega^*(M_x)} + \langle D (1-\psi) u, v\rangle_{L^2\Omega^*(M_x)}
\\ &= \langle D (\psi u), \chi v\rangle_{L^2\Omega^*(M_x)} + \langle D (1-\psi) u, (1-\phi) v\rangle_{L^2\Omega^*(M_x)}.
\end{split}
\end{equation}
The function $(1-\psi) u$ can be viewed as an element of $\dom (D_\infty)$.
The function $(1-\chi) v$ can be viewed as an element of $\dom (D^*_\infty)$.
Hence we compute 
\begin{equation}\label{Green-b}
\begin{split}
\langle D (1-\psi) u, (1-\phi) v\rangle_{L^2\Omega^*(M_x)} &= \langle (1-\psi) u, D^t (1-\phi) v \rangle_{L^2\Omega^*(M_x)}
\\ & = \langle (1-\psi) u, (1-\phi) D^t v \rangle_{L^2\Omega^*(M_x)}
\\ & + \langle (1-\psi) u, D^t (1-\phi ) v -  (1-\phi) D^t v \rangle_{L^2\Omega^*(M_x)} 
\\ & = \langle (1-\psi) u, D^t  v \rangle_{L^2\Omega^*(M_x)},
\end{split}
\end{equation}
where in the last equality we used that by construction, 
$(1-\phi) (1-\psi) = (1-\psi)$ and the fact that, by a combination of the Leibniz rule for the exterior derivative and 
\eqref{Leibniz-dt}, the support of $D^t (1-\phi ) v -  (1-\phi) D^t v$ is contained in the support of $d(1-\phi)$,
which is disjoint from the support of $(1-\psi)$. This is why $\langle (1-\psi) u, D^t (1-\phi ) v -  (1-\phi) D^t v \rangle_{L^2\Omega^*(M_x)}$
vanishes. \medskip

Next, we compute by the $L^2$-Stokes Theorem \ref{Stokes-thm} using the product structure \eqref{product-form-D} of $D$ and
essential self-adjointness of $B$ for almost all $x\in (-\infty, 0)$
\begin{equation}\label{Green-c}
\begin{split}
\langle D (\psi u), \chi v\rangle_{L^2\Omega^*(M_x)} 
&= \langle \psi u, D^t (\chi v) \rangle_{L^2\Omega^*(M_x)} + \langle \sigma q[x]u , q[x]v \rangle_{L^2\Omega^*(\partial M_x)} \\
& = \langle \psi u, D^t (\chi  v) - \chi D^t v \rangle_{L^2\Omega^*(M_x)} + \langle \psi u, \chi D^t v \rangle_{L^2\Omega^*(M_x)} \\
& + \langle \sigma q[x]u , q[x]v \rangle_{L^2\Omega^*(\partial M_x)} = \langle \psi u, D^t v \rangle_{L^2\Omega^*(M_x)} 
\\ &+  \langle \sigma q[x]u , q[x]v \rangle_{L^2\Omega^*(\partial M_x)},
\end{split}
\end{equation}
where in the last equality we used the fact that by construction, 
$\chi \psi = \psi$ and again the fact that the support of $D^t (\chi  v) - \chi D^t v$ 
lies in the support of $d\chi$, which is disjoint from the support of $\psi$. This is why 
$\langle \psi u, D^t (\chi  v) - \chi D^t v \rangle_{L^2\Omega^*(M_x)}$ vanishes.
\medskip

By \eqref{Green-b} and \eqref{Green-c}, we conclude from \eqref{Green-a} 
for almost all $x\in (-\infty, 0)$
\begin{equation}
\begin{split}
\langle Du, v\rangle_{L^2\Omega^*(M_x)} &= \langle (\psi u), D^t v\rangle_{L^2\Omega^*(M_x)} + \langle (1-\psi) u, D^t v\rangle_{L^2\Omega^*(M_x)}
\\ &+ \langle \sigma q[x]u , q[x]v \rangle_{L^2\Omega^*(\partial M_x)} = \langle u , D^t v\rangle_{L^2\Omega^*(M_x)} 
\\ &+ \langle \sigma q[x]u , q[x]v  \rangle_{L^2\Omega^*(\partial M_x)}.
\end{split}
\end{equation}
From here we conclude for any $L^2$-solution to $D_\infty^t D_\infty u = 0$
(thus $u$ and $Du$ are in particular solutions to elliptic equations and therefore smooth in the regular part,
and hence admit a well-defined restriction to $\partial M_x$)
\begin{equation}\label{D-Stokes}
\begin{split}
\| D_\infty u \|^2_{L^2\Omega^*(M_\infty)} &= \lim_{x\to -\infty} 
\langle Du, Du\rangle_{L^2\Omega^*(M_x)} \\ &= \lim_{x\to -\infty} 
\left(\langle Du, Du\rangle_{L^2\Omega^*(M_x)} - \langle D^t D u, u\rangle_{L^2\Omega^*(M_x)}\right)
\\ &= \lim_{x\to -\infty} \langle - q[x]Du , \sigma q[x]u  \rangle_{L^2\Omega^*(\partial M_x)} = 0,
\end{split}
\end{equation}
where in the last we used the exponential decay in \eqref{exponential-decay}.
This proves 
$$
\ker_{(2)} D_\infty = \ker_{(2)} D_\infty^t D_\infty.
$$
For the statement on the extended $L^2$-sections, solving $D^t_\infty D_\infty u = 0$, 
the series representation \eqref{u-plus-minus-expansion} still holds, with 
$a_0=0$ and $b_\mu = 0$ for all eigenvalues $\mu>0$. In particular, the series may admit
constant terms (times $\phi_0$). Thus, in contrast to \eqref{exponential-decay}, 
we can only conclude that $u(x)$ is bounded as $x\to -\infty$. However, applying $D_\infty$
to $u$ removes the constant terms and we can still conclude 
\begin{equation}\label{exponential-decay-2}
 \|Du(x)\|_{L^2\Omega^*(\partial M)} \leq C e^{\alpha x}, \quad \textup{for some} \quad \alpha, C > 0.
\end{equation}
Proceeding verbatim to the previous case, we find 
$$
\textup{ext-}\ker_{(2)} D_\infty = \textup{ext-}\ker_{(2)} D^t_\infty D_\infty.
$$
The statements for $D$ and $D^t$ interchanged are discussed similarly. It remains to prove
the relations to the extended $L^2$-harmonic forms
\begin{equation}\label{ext-D-vs-harmonic}
\begin{split}
&\textup{ext-}\cH_{(2)}^*(M_\infty)^+ = \textup{ext-}\ker_{(2)} D_\infty, \\
&\textup{ext-}\cH_{(2)}^*(M_\infty)^- = \textup{ext-}\ker_{(2)} D^*_\infty.
\end{split}
\end{equation}
Indeed, the inclusions $\subseteq$ are obvious. For the converse inclusions, consider e.g.
$u \in \textup{ext-}\ker_{(2)} D_\infty$. Using Proposition \ref{prop3_4}, this is an extended $L^2$-solution 
to $D_\infty^t D_\infty u \equiv (d^t_\infty d_\infty + d_\infty d^t_\infty)u= 0$. Using \eqref{u-plus-minus-expansion}, we may conclude similar to 
\eqref{exponential-decay}, using the fact that $B^2$ commutes with the exterior derivative on $\partial M$,
that $\|u(x)\|_{L^2\Omega^*(\partial M)}$ is bounded and $\| (d_\infty u) (x)\|_{L^2\Omega^*(\partial M)}, \| (d^t_\infty u) (x)\|_{L^2\Omega^*(\partial M)}$ 
are exponentially decaying as $x\to -\infty$. Then as in \eqref{D-Stokes}, we conclude using the $L^2$-Stokes theorem in Theorem
\ref{Stokes-thm} 
\begin{equation}
\begin{split}
\| d_\infty u \|^2_{L^2\Omega^*(M_\infty)} + \| d^t_\infty u \|^2_{L^2\Omega^*(M_\infty)} &=
 \lim_{x\to -\infty} \langle q[x] du , q[x] u  \rangle_{L^2\Omega^*(\partial M_x)} 
\\ &- \lim_{x\to -\infty} \langle q[x] d^tu , q[x] u  \rangle_{L^2\Omega^*(\partial M_x)} = 0,
\end{split}
\end{equation}
This proves \eqref{ext-D-vs-harmonic}. 
\end{proof}

Our next two results relate extended solutions to the kernel of $B$.

\begin{prop}\label{prop3_5}	
	\begin{equation*}
		\dim \ker B = h_\infty (D) + h_\infty (D^*)
	\end{equation*}
\end{prop}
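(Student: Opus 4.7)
The plan is to adapt the classical argument of \cite[Proposition 3.14]{APSa} to the stratified setting, relying on three ingredients that are available: (a) discreteness of $\spec B$ from the geometric Witt assumption and \cite[Theorem 1.1]{ALMP1}, (b) essential self-adjointness of $D_\infty$ (cf.\ \eqref{min-max-same}), and (c) the $L^2$-Stokes Theorem \ref{Stokes-thm} on the Witt space $M_x=[x,0]\times\partial M\cup_{\partial M} M$. Writing $V_+ \subseteq \ker B$ for the set of limiting values of elements of $\textup{ext-}\ker_{(2)}D_\infty$ and $V_- \subseteq \ker B$ for the analogous set for $D^*_\infty$, so that $h_\infty(D)=\dim V_+$ and $h_\infty(D^*)=\dim V_-$, the task is to show $\ker B = V_+ \oplus V_-$.

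First I would check that the limiting value maps $L^{\pm}\colon \textup{ext-}\ker_{(2)} D^{(*)}_\infty \to \ker B$ are well defined. Over the cylindrical end, any $u$ with $D_\infty u=0$ decomposes in the orthonormal eigenbasis $\{\phi_\lambda\}$ of $B$ (which is a basis by discreteness) as
\begin{equation*}
u(x)=u_\infty+\sum_{\lambda>0} a_\lambda e^{-\lambda x}\phi_\lambda,\qquad u_\infty\in\ker B,
\end{equation*}
since the $L^2$-condition at $-\infty$ kills the growing modes (exactly the argument already used in Proposition \ref{prop3_4}). Thus $u_\infty=\lim_{x\to -\infty}u(x)$, and the non-constant part decays exponentially; the symmetric statement holds for $D^*_\infty$.

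Second, I would show $V_+\cap V_-=\{0\}$ by Green's formula. For $u\in\textup{ext-}\ker_{(2)} D_\infty$ and $v\in\textup{ext-}\ker_{(2)} D^*_\infty$, the $L^2$-Stokes theorem applied on $M_x$ using the collar form $D=\sigma(\partial_x+B)$ gives
\begin{equation*}
0=\langle Du,v\rangle_{L^2(M_x)}-\langle u,D^t v\rangle_{L^2(M_x)}=\langle \sigma\, q[x]u,\,q[x]v\rangle_{L^2(\partial M)}.
\end{equation*}
Substituting the spectral expansions of $u$ and $v$, the nonzero eigenmodes contribute terms that vanish as $x\to-\infty$ by exponential decay, leaving $\langle \sigma L^+u, L^-v\rangle_{L^2(\partial M)}=0$. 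Since $\sigma$ preserves $\ker B$ (as $\sigma B=-B\sigma$) and restricts there to an isometry with $\sigma^2=\pm\mathrm{Id}$, the induced pairing $(\phi,\psi)\mapsto \langle \sigma\phi,\psi\rangle$ is non-degenerate on $\ker B$, and we conclude $V_-\subseteq (\sigma V_+)^\perp$ inside $\ker B$, so in particular $V_+\cap V_-=\{0\}$.

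Finally, I would show that $V_+ + V_-=\ker B$, which together with Step 2 and non-degeneracy of the $\sigma$-pairing forces $\dim V_-=\dim \ker B-\dim V_+$ and hence the identity $\dim\ker B=h_\infty(D)+h_\infty(D^*)$. For this, given $\phi\in\ker B$, the constant extension $\widetilde{\phi}(x)\equiv \phi$ on the cylinder satisfies $D\widetilde{\phi}=0$ there. Glue $\widetilde{\phi}$ against a cutoff to obtain a smooth form $w$ on $M_\infty$ with $D_\infty w\in L^2_c(M_\infty)$. The Fredholm theory for the APS closed extension of $D$ (Proposition \ref{index-trace}, which rests on the Albin--Gell-Redman heat kernel \cite{AGR17}) identifies $\mathrm{coker}\,D\cong \ker D^*$, and the latter, by Proposition \ref{prop3_2}, consists of extended $L^2$-solutions of $D^*_\infty$. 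Projecting $D_\infty w$ orthogonally to $\ker D^*$ and solving $D_\infty u_0 = D_\infty w - (\text{obstruction})$, the correction $u:=w-u_0$ lies in $\textup{ext-}\ker_{(2)} D_\infty$ with $L^+u=\phi$ modulo an element of $\sigma V_-$; hence $\phi\in V_++V_-$.

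The main obstacle is the extension step. In the smooth case this is handled by standard Poisson-type operators and the explicit $b$-calculus on $M_\infty$; in our singular setting we must instead invoke the microlocal heat-kernel machinery of \cite{AGR17} together with the $L^2$-Stokes theorem \ref{Stokes-thm} to ensure that the parametrix for $D_\infty$ respects the cylindrical asymptotics while producing only extended $L^2$-errors. This is the point that needs the most care because the non-uniform behavior of the heat kernel near the singular strata (see Theorem \ref{off-diagonal}) prevents a verbatim transcription of the APS parametrix construction.
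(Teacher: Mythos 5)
Your approach differs fundamentally from the paper's: you attempt a direct geometric proof that $\ker B = V_+\oplus V_-$ (where $V_\pm$ are the spaces of limiting values), whereas the paper simply adds the two Atiyah--Patodi--Singer index formulas. Specifically, the paper writes the index formula from Theorem~\ref{index-main} and Corollary~\ref{cor3_3} for $D$ (APS condition $P_+(B)$), and then the analogous formula for $D^t$ (APS condition $P_-(B)$), exploiting that the local integrands flip sign while $\eta(B)$ also flips sign. Adding the two yields $-\dim\ker B = -h_\infty(D)-h_\infty(D^*)$ in one line, because the cohomological sides $\ind D = h(D)-h(D^*)-h_\infty(D^*)$ and $\ind D^* = h(D^*)-h(D)-h_\infty(D)$ also telescope. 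This is essentially an algebraic consequence of facts you have already granted, and it requires no new analysis on the singular end.

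Your direct argument has two genuine gaps, both at precisely the points where the classical APS treatment also falls back on the index formula rather than a direct decomposition.

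First, in Step~2 you derive $V_-\subseteq(\sigma V_+)^\perp$ from Green's formula and assert that this forces $V_+\cap V_-=\{0\}$. This implication is false in general: take $\ker B=\C^2$ with orthonormal basis $e_1,e_2$, $\sigma$ the coordinate swap, and $V_+=V_-=\C e_1$; then $\sigma V_+=\C e_2\perp V_-$, yet $V_+=V_-\neq 0$. The orthogonality established via the Stokes/Green pairing is genuine information, but by itself it is symplectic-type, not an inner-product orthogonality, and one needs an additional structural input about $\sigma$ restricted to $\ker B$ together with the index formula to conclude the direct-sum statement. (Indeed, in \cite{APSa} the direct-sum decomposition of $\ker B$ by limiting values is a corollary of the dimension count, not a prerequisite.)

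Second, in Step~3 the phrase ``with $L^+u=\phi$ modulo an element of $\sigma V_-$'' does not correspond to a correct statement. If you project $D_\infty w$ onto $\ker D^*\cong\textup{ext-}\ker_{(2)}D^*_\infty$ and solve away the complement, the correction $u=w-u_0$ satisfies $D_\infty u = P_{\ker D^*}(D_\infty w)$, which need not vanish, so $u$ is not an extended solution of $D_\infty$ at all; the limiting value you extract is therefore undefined. Making this cohomological Mayer--Vietoris/Poisson argument rigorous on a Witt space would require additional mapping properties of the parametrix that the paper never needs, because the index-formula addition bypasses the entire construction. The efficient route here is to use what has already been proved: the two APS-type index theorems. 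That is exactly what the paper does, and your argument as written cannot replace it without substantial repair.
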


\begin{proof} The proof of \cite[(3.25)]{APSa} carries over verbatim. We nevertheless repeat the
arguments here to point out the precise results of the paper we use here. Indeed, consider
$D$ and $D^t$ in the collar neighborhood of $\partial M$
\begin{equation}\label{product-D-t}
D = \sigma \left( \frac{\partial}{\partial x} + B \right),
\quad D^t = -\sigma \left( \frac{\partial}{\partial x} - B \right).
\end{equation}
Recall Theorem \ref{index-main} and Corollary 
\ref{cor3_3} on the index of $D$
\begin{align*}
\ind \, D = \int\limits_M L(M) + 
\sum_{\alpha \in A} \int\limits_{\ Y_\alpha} b_{\alpha} 
- \frac{\dim \ker B + \eta(B)}{2} = h(D)  - h(D^*) - h_\infty(D^*).
\end{align*}
Repeating the arguments of Theorem \ref{index-main} and Corollary 
\ref{cor3_3} with $D$ replaced by $D^t$ (with Atiyah Patodi Singer boundary conditions
given in terms of the negative spectral projection of $B$) we conclude in view of \eqref{product-D-t}
\begin{align*}
\ind \, D^* &= \int\limits_M -L(M) + 
\sum_{\alpha \in A} \int\limits_{\ Y_\alpha} (-b_{\alpha}) 
- \frac{\dim \ker B - \eta(B)}{2} \\ &= h(D^*)  - h(D) - h_\infty(D).
\end{align*}
Adding up the two index formulae for $D$ and $D^*$, we arrive at the statement.
\end{proof}

\begin{prop}\label{prop3_6}
	\begin{equation*}
		h_\infty (D) = h_\infty (D^*) = \frac{\dim \ker B}{2}. 
	\end{equation*}
\end{prop}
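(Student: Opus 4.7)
My plan is to combine Proposition \ref{prop3_5} with a natural isomorphism between the two spaces of limiting values, following the strategy of Atiyah-Patodi-Singer \cite[Prop. 4.9]{APSa}. Proposition \ref{prop3_5} already gives $h_\infty(D) + h_\infty(D^*) = \dim \ker B$, so it suffices to prove the equality $h_\infty(D) = h_\infty(D^*)$, and I would do this by exhibiting a bijection $V_D \cong V_{D^*}$ between the limiting value subspaces $V_D := \{u_\infty : u \in \textup{ext-}\ker_{(2)}D_\infty\}$ and $V_{D^*} := \{v_\infty : v \in \textup{ext-}\ker_{(2)}D^*_\infty\}$.

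The source of the bijection is the Hodge star $*_{\partial M}$ on $\partial M$ and its action on $\ker B$. Structurally, on the collar one has $\slashed{\partial} = \sigma(\partial_x + B)$, and self-adjointness together with $\tau$-anticommutativity of $\slashed{\partial}$ forces the commutation relations $\sigma B + B\sigma = 0$ and $\tau B = B\tau$. The bundle map $\sigma$ therefore anticommutes with $B$ and swaps $\Omega^\pm|_\partial$; upon identifying both $\Omega^+|_\partial$ and $\Omega^-|_\partial$ with $\Omega^*(\partial M)$, this translates into commutation of $*_{\partial M}$ with $B$ on $\Omega^*(\partial M)$ (precisely the commutation that underlies the identity $\eta(B) = 2\eta(B_{\textup{even}})$ already quoted in the introduction). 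Consequently $*_{\partial M}$ restricts to a bijection $\ker B \to \ker B$ which interchanges $\ker B_{\textup{even}}$ and $\ker B_{\textup{odd}}$, so each has dimension $\dim \ker B / 2$.

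The main step is to lift this to a bijection $V_D \cong V_{D^*}$. Given $u \in \textup{ext-}\ker_{(2)} D_\infty$ with limiting value $u_\infty$, I would build a companion $v \in \textup{ext-}\ker_{(2)} D^*_\infty$ with limiting value $*_{\partial M} u_\infty$ in two steps: first, on the collar I set $v := \sigma u$, whose limiting value is $\sigma u_\infty$, corresponding to $*_{\partial M} u_\infty$ under the identifications; second, in the singular interior of $\overline{M}$ I extend $v$ as an extended harmonic anti-self-dual form by solving the APS boundary value problem for $D^*$. The solvability of this boundary value problem follows from the Fredholm theory of Theorem \ref{index-main} applied to $D^*$ (whose tangential operator is again $B$), combined with the $L^2$-Hodge decomposition on $M_\infty$ provided by Proposition \ref{prop3_4}. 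By symmetry (exchanging the roles of $D$ and $D^*$), the resulting map $V_D \to V_{D^*}$ is a bijection, which together with Proposition \ref{prop3_5} yields $h_\infty(D) = h_\infty(D^*) = \dim \ker B / 2$.

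The main obstacle is the second, interior-extension step: verifying the solvability of the APS boundary value problem for $D^*$ on the stratified Witt space $\overline{M}$. In the smooth APS setting this is routine, but here one must use the microlocal heat-kernel construction of Albin and Gell-Redman \cite{AGR17} (already invoked in Theorem \ref{index-main}) together with the $L^2$-Stokes theorem (Theorem \ref{Stokes-thm}) to handle the non-uniform behaviour at the singular strata. Given that the parallel analysis for $D$ is already in place, I expect the adaptation to $D^*$ to present no essentially new difficulties, so that the dimension-counting argument above closes the proof.
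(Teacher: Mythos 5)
Your plan correctly reduces to showing $h_\infty(D) = h_\infty(D^*)$ given Proposition \ref{prop3_5}, but the mechanism you propose — twisting by $\sigma$ on the collar and then extending into the interior — has a genuine gap at exactly the step you flag. Given $u \in \textup{ext-}\ker_{(2)} D_\infty$, the form $\sigma u$ solves $D^t v = 0$ only on the cylinder; $\sigma$ is not even defined away from the collar. To push $v$ across the interior you would need to show that the boundary data $\sigma u_\infty \in \ker B$ lies in the subspace of achievable limiting values for $D^*_\infty$. But that subspace \emph{is} $V_{D^*}$, whose dimension is $h_\infty(D^*)$ — so the claim $\sigma(V_D) \subseteq V_{D^*}$ is essentially a reformulation of what you are trying to prove, not something that follows from Theorem \ref{index-main} or Proposition \ref{prop3_4}. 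The Fredholm property of the APS extension tells you the index is finite; it does not tell you that an arbitrary element of $\ker B$ arises as the limiting value of an extended solution, and in fact only a half-dimensional subspace does.

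The paper avoids prescribing boundary data altogether. It identifies $h_\infty(D^\pm) = \dim \big(\mathrm{im}\,[q] \circ \beta^\pm\big)$ via the maps $\beta : \textup{ext-}\cH^*_{(2)}(M_\infty) \to \coh^*(M)$ and $[q] : \coh^*(M) \to \coh^*(\partial M)$, observes that both images sit inside $\mathrm{im}\,[q]$, and then uses Poincar\'e duality together with the long exact sequence $\coh^*(M) \to \coh^*(\partial M) \to \coh^*(M,\partial M)$ to compute $\dim(\mathrm{im}\,[q]) = \tfrac{1}{2}\dim\coh^*(\partial M) = \tfrac{1}{2}\dim\ker B$. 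The resulting two upper bounds $h_\infty(D) \le \tfrac{1}{2}\dim\ker B$ and $h_\infty(D^*) \le \tfrac{1}{2}\dim\ker B$, combined with the sum $h_\infty(D) + h_\infty(D^*) = \dim\ker B$ from Proposition \ref{prop3_5}, force both equalities. If you want to salvage a more direct symmetry argument, the standard route is to show that $V_D$ and $V_{D^*}$ are each Lagrangian (or complementary Lagrangians) for the symplectic pairing $\langle \sigma\,\cdot\,, \cdot\,\rangle$ on $\ker B$ via an $L^2$-Stokes integration-by-parts; that genuinely bounds each by $\tfrac{1}{2}\dim\ker B$ without prescribing any boundary data.
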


\begin{proof}
In view of Proposition \ref{prop3_5} it suffices to prove $h_\infty(D) = h_\infty (D^*)$.
Consider the space of extended $L^2$-harmonic forms $\textup{ext-}\cH_{(2)}^*(M_\infty)$.
Clearly $\cH_{(2)}^*(M_\infty) \subset \textup{ext-}\cH_{(2)}^*(M_\infty)$. 
The restriction to $M\subset M_\infty$ extends $[r]: \cH_{(2)}^*(M_\infty) \to \coh^* (M)$. 
We consider the sequence of maps
\begin{equation*}
\begin{split}
&\textup{ext-}\cH_{(2)}^*(M_\infty)  \xrightarrow{\quad \beta \quad } \coh^*(M)  \xrightarrow{\quad [q] \quad } \coh^*(\partial M)\\
&\qquad  \qquad u  \qquad \longmapsto \qquad  [u\vert_M]  \quad \longmapsto \quad [q u].
\end{split}
\end{equation*}
Any $u\in \textup{ext-}\cH_{(2)}^*(M_\infty)$ can be written over $\partial M \times (-\infty, 0]$
as $u_\infty + g$, where $g \in L^2\Omega^*(\partial M \times (-\infty, 0])$
and $u_\infty = u_0 + u_1 \wedge dx$ with $u_0,u_1 \in \cH_{(2)}^*(\partial M)$ independent of $x$.
By Proposition \ref{prop_3} we find for the composition $[q] \circ \beta$
$$
[q] \circ \beta : \textup{ext-}\cH_{(2)}^*(M_\infty) \to  \coh^*(\partial M),
\quad u \mapsto u_0.
$$
We write $\beta^\pm := \beta \restriction \textup{ext-}\cH_{(2)}^*(M_\infty)^\pm$, where
the restrictions 
$$
\textup{ext-}\cH_{(2)}^*(M_\infty)^\pm = \textup{ext-}\cH_{(2)}^*(M_\infty) \cap \Omega^\pm(M_\infty),
$$
were introduced in Proposition \ref{prop3_4}. Given $u \in \textup{ext-}\cH_{(2)}^*(M_\infty)^\pm$, 
we find that $\tau(u_0) = u_1 \wedge dx$ up to a sign.
Hence $u_0=0$ implies $u_1=0$. Consequently, if 
$u \in \ker [q] \circ \beta^\pm$, then $u_\infty = 0$. We therefore conclude
\begin{align}\label{qbeta}
\ker \left([q] \circ \beta^\pm\right) = \cH_{(2)}^*(M_\infty) \cap \Omega^\pm(M_\infty)=: \cH_{(2)}^*(M_\infty)^\pm.
\end{align}
Note that $\cH_{(2)}^*(M_\infty)^+= \ker_{(2)} D_\infty$ and $\cH_{(2)}^*(M_\infty)^- = \ker_{(2)} D^*_\infty$.
Combining \eqref{ext-D-vs-harmonic} and \eqref{qbeta} we arrive at the following relations
\begin{equation}\label{h-im}
\begin{split}
&h_\infty (D) = \dim \frac{\textup{ext-}\ker_{(2)} D_\infty}{\ker_{(2)} D_\infty} 
= \dim \frac{\textup{ext-}\cH_{(2)}^*(M_\infty)^+}{\ker_{(2)} D_\infty} 
= \dim (\textup{im} [q] \circ \beta^+), \\
&h_\infty (D^*) = \dim \frac{\textup{ext-}\ker_{(2)} D^*_\infty}{\ker_{(2)} D^*_\infty} 
= \dim \frac{\textup{ext-}\cH_{(2)}^*(M_\infty)^-}{\ker_{(2)} D^*_\infty} 
= \dim (\textup{im} [q] \circ \beta^-).
\end{split}
\end{equation}
Consider the exact sequence (from the long exact sequence in cohomology)
$$
\coh^*(M) \xrightarrow{\quad [q] \quad } \coh^*(\partial M)  \xrightarrow{\quad [\delta] \quad } \coh^*(M,\partial M),
$$
where $[\delta]$ is the usual connecting homomorphism. Poincare duality implies that $\textup{im} [q]$ is dual to 
its orthogonal complement and hence
$$
\dim \textup{im} [q] = \frac{\dim \coh^*(\partial M)}{2} = \frac{\dim \ker B}{2}.
$$
Thus in view of \eqref{h-im} we conclude
\begin{align*}
&h_\infty (D) = \dim (\textup{im} [q] \circ \beta^+) \leq \dim \textup{im} [q] = \frac{\dim \ker B}{2}, \\
&h_\infty (D^*) = \dim (\textup{im} [q] \circ \beta^-) \leq \dim \textup{im} [q] = \frac{\dim \ker B}{2}.
\end{align*}
The statement now follows from Proposition \ref{prop3_5}.
\end{proof}

We can now prove our first main result in Theorem \ref{main-1}. 
Using \eqref{s-s-infty} and \eqref{cor3_3} we obtain the following representation of the signature
\begin{equation}\label{equality+formula}
	\textup{sign}(s) = \textup{sign}(s_\infty) \equiv \ind \, D_\infty = \ind \, D + h_\infty(D^*).
\end{equation}
Using the index theorem im Theorem \ref{index-main} we conclude
\begin{equation}\label{signature-theorem-intermediate}
\begin{split}
&\textup{sign}(s) = \ind \, D + h_\infty(D^*) \\ &= \int\limits_M L(M) + 
\sum_{\alpha \in A} \int\limits_{\ Y_\alpha} b_{\alpha} 
- \frac{\dim \ker B + \eta(B)}{2} + h_\infty(D^*) \\ &= 
\int\limits_M L(M) + 
\sum_{\alpha \in A} \int\limits_{\ Y_\alpha} b_{\alpha} 
- \frac{\eta(B)}{2},
\end{split}
\end{equation}
where we used Proposition \ref{prop3_6} in the last equality. 
This proves the signature formula in Theorem \ref{main-1}, since $\eta(B) = 2 \eta(B_{\textup{even}})$
$$
\textup{sign}^{(2)}_{{\rm dR}} (\overline{M}, \partial \overline{M}) = \int_{M} L(M) + \sum_{\alpha \in A} \int\limits_{\ Y_\alpha} b_{\alpha} - \eta(B_{\textup{even}}).
$$

\begin{remark}
Using the analysis developed in \cite{AGR17} and the arguments in 
\cite[Section 6.5 and Section 9.3]{MelATP} it should be possible
to give a $b$-edge-calculus proof of the signature formula on Witt spaces with boundary. We have privileged a 
Atiyah-Patodi-Singer approach because it presents slightly more complicated 
arguments that will be used below on Galois coverings of Witt spaces with boundary.
\end{remark}

%%%%%%%%%%%%%%%%%%%%%%%%%%%%%%%%%%
\section{The $L^2$-Gamma index theorem on $\widetilde{M}_\infty$ 
and the signature formula}\label{main-2-section}
%%%%%%%%%%%%%%%%%%%%%%%%%%%%%%%%%%

Consider as before the Galois coverings $\overline{M}_\Gamma, \overline{M}_{\Gamma,\infty},\partial \overline{M}_\Gamma$ of 
$\overline{M}, \overline{M}_{\infty},\partial \overline{M}$, respectively. The regular part of $\overline{M}_\Gamma$ is denoted 
by $\widetilde{M}$, the regular part of $\overline{M}_{\Gamma,\infty}$ by $\widetilde{M}_\infty$, and the regular part of $\partial \overline{M}_\Gamma$ by $\partial \widetilde{M}$.\medskip

We now proceed towards the index theorem on the Galois covering $\widetilde{M}_\infty$.
We follow the general strategy of Vaillant \cite{Va}, where at various points crucial steps have to be 
altered due to presence of singularities. Hereby a central role is played by a perturbed signature 
operator. More precisely, over $(-\infty, 0) \times \partial \widetilde{M} \subset \widetilde{M}_\infty$
the equivariant lift $\widetilde{D}_\infty$ of the signature operator $D_\infty$ on $M_\infty,$ is given by the product form
\begin{align}
\widetilde{D}_\infty =  \widetilde{\sigma} \left( \frac{d}{dx} + \widetilde{B}\right),
\end{align}
where $\widetilde{\sigma}$ and $\widetilde{B}$ are the equivariant lifts of $\sigma$ 
and $B$ to $\partial \widetilde{M}$, respectively. The main technical issue is that 
$\widetilde{D}_\infty$ is not $\Gamma$-Fredholm, since $\widetilde{B}$ is not invertible.
Vaillant \cite{Va} overcomes this issue by perturbing the tangential operator $ \widetilde{B}$,
in such a way that it admits a spectral gap around zero. This makes the perturbation 
$\Gamma$-Fredholm. The $L^2$-Gamma index theorem for $\widetilde{D}_\infty$
then follows by establishing an index theorem for its $\Gamma$-Fredholm perturbation and taking limits.

%%%%%%%%%%%%%%%%%%%%%%%%%%%%%%%%%%
\subsection{$\Gamma$-Fredholm perturbation of $\widetilde{D}_\infty$}
%%%%%%%%%%%%%%%%%%%%%%%%%%%%%%%%%%
Consider for any $\varepsilon > 0$
\begin{equation*}
		\Pi_\varepsilon (x) := \left\{ \begin{array}{ll}
		1, & x \in (-\varepsilon,\varepsilon),\\
		0, & \text{elsewhere}
		\end{array} \right.
\end{equation*}
This defines a spectral projection $\Pi_\varepsilon (\widetilde{B})$ by spectral calculus and we obtain
perturbations of $\widetilde{B}$ and $\widetilde{D}$ as follows. Consider an auxiliary 
function $\theta \in C^\infty_0(-\infty,0)$ with $\theta \restriction (-\infty,-1] \equiv 1$ and 
 $\theta \restriction [-1/2,0) \equiv 0$. This defines a smooth function on $(-\infty, 0) \times \partial \widetilde{M}$
which we extend trivially by zero to $\widetilde{M}_\infty$. For any $u\geq0$ we now set
\begin{equation}
		\widetilde{D}_{\varepsilon,u} := 
		\widetilde{D}_\infty + \theta(x) \cdot \widetilde{\sigma} 
		\left( u -  \widetilde{B} \, \Pi_\varepsilon (\widetilde{B})\right).
\end{equation}
This is a bounded perturbation of $\widetilde{D}_\infty$. Moreover,
if we write $\widetilde{\slashed{\partial}}_\infty$ for the Hodge Dirac operator on $\widetilde{M}_\infty$, we find
\begin{equation}
		\widetilde{D}_{\varepsilon,u} \oplus \widetilde{D}^t_{\varepsilon,u} = 
		\widetilde{\slashed{\partial}}_\infty + \theta(x) \cdot \widetilde{\sigma} \oplus \widetilde{\sigma}^t 
		\left( u - \widetilde{B} \, \Pi_\varepsilon (\widetilde{B})\oplus  \widetilde{B} \, \Pi_\varepsilon (\widetilde{B})\right),
\end{equation}
which is a bounded perturbation of $\widetilde{\slashed{\partial}}_\infty$, which keeps the 
domains invariant by \cite[V, Theorem 4.3]{Kato5}. By the geometric Witt assumption 
the maximal and minimal domains of the Hodge Dirac operator on $\widetilde{D}_\infty$ coincide. Hence,
cf. \eqref{min-max-same}
\begin{equation}
\dom(\widetilde{D}_{\varepsilon,u}) = \dom (\widetilde{D}_\infty) 
\quad \left(= \dom_{\min}(\widetilde{D}_\infty) = \dom_{\max}(\widetilde{D}_\infty)\right).
\end{equation}
Over the cylinder $(-\infty, 0) \times \partial \widetilde{M}$
the perturbed operator $\widetilde{D}_{\varepsilon,u}$ is of the form
\begin{equation}
\widetilde{D}_{\varepsilon,u} 
= \widetilde{\sigma} \left( \frac{d}{dx} + \widetilde{B}(x)\right), 
\quad \widetilde{B}(x) := \widetilde{B} + \theta(x) \left( u - \widetilde{B} \, \Pi_\varepsilon (\widetilde{B}) \right).
\end{equation}
Thus, $\widetilde{B}(x)$ is a bounded perturbation of $\widetilde{B}$ for any $x\leq 0$.
By the same argument as above, the domains of $\widetilde{B}(x)$ and $\widetilde{B}$ coincide
\begin{equation}
\dom(\widetilde{B}(x)) = \dom (\widetilde{B}) 
\quad \left(= \dom_{\min}(\widetilde{B}) = \dom_{\max}(\widetilde{B})\right).
\end{equation}
Note that for $x\leq -1$, the parameters $\varepsilon>0$ and $u\neq 0$, the tangential operator $\widetilde{B}(x)$
admits a spectral gap around zero. We can now prove that $\dom(\widetilde{D}_{\varepsilon,u})$ is 
$\Gamma$-Fredholm by constructing its right-parametrix up to a $\Gamma$-compact remainder. 
We point out that our argument differs from the approach taken by Vaillant \cite[\S 6.1]{Va}.

\begin{thm}\label{D-Fredholm}
$\widetilde{D}_{\varepsilon,u}$ is $\Gamma$-Fredholm for $\varepsilon, u> 0$.
\end{thm}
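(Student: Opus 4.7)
The plan is to construct a right parametrix for $\widetilde{D}_{\varepsilon,u}$ modulo $\Gamma$-compact operators, which together with a symmetric argument for $\widetilde{D}_{\varepsilon,u}^*$ yields $\Gamma$-Fredholmness by the usual $\Gamma$-equivariant version of the Atkinson characterization. The construction decomposes $\widetilde{M}_\infty$ into two overlapping regions: an interior $U_1 = \widetilde{M} \cup ((-3/4,0]\times\partial\widetilde{M})$ containing all singular strata, and a cylindrical end $U_2 = (-\infty,-1/2)\times\partial\widetilde{M}$ on which $\theta(x)\equiv 1$. I would fix a $\Gamma$-invariant partition of unity $\{\phi_1,\phi_2\}$ subordinate to $\{U_1,U_2\}$, together with $\{\psi_1,\psi_2\}$ satisfying $\psi_i\equiv 1$ on $\supp\phi_i$.

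On $U_2$, the perturbed operator takes the form $\widetilde{\sigma}(\partial_x + \widetilde{B}(x))$, and for $x\leq -1$ one has $\widetilde{B}(x)=\widetilde{B}(1-\Pi_\varepsilon(\widetilde{B}))+u\cdot\mathrm{Id}$. Provided $0<u<\varepsilon$, the spectrum of $\widetilde{B}(x)$ is contained in $(-\infty,u-\varepsilon]\cup\{u\}\cup[u+\varepsilon,\infty)$, so $\widetilde{B}(x)$ is boundedly invertible with a uniform spectral gap around zero. Using the Browder--Garding spectral decomposition from Theorem \ref{Browder-Garding}, I would construct a right inverse $Q_2$ to $\widetilde{\sigma}(\partial_x+\widetilde{B}(x))$ by diagonalising in the generalised eigensections of $\widetilde{B}$ and solving the scalar ODE $(\partial_x+\lambda)u=f$ by the sign-dependent Volterra integral; uniform invertibility yields boundedness of $Q_2$ on $L^2(U_2)$.

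On $U_1$, which has compact image in $\overline{M}$, the perturbation is $\Gamma$-invariant and bounded, $\widetilde{D}_{\varepsilon,u}$ coincides with $\widetilde{D}$ on a collar of $\partial\widetilde{M}\subset\widetilde{M}$, and the core domain in \eqref{core-domain} carries over. Lifting the microlocal heat kernel of Albin--Gell-Redman as in Section \ref{index-section2} and combining with the $\Gamma$-trace class property of Proposition \ref{resolvent-trace-class}, I obtain a right parametrix $Q_1$ satisfying $\widetilde{D}_{\varepsilon,u} Q_1 = \mathrm{Id}+R_1$ on $U_1$, with $R_1$ $\Gamma$-smoothing and $\Gamma$-compact. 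Gluing,
\begin{equation*}
Q := \psi_1 Q_1 \phi_1 + \psi_2 Q_2 \phi_2,
\end{equation*}
one computes
\begin{equation*}
\widetilde{D}_{\varepsilon,u} Q = \phi_1+\phi_2 + \psi_1 R_1 \phi_1 + [\widetilde{D}_{\varepsilon,u},\psi_1]Q_1\phi_1 + [\widetilde{D}_{\varepsilon,u},\psi_2]Q_2\phi_2 = \mathrm{Id}+R,
\end{equation*}
and the remainder $R$ must be shown to be $\Gamma$-compact. The commutators $[\widetilde{D}_{\varepsilon,u},\psi_i]$ are bounded multiplication operators supported in the overlap $(-3/4,-1/2)\times\partial\widetilde{M}$, which is compact modulo $\Gamma$ in the $x$-direction. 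Composed with $Q_i$, which improve regularity transversely to the cylinder axis, they yield Schwartz kernels whose restriction to $\mathscr{F}$ is Hilbert--Schmidt by the spectral gap estimate on the cylinder together with Proposition \ref{resolvent-trace-class} applied to the tangential direction; hence they are $\Gamma$-compact in the sense of Definition \ref{HS-definition-galois}.

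The main obstacle will be the last step, namely verifying that the commutator pieces $[\widetilde{D}_{\varepsilon,u},\psi_i]Q_i\phi_i$ are genuinely $\Gamma$-compact. This is where the uniform spectral gap of $\widetilde{B}(x)$ for $x\leq -1$ is essential: it guarantees exponential decay of the Schwartz kernel of $Q_2$ away from the diagonal in the $x$-direction, which combined with the $\Gamma$-trace class nature of spectral projections of $\widetilde{B}$ onto bounded windows yields $\Gamma$-Hilbert--Schmidt estimates. One also needs $Q_i\phi_i$ to map into $\dom(\widetilde{D}_{\varepsilon,u})$, which follows from the product structure on the cylinder and from the core-domain characterisation used in Section \ref{index-section2}. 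An identical argument applied to $\widetilde{D}_{\varepsilon,u}^*$ produces a left parametrix, completing the proof of $\Gamma$-Fredholmness.
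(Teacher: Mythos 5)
Your overall strategy --- patching together an interior parametrix and a parametrix on the cylindrical end where the tangential operator $\widetilde{B}(x)$ acquires a uniform spectral gap, then extracting $\Gamma$-Fredholmness from $\Gamma$-compactness of the remainder --- is the same as in the paper, and your spectral-gap computation for $x\leq -1$ is correct. However, two steps in your sketch are where the paper does something precise that you have not accounted for, and both are genuine gaps as written.

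First, the interior parametrix $Q_1$. You localize it to the open region $U_1 = \widetilde{M}\cup((-3/4,0]\times\partial\widetilde{M})$, which has an artificial boundary at $x=-3/4$. You have not specified an operator there that can be inverted: the appeal to the core domain \eqref{core-domain} is misplaced, since that domain encodes an APS boundary condition for the Fredholm (not invertible) problem on $\widetilde{M}$, not on a truncated portion of $\widetilde{M}_\infty$; and the Albin--Gell-Redman heat kernel is constructed on a \emph{closed} stratified space, so it does not directly produce a parametrix on the bounded open set $U_1$. The paper escapes this by first forming the \emph{double} $\widetilde{M}_c$ of $\widetilde{M}\cup(\partial\widetilde{M}\times[-4,0])$: since $\widetilde{B}(x)$ is constant on $[-4,-1]$, the perturbed operator extends smoothly to an essentially self-adjoint operator $\widetilde{D}^c_{\varepsilon,u}$ on the closed stratified Galois covering $\widetilde{M}_c$, and $Q_\textup{int}:=(i+\widetilde{D}^c_{\varepsilon,u})^{-1}$ is then an actual bounded inverse. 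Without this (or some equivalent mechanism to remove the artificial boundary), there is no $Q_1$ to glue.

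Second, the $\Gamma$-compactness of the remainder. With your symmetric gluing $Q = \psi_1 Q_1 \phi_1 + \psi_2 Q_2 \phi_2$ the commutator terms $[\widetilde{D}_{\varepsilon,u},\psi_i]Q_i\phi_i$ are the remainder, and you propose $\Gamma$-Hilbert--Schmidt bounds on their kernels via the spectral gap in the cylinder direction plus Proposition~\ref{resolvent-trace-class} in the tangential direction. This can be made to work, but as sketched it leaves open exactly the estimates that require care in the stratified setting. The paper instead uses the asymmetric gluing $Q=\chi Q_\textup{int}(1-\psi)+\phi Q_\textup{cyl}\psi$ with $\chi\equiv 1$ on $\supp(1-\psi)$ and $\phi\equiv 1$ on $\supp\psi$, and explicitly computes that the remainder $\widetilde{D}_{\varepsilon,u}Q-\textup{Id}$ maps $L^2(\widetilde{M}_\infty)$ into $\dom_\textup{comp}(\widetilde{D}_\infty)$, the domain with compact support in the cylindrical direction. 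Its $\Gamma$-compactness is then a consequence of the isometric identification $L^2(\widetilde{M}_\infty)\cong L^2(M_\infty)\otimes L^2(\Gamma)$ together with the ordinary compact (Rellich) embedding $\dom_\textup{comp}(D_\infty)\hookrightarrow L^2(M_\infty)$ from \cite[Proposition~5.9]{ALMP1}. You would want to import exactly this factorisation; note also Remark~\ref{compact-error}, which explains why the same convenient mapping property fails for the left parametrix and forces the paper to obtain it by repeating the construction for $\widetilde{D}^*_{\varepsilon,u}$ and taking adjoints, rather than from \eqref{Q-left} directly.
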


\begin{proof}
We construct a right-parametrix for $\widetilde{D}_{\varepsilon,u}$ by a gluing argument from an interior and 
a boundary parametrix. Existence of a left-parametrix up to a $\Gamma$-compact remainder follows by repeating
the construction for $\widetilde{D}^*_{\varepsilon,u}$ verbatim, and taking adjoints. 
We proceed in three steps: construct the interior parametrix; construct the
boundary parametrix, using crucially the spectral gap around zero in $\widetilde{B}(x), x\leq -1$; glue these local parametrices 
together to a full right-parametrix to conclude the statement.

%%%%%%%%%%%%%%%%%%%%%%%%%%%%%%%%%%
\subsubsection*{Step 1: Interior parametrix for $\widetilde{D}_{\varepsilon,u}$}
%%%%%%%%%%%%%%%%%%%%%%%%%%%%%%%%%%
Consider a double of $\widetilde{M} \cup (\partial \widetilde{M} \times [-4,0])$,
which we denote by $\widetilde{M}_c$.  Since $\widetilde{B}(x)$ is independent
of $x$ for $x\in [-4,-1]$, the perturbed operator $\widetilde{D}_{\varepsilon,u}$
extends smoothly to an operator $\widetilde{D}^{c}_{\varepsilon,u}$
on the double $\widetilde{M}_c$. Similarly, $\widetilde{D}_\infty$ yields a self-adjoint operator
$\widetilde{D}^c$ on the double, which is simply the signature operator on $\widetilde{M}_c$ with 
$$
\widetilde{D}^c = \widetilde{D}^{c}_{0,0}\, , \quad \dom(\widetilde{D}^c) = 
\dom (\widetilde{D}^{c}_{\varepsilon,u})
\quad \left(= \dom_{\min}(\widetilde{D}^c) = \dom_{\max}(\widetilde{D}^c)\right).
$$
Since $\widetilde{D}^{c}_{\varepsilon,u}$ is self-adjoint, the inverse 
$(i+\widetilde{D}^{c}_{\varepsilon,u})^{-1}$ exists and we set 
\begin{equation}
Q_{\textup{int}}:= (i+\widetilde{D}^{c}_{\varepsilon,u})^{-1}: 
L^2\Omega^*(\widetilde{M}_c) \to \dom(\widetilde{D}^c).
\end{equation}

%%%%%%%%%%%%%%%%%%%%%%%%%%%%%%%%%%
\subsubsection*{Step 2: Boundary parametrix for $\widetilde{D}_{\varepsilon,u}$}
%%%%%%%%%%%%%%%%%%%%%%%%%%%%%%%%%%
Consider the cylinder $\partial \widetilde{M} \times \R$ and the operator 
$\widetilde{\sigma} \left( \partial_x + \widetilde{B}(- \, 1)\right)$ on the cylinder. 
Under the Fourier transform $\mathscr{F}$ along $\R$ the operator transforms
as follows 
$$
\mathscr{F}^{-1} \circ \widetilde{\sigma} \left( \frac{d}{dx} + \widetilde{B}(-\, 1)\right) \circ \mathscr{F}
= \widetilde{\sigma} \left( i \xi + \widetilde{B}(- \, 1)\right).
$$
Since by construction, $\widetilde{B}(- \, 1)$ admits a spectral gap around zero for
$\varepsilon, u > 0$, $i \xi + \widetilde{B}(- \, 1)$ is invertible for any $\xi \in \R$ 
including $\xi = 0$. Hence we can define the boundary parametrix by 
\begin{equation}
Q_{\textup{cyl}}:= \mathscr{F} \circ \left( \widetilde{\sigma} \left( i \xi + \widetilde{B}(- \, 1)\right)
\right)^{-1} \circ \mathscr{F}^{-1}.
\end{equation}
The $L^2\Omega^*(\partial \widetilde{M})$ operator norm of 
the inverse  $(i \xi + \widetilde{B}(- \, 1))^{-1}$ is $O(\|\xi\|)$ as $\|\xi\|\to \infty$. Hence we find
that the boundary parametrix acts as follows
\begin{equation}
Q_{\textup{cyl}}: L^2\Omega^*(\partial \widetilde{M} \times \R) \to 
H^1(\R) \widehat{\otimes} L^2\Omega^*(\partial \widetilde{M}) \cap L^2\Omega^*(\R) \widehat{\otimes} \dom(\widetilde{B}).
\end{equation}

%%%%%%%%%%%%%%%%%%%%%%%%%%%%%%%%%%
\subsubsection*{Step 3: Full parametrix for $\widetilde{D}_{\varepsilon,u}$}
%%%%%%%%%%%%%%%%%%%%%%%%%%%%%%%%%%
Consider cutoff functions $\phi, \psi, \chi \in C^\infty(\R)$
as illustrated in Figure \ref{fig:CutOff}. The functions $\chi$ and $(1-\psi)$
extend naturally to smooth functions on the double $\widetilde{M}_c$ being identically
$1$ on both copies on $\widetilde{M}$. The cutoff functions $\phi, \psi$ extend naturally to smooth functions on the cylinder 
$\partial \widetilde{M} \times \R$, being identically $1$ on $(-\infty, -3] \times \partial \widetilde{M}$.

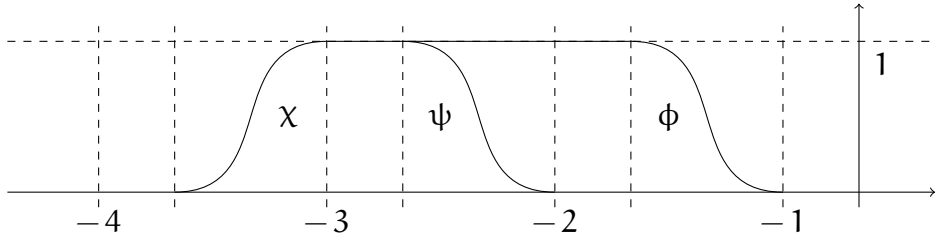
\begin{figure}[h]
\begin{center}

\begin{tikzpicture}[scale=1]
%coordinate axes
\draw[->] (-4.2,0) -- (8,0);
\draw[->] (7,-0.2) -- (7,2.5);
\draw[dashed] (-4.2,2) -- (8,2);

\draw (0,2) -- (4,2);
\draw (7.3,1.7) node {$1$};

\draw (1,2) .. controls (2.4,2) and (1.6,0) .. (3,0);
\draw[dashed] (1,-0.2) -- (1,2.2);
\draw[dashed] (3,-0.2) -- (3,2.2);
\draw (3,-0.4) node {$-\, 2$};

\draw (-3,-0.4) node {$-\, 4$};
\draw[dashed] (-2,-0.2) -- (-2,2.2);
\draw[dashed] (-3,-0.2) -- (-3,2.2);

\draw (4,2) .. controls (5.4,2) and (4.6,0) .. (6,0);
\draw[dashed] (4,-0.2) -- (4,2.2);
\draw[dashed] (6,-0.2) -- (6,2.2);
\draw (6,-0.4) node {$-\, 1$};

\draw (-2,0) .. controls (-0.6,0) and (-1.4,2) .. (0,2);
\draw[dashed] (0,-0.2) -- (0,2.2);
\draw (0,-0.4) node {$-\, 3$};

\draw (1.5,1) node {$\psi$};
\draw (4.5,1) node {$\phi$};
\draw (-0.5,1) node {$\chi$};

\end{tikzpicture}

\caption{The cutoff functions $\phi, \psi$ and $\chi$.}
\label{fig:CutOff}
\end{center}
\end{figure}

We define the full parametrix by 
\begin{align}\label{Q-def}
Q := \chi \cdot Q_{\textup{int}} \cdot (1-\psi) + \phi \cdot Q_{\textup{cyl}} \cdot \psi.
\end{align}
 We obtain by explicit computations ($c$ denotes the Clifford multiplication)
\begin{align*}
\widetilde{D}_{\varepsilon,u} \circ Q &=  c(d\chi) \, Q_{\textup{int}} \cdot (1-\psi) + c(d\phi) \, Q_{\textup{cyl}} \cdot \psi
 \\ &+ \chi \cdot \widetilde{D}^{c}_{\varepsilon,u} Q_{\textup{int}} \cdot (1-\psi) +
\phi \cdot \widetilde{\sigma} \left( \partial_x + \widetilde{B}(- \, 1)\right) Q_{\textup{cyl}} \cdot \psi
\\ &= \textup{Id} + c(d\chi) \, Q_{\textup{int}} \cdot (1-\psi) + c(d\phi) \, Q_{\textup{cyl}} \cdot \psi
- i \chi \cdot Q_{\textup{int}} \cdot (1-\psi).
 \end{align*}
 Writing $\dom_{\textup{comp}}(\widetilde{D}_\infty)$ for elements in the domain of 
 $\widetilde{D}_\infty$ with compact support in the cylindrical direction of $\widetilde{M}_\infty$,
 we conclude\footnote{Here, the right parametrix $Q$ strikingly produces a 
 remainder with values in compactly supported forms along the cylinder. This is however not the case
 for the left parametrix, cf. Remark \ref{compact-error}.}
 \begin{equation}
\widetilde{D}_{\varepsilon,u} \circ Q - \textup{Id}: L^2\Omega^*(\widetilde{M}_\infty) \rightarrow \dom_{\textup{comp}}(\widetilde{D}_\infty)
\hookrightarrow L^2\Omega^*(\widetilde{M}_\infty),
 \end{equation}
where  the second map is the natural inclusion $\widetilde{\iota}:\dom_{\textup{comp}}(\widetilde{D}_\infty)
\hookrightarrow L^2\Omega^*(\widetilde{M}_\infty)$ We now show that 
$\widetilde{\iota}$, and consequently also $\widetilde{D}_{\varepsilon,u} \circ Q - \textup{Id}$ as a map on $L^2\Omega^*(\widetilde{M}_\infty)$, 
is $\Gamma$-compact. Consider first an isomorphism of (right) $\Gamma$-modules
			\begin{equation*}
				\begin{split}
				L^2\Omega^*(\widetilde{M}_\infty) & \to L^2\Omega^*(M_\infty) \otimes L^2\Omega^*(\Gamma)\\
				\w & \mapsto \sum_{\gamma \in \Gamma} (\w \gamma)\vert_{\mathcal{F}} \otimes \gamma^{-1}
				\end{split}
			\end{equation*}
where $\mathcal{F}$ is the fundamental domain of the 
Galois-covering $\widetilde{M}_\infty \to M_\infty$ and $(\w \gamma)(q) = \w (q \gamma^{-1})$ 
for any $q\in \mathcal{F}$. Clearly $L^2\Omega^*(\mathcal{F}) = L^2\Omega^*(M_\infty)$, since $\mathcal{F} \subset M_\infty$ is dense.
$\Gamma$-compactness of $\widetilde{\iota}$ follows from the commutative diagramm:
			\begin{equation}\label{cpt-incl}
				\xymatrix@R=3mm@C=3mm{\dom_{\textup{comp}}(\widetilde{D}_\infty)
				\ar[rr] \ar@{^{(}->}[dd]_{\widetilde{\iota}} & & \dom_{\textup{comp}}(D_\infty)\otimes L^2\Omega^*(\Gamma) 
				\ar@{^{(}->}[dd]^{\iota \, \otimes \, Id} \\ & \circlearrowleft &\\
				L^2\Omega^*(\widetilde{M}_\infty) \ar[rr] & & L^2\Omega^*(M_\infty) \otimes L^2\Omega^*(\Gamma)}
			\end{equation}
where as before \emph{comp} refers to compact support in cylindrical direction, $\iota$ 
is the inclusion map, and the horizontal maps are isometries. Note that 
$\iota:  \dom_{\textup{comp}}(D_\infty) \hookrightarrow L^2\Omega^*(M_\infty)$ is a compact 
embedding, cf. \cite[Proposition 5.9]{ALMP1}, and hence $\widetilde{\iota}$ is $\Gamma$-compact. Thus $\widetilde{D}_{\varepsilon,u} \circ Q - \textup{Id}$
is $\Gamma$-compact as a map on $L^2\Omega^*(\widetilde{M}_\infty)$, and the same statement for a left-inverse follows by 
repeating the construction for $\widetilde{D}^*_{\varepsilon,u}$ and taking adjoints. 
This proves the statement.
\end{proof}

\begin{remark}\label{compact-error}
In the proof above we have viewed $Q$ as a right-parametrix, where the remainder
by construction mapped to $\dom_{\textup{comp}}(\widetilde{D}_\infty)$ and hence was
easily seen to be $\Gamma$-compact. We can also take $Q$ as a left parametrix for 
$\widetilde{D}_{\varepsilon,u}$ and compute similarly
\begin{align}\label{Q-left}
Q \circ \widetilde{D}_{\varepsilon,u} 
= \textup{Id} + \left(\chi Q_{\textup{int}} - \phi Q_{\textup{cyl}} \right) c(d\psi).
\end{align}
In that case, however, the remainder does not map to $\dom_{\textup{comp}}(\widetilde{D}_\infty)$
and thus its $\Gamma$-compactness cannot be established by a straightforward argument.
\end{remark}

%%%%%%%%%%%%%%%%%%%%%%%%%%%%%%%%%%
\subsection{Finite propagation speed on $\widetilde{M}_\infty$}
%%%%%%%%%%%%%%%%%%%%%%%%%%%%%%%%%%

Using the spectral theorem, we conclude that for any $\xi_0 \in \Omega^*_0(\widetilde{M}_\infty)$ with compact support, 
and any self-adjoint operator $T$ in $L^2\Omega^*(\widetilde{M}_\infty)$,
there exists a unique solution $\xi(t) := e^{itT} \xi_0$ of the wave equation $\partial_t \xi(t) 
-iT \xi(t) = 0$ with the initial condition $\xi(0) = \xi_0$. Here we set specifically
$$
T := \widetilde{D}_{\varepsilon,u} \oplus \widetilde{D}^*_{\varepsilon,u}, 
\quad \dom(T) := \dom_{\max}(T) = \dom_{\max}(\widetilde{\slashed{\partial}}_\infty).
$$

\begin{thm}\label{propagation}
Consider $C:=(a,b) \times \partial \widetilde{M} \subset \widetilde{M}_\infty$. for any $a < b < 0$.
We write $B(C,r):= (a-r,b+r)  \times \partial \widetilde{M} \subset \widetilde{M}_\infty$ for any $r<-b$.
Then the norm $\|\xi(t)\|_{L^2\Omega^*(B(C,r-t))}$ is decreasing in $t$. Thus, if $\supp \, \xi_0 
\subset (a,b) \times \partial \widetilde{M}$, then $\supp \, \xi(t) \subset (a-t,b+t) \times \partial \widetilde{M}$,
i.e. the propagation speed of the wave operator $e^{itT}$ is $\leq 1$ along the cylinder.
\end{thm}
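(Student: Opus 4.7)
The plan is to prove this finite propagation speed result via the classical energy estimate for first-order symmetric hyperbolic systems of Dirac type, adapted to the Witt setting. The essential observation is that $T = \widetilde{D}_{\varepsilon,u} \oplus \widetilde{D}^*_{\varepsilon,u}$ differs from the Hodge Dirac operator $\widetilde{\slashed{\partial}}_\infty = d+d^t$ only by a bounded zero-order operator, so the principal symbol of $T$ coincides with Clifford multiplication. In the cylindrical $x$-direction this symbol is, up to grading, the unitary bundle isomorphism $\pm\widetilde{\sigma}$, so the expected propagation speed is $\leq 1$.

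First I would introduce the Lipschitz ``distance to $C$'' function $\rho:\widetilde{M}_\infty \to [0,\infty)$, defined on the cylinder $(-\infty,0]_x \times \partial \widetilde{M}$ by $\rho(x,y) := \max(0, a-x, x-b)$ and extended by $\max(0,a)$ on the compact part; then $\{\rho < s\}$ inside the cylinder equals $B(C,s)$ whenever $s<-b$, and $|d\rho|=1$ almost everywhere on $\{\rho > 0\}$ within the cylinder. Set
\[
E_r(t) := \|\xi(t)\|^2_{L^2(B(C,r-t))}.
\]
The first claim is equivalent to $E_r$ being nonincreasing on $[0,r)$. Because $T$ is self-adjoint (its unique closed extension exists by the geometric Witt assumption, via $\dom_{\min}(\widetilde{\slashed{\partial}}_\infty) = \dom_{\max}(\widetilde{\slashed{\partial}}_\infty)$), the unitary group $e^{itT}$ preserves each $\dom(T^k)$; by density we may assume $\xi_0 \in \dom(T^k)$ for $k$ sufficiently large, so that $\xi(t)$ is smooth in $t$ with values in $\dom(T^{k-1})$ and admits $L^2$-traces on each level set $\{\rho=r-t\}$.

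Differentiating $E_r$ via the coarea formula yields
\[
\frac{d}{dt}E_r(t) = 2\Re \langle iT\xi(t),\xi(t)\rangle_{L^2(B(C,r-t))} - \int_{\partial B(C,r-t)} |\xi(t)|^2\, dS.
\]
The closed subregion $\overline{B(C,r-t)} = [a-(r-t),b+(r-t)]\times \partial \widetilde{M}$ is itself a compact smoothly stratified Witt space with boundary consisting of two disjoint copies of the Witt space $\partial \widetilde{M}$. Applying the $L^2$-Stokes theorem (Theorem \ref{Stokes-thm}) separately to the components $d$ and $d^t$ of $T$, and noting that the bounded perturbation contributes no boundary term, one obtains
\[
2\Re \langle iT\xi,\xi\rangle_{L^2(B(C,r-t))} = i\int_{\partial B(C,r-t)} \langle c(\nu)\xi,\xi\rangle\, dS,
\]
where $\nu$ is the outward unit conormal and $c(\nu)$ is Clifford multiplication by $\nu^\flat$, which is skew-adjoint with operator norm $|\nu|=1$. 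Consequently $|i\langle c(\nu)\xi,\xi\rangle| \leq |\xi|^2$ pointwise, and $\frac{d}{dt}E_r(t)\leq 0$ follows.

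The main technical obstacle is the rigorous application of the $L^2$-Stokes theorem on the subdomain $\overline{B(C,r-t)}$ for general $\xi(t)\in \dom(T)$; Theorem \ref{Stokes-thm} applies directly since this subdomain is itself a compact Witt space whose boundary consists of Witt spaces satisfying the geometric Witt assumption, but one must ensure the boundary trace lies in $L^2(\partial \widetilde{M})$, which follows first for smooth compactly supported initial data by interior elliptic regularity and then for general $\xi_0$ by density. The support statement is deduced from the monotonicity by applying it to ``exterior'' cylinders $C' = (a',b')\times \partial \widetilde{M}$ disjoint from $\overline{B(C,t)}$: for $\xi_0$ supported in $C$ one chooses $r' > t$ small enough that $B(C',r')\cap \supp \xi_0 = \varnothing$, yielding $\|\xi(t)\|_{L^2(B(C',r'-t))} \leq \|\xi_0\|_{L^2(B(C',r'))} = 0$; varying $C'$ over all such exterior cylinders covers $((-\infty,0]\times \partial \widetilde{M})\setminus \overline{B(C,t)}$, giving $\supp \xi(t)\subset \overline{B(C,t)}$ as required.
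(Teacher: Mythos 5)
Your proof is correct and follows essentially the same route as the paper: differentiate the sliding energy $E_r(t)$, use the wave equation and an integration by parts to pass the interior term to a boundary integral, and observe that the boundary integrand is controlled because the Clifford symbol $\widetilde{\sigma}\oplus\widetilde{\sigma}^t$ (equivalently $c(\nu)$) is unitary. The paper extracts the boundary term more directly by writing $T$ in product form on the cylinder and cancelling the $B(x)$-contribution via essential self-adjointness of the tangential operator on $\partial\widetilde{M}$, rather than invoking Theorem~\ref{Stokes-thm} on the slab $\overline{B(C,r-t)}$, but these are the same cancellation in different dress; your introduction of the Lipschitz distance $\rho$ is not used, and $\overline{B(C,r-t)}$ is of course non-compact since $\partial\widetilde{M}$ is, though this is harmless as Theorem~\ref{Stokes-thm} is formulated for non-compact Witt spaces.
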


\begin{proof}
The proof is classical, see \cite[Proposition 5.5]{Roe}. We just need some care when dealing with 
domains. We start with the following computation, where we write $\textup{dvol}$ for the volume form induced 
by the metric $\widetilde{g}_\infty$, $\langle \cdot , \cdot\rangle$ for the induced pointwise scalar product on the 
exterior algebra of the cotangent bundle, and $| \cdot |$ for the corresponding pointwise norm. Assume $t>0$ for simplicity.
\begin{align*}
\partial_t \|\xi(t)\|^2_{L^2\Omega^*(B(C,r-t))} = & \ \partial_t \! \! \! \! \int\limits_{B(C,r-t)}  | \, \xi(t)(p) \, |^{\, 2} \, \textup{dvol}(p)
= - \! \! \! \! \int\limits_{\partial B(C,r-t)} | \, \xi(t)(p) \, |^{\, 2} \, \textup{dvol}(p) 
\\ & + \! \! \! \! \int\limits_{B(C,r-t)} \langle \xi(t)(p), iT\xi(t)(p)\rangle + \langle iT \xi(t)(p),\xi(t)(p)\rangle \textup{dvol}(p).
\end{align*}
Since $\xi(t) \in \dom_{\max}(T)$ we conclude from self-adjointness of $B$
\begin{align*}
\partial_t \|\xi(t)\|^2_{L^2\Omega^*(B(C,r-t))} &= \int\limits_{B(C,r-t)} \partial_x 
\langle \xi(t)(x,q), \widetilde{\sigma} \oplus \widetilde{\sigma}^t \xi(t)(x,q)\rangle dx \, \textup{dvol}(q)
\\ &-\int\limits_{\partial B(C,r-t)}  | \, \xi(t)(q) \, |^{\, 2} \,  \textup{dvol}(q)
\\ &\leq \int\limits_{\partial B(C,r-t)} \left( \|\widetilde{\sigma} \oplus \widetilde{\sigma}^t \| - 1\right) | \, \xi(t)(q) \, |^{\, 2}   \textup{dvol}(q).
\end{align*}
Since $\|\widetilde{\sigma} \oplus \widetilde{\sigma}^t \| = 1$, we conclude that the norm
$\|\xi(t)\|_{L^2\Omega^*(B(C,r-t))}$ is indeed decreasing in $t$. In particular, if $\xi_0 \equiv 0$ on
a cylinder $(a,b) \times \partial \widetilde{M}$, then $\xi(t)\equiv 0$ on $(a+t,b-t) \times \partial \widetilde{M}$
for any $t\geq 0$. The statement on supports now follows. 
\end{proof} \\[-7mm]

%%%%%%%%%%%%%%%%%%%%%%%%%%%%%%%%%%
\subsection{Sobolev embedding theorem and a Garding inequality on $\widetilde{M}_\infty$}
%%%%%%%%%%%%%%%%%%%%%%%%%%%%%%%%%%

In order to pass from the finite propagation speed to uniform heat
kernel estimates, we need a Sobolev embedding theorem along with the 
Garding inequality in the setting of $\widetilde{M}_\infty$,
which is notably not a manifold with bounded geometry because
of the structure of the iterated cone-edge metric near the singularity, so that the arguments of 
\cite[\S 3]{Va} do not apply directly. \medskip

We first define weighted Sobolev spaces
on $\widetilde{M}_\infty$ with values in any vector bundle $E$ associated to $T\widetilde{M}_\infty$,
and study their embedding and multiplication properties. The vector bundle 
$E$ shall be the exterior algebra of $T^*\widetilde{M}_\infty$, but the statement of 
the Sobolev embedding theorem below holds for other associated vector bundles as well. \medskip

The weights of the Sobolev spaces below are defined in terms of $\rho: \widetilde{M}_\infty \to (0,\infty)$,
defined as the distance to the singular stratum $S= \{Y_\sigma\}_{\sigma}$ of $\widetilde{M}_\infty$. 
Recall that any point of $Y_\sigma$ has a tubular neighborhood $\cU_\sigma$, which is the total space of a fibration 
$\phi_\sigma: \cU_\sigma \to \phi_\sigma (\cU_\sigma) \subseteq Y_\sigma$ 
with fibers given by cones $\overline{\mathscr{C}(F_\sigma)}$ with link $F_\sigma$ being 
compact smoothly stratified space of lower depth. Denote by $x_\sigma$ the radial function of the
cone $\overline{\mathscr{C}(F_\sigma)}$. Then $\rho$ can be given locally near the strata by 
a locally finite product $\rho = \Pi_{Y_\sigma \in S} \, x_\sigma$. We set for any $d = (d_\sigma)_{\sigma} \subset \R$
\begin{align}\label{rho-d}
\rho^d := \prod\limits_{Y_\sigma \in S} x_\sigma^{d_\sigma},
\end{align}
extended to a positive smooth function in the interior away from strata. 
The next definition follows Pacini \cite[(5.1), (5.2)]{Pacini}.

\begin{defn}\label{product-spaces}
Consider $s \in \N_0$ and $\delta \in \R$. 
Let $\nabla$ denote the Levi Civita connection on $E$, induced by the $\Gamma$-equivariant 
iterated cone-edge metric $\widetilde{g}_\infty$.
We consider the space $L^2\Omega^*(\widetilde{M}_\infty)$ of square-integrable sections of $E$ with respect to the volume form of $\widetilde{g}_\infty$ and
the pointwise norm $| \cdot |$ on fibres of $E$ induced by $\widetilde{g}_\infty$. We introduce the 
multi-index $m= (\dim \mathscr{C}(F_\alpha))_\alpha \subset \N$
and define \medskip

\begin{enumerate}
\item We define the Sobolev space $H^s_\delta (E)$ as 
the closure of compactly supported smooth sections $C^\infty_0(E)$ under
\begin{align*}
\left\| \w \right\|_{H^s_\delta}= \sum_{k=0}^s \| \rho^{k-\delta-\frac{m}{2}} 
| \, \nabla^k \w \, | \, \|_{L^2\Omega^*(\widetilde{M}_\infty)}.
\end{align*}
Note that $L^2\Omega^*(\widetilde{M}_\infty) = H^0_{-\frac{m}{2}}(E)$ by construction. 
\item We define the Banach space $C^{s}_\gamma(E)$ 
as the closure of  $C^{\infty}_{0}(E)$ under 
\begin{align*}
\left\| \w \right\|_{C^s_\delta} = \sum_{k=0}^s 
\sup_{q \in \widetilde{M}_\infty} \left(\rho^{k-\delta}| \, \nabla^k \w \, | \right) (q).
\end{align*}
\end{enumerate}
\end{defn}

Notice that in the definition of weighted Sobolev spaces $H^s_\delta (E)$ above, 
there is a shift by $-m/2$, when compared to the Sobolev spaces in e.g.
in \cite{ALMP1}. \medskip

The function $\rho$ satisfies all the assumptions of \cite[Theorem 4.7]{Pacini}, 
cf. the argument of \cite[Example 4.9]{Pacini} on a model cone, that is a local argument and hence carries over to iterated
cone-edge singularities. As asserted by Pacini \cite[Corollary 6.8, Remark 6.9]{Pacini} we find the following 
analogue of the standard properties in the stratified non-compact setting.

\begin{thm}\label{embedding-theorem} 
The spaces in Definition \ref{product-spaces} admit the following properties. 
\begin{enumerate}
     \item For $\beta \lvertneqq \delta$ we have 
	$C^{s}_{\delta}(E) \subset H^s_{\beta}(E)$.
	\item For $N > \dim M/2$ and $\beta \leq \delta$ we have
	$H^{s+N}_{\delta}(E) \subset C^{s}_{\beta}(E)$.
	\end{enumerate}
\end{thm}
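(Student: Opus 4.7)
The plan is to reduce Theorem \ref{embedding-theorem} to Pacini's general Sobolev embedding framework \cite[Corollary 6.8, Remark 6.9]{Pacini} by verifying that the weight function $\rho$ of \eqref{rho-d} together with the iterated cone-edge geometry of $(\widetilde{M}_\infty, \widetilde{g}_\infty)$ satisfies Pacini's hypotheses. Both statements are local near the singular strata, and on the cylindrical end $\partial \widetilde{M} \times (-\infty,0]$ the stratification is a product with compact $\partial \widetilde{M}$. So it suffices to prove estimates in a neighborhood $\cU_\sigma$ of each singular stratum $Y_\sigma$ (lifted to the cover) and then patch with a partition of unity subordinate to the finite cover of $\widetilde{M}_\infty / \Gamma$ by coordinate charts along strata plus one chart covering the remaining smooth region.

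For part (1), the pointwise bound $|\nabla^k \omega| \leq C\,\rho^{\delta-k}$ for $k \leq s$ gives
\begin{equation*}
\rho^{2(k-\beta-m/2)}|\nabla^k\omega|^2 \;\leq\; C^2\, \rho^{\,2(\delta-\beta)-m}.
\end{equation*}
Writing the iterated cone-edge volume element near $Y_\sigma$ as $x_\sigma^{m_\sigma-1}\, dx_\sigma \wedge dv_{Y_\sigma} \wedge dv_{F_\sigma}$ up to a bounded factor (where $m_\sigma = \dim \mathscr{C}(F_\sigma)$), the integral in the cone direction reduces to $\int_0^{\varepsilon} x_\sigma^{\,2(\delta_\sigma-\beta_\sigma)-1}\, dx_\sigma$, which converges exactly when $\beta_\sigma \lvertneqq \delta_\sigma$. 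Iterating on the links, whose own stratification has strictly lower depth, then completes the argument.

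For part (2), the plan is a rescaling argument: at any $p \in \widetilde{M}_\infty$ sufficiently close to $Y_\sigma$, rescale the metric on the geodesic ball $B(p, c\,\rho(p))$ by $\rho(p)^{-1}$ for a fixed small $c>0$. The rescaled metric is uniformly quasi-isometric to a model metric with bounded geometry on a unit ball, so the classical Euclidean Sobolev embedding $H^{s+N} \hookrightarrow C^s$ with $N > \dim M/2$ applies with constants independent of $p$. Tracking how $\nabla^k$ and the volume form scale under this dilation converts the weighted Sobolev norm of $\omega$ into a uniform pointwise bound of the form $\rho(p)^{k-\beta}|\nabla^k\omega(p)| \leq C \|\omega\|_{H^{s+N}_\delta}$, provided $\beta \leq \delta$; taking the supremum over $p$ yields the embedding.

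The main obstacle is verifying uniform bounded geometry of the rescaled model at points approaching deeper strata, since the link $F_\sigma$ is itself iteratively stratified. I would handle this by an induction on the depth of the stratification: the rescaled ball near a point at distance $\rho(p)$ from $Y_\sigma$ is, up to controlled distortion, a product of a Euclidean factor from $Y_\sigma$ with a unit-scale slice of $\mathscr{C}(F_\sigma)$ whose own singular behavior is governed by the depth-$(j-1)$ inductive hypothesis. The depth-zero base case is the classical Sobolev embedding in bounded geometry \cite[\S 3]{Va}, and the inductive step absorbs into Pacini's framework, finishing the proof.
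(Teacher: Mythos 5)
Your proposal takes essentially the same route as the paper: the paper's entire argument is to verify that $\rho$ satisfies the hypotheses of Pacini's general weighted Sobolev framework (via the model-cone computation in \cite[Example 4.9]{Pacini}, extended locally to iterated cone-edges) and then invoke \cite[Corollary 6.8, Remark 6.9]{Pacini}. You do the same reduction, merely spelling out the local volume computation for part (1) and the rescaling/induction-on-depth picture for part (2) in more explicit detail.
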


As for a Garding inequality, we do not assert such a statement for general elliptic operators. We 
rather consider the $\Gamma$-Fredholm operator $\widetilde{D}_{\varepsilon,u}$.

\begin{prop}\label{Garding}
Consider any multi-index $\A = (\alpha_\sigma)_{\sigma} \subset \R^+$. Set 
$D_{\A} := \rho^{\A} \circ \widetilde{D}_{\varepsilon,u} \circ \rho^{-\A}$
and consider the multi-index $\beta := -\frac{m}{2} + \A$.
Then for any $s\in \N_0$ and $\delta \in (0,1)$
there exists a constant $c>0$ such that
$$
\forall \w \in H^{s+1}_{\beta+\delta}(E): 
\quad \| \w \|_{H^{s+1}_{\beta}(E)} \leq \| \w \|_{H^{s+1}_{\beta+\delta}(E)} \leq c \left(
\| D_{\A} \w \|_{H^{s}_{\beta}(E)} + \| \w \|_{H^{s}_{\beta}(E)}
\right).
$$
\end{prop}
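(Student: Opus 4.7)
My plan is to prove the two inequalities separately, with the first being elementary and the second requiring a decomposition of $\widetilde{M}_\infty$ into three geometrically distinct regions. The first inequality $\|\omega\|_{H^{s+1}_\beta(E)} \leq \|\omega\|_{H^{s+1}_{\beta+\delta}(E)}$ follows at once from the fact that the weight function $\rho$ is uniformly bounded above on $\widetilde{M}_\infty$ (since $\rho$ is a distance function to compact singular strata and is $\Gamma$-invariant along the cylinder): this yields $\rho^{-\delta} \geq c > 0$, and the corresponding pointwise estimate on each summand in the definition of the Sobolev norms gives the inequality.

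For the main Garding estimate, I would cover $\widetilde{M}_\infty$ by three $\Gamma$-invariant regions and establish a local weighted elliptic estimate on each. First, on a relatively compact region $K$ of the regular part, staying away from both the singular strata and the cylindrical infinity, $D_\mathcal{A}$ is a first-order smooth elliptic operator with bounded coefficients, so the classical Garding inequality applies directly. Second, on a tubular neighborhood $\cU_\sigma$ of each singular stratum $\widetilde{Y}_\sigma$ away from the cylindrical end, the perturbation in $\widetilde{D}_{\varepsilon,u}$ vanishes and $D_\mathcal{A}$ reduces to $\rho^\mathcal{A}\widetilde{D}_\infty\rho^{-\mathcal{A}}$; this is a first-order edge operator with the same principal symbol as $\widetilde{D}_\infty$, and its indicial family is the translate of that of $\widetilde{D}_\infty$ by $-\mathcal{A}$. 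The geometric Witt assumption for $\widetilde{D}_\infty$ provides a spectral gap, which persists after translation, so full ellipticity in the edge sense is retained. The wedge parametrix underlying \cite{AGR17} then produces a map $H^s_\beta(E) \to H^{s+1}_{\beta+\delta}(E)$ for $\delta \in (0,1)$ small, giving the weighted Garding estimate in $\cU_\sigma$. Third, on the cylindrical end $(-\infty,-L)\times \partial \widetilde{M}$ for some $L>1$, the operator takes the translation-invariant product form $\widetilde{\sigma}(\partial_x + \widetilde{B}(-1))$ modulo a bounded zeroth-order commutator coming from $\rho^\mathcal{A}$; since $\widetilde{B}(-1)$ has a spectral gap around zero by construction, Fourier transform along $x$ inverts $\widetilde{\sigma}(\partial_x + \widetilde{B}(-1))$ from $L^2$ to $H^1$ uniformly along the cylinder, and the weighted estimate follows because $\rho$ is $x$-independent and bounded. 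A $\Gamma$-invariant partition of unity $\{\chi_j\}$ subordinate to this cover then patches the local estimates into the global one; the commutators $[D_\mathcal{A},\chi_j]$ are bounded multiplication operators and produce only terms of the form $\|\omega\|_{H^s_\beta(E)}$, which are absorbed into the right-hand side.

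The main obstacle will be the weighted estimate near the strata, that is the verification that the parametrix gains the weight $\delta$. Concretely, one needs to check that (i) $\rho^\mathcal{A}\widetilde{D}_\infty\rho^{-\mathcal{A}}$ is indeed a first-order edge operator whose normal family is a shift of that of $\widetilde{D}_\infty$; (ii) the Witt gap condition survives this shift for $\mathcal{A}$ fixed and $\delta$ chosen small enough; and (iii) the fully elliptic parametrix of \cite{AGR17} maps $H^s_\beta(E)$ into $H^{s+1}_{\beta+\delta}(E)$, rather than merely into $H^{s+1}_\beta(E)$. Points (i) and (ii) are essentially algebraic computations in the edge calculus; point (iii) is the genuine analytic input, and relies on the fact that full ellipticity---not just symbolic ellipticity---of an edge operator improves the weight in the mapping properties of its parametrix, exactly as in the standard theory. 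The uniformity in the $\Gamma$-covering setting is automatic since all constructions are $\Gamma$-equivariant and the estimates are local.
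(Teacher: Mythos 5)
Your proof is correct in outline and relies on the same essential analytic input as the paper, namely that a parametrix for $\widetilde{D}_{\varepsilon,u}$ improves the weight near the strata by $\delta$. But the route is genuinely different in one respect. You rebuild a local parametrix by hand via a three-region decomposition (compact interior away from strata and infinity, tubular neighborhoods of strata, and the cylindrical end), patching with a $\Gamma$-invariant partition of unity and invoking the classical Garding inequality, the wedge/edge calculus, and a Fourier transform respectively. The paper instead recycles the \emph{global} parametrix $Q$ already constructed in Theorem \ref{D-Fredholm} (itself a two-region gluing of the resolvent on the closed double $\widetilde{M}_c$ with a Fourier-transform cylinder parametrix), conjugates it by $\rho^{\mathcal{A}}$ to get $Q_{\mathcal{A}}$, and reads off the estimate directly from the identity $Q_{\mathcal{A}} D_{\mathcal{A}} = \mathrm{Id} + R_{\mathcal{A}}$ once the mapping property $Q_{\mathcal{A}}, R_{\mathcal{A}}\colon H^s_\beta(E)\to H^{s+1}_{\beta+\delta}(E)$ is established from \cite{ALMP2} and \cite{AGR17}. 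Your version makes the geometric content more explicit, at the price of having to reprove full ellipticity for the conjugated edge operator in each chart; the paper's version is shorter but delegates the weight-gain entirely to the references. One point to be careful about in your approach: you say the wedge parametrix gains weight ``for $\delta\in(0,1)$ small,'' whereas the paper asserts the estimate for every $\delta\in(0,1)$; the uniformity of the statement in $\delta$ comes from the fact that $\dom(\widetilde{D}_\infty)\subset H^1_{-m/2+\delta}(E)$ holds for \emph{all} $\delta\in(0,1)$ (this is the uniform elliptic regularity under the Witt condition), so when you verify that the shifted indicial roots avoid the critical line you should check that this holds for the full range of $\delta$, not only for $\delta$ near $0$. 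Also note that the first inequality $\|\omega\|_{H^{s+1}_\beta}\leq \|\omega\|_{H^{s+1}_{\beta+\delta}}$ as stated (without a constant) requires $\rho\leq 1$ globally; if one only has $\rho$ bounded, the constant can of course be absorbed into $c$, so this is harmless.
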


\begin{proof}
The operator $\widetilde{D}_{\varepsilon,u}$ is $\Gamma$-Fredholm
with a parametrix $Q$ defined in \eqref{Q-def}.
We can view $Q$ as a left-parametrix with remainder $R$ obtained in \eqref{Q-left}.
Both $Q$ and $R$ are continuous operators
\begin{align}
Q, R: L^2\Omega^*(\widetilde{M}_\infty) = H^0_{-\frac{m}{2}}(E) \to \dom (\widetilde{D}_\infty).
\end{align}
By employing the arguments in \cite{ALMP2} and the well-known
correspondence between operators twisted by the Mishchenko von Neumann bundle and
operators on the covering, see   \cite[Appendix E]{Piazza-Schick}, 
one can prove that 
the domain of $\widetilde{D}_\infty$ is included in the intersection of $H^1_{-\frac{m}{2}+\delta}(E)$ for any
$\delta \in (0,1)$.
We conclude that
\begin{align}
Q, R: H^s_{-\frac{m}{2}}(E) \to H^{s+1}_{-\frac{m}{2}+\delta}(E).
\end{align}
are continuous for $s=0$. 
Define
\begin{equation}\label{QR-mapping-0}
Q_{\A} := \rho^{\A} \circ Q \circ  \rho^{-\A} \;\;\;\text{and}\;\;\;
R_{\A} := \rho^{\A} \circ R \circ \rho^{-\A} 
\end{equation}
Then by combining  \cite{ALMP2} and  \cite[Theorems 3.7 and 4.3]{AGR17} we obtain that 
\begin{equation}\label{QR-mapping}
\begin{split}
Q_{\A} : H^s_{\beta}(E) \to H^{s+1}_{\beta+\delta}(E), \\
R_{\A}: H^s_{\beta}(E) \to H^{s+1}_{\beta+\delta}(E).
\end{split}
\end{equation}
continuously.
By construction, $Q_{\A} \circ D_{\A} = \textup{Id} + R_{\A}$ and hence we compute for any 
$\w \in H^{s}_{\beta+\delta}(E)$, using \eqref{QR-mapping} in the second inequality
\begin{align*}
\| \w \|_{H^{s+1}_{\beta+\delta}(E)} & \leq \| Q_{\A} \circ D_{\A} \w \|_{H^{s+1}_{\beta+\delta}(E)}
+ \| R_{\A} \w \|_{H^{s+1}_{\beta+\delta}(E)} \\ & \leq  c \left(
\| D_{\A} \w \|_{H^{s}_{\beta}(E)} + \| \w \|_{H^{s}_{\beta}(E)}\right).
\end{align*}
\end{proof}

%%%%%%%%%%%%%%%%%%%%%%%%%%%%%%%%%%
\subsection{Uniform off-diagonal large time heat kernel estimates on $\widetilde{M}_\infty$}
%%%%%%%%%%%%%%%%%%%%%%%%%%%%%%%%%%

We can now establish uniform heat kernel estimates. Consider the reference operator 
\begin{equation}\begin{split}
\widetilde{S}_{\varepsilon,u} 
= \widetilde{\sigma} \left( \frac{d}{dx} \otimes \textup{Id}_{L^2\Omega^*(\partial \widetilde{M})}
+ \textup{Id}_{L^2\Omega^*(\R)} \otimes \widetilde{B}(-1) \right) \equiv
\widetilde{\sigma} \left( \frac{d}{dx} + \widetilde{B}(-1)\right), 
\end{split}\end{equation}
in $L^2 (\R \times \partial \widetilde{M})$. Its domain is given by
$\dom(\widetilde{S}_{\varepsilon,u} ) := (H^1(\R) \widehat{\otimes} L^2\Omega^*(\partial \widetilde{M}))
\cap (L^2\Omega^*(\R)\widehat{\otimes} \dom (\widetilde{B}))$, which is a Hilbert space with the inner product
given by the sum of the inner products on $H^1(\R) \widehat{\otimes} L^2\Omega^*(\partial \widetilde{M})$
and on $L^2\Omega^*(\R)\widehat{\otimes} \dom (\widetilde{B})$.
Our main result of this subsection is 
an analogue of \cite[Proposition 6.2]{Va}.

\begin{thm}\label{off-diagonal}
Fix any $p=(x,q), p'=(x',q') \in (-\infty, 0) \times \partial \widetilde{M}$. Consider the multi-index 
$m= (\dim \mathscr{C}(F_\alpha))_\alpha \subset \N$, and any $t>0$ and $\delta\in (0,1)$.
We write $C(k,\varepsilon,u)>0$ for constants depending only on the data in the brackets.
\begin{enumerate}
\item Consider $r_1>0$ sufficiently small and assume that $|x-x'|>2r$. Then
\begin{align*}
\left|  \left(\widetilde{D}_{\varepsilon,u}^k e^{-t\widetilde{D}_{\varepsilon,u}^2} \right) (p,p')\right| 
\leq C(k,\varepsilon, u) \left(\rho(p) \rho(p')\right)^{-\frac{m}{2}+\delta}
\\ \times\exp \left(- \frac{(|x-x'|-r_1)^2}{6t}\right).
\end{align*}
\item Consider $r_2>0$ sufficiently small and assume that $x,x' < -2r_2-1$. Then
\begin{align*}
\left|  \left(\widetilde{D}_{\varepsilon,u}^k e^{-t\widetilde{D}_{\varepsilon,u}^2} - 
\widetilde{S}_{\varepsilon,u}^k e^{-t\widetilde{S}_{\varepsilon,u}^2} \right)(p,p')\right| 
\leq C(k,\varepsilon,u) \left(\rho(p) \rho(p')\right)^{-\frac{m}{2}+\delta} \\ 
\times \exp \left(- \frac{(\min \, \{\, |x|,|x'|\, \}-r_2-1)^2}{6t}\right).
\end{align*}
\end{enumerate}
Here, $\rho^d$ has been defined in \eqref{rho-d} for any multi-index $d$.
\end{thm}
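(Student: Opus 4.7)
The plan is to follow the Cheeger-Gromov-Taylor finite-propagation scheme in the form adapted to cylindrical ends by Vaillant \cite[\S 6]{Va}, with the key new ingredient being the weighted Sobolev framework from Theorem \ref{embedding-theorem} and Proposition \ref{Garding}, needed because $\widetilde{M}_\infty$ is neither smooth nor of bounded geometry. Set $T := \widetilde{D}_{\varepsilon,u}\oplus \widetilde{D}^*_{\varepsilon,u}$, which is self-adjoint with domain $\dom(\widetilde{\slashed{\partial}}_\infty)$. For $f_{t,k}(\lambda):=\lambda^k e^{-t\lambda^2}$ the Fourier transform $\widehat{f_{t,k}}(\xi)$ is a polynomial in $t^{-1/2}$ and $\xi$ times $e^{-\xi^2/(4t)}$, and the spectral theorem yields
\begin{equation*}
T^k e^{-tT^2} \;=\; \frac{1}{2\pi}\int_{\R} \widehat{f_{t,k}}(\xi)\, e^{i\xi T}\, d\xi,
\end{equation*}
with restriction to $\Omega^+$ recovering $\widetilde{D}_{\varepsilon,u}^k e^{-t\widetilde{D}_{\varepsilon,u}^2}$ and the analogous formula holding for the model operator $\widetilde{S}_{\varepsilon,u}$.

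For part (1), I would fix $p=(x,q)$, $p'=(x',q')$ with $|x-x'|>2r_1$, set $R:=|x-x'|-r_1$, and introduce a cutoff $\chi_R$ that is identically one on $[-R/2, R/2]$ and supported in $[-R, R]$. The portion $\int (1-\chi_R) \widehat{f_{t,k}}(\xi) e^{i\xi T} d\xi$ has $L^2$-operator norm bounded by $C(k) e^{-R^2/(16t)}$ directly from the Gaussian decay of $\widehat{f_{t,k}}$. For the truncated portion, Theorem \ref{propagation} asserts that $e^{i\xi T}$ has propagation speed at most one along the cylinder: if a test form $\w$ is supported in $(x'-r_1/2, x'+r_1/2)\times \partial \widetilde{M}$, then for every $|\xi|\leq R$ its image under $e^{i\xi T}$ is supported outside a neighborhood of $p$, and hence contributes zero to the relevant matrix element. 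Integration against localized cutoffs thus gives an $L^2$-operator bound with Gaussian decay in $R$ for $\mathbf{1}_{\mathcal{V}}\, T^k e^{-tT^2}\,\mathbf{1}_{\mathcal{V}'}$, where $\mathcal{V}, \mathcal{V}'$ are small coordinate neighborhoods of $p,p'$.

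To convert this $L^2$-bound to the required pointwise kernel estimate, the plan is to invoke Theorem \ref{embedding-theorem}(2): for $N>\dim M/2$ and $\delta\in (0,1)$, evaluation at $p$ extends to a continuous functional on $H^N_{-m/2+\delta}$ with norm bounded by a constant times $\rho(p)^{-m/2+\delta}$, and an analogous statement for $p'$ holds by passing to the adjoint. It therefore suffices to bound the operator $T^k e^{-tT^2}$ between suitable weighted Sobolev spaces, with a Gaussian constant in $R$. This is achieved by iterating Proposition \ref{Garding} applied to the conjugated operator $D_\A=\rho^\A \widetilde{D}_{\varepsilon,u}\rho^{-\A}$ for a multi-index $\A$ matching the desired weight: each application trades Sobolev regularity against $L^2$-norms involving higher powers of $T$, which are in turn controlled by the step-2 estimate applied to $T^{k+2N}e^{-tT^2}$. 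The extra powers of $T$ contribute only polynomial factors in $t^{-1/2}$, absorbed into $C(k,\varepsilon,u)$, and the conjugation by $\rho^\A$ produces the symmetric prefactor $(\rho(p)\rho(p'))^{-m/2+\delta}$.

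For part (2) I would use that $\widetilde{D}_{\varepsilon,u}$ and $\widetilde{S}_{\varepsilon,u}$ coincide on $(-\infty,-1)\times \partial \widetilde{M}$; by uniqueness of solutions to the wave equation combined with Theorem \ref{propagation}, the wave operators $e^{i\xi T}$ and $e^{i\xi T_S}$ act identically on forms supported in a ball $(x_0-r,x_0+r)\times \partial \widetilde{M}$ provided $x_0+|\xi|+r<-1$. Taking $R_0 := \min\{|x|,|x'|\}-r_2-1$ and cutting the integrand $\widehat{f_{t,k}}(\xi)(e^{i\xi T}-e^{i\xi T_S})$ as before kills the contribution for $|\xi|<R_0$ when tested against forms localized near $p,p'$, while the complementary Gaussian tail combined with the weighted Sobolev conversion of the previous paragraph produces the stated bound. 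The main obstacle is the weighted pointwise upgrade: unlike Vaillant's smooth setting, one must verify that the Garding iteration of Proposition \ref{Garding} and the weighted embedding of Theorem \ref{embedding-theorem} combine cleanly with the wave-equation cutoff to give the precise $\rho^{-m/2+\delta}$ behaviour; once this is in place, the remainder of the argument is a direct adaptation of the classical finite-propagation machinery.
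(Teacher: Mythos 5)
Your proposal uses exactly the same key ingredients the paper does (the Fourier–wave representation of $f(\widetilde{D}_{\varepsilon,u})$, finite propagation speed from Theorem~\ref{propagation}, the weighted Sobolev embedding of Theorem~\ref{embedding-theorem}, and the G\aa rding inequality of Proposition~\ref{Garding} for the conjugated operator $D_\A$) and arrives at the same kernel bounds, so this is essentially the paper's proof; the only difference is cosmetic organization, as the paper first runs the weighted-Sobolev/G\aa rding reduction in both kernel variables to reach $L^2\to L^2$ operator norms of cutoff-localized powers $\phi_x\widetilde{D}^\ell_{\varepsilon,u}f(\widetilde{D}_{\varepsilon,u})\phi_{x'}$ and applies finite propagation last, whereas you apply finite propagation first and then iterate G\aa rding to upgrade to the weighted pointwise estimate, which loops back to the same localized $L^2$ bounds on higher powers of $T$.
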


\begin{proof} We denote all positive uniform constants by $C$.
Consider a non-negative function $\phi_x \in C^\infty_0(-\infty, 0)$ such that $\phi_x$ is identically $1$ in an open neighborhood of $x<0$.
It lifts to a smooth function on the cylinder $(-\infty, 0) \times \partial \widetilde{M}$ and extends trivially
to $\widetilde{M}_\infty$. We consider $f(x):=x^k e^{-tx^2}$ and the corresponding operator
$f(\widetilde{D}_{\varepsilon,u})$, defined by spectral calculus. Recall that $m= (\dim \mathscr{C}(F_\alpha))_\alpha \subset \N$.

%%%%%%%%%%%%%%%%%%%%%%%%%%%%%%%%%%
\subsubsection*{Step 1: Uniform estimate in the second argument}
%%%%%%%%%%%%%%%%%%%%%%%%%%%%%%%%%%
We set $\alpha = \frac{m}{2} - \delta$ and compute for any $N> \frac{\dim M}{2}$ using 
the Sobolev embedding of Theorem \ref{embedding-theorem} in the second inequality,
and the Garding inequality of Proposition \ref{Garding}, with $\alpha = \frac{m}{2}-\delta$, in the third inequality.  
\begin{align*}
| \rho^{\A}(p) \rho^{\A}(p') & f(\widetilde{D}_{\varepsilon,u})(p,p')| \leq
\| \rho^{\A}(p) \phi_x(p) \left(  \rho^{\A} \phi_{x'} f(\widetilde{D}_{\varepsilon,u})\right) (p, \cdot)\|_{C^0_0(E)}
\\ &\leq C \ \| \rho^{\A}(p) \phi_x(p) \left(  \rho^{\A} \phi_{x'} f(\widetilde{D}_{\varepsilon,u})\right) (p, \cdot) \|_{H^N_0(E)} 
\\ &\leq C \sum_{j=0}^N \| \rho^{\A}(p) \phi_x(p) \left( D^j_{\A} \rho^{\A} \phi_{x'} f(\widetilde{D}_{\varepsilon,u})\right) (p, \cdot) \|_{H^0_{-\delta}(E)}
\\ &= C \sum_{j=0}^N \| \rho^{\A}(p) \phi_x(p) \left( \rho^{\delta -\frac{m}{2}} D^j_{\A} \rho^{\A} \phi_{x'} f(\widetilde{D}_{\varepsilon,u})\right) (p, \cdot) \|_{L^2\Omega^*(\widetilde{M}_\infty)},
\end{align*}
where we denoted all positive uniform constants by $C$, and identified the functions $\phi_x, \phi_{x'}$ and $\rho^{\A}$
with operators acting by multiplication with the respective function.
Noting that $D_{\A} := \rho^{\A} \circ \widetilde{D}_{\varepsilon,u} \circ \rho^{-\A}$,
we compute further
\begin{equation}\label{estimate1}
\begin{split}
| \rho^{\A}(p) \rho^{\A}(p') & f(\widetilde{D}_{\varepsilon,u})(p,p')| 
\\ &\leq C \sum_{j=0}^N \| \rho^{\A}(p) \phi_x(p) \left( \widetilde{D}^j_{\varepsilon,u} \phi_{x'} f(\widetilde{D}_{\varepsilon,u})\right) (p, \cdot) \|_{L^2\Omega^*(\widetilde{M}_\infty)}
\\ &=: C \sum_{j=0}^N \| \xi_j (p,\cdot) \|_{L^2\Omega^*(\widetilde{M}_\infty)}.
\end{split}
\end{equation}
Note that by construction $f(\widetilde{D}_{\varepsilon,u}) (p, \cdot)$ and $\xi_j(p,\cdot)$ lie in the domain of $\widetilde{D}^L_{\varepsilon,u}$ 
for any $L \in \N$ and hence we obtain by self-adjointness
\begin{align*}
\|\xi_j(p,\cdot)\|^2_{L^2\Omega^*(\widetilde{M}_\infty)} &= \int_{\widetilde{M}_\infty} 
\rho^{\A}(p) \phi_x(p) \left(  \widetilde{D}^j_{\varepsilon,u} \phi_{x'} f(\widetilde{D}_{\varepsilon,u})\right) (p, p') \xi_j(p,p') \textup{dvol}(p')
\\ &= \int_{\widetilde{M}_\infty} 
\rho^{\A}(p) \phi_x(p) \left( \phi_{x'} f(\widetilde{D}_{\varepsilon,u})\right) (p, p') \, \widetilde{D}^j_{\varepsilon,u} \xi_j(p,p') \textup{dvol}(p')
\\ &= \left| \left( \rho^{\A} \phi_x f(\widetilde{D}_{\varepsilon,u}) \right) \left[ \phi_{x'} \widetilde{D}^j_{\varepsilon,u} \xi_j(p,\cdot) \right] (x) \right|.
\end{align*}
Below we use $\widetilde{D}_{\varepsilon,u} \phi_{x'} = c(d\phi_{x'}) + \phi_{x'} \widetilde{D}_{\varepsilon,u}$, 
where $c(d \phi_{x'})$ is the Clifford multiplication by $d \phi_{x'}$, as estimate
\begin{equation}\label{Clifford1}
\begin{split}
\|\xi_j(p,\cdot)\|^2_{L^2\Omega^*(\widetilde{M}_\infty)} &\leq 
\left| \left(  \rho^{\A} \phi_x f(\widetilde{D}_{\varepsilon,u}) \right) \left[ \widetilde{D}_{\varepsilon,u} \phi_{x'} \widetilde{D}^{j-1}_{\varepsilon,u} \xi_j(p,\cdot) \right] (x) \right|
\\ &+ \left| \left(  \rho^{\A} \phi_x f(\widetilde{D}_{\varepsilon,u}) \right) \left[ c(d \phi_{x'}) \widetilde{D}^{j-1}_{\varepsilon,u} \xi_j(p,\cdot) \right] (x) \right|. 
\end{split}
\end{equation}
Replacing $\phi_{x'}$ by a smooth 
compactly supported non-negative function in $C^\infty_0(-\infty, 0)$ which is identically one on 
$\supp \, d\psi_{x'} \cup \supp \, \psi_{x'}$, and denoting this new function by $\phi_{x'}$ again to simplify notation, we arrive iteratively at the following 
intermediate estimate
\begin{equation}\label{Clifford2}
\begin{split}
\|\xi_j(p,\cdot)\|^2_{L^2\Omega^*(\widetilde{M}_\infty)} &\leq 
C \sum_{\ell = 0}^j \left| \left( \rho^{\A} \phi_x f(\widetilde{D}_{\varepsilon,u}) \right) \left[ \widetilde{D}^{\ell}_{\varepsilon,u} \phi_{x'} \xi_j(p,\cdot) \right] (x) \right| 
\\ & = C \sum_{\ell = 0}^j \left| \left( \rho^{\A} \phi_x \widetilde{D}^{\ell}_{\varepsilon,u} f(\widetilde{D}_{\varepsilon,u})  \phi_{x'} \xi_j(p,\cdot)\right) (x) \right|. 
\end{split}
\end{equation}
%%%%%%%%%%%%%%%%%%%%%%%%%%%%%%%%%%
\subsubsection*{Step 2: Uniform estimate in the first argument}
%%%%%%%%%%%%%%%%%%%%%%%%%%%%%%%%%%
We compute, applying Sobolev embedding and Garding inequality again
\begin{align*}
\|\xi_j(p,\cdot)\|^2_{L^2\Omega^*(\widetilde{M}_\infty)} &
\leq  C  \sum_{\ell = 0}^j\|  \rho^{\A} \phi_x \widetilde{D}^{\ell}_{\varepsilon,u} f(\widetilde{D}_{\varepsilon,u})  \phi_{x'} \xi_j(p,\cdot) \|_{H^0_0(E)}
\\ & \leq C \sum_{k=0}^N \sum_{\ell = 0}^j \| D^{k}_{\A}  \rho^{\A} \phi_x \widetilde{D}^{\ell}_{\varepsilon,u} f(\widetilde{D}_{\varepsilon,u})  \phi_{x'} \xi_j(p,\cdot) \|_{H^0_{-\delta}(E)}
\\ & \leq C \sum_{k=0}^N \sum_{\ell = 0}^j \| \widetilde{D}^k_{\varepsilon,u} \phi_x \widetilde{D}^{\ell}_{\varepsilon,u} f(\widetilde{D}_{\varepsilon,u})  
\phi_{x'}  \|_{L^2 \to L^2} \cdot \|\xi_j(p,\cdot)\|_{L^2\Omega^*(\widetilde{M}_\infty)}
\\ & \leq C \sum_{\ell = 0}^{N+j} \| \phi_x \widetilde{D}^{\ell}_{\varepsilon,u} f(\widetilde{D}_{\varepsilon,u})  
\phi_{x'}  \|_{L^2 \to L^2} \cdot \|\xi_j(p,\cdot)\|_{L^2\Omega^*(\widetilde{M}_\infty)},
\end{align*}
where in the last inequality we have argued as in \eqref{Clifford1} and \eqref{Clifford2}. Dividing both sides of the inequality by
$\|\xi_j\|_{L^2\Omega^*(\widetilde{M}_\infty)}$, we arrive at 
\begin{align*}
\|\xi_j(p,\cdot)\|_{L^2\Omega^*(\widetilde{M}_\infty)}  \leq C \sum_{\ell = 0}^{N+j} \| \phi_x \widetilde{D}^{\ell}_{\varepsilon,u} f(\widetilde{D}_{\varepsilon,u})  
\phi_{x'}  \|_{L^2 \to L^2}.
\end{align*}
Plugging this into \eqref{estimate1} we find
\begin{equation*}
\begin{split}
| \rho^{\A}(p) \rho^{\A}(p')  f(\widetilde{D}_{\varepsilon,u})(p,p')| 
&\leq C \sum_{j=0}^N \| \xi_j (p,\cdot) \|_{L^2\Omega^*(\widetilde{M}_\infty)} \\
&\leq C \sum_{\ell = 0}^{2N} \| \phi_x \widetilde{D}^{\ell}_{\varepsilon,u} f(\widetilde{D}_{\varepsilon,u})  
\phi_{x'}  \|_{L^2 \to L^2}.
\end{split}
\end{equation*}
%%%%%%%%%%%%%%%%%%%%%%%%%%%%%%%%%%
\subsubsection*{Step 3: Estimate using finite propagation speed}
%%%%%%%%%%%%%%%%%%%%%%%%%%%%%%%%%%
The operator norm above can now be studied via the spectral representation
\begin{equation}
\widetilde{D}^{\ell}_{\varepsilon,u} f(\widetilde{D}_{\varepsilon,u}) 
= \frac{1}{2\pi} \int_\R \left( (i\partial_s)^\ell \, \widehat{f}(s) \right) e^{is \widetilde{D}_{\varepsilon,u}} ds,
\end{equation}
where $\widehat{f}$ denotes the Fourier transform of $f$, and the representaton is 
in fact valid for any Schwartz function $f$. Using the finite propagation speed result in 
Theorem \ref{propagation}, we can now continue our estimate as follows.
\begin{equation}\label{estimate-final}
\begin{split}
| \rho^{\A}(p) \rho^{\A}(p') & f(\widetilde{D}_{\varepsilon,u})(p,p')| 
\\ &\leq C \sum_{\ell = 0}^{2N} \int_\R \left| \widehat{f}^{(\ell)}(s) \right| \cdot
\|  \phi_x e^{is \widetilde{D}_{\varepsilon,u}} \phi_{x'}   \|_{L^2 \to L^2} ds
\\ &\leq C \sum_{\ell = 0}^{2N} \int\limits_{\R \backslash I(x,x')} \left| \widehat{f}^{(\ell)}(s) \right| ds,
\end{split}
\end{equation}
where $I(x,x') := (-d,d)$ with $d$ being the distance between the supports of 
$\phi_x$ and $\phi_{x'}$. Choosing the supports of $\phi_x$ and $\phi_{x'}$ are sufficiently
small, $d=|x-x'|-r$ for a sufficiently small number $r>0$. From here on, the statement 
(1) of the Theorem follows from the explicit 
representation of $\widehat{f}^{(\ell)}(s)$ in terms of Hermite polynomials, and the computation 
of \cite[(3.6)]{Va}. \medskip

For the statement (2) of the Theorem, note that $e^{is \widetilde{D}_{\varepsilon,u}} \xi = e^{is \widetilde{S}_{\varepsilon,u}} \xi$
for $\supp \xi \subset (-\infty, -1) \times \partial \widetilde{M}$ and $|s|$ smaller than the distance of support of $\xi$ to
$\{-1\} \times \partial \widetilde{M}$, by uniqueness of solutions and finite propagation speed. 
Thus we obtain with similar arguments as before 
\begin{equation}\label{estimate-final2}
\begin{split}
| \rho^{\A}(p) \rho^{\A}(p') & \left( f(\widetilde{D}_{\varepsilon,u}) - f(\widetilde{S}_{\varepsilon,u})\right) (p,p')| 
\\ &\leq C \sum_{\ell = 0}^{2N} \int_\R \left| \widehat{f}^{(\ell)}(s) \right| \cdot
\|  \phi_x \left( e^{is \widetilde{D}_{\varepsilon,u}} - e^{is \widetilde{S}_{\varepsilon,u}} \right) \phi_{x'}   \|_{L^2 \to L^2} ds
\\ &\leq C \sum_{\ell = 0}^{2N} \int\limits_{\R \backslash J(x,x')} \left| \widehat{f}^{(\ell)}(s) \right| ds,
\end{split}
\end{equation}
where $J(x,x') := (-d',d')$ with $d'$ being the minimal distance between the supports of 
$\phi_x$ and $\phi_{x'}$ to $\{-1\} \times \partial \widetilde{M}$. From here on, the 
statement (2) of the Theorem follows as in \cite[(3.7)]{Va}.
\end{proof}

%%%%%%%%%%%%%%%%%%%%%%%%%%%%%%%%%%
\subsection{Uniform on-diagonal large time heat kernel estimates on $\widetilde{M}_\infty$}
%%%%%%%%%%%%%%%%%%%%%%%%%%%%%%%%%%

The estimates of Theorem \ref{off-diagonal} are concerned with the large time
behaviour of the heat kernel away from the diagonal. We complement the 
subsection with a result on a uniform large time heat kernel asymptotics at the diagonal. 
\medskip

Consider a fundamental domain $\mathcal{F}_\infty$ of the Galois covering $\widetilde{M}_\infty$,
which can be identified with $M_\infty$ up to a subset of measure zero.
Consider a smooth cutoff function $\phi \in C^\infty (M_\infty)$
smooth up to the singular strata, such that $\phi \equiv 0$ on $(-\infty, -N-1) \times \partial M \subset M_\infty$
for some $N\in \N$ and $\phi \equiv 1$ on the complement of $(-\infty, -N) \times \partial M$. We lift $\phi$ 
to a smooth function on $\widetilde{M}_\infty$ with $\supp \, \phi \subset \mathcal{F}_\infty$. Furthermore, let
$\pi: \mathcal{F}_\infty \to \mathcal{F}_\infty \times \mathcal{F}_\infty, p\mapsto (p,p)$ be the inclusion into the diagonal.
Let $\pi^* (E \boxtimes E)$ be the corresponding pullback bundle. 

\begin{thm}\label{on-diagonal}
Consider the orthogonal kernel projection $P_{\ker \widetilde{D}_{\varepsilon,u}}$ of $\widetilde{D}_{\varepsilon,u}$. 
Then for any $\delta \in (0,1)$ and $p\in \mathcal{F}_\infty$, $\phi e^{-t\widetilde{D}_{\varepsilon,u}^2}(p,p)$ converges to 
$\phi P_{\ker \widetilde{D}_{\varepsilon,u}}(p,p)$ uniformly in $C^0_{-m+2\delta}(\mathcal{F}_\infty, \pi^* (E \boxtimes E))$, as $t\to \infty$.
\end{thm}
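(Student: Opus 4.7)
The plan is to combine a spectral-gap argument for $\widetilde{D}_{\varepsilon,u}^2$ with the pointwise Sobolev/Garding machinery already developed in the proof of Theorem \ref{off-diagonal}. First, I will observe that for $\varepsilon,u>0$ the tangential operator $\widetilde{B}(x)$ admits a spectral gap around zero whenever $x\leq -1$, which is precisely what makes $\widetilde{D}_{\varepsilon,u}$ $\Gamma$-Fredholm in Theorem \ref{D-Fredholm}. Combined with self-adjointness, this forces the $\Gamma$-spectrum of the non-negative self-adjoint operator $\widetilde{D}_{\varepsilon,u}^2$ to be of the form $\{0\}\cup [\mu^2,\infty)$ for some $\mu>0$, where $0$ is an isolated point of the $\Gamma$-spectrum with associated $\Gamma$-trace class spectral projection $P_{\ker\widetilde{D}_{\varepsilon,u}}$. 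By functional calculus this yields, for every $k\in\N_0$,
\begin{equation*}
\bigl\|\widetilde{D}^k_{\varepsilon,u}(e^{-t\widetilde{D}_{\varepsilon,u}^2}-P_{\ker\widetilde{D}_{\varepsilon,u}})\bigr\|_{L^2\to L^2}\leq C_k\, e^{-t\mu^2/2},\quad t\geq 1,
\end{equation*}
since $|\lambda|^k e^{-t\lambda^2}\leq C_k e^{-t\mu^2/2}$ uniformly on $|\lambda|\geq \mu$.

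Next, to upgrade this operator-norm decay to a pointwise estimate at the diagonal, I would repeat Steps~1 and~2 of the proof of Theorem \ref{off-diagonal} verbatim, with the Schwartz function $x^k e^{-tx^2}$ replaced by any Schwartz function $\widetilde{f}_t$ that coincides with $e^{-tx^2}-\chi_{\{0\}}(x)$ on the $\Gamma$-spectrum of $\widetilde{D}_{\varepsilon,u}$ (so that $\widetilde{f}_t(\widetilde{D}_{\varepsilon,u})=e^{-t\widetilde{D}_{\varepsilon,u}^2}-P_{\ker\widetilde{D}_{\varepsilon,u}}$). With $\alpha=\frac{m}{2}-\delta$ and auxiliary cutoffs $\phi_x,\phi_{x'}$ chosen as in Theorem \ref{off-diagonal}, the Sobolev embedding of Theorem \ref{embedding-theorem} together with the Garding inequality of Proposition \ref{Garding} give
\begin{equation*}
\begin{split}
&\bigl|\rho^\alpha(p)\rho^\alpha(p')(e^{-t\widetilde{D}_{\varepsilon,u}^2}-P_{\ker\widetilde{D}_{\varepsilon,u}})(p,p')\bigr|\\
&\qquad\leq C\sum_{\ell=0}^{2N}\bigl\|\phi_x \widetilde{D}^\ell_{\varepsilon,u}(e^{-t\widetilde{D}_{\varepsilon,u}^2}-P_{\ker\widetilde{D}_{\varepsilon,u}})\phi_{x'}\bigr\|_{L^2\to L^2}\leq C'\, e^{-t\mu^2/2},
\end{split}
\end{equation*}
where the last estimate is the spectral-gap bound of the first paragraph. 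Note that, in contrast to Theorem \ref{off-diagonal}, no finite-propagation-speed argument (Step~3 there) is needed here, since we only seek decay in $t$, not decay in the spatial separation between $p$ and $p'$.

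Setting $p=p'$ and multiplying by the cutoff $\phi$ then yields
\begin{equation*}
\sup_{p\in\mathcal{F}_\infty}\rho^{m-2\delta}(p)\,\bigl|\phi(p)(e^{-t\widetilde{D}_{\varepsilon,u}^2}-P_{\ker\widetilde{D}_{\varepsilon,u}})(p,p)\bigr|\leq C''\, e^{-t\mu^2/2},
\end{equation*}
which is precisely convergence to zero in $C^0_{-m+2\delta}(\mathcal{F}_\infty,\pi^*(E\boxtimes E))$. The main obstacle is to ensure that the Sobolev and Garding constants appearing in the estimate above can be chosen uniformly in $p\in\mathcal{F}_\infty\cap\supp\phi$; this should follow from the fact that $\supp\phi$ is bounded in the cylindrical direction by construction of $\phi$ (so the auxiliary cutoffs $\phi_x,\phi_{x'}$ may be selected from a finite family), together with the fact that the weighted spaces of Definition \ref{product-spaces} are designed precisely so that the embeddings of Theorem \ref{embedding-theorem} hold uniformly up to the singular strata via the radial distance function $\rho$.
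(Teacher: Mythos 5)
Your proof breaks down at the very first step. You assert that the $\Gamma$-Fredholm property of $\widetilde{D}_{\varepsilon,u}$ established in Theorem \ref{D-Fredholm} ``forces the $\Gamma$-spectrum of the non-negative self-adjoint operator $\widetilde{D}_{\varepsilon,u}^2$ to be of the form $\{0\}\cup [\mu^2,\infty)$''; in other words, that $0$ is an isolated point of the spectrum. That is not what $\Gamma$-Fredholm means in the von Neumann setting. Breuer--Fredholmness of a self-adjoint operator $T$ (invertibility modulo $\Gamma$-compacts, which is what the parametrix construction gives) is equivalent to $\chi_{(-\epsilon,\epsilon)}(T)$ being a $\Gamma$-\emph{finite} projection for some $\epsilon>0$ --- not to $\chi_{(-\epsilon,\epsilon)}(T)$ being the kernel projection. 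The spectrum may perfectly well accumulate at zero while the nearby spectral projections have small but nonzero $\Gamma$-trace. The paper itself exhibits exactly this phenomenon on the boundary in Corollary \ref{codim}: $\trace_\Gamma\,\chi_{(0,\lambda]}(d^*d)$ is finite for every $\lambda$, tends to $0$ as $\lambda\to 0$, and may be nonzero for all $\lambda>0$. If spectrum does accumulate at zero, then
\begin{equation*}
\bigl\|e^{-t\widetilde{D}_{\varepsilon,u}^2}-P_{\ker\widetilde{D}_{\varepsilon,u}}\bigr\|_{L^2\to L^2}=\sup_{\lambda\in\spec\widetilde{D}_{\varepsilon,u}\setminus\{0\}}e^{-t\lambda^2}=1
\end{equation*}
for every $t$, so the exponential operator-norm decay you posit is simply false, and with it the entire pointwise estimate collapses. (A spectral gap does hold for the cylindrical model operator $\widetilde{S}_{\varepsilon,u}$, because $\widetilde{B}(-1)$ is invertible; but the interior piece $\widetilde{M}$ is a $\Gamma$-cover of a compact stratified space and can carry continuous spectrum touching zero.)

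The paper deliberately avoids requiring any rate of convergence. The actual proof uses Step~1 and Step~2 of the proof of Theorem \ref{off-diagonal} with $f(x)=e^{-tx^2}$ to obtain the uniform \emph{bound}
\begin{equation*}
\bigl|\rho^{m-2\delta}(p)\,e^{-t\widetilde{D}_{\varepsilon,u}^2}(p,p)\bigr|\leq C\sum_{\ell=0}^{2N}\int_\R\bigl|\widehat{f}^{(\ell)}(s)\bigr|\,ds\leq C,
\end{equation*}
using the explicit Hermite-polynomial form of $\widehat f^{(\ell)}$ — boundedness in the $C^0_{-m+2\delta}$-norm as $t\to\infty$, not decay. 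It then deploys a soft topological argument: by uniform boundedness, any subsequence of $(\phi e^{-t\widetilde{D}^2_{\varepsilon,u}}(p,p))_t$ has a convergent sub-subsequence; every such limit must equal $\phi P_{\ker\widetilde{D}_{\varepsilon,u}}(p,p)$ because heat kernels converge pointwise on compacta to the kernel projection as $t\to\infty$ (cf.\ \cite[Proposition 15.11]{Roe}); hence the whole family converges. This mechanism works without any spectral gap. If you want to salvage your approach, you would first have to \emph{prove} that $\widetilde{D}_{\varepsilon,u}$ has a genuine gap at zero for $\varepsilon,u>0$, which is a separate (and potentially false) claim that cannot be read off from Theorem \ref{D-Fredholm} alone.
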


\begin{proof}
Repeating the arguments of \eqref{estimate-final}, we find for $f(x):= e^{-tx^2}$ 
\begin{equation*}
\begin{split}
| \rho^{m-2\delta}(p)e^{-t\widetilde{D}_{\varepsilon,u}^2} (p,p)| 
\leq C \sum_{\ell = 0}^{2N} \int\limits_{\R} \left| \widehat{f}^{(\ell)}(s) \right| ds.
\end{split}
\end{equation*}
The Fourier transform $\widehat{f}$ and its derivatives are computed explicitly by
$$
\widehat{f}^{(\ell)}(s) = \frac{C_\ell}{t^{(\ell+1)/2}} H_\ell \left(\frac{s}{\sqrt{4t}}\right)
e^{-\frac{s^2}{4t}},
$$
where $H_\ell$ is the $\ell$-th Hermite polynomial and $C_\ell$ is a universal constant, 
depending on $\ell$. Consequently we find after substitution
\begin{equation*}
\begin{split}
| \rho^{m-2\delta}(p) e^{-t\widetilde{D}_{\varepsilon,u}^2} (p,p)| 
\leq C \sum_{\ell = 0}^{2N} t^{-\ell / 2} \int\limits_{\R} \left| H_\ell (s) \right| e^{-s^2} ds
\leq C.
\end{split}
\end{equation*}
This shows that $(\phi e^{-t\widetilde{D}_{\varepsilon,u}^2}(p,p))_{t\in \R^+}$ is uniformly bounded in 
$C^0_{-m+2\delta}(\mathcal{F}_\infty, \pi^* (E \boxtimes E))$ as $t\to \infty$. Thus it admits a convergent subsequence $(\phi e^{-t_j\widetilde{D}_{\varepsilon,u}^2}(p,p))_{j\in \N}$
with $t_j\to \infty$ as $j\to \infty$. The limit of that subsequence must be $\phi P_{\ker \widetilde{D}_{\varepsilon,u}}(p,p)$,
since $e^{-t\widetilde{D}_{\varepsilon,u}^2}(p,p)$ converges to $P_{\ker \widetilde{D}_{\varepsilon,u}}(p,p)$ on compact subsets as $t\to \infty$, by
the proof of \cite[Proposition 15.11]{Roe}. The statement now follows from the general topological fact that 
if any subsequence of $(\phi e^{-t\widetilde{D}_{\varepsilon,u}^2}(p,p))_{t\in \R^+}$ admits a convergent subsequence with 
the same limit, then $\phi e^{-t\widetilde{D}_{\varepsilon,u}^2}(p,p)$ must converge to that limit as $t\to \infty$.
\end{proof}

%%%%%%%%%%%%%%%%%%%%%%%%%%%%%%%%%%
\subsection{Proof of an index theorem on $\widetilde{M}_\infty$}
%%%%%%%%%%%%%%%%%%%%%%%%%%%%%%%%%%

Consider for any $N\in \N$ a smooth $\Gamma$-invariant cutoff function $\phi_N \in C^\infty (\widetilde{M}_\infty)$, 
smooth up to the singular strata, such that $\phi_N \equiv 0$ on $(-\infty, -N-1) \times \partial \widetilde{M}$
and $\phi_N \equiv 1$ on the complement of $(-\infty, -N) \times \partial \widetilde{M}$.

\begin{prop}\label{eP}
The operators $\phi_N \, e^{-t\widetilde{D}_{\varepsilon,u}^2} \phi_N$ and 
$\phi_N \, \widetilde{D}_{\varepsilon,u}^2 e^{-t\widetilde{D}_{\varepsilon,u}^2} \phi_N$
are $\Gamma$-trace class and, taking the $\Gamma$-super traces, we find 
as $t\to \infty$
\begin{align*}
\textup{Tr}_\Gamma \left( \phi_N \, e^{-t\widetilde{D}_{\varepsilon,u}^2} \phi_N \right) \to 
\textup{Tr}_\Gamma \left( \phi_N \, P_{\ker \widetilde{D}_{\varepsilon,u}} \phi_N \right).
\end{align*}
\end{prop}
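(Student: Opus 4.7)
The plan is to establish the $\Gamma$-trace class property of both operators by a Hilbert--Schmidt factorization, and then to identify the limit by combining the on-diagonal uniform convergence of Theorem \ref{on-diagonal} with an integrable dominating weight.

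For $\Gamma$-trace class of $\phi_N e^{-t\widetilde{D}^2_{\varepsilon,u}} \phi_N$, I would factor
\begin{equation*}
\phi_N e^{-t\widetilde{D}^2_{\varepsilon,u}} \phi_N = \bigl(\phi_N e^{-(t/2)\widetilde{D}^2_{\varepsilon,u}}\bigr) \circ \bigl(e^{-(t/2)\widetilde{D}^2_{\varepsilon,u}} \phi_N\bigr),
\end{equation*}
the two factors being mutual adjoints, and invoke Definition \ref{HS-definition-galois} to reduce matters to verifying that the Schwartz kernel $K(p,q) = \phi_N(p) e^{-(t/2)\widetilde{D}^2_{\varepsilon,u}}(p,q)$ lies in $L^2(\mathcal{F} \times \widetilde{M}_\infty)$. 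Since $\mathrm{supp}(\phi_N)\cap \mathcal{F}$ is cylindrically bounded, Theorem \ref{off-diagonal}(1) dominates $K$ for $|x_p - x_q|$ large by $(\rho(p)\rho(q))^{-m/2+\delta}\exp(-(|x_p - x_q|-r_1)^2/3t)$, whose $L^2$ norm is finite thanks to the Gaussian decay in $|x_p - x_q|$ and the local integrability of $\rho^{-m+2\delta}$ near each stratum (the volume form absorbs $\rho^m$, reducing the relevant integral to a convergent one of type $\int x^{-1+2\delta}\, dx$). In the complementary near-diagonal regime, Theorem \ref{on-diagonal} (combined with parabolic regularity off the diagonal) controls $K$ pointwise by the same weight on a bounded region. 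Summing yields $K \in L^2(\mathcal{F}\times \widetilde{M}_\infty)$. The analogous factorization
\begin{equation*}
\phi_N \widetilde{D}^2_{\varepsilon,u} e^{-t\widetilde{D}^2_{\varepsilon,u}} \phi_N = \bigl(\phi_N \widetilde{D}_{\varepsilon,u} e^{-(t/2)\widetilde{D}^2_{\varepsilon,u}}\bigr)\bigl(\widetilde{D}_{\varepsilon,u} e^{-(t/2)\widetilde{D}^2_{\varepsilon,u}} \phi_N\bigr),
\end{equation*}
handled by Theorem \ref{off-diagonal} with $k=1$, gives the $\Gamma$-trace class property of the second operator.

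For the convergence statement, I would represent the $\Gamma$-(super)trace via Remark \ref{HS-remark-galois} (applicable since the heat kernel is smooth at the diagonal for $t>0$):
\begin{equation*}
\textup{Tr}_\Gamma\bigl(\phi_N e^{-t\widetilde{D}^2_{\varepsilon,u}} \phi_N\bigr) = \int_{\mathcal{F}} \phi_N(p)^2 \, \textup{tr}_p\bigl(e^{-t\widetilde{D}^2_{\varepsilon,u}}(p,p)\bigr) \, \textup{dvol}_{\widetilde{g}}(p),
\end{equation*}
and the corresponding formula for the projection onto the kernel. Applying Theorem \ref{on-diagonal} with a cutoff covering $\mathrm{supp}(\phi_N) \cap \mathcal{F}_\infty$, the pointwise difference $e^{-t\widetilde{D}^2_{\varepsilon,u}}(p,p) - P_{\ker \widetilde{D}_{\varepsilon,u}}(p,p)$ tends to zero in $C^0_{-m+2\delta}$ as $t\to \infty$, so the integrand converges uniformly with the integrable majorant $C\, \phi_N(p)^2 \rho(p)^{-m+2\delta}$; dominated convergence then yields the claimed limit.

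The main obstacle is the first step: the singular weights $\rho^{-m/2+\delta}$ in each argument of the heat kernel must be carefully balanced against the $\rho^m$ factor of the volume form on each stratum and against the Gaussian decay in the cylindrical direction, in order to secure the global $L^2$ bound on $K$ over the non-compact product $\mathcal{F} \times \widetilde{M}_\infty$. Once $\Gamma$-trace class is in hand, the passage to the limit reduces to uniform convergence with an integrable dominating weight.
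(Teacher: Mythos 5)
The convergence part of your argument (Theorem~\ref{on-diagonal} plus dominated convergence against the integrable majorant $\phi_N^2\,\rho^{-m+2\delta}$) matches the paper's reasoning exactly. However, your route to the $\Gamma$-trace class statement has a genuine gap. You try to verify directly that the Schwartz kernel of $\phi_N\,e^{-(t/2)\widetilde{D}^2_{\varepsilon,u}}$ is in $L^2(\mathcal{F}\times\widetilde{M}_\infty)$ by patching together Theorem~\ref{off-diagonal} and Theorem~\ref{on-diagonal}. But Theorem~\ref{off-diagonal} is formulated only for both points on the cylindrical end $(-\infty,0)\times\partial\widetilde{M}$ \emph{and} cylindrically separated ($|x-x'|>2r$); Theorem~\ref{on-diagonal} is strictly a diagonal statement. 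Neither result gives any control in the near-diagonal off-diagonal regime ($p\neq q$ but $|x_p - x_q|<2r$), and in particular not when both arguments are close to a singular stratum, which is precisely the dangerous region for $L^2$-integrability. Your appeal to ``parabolic regularity off the diagonal'' does not close this: the signature Laplacian degenerates at the strata, and standard interior parabolic estimates are not available there without the full microlocal analysis.

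The paper sidesteps all this. It first observes that the Albin--Gell-Redman microlocal heat kernel construction~\cite{AGR17} applies verbatim to $\widetilde{D}_{0,u}^2$ (the $u$-term merely shifts the tangential operator), and this construction \emph{by its very nature} describes the Schwartz kernel up to the strata and the diagonal as a polyhomogeneous conormal function on a blown-up space; in particular it yields local $L^2$-integrability up to the strata, hence the $\Gamma$-Hilbert--Schmidt property of $\phi_N e^{-t\widetilde{D}_{0,u}^2}$ and $\phi_N\widetilde{D}_{0,u} e^{-t\widetilde{D}_{0,u}^2}$, and therefore the $\Gamma$-trace class of the two products. It then passes from $\varepsilon=0$ to $\varepsilon>0$ via a Volterra series for $e^{-t\widetilde{D}^2_{\varepsilon,u}}$ around $e^{-t\widetilde{D}^2_{0,u}}$, since $\widetilde{D}^2_{\varepsilon,u} = \widetilde{D}^2_{0,u} + R$ with $R$ bounded. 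This two-step argument (microlocal model plus bounded perturbation) is the essential content your proof is missing. Theorems~\ref{off-diagonal} and~\ref{on-diagonal} are instead designed for the \emph{large-time} cylindrical estimates later in the section (the terms $I_2$, $I_3$), and cannot be made to carry the near-diagonal near-stratum burden here.
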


\begin{proof}
The microlocal heat-kernel construction of \cite{AGR17} extends to $e^{-t\widetilde{D}_{0,u}^2}$, 
where $u>0$ simply shifts the tangential operator by a constant. Hence, 
$\phi_N e^{-t\widetilde{D}_{0,u}^2}$ and $\phi_N \widetilde{D}_{0,u} e^{-t\widetilde{D}_{0,u}^2}$ are 
$\Gamma$-Hilbert Schmidt. Thus $\phi_N \, e^{-t\widetilde{D}_{0,u}^2} \phi_N$ 
and $\phi_N \, \widetilde{D}_{0,u}^2 e^{-t\widetilde{D}_{0,u}^2} \phi_N$ are $\Gamma$-trace class.
In order to pass to $\varepsilon >0$, we express the heat-kernel for $\widetilde{D}_{\varepsilon,u}^2$ 
as follows. By construction
\begin{equation*}
\begin{split}
\widetilde{D}_{\varepsilon,u} & = \widetilde{\sigma} \left( \frac{d}{dx} + \widetilde{B}(x)\right) 
= \widetilde{D}_{0,u} - \widetilde{\sigma} \, \theta(x) \widetilde{B} \, \Pi_\varepsilon (\widetilde{B}), \\
\widetilde{D}_{\varepsilon,u}^2 & = \widetilde{D}_{0,u}^2 - \left(\theta(x) \widetilde{B} \, \Pi_\varepsilon (\widetilde{B})\right)^2
 - c(d\theta)  \, \widetilde{\sigma} \, \widetilde{B} \, \Pi_\varepsilon (\widetilde{B})  \\& + 2 \, \theta(x) \, \, \widetilde{B} \, \Pi_\varepsilon (\widetilde{B}) 
  \left(\widetilde{B} + \theta(x) u\right)  =: \widetilde{D}_{0,u}^2 + R.
\end{split}
\end{equation*}
Now the heat-kernels for $\widetilde{D}_{\varepsilon,u}^2$ and $\widetilde{D}_{0,u}^2$ are related by a Volterra series
\begin{equation*}
\begin{split}
e^{-t\widetilde{D}_{\epsilon,u}^2}  = e^{-t \widetilde{D}_{0,u}^2} + \sum_{\kappa = 1}^{\infty} \int_{\Delta^\kappa} d \sigma_1 ... d\sigma_\kappa \, 
 e^{-\sigma_1 t \, \widetilde{D}_{0,u}^2} \circ (tR) \circ e^{-\sigma_2 t \, \widetilde{D}_{0,u}^2} \cdots (tR) \circ e^{-\sigma_\kappa t \, \widetilde{D}_{0,u}^2},
\end{split}
\end{equation*}
where $\Delta^\kappa := \{(\sigma_1,...,\sigma_\kappa) \in \R_+^\kappa \, \mid \, \sigma_1 + ... + \sigma_\kappa = 1 \}$.
Using again the microlocal heat-kernel construction of \cite{AGR17}, we conclude that the Schwartz kernels of $e^{-t \widetilde{D}_{\varepsilon,u}^2}$ 
and $ \widetilde{D}_{\varepsilon,u} e^{-t \widetilde{D}_{\varepsilon,u}^2}$ are still locally $L^2$-integrable up to the strata and 
$\phi_N e^{-t\widetilde{D}_{\varepsilon,u}^2}, \phi_N \widetilde{D}_{\varepsilon,u} e^{-t\widetilde{D}_{\varepsilon,u}^2}$ are 
$\Gamma$-Hilbert Schmidt. Hence the operators $\phi_N \, e^{-t\widetilde{D}_{\varepsilon,u}^2} \phi_N$ and 
$\phi_N \, \widetilde{D}_{\varepsilon,u}^2 e^{-t\widetilde{D}_{\varepsilon,u}^2} \phi_N$
are $\Gamma$-trace class, as claimed. Now the statement on the convergence of $\Gamma$-super traces follows 
Theorem \ref{on-diagonal} and from the fact that 
$$
\phi_N|_{\mathcal{F}_\infty} \cdot C^0_{-m+2\delta}(\mathcal{F}_\infty, \pi^* (E \boxtimes E)) \subset L^1(\mathcal{F}_\infty, E).
$$
\end{proof}

We can now establish an index theorem for $\widetilde{D}_{\varepsilon,u}$ by an adaptation of the
argument of \cite[Proposition 6.13]{Va} to our singular setting. We emphasize that as before, the proof
of \cite{Va} has to be amended significantly due to the singularities.

\begin{thm}\label{thm6_4}
The $\Gamma$-Fredholm operator $\widetilde{D}_{\epsilon,u}$ admits an index formula as $u\to 0$
\begin{equation}\label{ind-thm1}
\ind_\Gamma \widetilde{D}_{\epsilon,u} = 
\int\limits_M L(M) + 
\sum_{\alpha \in A} \int\limits_{\ Y_\alpha} b_{\alpha}
\\ - \frac{\eta_{\Gamma}(\widetilde{B}(-1))}{2} + \underline{o}(1).
\end{equation}
\end{thm}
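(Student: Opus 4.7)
The plan is to mimic the Atiyah--Patodi--Singer heat-equation proof of the index formula, adapted to the $\Gamma$-equivariant singular setting, with Vaillant's argument \cite{Va} as template. The starting point is the $\Gamma$-McKean--Singer formula, which by Proposition~\ref{eP} makes sense for $\widetilde{D}_{\varepsilon,u}$ because $\phi_N\, e^{-t\widetilde{D}_{\varepsilon,u}^* \widetilde{D}_{\varepsilon,u}}\phi_N$ and the corresponding $\widetilde{D}_{\varepsilon,u}\widetilde{D}_{\varepsilon,u}^*$-variant are $\Gamma$-trace class and converge in $\Gamma$-supertrace to the corresponding kernel projections as $t\to\infty$. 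First I would decompose $\widetilde{M}_\infty = K_N \cup \bigl((-\infty,-N]\times\partial\widetilde{M}\bigr)$ using the cutoff $\phi_N$ from Proposition~\ref{eP} together with a complementary cutoff $\chi_N=1-\phi_N$ supported in the deep cylinder (where $\theta\equiv 1$ and $\widetilde{D}_{\varepsilon,u}$ has the constant-coefficient product form with tangential operator $\widetilde{B}(-1)=\widetilde{B}+u-\widetilde{B}\,\Pi_\varepsilon(\widetilde{B})$).

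Next, using the $\Gamma$-supertrace version of the McKean--Singer formula I would write
\begin{align*}
\ind_\Gamma \widetilde{D}_{\varepsilon,u}
&= \lim_{t\to\infty}\Tr_{\Gamma,s}\bigl(e^{-t\widetilde{D}_{\varepsilon,u}^2}\bigr) \\
&= \Tr_{\Gamma,s}\bigl(e^{-T\widetilde{D}_{\varepsilon,u}^2}\bigr)
+ \int_T^\infty \partial_t \Tr_{\Gamma,s}\bigl(e^{-t\widetilde{D}_{\varepsilon,u}^2}\bigr)\,dt,
\end{align*}
for small $T>0$, and then split each $\Gamma$-trace via $\phi_N + \chi_N$. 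The short-time piece $\Tr_{\Gamma,s}(\phi_N e^{-T\widetilde{D}_{\varepsilon,u}^2})$ is purely local, and as $T\to 0$ it produces the interior integrals $\int_M L(M) + \sum_\alpha \int_{Y_\alpha} b_\alpha$, precisely because in the interior the heat kernel of $\widetilde{D}_{\varepsilon,u}$ coincides with that of the signature operator $\widetilde{D}$ on the Galois cover, and Albin--Gell-Redman's Getzler rescaling \cite[Thm.~6.4]{AGR17} together with Duhamel yields the same pointwise supertrace expansion as on the base $M$ (cf. \cite[Thm.~\ref{index-main-2}]{PiVe}). The cylindrical piece $\Tr_{\Gamma,s}(\chi_N e^{-T\widetilde{D}_{\varepsilon,u}^2})$ can, via the uniform off-diagonal estimates in Theorem~\ref{off-diagonal}(2), be replaced up to $O(e^{-c N^2/T})$ by the corresponding supertrace of the model operator $\widetilde{S}_{\varepsilon,u}$, which by separation of variables has vanishing supertrace because the $x$-derivative piece contributes no supertrace.

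The remaining term $\int_T^\infty \partial_t\Tr_{\Gamma,s}(e^{-t\widetilde{D}_{\varepsilon,u}^2})\,dt$ is where the $\Gamma$-eta invariant emerges. Following Vaillant \cite[\S 6]{Va}, the time derivative can be written using integration by parts in the cylindrical variable, and Theorem~\ref{off-diagonal}(2) justifies the replacement of $\widetilde{D}_{\varepsilon,u}$ by the model $\widetilde{S}_{\varepsilon,u}$ in the cylindrical region; the latter gives exactly $-\tfrac{1}{2}\eta_\Gamma(\widetilde{B}(-1))$ after using the standard identity $\int_0^\infty \Tr_\Gamma(\widetilde{B}(-1) e^{-t \widetilde{B}(-1)^2})t^{-1/2}dt/\sqrt\pi = \eta_\Gamma(\widetilde{B}(-1))/2$, justified by the $\Gamma$-trace class property established via Proposition~\ref{resolvent-trace-class} adapted to $\widetilde{B}(-1)$ (which has a spectral gap and inherits polyhomogeneity from \cite{AGR17}). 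The error between $e^{-t\widetilde{D}_{\varepsilon,u}^2}$ and the model on the cylinder is controlled by the Gaussian factor from Theorem~\ref{off-diagonal}(2), uniformly in $t$, hence integrable in $t\in[T,\infty)$ up to $\underline{o}(1)$ as $N\to\infty$.

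The main obstacle, I expect, will be the uniform control of the long-time behaviour of $\Tr_{\Gamma,s}(\chi_N e^{-t\widetilde{D}_{\varepsilon,u}^2})$ near the singular strata of $\partial\widetilde{M}$: the heat kernel is \emph{not} of bounded geometry type, so the off-diagonal weights $(\rho(p)\rho(p'))^{-m/2+\delta}$ from Theorem~\ref{off-diagonal} only barely fit into $L^1$ of a fundamental domain, forcing careful bookkeeping of the weighted Sobolev arguments (Proposition~\ref{Garding}) to ensure the limit $u\to 0$, which makes the spectral gap of $\widetilde{B}(-1)$ collapse, produces only an $\underline{o}(1)$ contribution rather than something divergent; this is the point where the singular analysis departs most visibly from Vaillant's smooth argument, as highlighted already in Remark~\ref{Vai-differences}.
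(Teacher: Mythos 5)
Your proposal follows essentially the same Vaillant-style route as the paper's proof: Proposition~\ref{eP} supplies the $\Gamma$-McKean--Singer formula via the $\phi_N$-cutoffs, the short-time part of the split yields the interior index densities, and the $\partial_t$ piece is converted into a boundary commutator term via Duhamel (your ``integration by parts in the cylindrical variable'', which the paper renders as $-2\,\textup{s-Tr}_\Gamma(\phi_N\widetilde{D}_{\varepsilon,u}^2 e^{-t\widetilde{D}_{\varepsilon,u}^2}\phi_N)=\textup{s-Tr}_\Gamma(c(d\phi_N^2)\,\widetilde{D}_{\varepsilon,u}e^{-t\widetilde{D}_{\varepsilon,u}^2})$) and compared with the model $\widetilde{S}_{\varepsilon,u}$ using Theorem~\ref{off-diagonal}, producing the $\Gamma$-eta term. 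The one caution is that the uncut $\Tr_{\Gamma,s}(e^{-t\widetilde{D}_{\varepsilon,u}^2})$ need not be finite on the noncompact $\widetilde{M}_\infty$, so the $\phi_N$-conjugation must be carried from the outset---as the paper does by writing $\ind_\Gamma\widetilde{D}_{\varepsilon,u}=\lim_{N\to\infty}\lim_{t\to\infty}\textup{s-Tr}_\Gamma(\phi_N e^{-t\widetilde{D}_{\varepsilon,u}^2}\phi_N)$, separating the resulting $t$-integral into $\int_s^N+\int_N^\infty$ and showing the second vanishes as $N\to\infty$---rather than introduced only after writing down a full (possibly ill-defined) $\Gamma$-supertrace.
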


\begin{proof}
We adapt the argument of \cite[Proposition 6.13]{Va} to our singular setting, and detail out only those elements of the proof,
where the presence of singularities requires a change of argument.
By Theorem \ref{D-Fredholm}, $\widetilde{D}_{\varepsilon,u}$ is $\Gamma$-Fredholm and, in particular, 
the projection $P_{\ker \widetilde{D}_{\varepsilon,u}}$ onto the null space of $\widetilde{D}_{\varepsilon,u}$ is $\Gamma$-trace class. Hence
\begin{align*}
\ind_\Gamma \tilde{D}_{\epsilon,u} = \textup{s-Tr}_\Gamma \left(P_{\ker \widetilde{D}_{\varepsilon,u}}\right)
= \lim_{N\to \infty} \textup{s-Tr}_\Gamma \left( \phi_N P_{\ker \widetilde{D}_{\varepsilon,u}} \phi_N\right).
\end{align*}
By Proposition \ref{eP} we conclude
\begin{align*}
\ind_\Gamma \widetilde{D}_{\epsilon,u} 
&= \lim_{N\to \infty} \lim_{t\to \infty} \textup{s-Tr}_\Gamma \left( \phi_N \, e^{-t \, \widetilde{D}_{\varepsilon,u}^2} \phi_N \right) \\
&= \lim_{N\to \infty} \left(  \textup{s-Tr}_\Gamma \left( \phi_N \, e^{-s \, \widetilde{D}_{\varepsilon,u}^2} \phi_N \right)
 -  \int_s^\infty \textup{s-Tr}_\Gamma \left( \phi_N \, \widetilde{D}_{\varepsilon,u}^2 e^{-t \, \widetilde{D}_{\varepsilon,u}^2} \phi_N \right) dt \right) \\
&= \lim_{N\to \infty} \left(  \textup{s-Tr}_\Gamma \left( \phi_N \, e^{-s \, \widetilde{D}_{\varepsilon,u}^2} \phi_N \right)
 -  \int_s^N \textup{s-Tr}_\Gamma \left( \phi_N \, \widetilde{D}_{\varepsilon,u}^2 e^{-t \, \widetilde{D}_{\varepsilon,u}^2} \phi_N \right) dt \right.
\\ &  \left. -  \int_N^\infty \textup{s-Tr}_\Gamma \left( \phi_N \, \widetilde{D}_{\varepsilon,u}^2 e^{-t \, \widetilde{D}_{\varepsilon,u}^2} \phi_N \right) dt \right)
=: \lim_{N\to \infty} \left( I_1 + I_2 + I_3 \right).
\end{align*}	
Note that the third term $I_3$ exists individually by a similar argument as in \cite[(7.10)]{PiVe}.
By an analogous argument as in \cite[p. 37-38]{Va}, $I_3$ vanishes as $N\to \infty$. In order to estimate 
$I_2$, we compute exactly as in \cite[(6.21)]{Va}
\begin{equation*}
\begin{split}
-2 \cdot \textup{s-Tr}_\Gamma \left( \phi_N \, \widetilde{D}_{\varepsilon,u}^2 e^{-t \, \widetilde{D}_{\varepsilon,u}^2} \phi_N \right)
= & \ \textup{s-Tr}_\Gamma \left( c(d\phi_N^2)  \, \widetilde{D}_{\varepsilon,u} e^{-t \, \widetilde{D}_{\varepsilon,u}^2} \right)
 \\ = & \ \textup{s-Tr}_\Gamma \left( c(d\phi_N^2)  \left(\widetilde{D}_{\varepsilon,u} e^{-t \, \widetilde{D}_{\varepsilon,u}^2} 
 - \widetilde{S}_{\varepsilon,u} e^{-t \, \widetilde{S}_{\varepsilon,u}^2} \right) \right) 
\\ & \  \textup{s-Tr}_\Gamma \left( c(d\phi_N^2)  \, \widetilde{S}_{\varepsilon,u} e^{-t \, \widetilde{S}_{\varepsilon,u}^2} \right).
 \end{split}
 \end{equation*}
By Theorem \ref{off-diagonal} we have the following uniform estimate
\begin{align*}
\left|  c(d\phi_N^2) \left(\widetilde{D}_{\varepsilon,u} e^{-t\widetilde{D}_{\varepsilon,u}^2} - 
\widetilde{S}_{\varepsilon,u} e^{-t\widetilde{S}_{\varepsilon,u}^2} \right)(p,p)\right| 
\leq C \rho(p)^{-m +2 \delta} \exp \left(- \frac{(N-2)^2}{6t}\right).
\end{align*}
Noting that $\rho^{-m+2\delta}$ is locally integrable up to the singular strata, we conclude 
exactly as in \cite[p.39]{Va}
\begin{align*}
&\left| \, \int_s^N \textup{s-Tr}_\Gamma \left( c(d\phi_N^2)  \left(\widetilde{D}_{\varepsilon,u} e^{-t \, \widetilde{D}_{\varepsilon,u}^2} 
 - \widetilde{S}_{\varepsilon,u} e^{-t \, \widetilde{S}_{\varepsilon,u}^2} \right) \right) dt \right| 
\\ &\quad  \leq C \int_s^N \exp \left(- \frac{(N-2)^2}{6t}\right) dt \leq 
C \left( N^2 e^{-N/c_1} + e^{-c_2/s}\right),
\end{align*}
for some uniform constants $C,c_1,c_2>0$. Consequently, in the limit $s\to 0$ and $N\to \infty$, 
we can replace $\widetilde{D}_{\varepsilon,u}$ by $\widetilde{S}_{\varepsilon,u}$ in $I_2$. Now repeating
the arguments of \cite[p.39]{Va} verbatim, we arrive at the following intermediate formula
\begin{align}
\ind_\Gamma \widetilde{D}_{\epsilon,u} 
= \lim_{N\to \infty} \lim_{s \to 0} \textup{s-Tr}_\Gamma \left( \phi_N \, e^{-s \, \widetilde{D}_{\varepsilon,u}^2} \phi_N \right)  
+ \frac{\eta_\Gamma (\widetilde{B}(-1))}{2},
\end{align}
in particular, the first limit exists. From there we can now follow \cite[p. 40-41]{Va} 
without any additional changes and deduce the statement with integrals in \eqref{ind-thm1}, involving 
$b_\alpha$, arising from the local asymptotics of $\textup{s-Tr}_\Gamma \left( e^{-s \, \widetilde{D}_{\infty}^2} \restriction \widetilde{M} \right)$
as $s\to 0$. This local asymptotics coincides with the asymptotics 
\eqref{heat-kernel-asymptotics} of 
$\textup{s-Tr} \left( e^{-s \, D_{\infty}^2} \restriction M \right)$ as $s\to 0$.
\end{proof}

\begin{remark}\label{Vai-differences}
The result of Theorem \ref{ind-thm1} is an analogue of \cite[Proposition 6.13]{Va} in the stratified setting.
In contrast to \cite{Va}, heat kernel estimates for large times do not hold uniformly here. In fact, Theorems
\ref{off-diagonal} and \ref{on-diagonal} assert that the heat kernel estimates are not uniform up to the singular strata.
Fortunately, the estimates are still integrable at the strata, so that we can estimate the 
$\Gamma$-traces accordingly, and still follow the general proof outline of \cite{Va}.
\end{remark}

%%%%%%%%%%%%%%%%%%%%%%%%%%%%%%%%%%
\subsection{The $L^2$-Gamma Index theorem on $\widetilde{M}_\infty$ and the signature formula}\label{subsection:gamma-index}
%%%%%%%%%%%%%%%%%%%%%%%%%%%%%%%%%%
Recall that $\widetilde{D}_\infty$ is not $\Gamma$-Fredholm, since $\widetilde{B}$ is not invertible. 
Nevertheless we can define an $L^2$-Gamma index of $\widetilde{D}_\infty$ by proving that
$\ker_{(2)} \widetilde{D}_\infty$ and $\ker_{(2)} \widetilde{D}^*_\infty$ have finite $\Gamma$-dimensions,
which is the analogue of \cite[Corollary 6.6]{Va}, albeit with a different proof.

\begin{prop}\label{finite-dim}
For any $\varepsilon > 0$, 
$\ker_{(2)} \widetilde{D}_{\varepsilon,0}$ and $\ker_{(2)} \widetilde{D}^*_{\varepsilon,0}$ have finite $\Gamma$-dimensions. 
\end{prop}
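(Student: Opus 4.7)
The plan is to adapt the $\Gamma$-Fredholm parametrix construction of Theorem \ref{D-Fredholm} to the limiting case $u=0$. For $u>0$ the construction used invertibility of $\widetilde{B}(-1) = \widetilde{B}(I-\Pi_\varepsilon(\widetilde{B})) + u$; at $u=0$ this tangential operator acquires a non-trivial null space, namely $V_\varepsilon := \textup{image}(\Pi_\varepsilon(\widetilde{B}))$. The central observation is that $V_\varepsilon$ has finite $\Gamma$-dimension: writing $P_0 := \Pi_\varepsilon(\widetilde{B}) = [P_0 (I + \widetilde{B}^2)^N] \cdot (I + \widetilde{B}^2)^{-N}$ with the bracketed factor bounded by functional calculus (since $P_0$ is spectrally supported in $(-\varepsilon,\varepsilon)$), Proposition \ref{resolvent-trace-class} yields $\textup{Tr}_\Gamma P_0 < \infty$. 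Hence the obstruction to constructing a parametrix is controlled by a $\Gamma$-trace class projection.

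Next I would modify the boundary parametrix. On $V_\varepsilon^\perp$-valued sections over the cylinder, the restriction $\widetilde{B}|_{V_\varepsilon^\perp}$ has a uniform spectral gap $|\widetilde{B}|\geq \varepsilon$, so $i\xi + \widetilde{B}|_{V_\varepsilon^\perp}$ is invertible for every $\xi \in \R$ with norm of inverse bounded by $(\xi^2+\varepsilon^2)^{-1/2}$. Fourier transform in $x$ produces a bounded right-inverse $Q^\perp_{\textup{cyl}}$ of $\widetilde{S}_{\varepsilon,0}\restriction V_\varepsilon^\perp$; extending by zero on $V_\varepsilon$-valued sections defines $Q_{\textup{cyl}}$, with $\widetilde{D}_{\varepsilon,0}^{\textup{cyl}} Q_{\textup{cyl}} = I - P_0$ on the cylinder. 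Gluing with the interior parametrix $Q_{\textup{int}}$ from the double $\widetilde{M}_c$ via the cutoffs of Figure \ref{fig:CutOff}, and computing exactly as in Theorem \ref{D-Fredholm}, the total parametrix $Q := \chi Q_{\textup{int}}(1-\psi) + \phi Q_{\textup{cyl}} \psi$ satisfies $\widetilde{D}_{\varepsilon,0} Q = I - K - K_0$, where $K$ is the same $\Gamma$-compact error as in Theorem \ref{D-Fredholm} (arising from cutoff derivatives and the resolvent shift $iQ_{\textup{int}}$) and $K_0 = \phi P_0 \psi$.

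The principal obstacle is to verify that $K_0$ is $\Gamma$-compact on $L^2(\widetilde{M}_\infty)$. Since $\phi,\psi$ are supported in a bounded cylinder segment $[a,b]\times \partial\widetilde{M}$, we identify $K_0$ with the tensor operator $\textup{mult}(\phi\psi) \otimes P_0$ on $L^2([a,b]) \, \widehat{\otimes}\, L^2(\partial\widetilde{M})$, extended by zero elsewhere. The $\Gamma$-trace factorizes as $\textup{Tr}_\Gamma K_0 = \bigl(\int_a^b \phi(x)\psi(x)\,dx\bigr)\cdot \textup{Tr}_\Gamma P_0 < \infty$, so $K_0$ is $\Gamma$-trace class, in particular $\Gamma$-compact. (A careful check using Definition \ref{HS-definition-galois} along the lines of \cite[Section 7]{PiVe} confirms that the compactly supported $x$-factor does not spoil the $\Gamma$-trace property.)

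Finally, since a left parametrix is needed to conclude finiteness of $\Gamma$-dimension of the kernel, we repeat the above construction for $\widetilde{D}^*_{\varepsilon,0}$ (whose cylinder form differs only by a sign of $\widetilde{B}(I-\Pi_\varepsilon)$, so the same spectral analysis applies) and take adjoints, obtaining $(Q')^* \widetilde{D}_{\varepsilon,0} = I - (K')^* - (K_0')^*$ with both error terms $\Gamma$-compact. The operator $I - (K')^* - (K_0')^*$ is therefore $\Gamma$-Fredholm, so $\ker_{(2)} \widetilde{D}_{\varepsilon,0}$, being contained in its kernel, has finite $\Gamma$-dimension. The statement for $\ker_{(2)} \widetilde{D}^*_{\varepsilon,0}$ follows by an entirely symmetric argument. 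This yields the singular analogue of \cite[Corollary 6.6]{Va}.
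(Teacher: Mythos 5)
Your proposal takes a genuinely different route from the paper: you try to extend the parametrix construction of Theorem \ref{D-Fredholm} to $u=0$ by projecting off the finite-$\Gamma$-rank space $V_\varepsilon$, whereas the paper applies the Browder--Garding decomposition directly to elements of the kernel, shows that $L^2$-solutions on the cylinder decay like $e^{\varepsilon\vartheta}$, and then exhibits $\ker_{(2)}\widetilde{D}_{\varepsilon,0}$ as a subspace of $e^{\varepsilon\vartheta}\dom(\widetilde{D}_\infty)$ mapped $\Gamma$-compactly into $L^2(\widetilde{M}_\infty)$.

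Unfortunately your argument has a genuine gap at the decisive step, namely the claim that $K_0 = \phi\,\Pi_\varepsilon(\widetilde{B})\,\psi$ is $\Gamma$-compact (a fortiori $\Gamma$-trace class). On the cylinder region this operator is, up to identifications, $\textup{mult}(\psi)\otimes\Pi_\varepsilon(\widetilde{B})$ acting on $L^2(\R_-)\widehat{\otimes}L^2(\partial\widetilde{M})$. The second tensor factor is $\Gamma$-trace class, as you correctly argue, but the first factor $\textup{mult}(\psi)$ is a multiplication operator on $L^2(\R_-)$ and hence is bounded but \emph{not} compact (nor trace class). A tensor product of a bounded non-compact operator with a $\Gamma$-finite-rank operator is not $\Gamma$-compact: already in the trivial case $\Gamma=\{e\}$ and $\Pi_\varepsilon$ a rank-one projection, $\textup{mult}(\psi)\otimes\Pi_\varepsilon$ is unitarily equivalent to $\textup{mult}(\psi)$ on $L^2(\R_-)$ and is not compact. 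Your factorization $\textup{Tr}_\Gamma K_0 = (\int\phi\psi\,dx)\cdot\textup{Tr}_\Gamma P_0$ would require the Schwartz kernel of $K_0$ to be continuous at the diagonal, but that kernel carries a Dirac delta $\delta(x-x')$ in the $\R$-direction, so the factorized formula is not the $\Gamma$-trace and the finiteness assertion fails.

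This is not a small repair issue: it is precisely the reason why $\widetilde{D}_{\varepsilon,0}$ fails to be $\Gamma$-Fredholm, as the paper notes when introducing extended $L^2$-solutions ("In contrast to $\widetilde{D}_{\varepsilon,u}$ with $u>0$, are not $\Gamma$-Fredholm"). The cylinder operator $\widetilde{\sigma}\partial_x$ on $V_\varepsilon$-valued sections over a half-line has no Fredholm inverse, and projecting it off leaves behind an error that is genuinely of operator-norm size, not of compact type. A parametrix argument cannot establish finite $\Gamma$-dimensional kernel without extra input. One must instead argue on the level of kernel elements themselves, as the paper does: the $V_\varepsilon$-components of harmonic sections over the cylinder decay at the rate $e^{\varepsilon\vartheta}$, so the kernel sits inside a weighted domain whose inclusion into $L^2$ one proves to be $\Gamma$-compact by an approximation-in-operator-norm argument over expanding cylinders.
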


\begin{proof}
We first prove the statement for $\widetilde{D}_{\varepsilon, 0}$ employing the Browder-Garding decomposition. 
By the Browder-Garding decomposition on $\partial \widetilde{M}$ as stated in Theorem \ref{Browder-Garding}, 
there exist countably many sections $e_j: \R \to \dom ( \widetilde{B})$ such that
$\widetilde{B} e_j(\lambda) = \lambda e_j(\lambda)$. We consider for any $\w \in L^2\Omega^*(\widetilde{M})$
$$
(V \w)_j(\lambda,x) := \int_{\partial \widetilde{M}} ( \w(x,p), e_j(\lambda, p) )_{\widetilde{g}_{\partial \widetilde{M}}}
\textup{dvol}_{\widetilde{g}_{\partial \widetilde{M}}}(p).
$$ 
If $\w$ is a solution of either $\widetilde{D}_{\varepsilon, 0}$ or its adjoint, 
then $(V\w)_j(\lambda,x)$ satisfies for any $j$ over the cylinder $(-\infty, 0) \times \partial \widetilde{M}$
the following equations
\begin{equation}
\begin{split}
\left( \partial_x + \lambda - \theta(x) \lambda \Pi_{\varepsilon}(\lambda) \frac{}{} \right)  (V\w)_j(\lambda, x) &= 0, \quad
\textup{if} \ \widetilde{D}_{\varepsilon, 0} \w = 0, \\
\left( -\partial_x + \lambda - \theta(x) \lambda \Pi_{\varepsilon}(\lambda) \frac{}{} \right)  (V\w)_j(\lambda, x) &= 0, \quad
\textup{if} \ \widetilde{D}^*_{\varepsilon, 0} \w = 0.
\end{split}
\end{equation}
We conclude that solutions $(V\w)_j(\lambda, x)$ are of the following form
\begin{equation}
\begin{split}
(V\w)_j(\lambda, x) = \textup{const} \cdot e^{+ \lambda (x-\vartheta(x) \Pi_{\varepsilon}(\lambda))}, \quad
\textup{if} \ \widetilde{D}_{\varepsilon, 0} \w = 0, \\
(V\w)_j(\lambda, x) = \textup{const} \cdot e^{- \lambda (x-\vartheta(x) \Pi_{\varepsilon}(\lambda))}, \quad
\textup{if} \ \widetilde{D}^*_{\varepsilon, 0} \w = 0,
\end{split}
\end{equation}
where $\vartheta(x) = x$ for $x<-1$, extended to a smooth positive function on $\widetilde{M}_\infty$, 
bounded away from zero, and being identically $1$ on $\widetilde{M}$.
Consequently, due to the $L^2$-condition, $\ker_{(2)} \widetilde{D}_{\varepsilon, 0}$
and $\ker_{(2)} \widetilde{D}^*_{\varepsilon, 0}$ lie in $e^{\varepsilon \vartheta} L^2\Omega^*(\widetilde{M})$.
We now continue with outlining the argument for $\widetilde{D}_{\varepsilon, 0}$, the adjoint treated verbatim.
For any $\w \in  \ker_{(2)} \widetilde{D}_{\varepsilon, 0}$, by the argument above 
$e^{-\varepsilon \vartheta} \w \in L^2\Omega^*(\widetilde{M})$, and we compute
$$
\left(\widetilde{D}_{\varepsilon, 0} + \widetilde{\sigma} ( \varepsilon  \, d\vartheta) \right) 
e^{-\varepsilon \vartheta} \w = e^{-\varepsilon \vartheta} \widetilde{D}_{\varepsilon, 0} \w = 0.
$$
Hence $e^{-\varepsilon \vartheta} \w \in \dom \left( \widetilde{D}_{\varepsilon, 0} + \widetilde{\sigma} 
(\varepsilon \, d\vartheta)\right) \equiv \dom \left( \widetilde{D}_{\infty} \right)$.
This proves 
\begin{align}\label{kere}
\ker_{(2)} \widetilde{D}_{\varepsilon, 0} \subset e^{\varepsilon \vartheta} \dom \left( \widetilde{D}_\infty \right).
\end{align}
In the next step we prove that the inclusion $\iota: e^{\varepsilon \vartheta} \dom \left( \widetilde{D}_\infty \right) \hookrightarrow L^2\Omega^*(\widetilde{M})$
is $\Gamma$-compact. Indeed, as shown by \eqref{cpt-incl}, $\dom_{\textup{comp}} \left( \widetilde{D}_\infty \right) \hookrightarrow L^2\Omega^*(\widetilde{M})$
is $\Gamma$-compact. In particular
$$
\iota_N: e^{\varepsilon \vartheta} \phi_N \dom \left( \widetilde{D}_\infty \right) \hookrightarrow L^2\Omega^*(\widetilde{M})
$$
is $\Gamma$-compact. Now the sequence $(\iota_N)_N$ converges to identity in the operator norm as $N \to \infty$. 
Indeed for $\w \in e^{\varepsilon \vartheta} \dom \left( \widetilde{D}_\infty \right) $
\begin{align*}
\| \w - \iota_N \w \|^2_{L^2\Omega^*(\widetilde{M})} &\leq \int_{-\infty}^N \| \w (x) \|^2_{L^2\Omega^*(\partial \widetilde{M})} dx
\leq C e^{-2\varepsilon N} \int_{-\infty}^N e^{-2\varepsilon \vartheta(x)}\| \w (x) \|^2_{L^2\Omega^*(\partial \widetilde{M})} dx
\\ & \leq C e^{-2\varepsilon N} \| \w \|^2_{e^{\varepsilon \vartheta}L^2\Omega^*(\partial \widetilde{M})} \leq 
C e^{-2\varepsilon N} \| \w \|^2_{e^{\varepsilon \vartheta} \dom \left( \widetilde{D}_\infty \right)}.
\end{align*}
Hence $\iota$ is indeed $\Gamma$-compact and thus by \eqref{kere}, 
$\ker_{(2)} \widetilde{D}_{\varepsilon, 0}$ has finite $\Gamma$-dimension. Similar argument applies to 
$\widetilde{D}^*_{\varepsilon,0}$. 
\end{proof}

For $\widetilde{D}_{\infty}$ we find by a similar argument as 
in \eqref{kere} that for any $\lambda_0 > 0$, 
$$
\ker_{(2)} \widetilde{D}_{\infty} \subset e^{-\lambda_0 \vartheta} 
\dom \left( \widetilde{D}_\infty \right) \subset e^{-\lambda_0 \vartheta} L^2\Omega^*(\widetilde{M}_\infty).
$$
Similar, to the proof of $\Gamma$-compactness for $\iota$, we find that for any $\lambda_0>0$ the inclusion 
$e^{-\lambda_0 \vartheta} \dom \left( \widetilde{D}_\infty \right) 
\hookrightarrow e^{-2\lambda_0 \vartheta}  L^2\Omega^*(\widetilde{M}_\infty)$ is $\Gamma$-compact as well. Consequently,
we find $\ker_{(2)} \widetilde{D}_{\infty} \subset e^{-2\lambda_0 \vartheta}  L^2\Omega^*(\widetilde{M}_\infty)$
is of finite $\Gamma$-dimension. We argue verbatim for the adjoint. 
By \cite[Lemma 6.5 (c)]{Va} that $\Gamma$-dimension is independent of $\lambda_0>0$
and hence we set 
\begin{equation}\label{finite-dim2}
\begin{split}
&\dim_\Gamma \ker_{(2)} \widetilde{D}_{\infty} := \dim_\Gamma 
\left( \ker_{(2)} \widetilde{D}_{\infty} \subset e^{-2\lambda_0 \vartheta}  L^2\Omega^*(\widetilde{M}_\infty) \right) < \infty, \\
&\dim_\Gamma \ker_{(2)} \widetilde{D}^*_{\infty} := \dim_\Gamma 
\left( \ker_{(2)} \widetilde{D}^*_{\infty} \subset e^{-2\lambda_0 \vartheta}  L^2\Omega^*(\widetilde{M}_\infty) \right) < \infty.
\end{split}
\end{equation}

In view of Proposition \ref{finite-dim} and of \eqref{finite-dim}, we can now define the $L^2$-Gamma indices
for $\widetilde{D}_\infty$ and $\widetilde{D}_{\varepsilon,0}$, which in contrast to 
$\widetilde{D}_{\varepsilon,u}$ with $u> 0$, are not $\Gamma$-Fredholm.

\begin{defn} For any $\varepsilon > 0$ we define
\begin{equation*}
\begin{split}
&L^2-\ind_\Gamma \widetilde{D}_\infty := \dim_\Gamma \ker_{(2)} \widetilde{D}_\infty - \dim_\Gamma \ker_{(2)} \widetilde{D}^*_\infty, \\
&L^2-\ind_\Gamma \widetilde{D}_{\varepsilon,0} := \dim_\Gamma \ker_{(2)} \widetilde{D}_{\varepsilon,0} - \dim_\Gamma \ker_{(2)} \widetilde{D}^*_{\varepsilon,0}.
\end{split}
\end{equation*}
\end{defn}

In order to relate the $L^2$-Gamma index $L^2-\ind_\Gamma \widetilde{D}_\infty$ with the $\Gamma$-index of 
$\widetilde{D}_{\varepsilon, u}$, we need to introduce the concept of extended $L^2$-solutions in the setting
of Galois coverings. We follow the approach of \cite{Va} and define for any $\varepsilon \geq 0$ 
(recall $\widetilde{D}_{\varepsilon, 0} =  \widetilde{D}_{\infty}$ for $\varepsilon = 0$)  the extended solutions by
\begin{equation}
\begin{split}
&\textup{ext-}\ker_{(2)}  \widetilde{D}_{\varepsilon, 0} := \bigcap\limits_{\lambda_0 > 0} 
\left(\ker \widetilde{D}_{\varepsilon, 0} \subset e^{-\lambda_0 \vartheta} L^2\Omega^*(\widetilde{M}_\infty)\right), \\
&\textup{ext-}\ker_{(2)}  \widetilde{D}^*_{\varepsilon, 0} := \bigcap\limits_{\lambda_0 > 0} 
\left(\ker \widetilde{D}^*_{\varepsilon, 0} \subset e^{-\lambda_0 \vartheta} L^2\Omega^*(\widetilde{M}_\infty)\right).
\end{split}
\end{equation}

\begin{prop}\label{hg} Let us write $D= D^+$ and $D^*= D^-$. Then
\begin{equation}
\begin{split}
\dim_\Gamma \ker_{(2)} \widetilde{D}^\pm_{\varepsilon,0} = 
\lim_{u\to 0} \dim_\Gamma \ker_{(2)} \widetilde{D}^\pm_{\varepsilon,\pm u}, \\
\dim_\Gamma \textup{ext-}\ker_{(2)} \widetilde{D}^\pm_{\varepsilon,0} = 
\lim_{u\to 0} \dim_\Gamma \ker_{(2)} \widetilde{D}^\pm_{\varepsilon,\mp u}.
\end{split}
\end{equation}
\end{prop}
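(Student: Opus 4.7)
The plan is to adapt the strategy of \cite[Proposition 6.14]{Va} to the singular setting, using the Browder-Garding spectral decomposition of Theorem \ref{Browder-Garding} applied to $\widetilde{B}$, together with the $\Gamma$-dimension continuity of Theorem \ref{Lueck-von-Neumann}, as the main ingredients.

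First I would extend the ODE analysis from the proof of Proposition \ref{finite-dim} to the perturbed operators $\widetilde{D}_{\varepsilon,\pm u}$. Solving the first-order equation in $x$ for each Browder-Garding component on the cylindrical end yields, for any $\omega\in\ker \widetilde{D}^{+}_{\varepsilon,\pm u}$, the explicit form
\begin{equation*}
(V\omega)_j(\lambda,x) = \mathrm{const}\cdot \exp\!\left(\lambda\bigl(x-\vartheta(x)\Pi_\varepsilon(\lambda)\bigr) \mp u\,\vartheta(x)\right),
\end{equation*}
with the reversed sign in front of $\lambda$ for solutions of $\widetilde{D}^{-}_{\varepsilon,\pm u}$. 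Imposing $L^{2}$-integrability as $x\to -\infty$ then selects a precise subset of the spectrum of $\widetilde{B}$: the $L^{2}$-spectrum of $\widetilde{D}^{+}_{\varepsilon,+u}$ is $\{\lambda>u\}$, that of $\widetilde{D}^{+}_{\varepsilon,-u}$ is $\{\lambda>-u\}\cup\{|\lambda|<\varepsilon\}$, and the dual conditions hold for $\widetilde{D}^{-}$. In the limit $u\to 0^{+}$ these selected spectra decrease respectively to $\{\lambda>0\}$ and to $\{\lambda>0\}\cup\{|\lambda|<\varepsilon\}$, which are the cylindrical spectral supports of $\ker_{(2)}\widetilde{D}^{+}_{\varepsilon,0}$ and $\textup{ext-}\ker_{(2)}\widetilde{D}^{+}_{\varepsilon,0}$ from Proposition \ref{finite-dim}.

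Next I would promote these cylindrical spectral characterizations to $\mathcal{N}\Gamma$-submodule inclusions
\begin{equation*}
\ker_{(2)} \widetilde{D}^{+}_{\varepsilon,+u}\hookrightarrow \ker_{(2)} \widetilde{D}^{+}_{\varepsilon,0},\qquad \ker_{(2)} \widetilde{D}^{+}_{\varepsilon,-u}\hookrightarrow \textup{ext-}\ker_{(2)} \widetilde{D}^{+}_{\varepsilon,0},
\end{equation*}
and analogously for $\widetilde{D}^{-}$. Such an inclusion is produced by viewing an $L^{2}$-solution of the perturbed operator as a solution of the unperturbed equation modulo a compactly supported inhomogeneity and using elliptic regularity in the interior via Proposition \ref{Garding} to correct it into a genuine element of $\ker_{(2)}\widetilde{D}^{+}_{\varepsilon,0}$ or $\textup{ext-}\ker_{(2)}\widetilde{D}^{+}_{\varepsilon,0}$ with the same cylindrical asymptotics. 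As $u\to 0^{+}$ the resulting images form monotone directed systems of $\mathcal{N}\Gamma$-submodules whose directed unions exhaust the target space, and Theorem \ref{Lueck-von-Neumann} then delivers the claimed equalities of $\Gamma$-dimensions.

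The hard part will be verifying that these spectrally-parametrized submodules fill the target in the genuine $\Gamma$-dimensional sense, rather than merely set-theoretically; equivalently, that the orthogonal projections onto $\ker_{(2)}\widetilde{D}^{+}_{\varepsilon,\pm u}$ depend continuously on $u$ in the $\Gamma$-trace norm as $u\to 0^{+}$. This is where the singular geometry requires genuinely new input beyond \cite{Va}: I would use the uniform large-time heat kernel estimates of Theorems \ref{off-diagonal} and \ref{on-diagonal}, combined with the $\Gamma$-Fredholm parametrix construction of Theorem \ref{D-Fredholm}, to control the interior contribution uniformly in $u$ near zero. Once this continuity is in place, Theorem \ref{Lueck-von-Neumann} yields both equalities simultaneously.
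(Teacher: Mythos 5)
Your ODE analysis via Browder--Garding is the right starting point and your spectral characterization of the $L^2$-kernels is essentially correct (up to sign conventions), but you miss the key algebraic observation that makes the rest of the argument short, and you are led into a blind alley as a consequence. The point is that the perturbed and unperturbed operators are \emph{conjugate} by multiplication with a weight: one has
$\widetilde{D}^\pm_{\varepsilon,0}\, e^{-u\vartheta}\,\omega = e^{-u\vartheta}\,\widetilde{D}^\pm_{\varepsilon,\mp u}\,\omega$
and
$\widetilde{D}^\pm_{\varepsilon,0}\, e^{u\vartheta}\,\omega = e^{u\vartheta}\,\widetilde{D}^\pm_{\varepsilon,\pm u}\,\omega$.
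This is an exact intertwining, visible immediately from the product form of the operator on the cylinder (or read off from your own explicit ODE solutions), and it yields the exact identities
$e^{-u\vartheta}\ker_{(2)}\widetilde{D}^\pm_{\varepsilon,\mp u} = \bigl(\ker\widetilde{D}^\pm_{\varepsilon,0}\subset e^{-u\vartheta}L^2\bigr)$
and
$e^{u\vartheta}\ker_{(2)}\widetilde{D}^\pm_{\varepsilon,\pm u} = \bigl(\ker\widetilde{D}^\pm_{\varepsilon,0}\subset e^{u\vartheta}L^2\bigr)$.
Because multiplication by $e^{\pm u\vartheta}$ is a $\Gamma$-equivariant unitary between the weighted and unweighted $L^2$-spaces, these identities translate the $\Gamma$-dimensions of the perturbed kernels directly into $\Gamma$-dimensions of the unperturbed kernel in shrinking/growing weighted spaces; the limit $u\to 0$ is then handled by Theorem \ref{Lueck-von-Neumann} applied to the nested family, together with the definitions in \eqref{finite-dim2} of $\ker_{(2)}$ and $\textup{ext-}\ker_{(2)}$ as weighted intersections and the independence of $\lambda_0$ recorded just before the proposition.

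By contrast, the inclusions you write,
$\ker_{(2)}\widetilde{D}^+_{\varepsilon,+u}\hookrightarrow\ker_{(2)}\widetilde{D}^+_{\varepsilon,0}$ and $\ker_{(2)}\widetilde{D}^+_{\varepsilon,-u}\hookrightarrow\textup{ext-}\ker_{(2)}\widetilde{D}^+_{\varepsilon,0}$,
do not hold as set-theoretic inclusions inside $L^2$: a solution of $\widetilde{D}_{\varepsilon,u}\omega=0$ is not a solution of $\widetilde{D}_{\varepsilon,0}$, and there is no ``compactly supported inhomogeneity'' to correct by elliptic regularity — the two kernels simply differ by a multiplicative factor. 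The subsequent difficulty you identify (continuity of the kernel projections in $\Gamma$-trace norm as $u\to 0$, and the proposed use of Theorems \ref{off-diagonal}, \ref{on-diagonal} and the parametrix of Theorem \ref{D-Fredholm}) is a phantom: once the conjugation identity is in place, no heat-kernel input is needed at all; this proposition is a purely algebraic consequence of the ODE picture on the cylindrical end and von Neumann continuity of $\dim_\Gamma$. The heat-kernel and parametrix machinery is used in the $L^2$-$\Gamma$-index theorem (Theorem \ref{thm6_4}) later in the section, not here.
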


\begin{proof}
The  Browder-Garding decomposition, as stated in Theorem \ref{Browder-Garding}, asserts
the existence of countably many sections $e_j: \R \to \dom ( \widetilde{B})$ such that
$\widetilde{B} e_j(\lambda) = \lambda e_j(\lambda)$. We consider as before for any $\w \in L^2\Omega^*(\widetilde{M})$
$$
(V \w)_j(\lambda,x) = \int_{\partial \widetilde{M}} ( \w(x,p), e_j(\lambda, p) )_{\widetilde{g}_{\partial \widetilde{M}}}
\textup{dvol}_{\widetilde{g}_{\partial \widetilde{M}}}(p).
$$ 
Over the cylinder $(-\infty, 0) \times \partial \widetilde{M}$, 
any $(V\w)_j(\lambda,x)$ solves
\begin{equation}
\begin{split}
\left( \partial_x + \lambda + \theta(x) (u-\lambda \Pi_{\varepsilon}(\lambda)) \frac{}{} \right)  (V\w)_j(\lambda, x) &= 0, \quad
\textup{if} \ \widetilde{D}_{\varepsilon, u} \w = 0, \\
\left( -\partial_x + \lambda + \theta(x) (u-\lambda \Pi_{\varepsilon}(\lambda)) \frac{}{} \right)  (V\w)_j(\lambda, x) &= 0, \quad
\textup{if} \ \widetilde{D}^*_{\varepsilon, u} \w = 0.
\end{split}
\end{equation}
We conclude that solutions $(V\w)_j(\lambda, x)$ are of the following form
\begin{equation}
\begin{split}
(V\w)_j(\lambda, x) = \textup{const} \cdot e^{- u \vartheta(x)} e^{- \lambda (x-\vartheta(x) \Pi_{\varepsilon}(\lambda))}, \quad
\textup{if} \ \widetilde{D}_{\varepsilon, u} \w = 0, \\
(V\w)_j(\lambda, x) = \textup{const} \cdot e^{+ u \vartheta(x)} e^{+ \lambda (x-\vartheta(x) \Pi_{\varepsilon}(\lambda))}, \quad
\textup{if} \ \widetilde{D}^*_{\varepsilon, u} \w = 0,
\end{split}
\end{equation}
where $\vartheta'(x)=\theta(x)$, such that $\vartheta(x)=x$ for $x \leq -1$.
From here we conclude 
\begin{equation}
\begin{split}
\left( \ker \widetilde{D}^\pm_{\varepsilon,0}\subset  e^{ - u \vartheta} L^2\Omega^*(\widetilde{M}_\infty) \right) &= 
e^{ - u \vartheta} \ker_{(2)} \widetilde{D}^\pm_{\varepsilon, \mp u}, \\
\left( \ker \widetilde{D}^\pm_{\varepsilon,0}\subset  e^{ u \vartheta} L^2\Omega^*(\widetilde{M}_\infty) \right) &= 
e^{ u \vartheta} \ker_{(2)} \widetilde{D}^\pm_{\varepsilon, \pm u}.
\end{split}
\end{equation}
This can alternatively be deduced from the following relations
\begin{equation}
\begin{split}
\widetilde{D}^\pm_{\varepsilon,0} e^{ - u \vartheta} \w &= e^{ - u \vartheta}  \widetilde{D}^\pm_{\varepsilon, \mp u} \w, \\
\widetilde{D}^\pm_{\varepsilon,0} e^{ u \vartheta} \w &= e^{ u \vartheta}  \widetilde{D}^\pm_{\varepsilon, \pm u} \w.
\end{split}
\end{equation}
Now the statement follows.
\end{proof}

\begin{cor}\label{index-relations}
We define, still writing $D= D^+$ and $D^*= D^-$
\begin{equation*}
h_{\Gamma,\varepsilon}^\pm  := \dim_\Gamma \textup{ext-}\ker_{(2)} \widetilde{D}^\pm_{\varepsilon,0} -
\dim_\Gamma \ker_{(2)} \widetilde{D}^\pm_{\varepsilon,0}.
\end{equation*}
Then the $(L^2 -)$ Gamma indices of $\widetilde{D}_{\varepsilon,0}$ and $\widetilde{D}_{\varepsilon,\pm u}$ are related by
\begin{equation*}
\begin{split}
L^2-\ind_\Gamma \widetilde{D}_{\varepsilon,0} &= \lim_{u\to 0} \ind_\Gamma \widetilde{D}_{\varepsilon,u} +  h_{\Gamma,\varepsilon}^-
\\ &= \lim_{u\to 0} \ind_\Gamma \widetilde{D}_{\varepsilon,-u} - h_{\Gamma,\varepsilon}^+.
\end{split}
\end{equation*}
\end{cor}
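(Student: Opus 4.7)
The plan is to derive the corollary as a direct bookkeeping consequence of the four limit relations packaged in Proposition \ref{hg}, together with the definitions of $L^2\textup{-}\ind_\Gamma \widetilde{D}_{\varepsilon,0}$, $\ind_\Gamma \widetilde{D}_{\varepsilon,\pm u}$, and $h_{\Gamma,\varepsilon}^\pm$. No additional analysis is needed beyond what has already been established; the content is entirely in correctly matching up how the four pieces $\dim_\Gamma \ker_{(2)} \widetilde{D}^{\pm}_{\varepsilon, \pm u}$ stabilize as $u\to 0^+$, with half of them going to the honest kernel dimensions and the other half to the extended kernel dimensions.

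Concretely, I would write by definition
\begin{equation*}
\ind_\Gamma \widetilde{D}_{\varepsilon, u} = \dim_\Gamma \ker_{(2)} \widetilde{D}^+_{\varepsilon, u} - \dim_\Gamma \ker_{(2)} \widetilde{D}^-_{\varepsilon, u},
\end{equation*}
and then take the limit $u\to 0^+$. Proposition \ref{hg} tells us that the first term converges to $\dim_\Gamma \ker_{(2)} \widetilde{D}^+_{\varepsilon, 0}$ while the second converges to $\dim_\Gamma \textup{ext-}\ker_{(2)} \widetilde{D}^-_{\varepsilon, 0}$; that is,
\begin{equation*}
\lim_{u\to 0}\ind_\Gamma \widetilde{D}_{\varepsilon, u}
= \dim_\Gamma \ker_{(2)} \widetilde{D}^+_{\varepsilon, 0} - \dim_\Gamma \textup{ext-}\ker_{(2)} \widetilde{D}^-_{\varepsilon, 0}.
\end{equation*}
Adding $h_{\Gamma,\varepsilon}^- = \dim_\Gamma \textup{ext-}\ker_{(2)} \widetilde{D}^-_{\varepsilon,0} - \dim_\Gamma \ker_{(2)} \widetilde{D}^-_{\varepsilon,0}$ cancels the extended dimension against the honest one and leaves exactly $L^2\textup{-}\ind_\Gamma \widetilde{D}_{\varepsilon,0}$, proving the first identity.

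For the second identity I would argue symmetrically. Starting again from $\ind_\Gamma \widetilde{D}_{\varepsilon,-u}$, Proposition \ref{hg} gives this time the inverted assignment
\begin{equation*}
\lim_{u\to 0}\ind_\Gamma \widetilde{D}_{\varepsilon,-u}
= \dim_\Gamma \textup{ext-}\ker_{(2)} \widetilde{D}^+_{\varepsilon,0} - \dim_\Gamma \ker_{(2)} \widetilde{D}^-_{\varepsilon,0},
\end{equation*}
and subtracting $h_{\Gamma,\varepsilon}^+$ again converts the extended dimension back into the kernel dimension on the $+$ side, yielding $L^2\textup{-}\ind_\Gamma \widetilde{D}_{\varepsilon,0}$.

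There is no genuine obstacle here; the corollary is a purely algebraic rearrangement of Proposition \ref{hg}. The only point deserving a one-line comment is that the four limits exist individually (finite $\Gamma$-dimensions were established in Proposition \ref{finite-dim} and its cylinder-end analogue for $\widetilde{D}_{\varepsilon,0}$), so that the limits $\lim_{u\to 0}\ind_\Gamma \widetilde{D}_{\varepsilon,\pm u}$ may be split into limits of the two individual $\Gamma$-dimensions before invoking Proposition \ref{hg}.
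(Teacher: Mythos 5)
Your argument is correct and coincides with the paper's proof: both are purely algebraic rearrangements of the four limit formulas in Proposition~\ref{hg}, using the definitions of $\ind_\Gamma$, $L^2\textup{-}\ind_\Gamma$, and $h^\pm_{\Gamma,\varepsilon}$. Your write-up is more explicit about which of the four limits lands on a kernel versus an extended kernel, but there is no difference in substance.
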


\begin{proof}
The statement follows, since by Proposition \ref{hg}
\begin{equation*}
\begin{split}
\dim_\Gamma \ker_{(2)} \widetilde{D}^\pm_{\varepsilon,0} = 
\lim_{u\to 0} \dim_\Gamma \ker_{(2)} \widetilde{D}^\pm_{\varepsilon,\pm u}= 
\lim_{u\to 0} \dim_\Gamma \ker_{(2)} \widetilde{D}^\pm_{\varepsilon,\mp u} - h_{\Gamma,\varepsilon}^\pm.
\end{split}
\end{equation*}
\end{proof}

We now arrive at the $L^2$-Gamma index theorem for $\widetilde{D}_{\varepsilon,0}$.

\begin{thm} Let us write $\widetilde{B}_{\varepsilon} := \widetilde{B} (1-\Pi_{\varepsilon}(\widetilde{B}))$. Then
\begin{equation*}
\begin{split}
L^2-\ind_\Gamma \widetilde{D}_{\varepsilon,0}
= \int\limits_M L(M) + 
\sum_{\alpha \in A} \int\limits_{\ Y_\alpha} b_{\alpha}
- \frac{1}{2} \left( \eta_{\Gamma}(\widetilde{B}_{\varepsilon}) + h_{\Gamma,\varepsilon}^- - h_{\Gamma,\varepsilon}^+ \right).
\end{split}
\end{equation*}
\end{thm}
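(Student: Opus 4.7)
The strategy is to extract the $L^2$-$\Gamma$ index of the non-Fredholm operator $\widetilde{D}_{\varepsilon,0}$ from the $\Gamma$-Fredholm index formula of Theorem \ref{thm6_4} applied to the perturbed family $\widetilde{D}_{\varepsilon,\pm u}$ ($u>0$ small), using Corollary \ref{index-relations} as the bridge. The essential mechanism is a symmetrization in $u$ that eliminates the spectral-flow-type jump of $\eta_\Gamma$ across zero.

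First, I would observe that the proof of Theorem \ref{thm6_4} extends verbatim to $\widetilde{D}_{\varepsilon,-u}$ with $u>0$ small. The only place sign-sensitivity could enter is the $\Gamma$-Fredholmness of $\widetilde{D}_{\varepsilon,-u}$ from Theorem \ref{D-Fredholm}, which requires a spectral gap around zero for $\widetilde{B}(-1)=\widetilde{B}_\varepsilon-u$; but this still holds because on $\textup{Ran}\,\Pi_\varepsilon(\widetilde{B})$ the operator acts as $-u\cdot I$, and on $\textup{Ran}(1-\Pi_\varepsilon(\widetilde{B}))$ its spectrum lies in $(-\infty,-\varepsilon-u]\cup[\varepsilon-u,\infty)$. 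Hence for both signs
\begin{equation*}
\ind_\Gamma \widetilde{D}_{\varepsilon,\pm u}=\int_M L(M)+\sum_{\alpha\in A}\int_{Y_\alpha} b_\alpha-\tfrac{1}{2}\,\eta_\Gamma(\widetilde{B}_\varepsilon\pm u)+\underline{o}(1).
\end{equation*}

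Next I would compute the limits of the two $\Gamma$-eta invariants. Decompose $L^2(\partial\widetilde{M})=\textup{Ran}\,\Pi_\varepsilon(\widetilde{B})\oplus\textup{Ran}\,(1-\Pi_\varepsilon(\widetilde{B}))$, on which $\widetilde{B}_\varepsilon\pm u$ acts as $\pm u\cdot I$ and as $\widetilde{B}(1-\Pi_\varepsilon)\pm u$, respectively. Set $\tau_\varepsilon:=\textup{Tr}_\Gamma\Pi_\varepsilon(\widetilde{B})$, which is finite by Proposition \ref{resolvent-trace-class}. A direct computation with the defining heat-integral (the contribution of the finite-$\Gamma$-rank piece $\pm u\cdot\Pi_\varepsilon$ is $\pm u\, e^{-tu^2}\tau_\varepsilon$, integrated against $t^{(s-1)/2}/\Gamma((s+1)/2)$, giving $\pm\,\textup{sign}(u)\,|u|^{-s}\tau_\varepsilon$ and hence $\pm\tau_\varepsilon$ at $s=0$) yields
\begin{equation*}
\lim_{u\to 0^+}\eta_\Gamma(\widetilde{B}_\varepsilon\pm u)=\eta_\Gamma(\widetilde{B}_\varepsilon)\pm\tau_\varepsilon,
\end{equation*}
using that on $\textup{Ran}(1-\Pi_\varepsilon)$ the shift $\pm u$ does not cross zero so the remaining eta contribution is continuous in $u$ and limits to $\eta_\Gamma(\widetilde{B}_\varepsilon)$.

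Finally, I would combine the two identities of Corollary \ref{index-relations}, namely $L^2\text{-}\ind_\Gamma\widetilde{D}_{\varepsilon,0}=\lim_{u\to 0^+}\ind_\Gamma\widetilde{D}_{\varepsilon,u}+h^-_{\Gamma,\varepsilon}=\lim_{u\to 0^+}\ind_\Gamma\widetilde{D}_{\varepsilon,-u}-h^+_{\Gamma,\varepsilon}$, by averaging. The sum of the two limiting index formulas is
\begin{equation*}
\lim_{u\to 0^+}\bigl(\ind_\Gamma\widetilde{D}_{\varepsilon,u}+\ind_\Gamma\widetilde{D}_{\varepsilon,-u}\bigr)=2\!\int_M L(M)+2\sum_\alpha\int_{Y_\alpha}b_\alpha-\eta_\Gamma(\widetilde{B}_\varepsilon),
\end{equation*}
since the $\pm\tau_\varepsilon$ contributions cancel. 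Averaging with the relation $2\,L^2\text{-}\ind_\Gamma\widetilde{D}_{\varepsilon,0}=\lim(\ind_\Gamma\widetilde{D}_{\varepsilon,u}+\ind_\Gamma\widetilde{D}_{\varepsilon,-u})+(h^-_{\Gamma,\varepsilon}-h^+_{\Gamma,\varepsilon})$ obtained by summing the two forms of Corollary \ref{index-relations}, and dividing by $2$, yields the claimed signature formula after regrouping the $h^\pm_{\Gamma,\varepsilon}$ terms with $\eta_\Gamma(\widetilde{B}_\varepsilon)$.

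The main obstacle will be the $\Gamma$-eta limit computation: in the non-commutative setting $\eta_\Gamma$ is defined only through the regularized heat integral and not as an eigenvalue sum, so one must verify carefully that the jump from the finite-$\Gamma$-dim subspace $\textup{Ran}\,\Pi_\varepsilon(\widetilde{B})$ where $\widetilde{B}_\varepsilon$ vanishes produces precisely $\pm\tau_\varepsilon$ and that the part on the complement has a continuous limit. This rests on the trace-class property of $\Pi_\varepsilon(\widetilde{B})$ from Proposition \ref{resolvent-trace-class}, on the Browder-Garding decomposition (Theorem \ref{Browder-Garding}) to separate the constant finite-rank piece from the continuous spectral part, and on the smoothness of the heat-trace expansion of $\widetilde{B}e^{-t\widetilde{B}^2}$ used in the definition of $\eta_\Gamma$ in \S \ref{G-eta-subsection}.
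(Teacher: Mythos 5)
Your proof follows the paper's argument essentially verbatim: apply Theorem~\ref{thm6_4} to both $\widetilde{D}_{\varepsilon,\pm u}$, combine the two relations of Corollary~\ref{index-relations} by adding and averaging, and observe that the $\pm u$ jumps of the $\Gamma$-eta invariants cancel. The one small variation is that you compute the one-sided limits $\lim_{u\to 0^\pm}\eta_\Gamma(\widetilde{B}_\varepsilon\pm u)=\eta_\Gamma(\widetilde{B}_\varepsilon)\pm\tau_\varepsilon$ directly from the heat-integral, using $\Gamma$-trace-classness of $\Pi_\varepsilon(\widetilde{B})$, whereas the paper simply cites \cite[Lemma 4.7]{Va} to identify $\lim_{u\to 0}\bigl(\eta_\Gamma(\widetilde{B}_\varepsilon+u)+\eta_\Gamma(\widetilde{B}_\varepsilon-u)\bigr)$ with $2\eta_\Gamma(\widetilde{B}_\varepsilon)$; these are the same fact, and your explicit derivation is a perfectly good substitute.

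One point you glossed over: your derivation yields the term
$-\tfrac{1}{2}\eta_\Gamma(\widetilde{B}_\varepsilon)+\tfrac{1}{2}\bigl(h^-_{\Gamma,\varepsilon}-h^+_{\Gamma,\varepsilon}\bigr)
=-\tfrac{1}{2}\bigl(\eta_\Gamma(\widetilde{B}_\varepsilon)+h^+_{\Gamma,\varepsilon}-h^-_{\Gamma,\varepsilon}\bigr)$,
which does \emph{not} regroup into the theorem's $-\tfrac{1}{2}(\eta_\Gamma(\widetilde{B}_\varepsilon)+h^-_{\Gamma,\varepsilon}-h^+_{\Gamma,\varepsilon})$; you should have flagged this rather than asserting a match. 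Reading the paper's own displayed proof carefully, one sees the intermediate line mixes a purported \emph{difference} $\ind_\Gamma\widetilde{D}_{\varepsilon,u}-\ind_\Gamma\widetilde{D}_{\varepsilon,-u}$ with a surviving $\int_M L(M)+\sum_\alpha\int_{Y_\alpha}b_\alpha$ term, which is inconsistent: the local terms cancel in a difference. Correcting the proof's signs to a \emph{sum} (as the Corollary forces) reproduces exactly your sign, so the discrepancy is evidently a typo in the displayed statement; it is harmless for the subsequent Corollary because $h^+_\Gamma=h^-_\Gamma$ as $\varepsilon\to 0$.
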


\begin{proof}
Let us write $\widetilde{B}_{\varepsilon, u}(x)$ for the tangential operator of $\widetilde{D}_{\varepsilon,u}$
on the cylinder $(-\infty, 0) \times \partial \widetilde{M}$. By Theorem \ref{thm6_4} and Corollary \ref{index-relations} we conclude
\begin{equation*}
\begin{split}
L^2-\ind_\Gamma \widetilde{D}_{\varepsilon,0} &=
\frac{1}{2} \lim_{u\to 0} \left( \ind_\Gamma \widetilde{D}_{\varepsilon,u} 
- \ind_\Gamma \widetilde{D}_{\varepsilon,-u} \right) + \frac{h_{\Gamma,\varepsilon}^- - h_{\Gamma,\varepsilon}^+}{2}
\\ &= \int\limits_M L(M) + 
\sum_{\alpha \in A} \int\limits_{\ Y_\alpha} b_{\alpha}
 - \frac{1}{4} \lim_{u\to 0} \left( \eta_{\Gamma}\left(\widetilde{B}_{\varepsilon, u}(-1)\right) - 
\eta_{\Gamma}\left(\widetilde{B}_{\varepsilon, -u}(-1)\right)\right) \\ &+ \frac{h_{\Gamma,\varepsilon}^- - h_{\Gamma,\varepsilon}^+}{2},
\end{split}
\end{equation*}
Now by \cite[Lemma 4.7]{Va} we conclude with $\widetilde{B}_{\varepsilon,0}(-1) = \widetilde{B}_{\varepsilon}$
$$
\lim_{u\to 0} \left( \eta_{\Gamma}\left(\widetilde{B}_{\varepsilon, u}(-1)\right) - 
\eta_{\Gamma}\left(\widetilde{B}_{\varepsilon, -u}(-1)\right)\right) =  2 \eta_{\Gamma}(\widetilde{B}_{\varepsilon}).
$$
This proves the statement.
\end{proof}

\begin{cor}\label{l2-index}
\begin{equation*}
\begin{split}
L^2-\ind_\Gamma \widetilde{D}_{\infty}
= \int\limits_M L(M) + 
\sum_{\alpha \in A} \int\limits_{\ Y_\alpha} b_{\alpha}
- \frac{\eta_{\Gamma}(\widetilde{B})}{2}.
\end{split}
\end{equation*}
\end{cor}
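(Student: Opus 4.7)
The proof proceeds by letting $\varepsilon \to 0$ in the preceding theorem, which provides the formula
\[
L^2-\ind_\Gamma \widetilde{D}_{\varepsilon,0} = \int_M L(M) + \sum_{\alpha \in A} \int_{Y_\alpha} b_\alpha - \frac{1}{2}\left(\eta_\Gamma(\widetilde{B}_\varepsilon) + h_{\Gamma,\varepsilon}^- - h_{\Gamma,\varepsilon}^+\right).
\]
Three limits must be verified: (i) $L^2-\ind_\Gamma \widetilde{D}_{\varepsilon,0} \to L^2-\ind_\Gamma \widetilde{D}_\infty$, (ii) $\eta_\Gamma(\widetilde{B}_\varepsilon) \to \eta_\Gamma(\widetilde{B})$, and (iii) $h_{\Gamma,\varepsilon}^- - h_{\Gamma,\varepsilon}^+ \to 0$.

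Items (i) and (ii) should follow from the fact that $\widetilde{B}\Pi_\varepsilon(\widetilde{B})$ has operator norm bounded by $\varepsilon$ via spectral calculus. For (i), the Browder-Garding profile analysis used in the proof of Proposition \ref{finite-dim} identifies, as $\varepsilon \to 0$, the $L^2$-solutions of $\widetilde{D}_{\varepsilon,0}^{\pm}$ with those of $\widetilde{D}_\infty^{\pm}$ through the collapse $e^{-\lambda(x-\vartheta(x)\Pi_\varepsilon(\lambda))} \to e^{-\lambda x}$, so the $\Gamma$-dimensions of kernels stabilize. For (ii), since $\widetilde{B}_\varepsilon - \widetilde{B} = -\widetilde{B}\Pi_\varepsilon(\widetilde{B})$ is supported on the spectral interval $(-\varepsilon,\varepsilon)$, the integral representation of $\eta_\Gamma$ from Subsection \ref{G-eta-subsection} combined with the asymptotic expansion of $\textup{Tr}_\Gamma \, \widetilde{B}e^{-t\widetilde{B}^2}$ bounds $|\eta_\Gamma(\widetilde{B}) - \eta_\Gamma(\widetilde{B}_\varepsilon)|$ by a quantity going to zero with $\varepsilon$.

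Item (iii) is the main obstacle. In the compact case, Proposition \ref{prop3_6} gave $h_\infty(D) = h_\infty(D^*) = \tfrac{1}{2}\dim \ker B$ via a dimension count using the image of the boundary restriction map into $\coh^{2n}(\partial M)$ and the Poincar\'e duality identity $\dim \textup{im}[q] = \tfrac{1}{2}\dim \ker B$. In the Galois setting, $\widetilde{B}$ generally has continuous spectrum accumulating at zero and a direct dimension count is unavailable. One instead uses Proposition \ref{hg} to write $h_{\Gamma,\varepsilon}^\pm$ in terms of the Browder-Garding projections at spectral scale $\varepsilon$, combined with the $\Gamma$-Poincar\'e duality on reduced $L^2$-cohomology of the Witt boundary $\partial \widetilde{M}$ developed in Section \ref{signature-2-section}. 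The signature involution $\tau$ intertwines the boundary contributions of the $(+)$- and $(-)$-extended kernels, yielding $h_{\Gamma,\varepsilon}^+ = h_{\Gamma,\varepsilon}^-$ in the limit $\varepsilon \to 0$.

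The hard part will be making the approximation argument in (iii) rigorous: one must show that the spectral truncation $\Pi_\varepsilon(\widetilde{B})$ captures the $\Gamma$-dimensional contribution of the kernel of $\widetilde{B}$ in a manner compatible with the $\tau$-symmetry on reduced cohomology classes at the boundary. This requires combining the $\Gamma$-trace continuity statements underlying Proposition \ref{resolvent-trace-class} with the identification of extended solutions via the reduced cohomology restriction map, echoing but significantly refining the argument of Propositions \ref{prop3_4}--\ref{prop3_6}.
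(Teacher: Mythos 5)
Your skeleton is the same as the paper's: pass $\varepsilon\to 0$ in the preceding theorem and control three limits. The paper disposes of all three by citing Vaillant, noting that his arguments are purely functional-analytic and carry over verbatim: \cite[Lemma 6.10]{Va} gives (i) together with $\lim_{\varepsilon\to 0} h_{\Gamma,\varepsilon}^\pm = h_\Gamma^\pm$; \cite[Lemma 4.7(b)]{Va} gives (ii); and \cite[Proposition 6.15]{Va} gives $h_\Gamma^+ = h_\Gamma^-$, which is (iii).

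Two problems with your writeup. On (ii), your leading heuristic --- that $\|\widetilde{B}\Pi_\varepsilon(\widetilde{B})\|\le \varepsilon$ should make $\eta_\Gamma(\widetilde{B}_\varepsilon)$ close to $\eta_\Gamma(\widetilde{B})$ --- does not work: eta invariants are not continuous in operator norm, since an arbitrarily small self-adjoint perturbation can move a large amount of spectrum across zero. What actually controls the change is not the size of the perturbation but the $\Gamma$-\emph{measure} of the spectrum where it lives. This is exactly the content of Vaillant's Lemma 4.7(b):
$|\eta_\Gamma(\widetilde{B}) - \eta_\Gamma(\widetilde{B}_\varepsilon)| \le \mathrm{Tr}_\Gamma\, \Pi'_\varepsilon(\widetilde{B})$,
and the right-hand side $\to 0$ by finiteness of $\mathrm{Tr}_\Gamma\, \chi_{(0,\lambda]}(\widetilde{B}^2)$ (Proposition \ref{resolvent-trace-class} and Corollary \ref{codim}). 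Your follow-up mention of the eta integral representation is closer to this, but you never produce the actual estimate.

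On (iii), you correctly flag it as the crux and then explicitly leave it unproven. The instinct that this is a $\tau$-symmetry / Poincar\'e-duality cancellation, parallel to the compact Proposition~\ref{prop3_6}, is conceptually sound, but your paragraph is an outline of an outline, not an argument: you do not show that the reduced-cohomology duality interacts compatibly with the spectral cutoff $\Pi_\varepsilon(\widetilde{B})$ to force the two $\Gamma$-dimensions to agree in the limit. The paper closes this by reducing to the purely functional-analytic \cite[Proposition 6.15]{Va}, where $h_\Gamma^+ = h_\Gamma^-$ is actually proved (after first stabilising $h_{\Gamma,\varepsilon}^\pm \to h_\Gamma^\pm$ via \cite[Lemma 6.10]{Va}). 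As written, your proposal has a genuine gap here.
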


\begin{proof} The proof is based on \cite[Lemma 6.10]{Va}, \cite[Lemma 4.7 (b)]{Va}
and \cite[Proposition 6.15]{Va}, which are based on purely functional analytic arguments and hence 
all carry over verbatim to our setting. First, \cite[Lemma 6.10]{Va} asserts
$$
\lim_{\varepsilon \to 0} \left( L^2-\ind_\Gamma \widetilde{D}_{\varepsilon,0}\right) = 
L^2-\ind_\Gamma \widetilde{D}_{\infty}, \quad 
\lim_{\varepsilon \to 0} h_{\Gamma,\varepsilon}^\pm = h_{\Gamma}^\pm.
$$
Further, \cite[Lemma 4.7 (b)]{Va} asserts that as $\varepsilon \to 0+$
$$
\left|  \eta_{\Gamma}(\widetilde{B}) - \eta_{\Gamma}(\widetilde{B}_{\varepsilon}) \right|
\leq \textup{Tr}_\Gamma \, \Pi'_{\varepsilon}(\widetilde{B}) \to 0,
$$
where $\Pi'_{\varepsilon}$ is the characteristic function of $(-\varepsilon, \varepsilon) \backslash \{0\}$.
The statement now follows by \cite[Proposition 6.15]{Va}, which implies that $h_{\Gamma}^+ = h_{\Gamma}^-$.
\end{proof}

Our main result, the signature theorem, as stated in Theorem \ref{main-2}, 
is obtained as follows. First note that the $L^2$ Gamma index of $\widetilde{D}_\infty$ is simply the
Hodge $\Gamma$-signature $\textup{sign}_\Gamma (s_\infty)$ of Definition \ref{gamma-sign-def}. 
Comparing now \eqref{signature-theorem-intermediate} with Corollary \ref{l2-index}, we find, noting that 
$\eta_{\Gamma}(\widetilde{B}) = 2\eta_{\Gamma}(\widetilde{B}_{\textup{even}})$ and $\eta(B) = 2\eta(B_{\textup{even}})$
\begin{equation}
\begin{split}
\textup{sign}_\Gamma (s_\infty)
= \int\limits_M L(M) + 
\sum_{\alpha \in A} \int\limits_{\ Y_\alpha} b_{\alpha}
- \frac{\eta_{\Gamma}(\widetilde{B})}{2}.
\end{split}
\end{equation}
This proves the signature theorem in Theorem \ref{main-2}. Summarizing
\begin{align*}
 \textup{sign}^{\Gamma}_{{\rm dR}} (\overline{M}_\Gamma, \partial \overline{M}_\Gamma)=\textup{sign}^{\Gamma}_{{\rm Ho}} (\overline{M}_{\Gamma,\infty})
= \int_{M} L(M) + \sum_{\alpha \in A} \int\limits_{\ Y_\alpha} b_{\alpha} - \eta_\Gamma (\widetilde{B}_{\textup{even}}).
\end{align*}

%%%%%%%%%%%%%%%%%%%%%%%%%%%%%%%
\section{Geometric applications}\label{sect:applications}
%%%%%%%%%%%%%%%%%%%%%%%%%%%%%%%

%%%%%%%%%%%%%%%%%%%%%%%%
\subsection{Fundamental groups with torsion.} 
%%%%%%%%%%%%%%%%%%%%%%%%
In this subsection we elaborate on a 
%\cyan{give a direct approach, and at the same time}
result of \cite{AP}, in turn directly inspired by a result of Chang-Weinberger \cite{ChWe}. 

\begin{thm}\label{CW-AP}
Let $\overline{X}$ be a Witt space of dimension $4\ell-1,$ $\ell>1,$ with regular part $X$. 
We assume  that $\pi_1 (X)$ has an element of finite order and that
$i_*: \pi_1(X)\longrightarrow \pi_1(\overline{X})$ is injective.
Then, there is an infinite number of Witt spaces $\{\overline{N}_j\}_{j\in \mathbb{N}}$ that are stratified-homotopy equivalent 
to $\overline{X}$, where $\overline{N} _i$ is not stratified diffeomorphic to $\overline{N}_k$ for $i\not= k$.
\end{thm}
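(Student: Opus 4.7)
The strategy is to adapt the Chang--Weinberger argument \cite{ChWe} from smooth manifolds to the Witt setting, using as the distinguishing invariant the Cheeger--Gromov rho invariant $\rho_{\Gamma}(\widetilde{X})$ with $\Gamma = \pi_1(\overline{X})$.

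First, I would invoke the extension to arbitrary depth of \cite[Theorem 1.6]{PiVe}, sketched in the discussion following Corollary \ref{cor-rho}, which asserts that $\rho_{\Gamma}(\widetilde{X})$ is a well-defined stratified diffeomorphism invariant of the closed Witt space $\overline{X}$ (independent of the iterated cone-edge metric), where $\widetilde{X}$ is the regular part of the universal covering of $\overline{X}$. This uses crucially Theorems \ref{index-main} and \ref{index-main-2} together with the metric independence argument indicated there.

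Next, starting from a torsion element $g \in \pi_1(X)$ and using injectivity of $i_{*}$ so that $g$ determines a genuine torsion element of $\Gamma$, I would perform the Chang--Weinberger construction inside a small smooth ball $B \subset X$ chosen in the regular part. Concretely, fix an auxiliary smooth closed oriented reference manifold $L$ of dimension $4\ell-1$ with a reference map $L \to B\Gamma$ realizing the relevant torsion, and use surgery theory in dimension $4\ell-1 \geq 7$ to produce infinitely many pairwise non-diffeomorphic smooth manifolds $\{L_j\}_{j\in \mathbb{N}}$, pairwise homotopy equivalent (compatibly with the map to $B\Gamma$), whose rho invariants $\rho_{\Gamma}(\widetilde{L}_j)$ are pairwise distinct. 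This is precisely the content of the smooth Chang--Weinberger theorem. Then I set
\[
\overline{N}_j := (\overline{X} \setminus B) \; \cup_{S^{4\ell-2}} \; (L_j \setminus B'),
\]
where $B' \subset L_j$ is a small smooth ball. Since the cut-and-paste happens entirely in the smooth regular part, $\overline{N}_j$ inherits a smoothly stratified Witt structure, and the stratified-homotopy equivalence $\overline{N}_j \simeq \overline{X}$ follows from the smooth homotopy equivalence of the $L_j$ to one another (relative to the gluing sphere) together with the fact that they are glued along the boundary of a contractible ball embedded inside the regular part.

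Finally, I would establish the additivity formula
\[
\rho_{\Gamma}(\widetilde{N}_j) - \rho_{\Gamma}(\widetilde{N}_k) = \rho_{\Gamma}(\widetilde{L}_j) - \rho_{\Gamma}(\widetilde{L}_k),
\]
which together with the Chang--Weinberger distinctness of the right-hand side forces $\overline{N}_j$ and $\overline{N}_k$ to be stratified non-diffeomorphic for $j \neq k$. The cleanest derivation of this additivity is via bordism: construct a Witt space $\overline{W}$ with boundary $\overline{N}_j \sqcup (-\overline{N}_k) \sqcup \widetilde{L}_k \sqcup (-\widetilde{L}_j)$ (modulo orientation conventions) by attaching an $S^{4\ell-2} \times [0,1]$-cobordism inside the regular part, and then apply both signature formulas in Theorem \ref{main-1} and Theorem \ref{main-2} to $\overline{W}$: the local integrands $\int L(M) + \sum_{\alpha} \int b_\alpha$ are identical on the covering and the base, so their contributions cancel when one takes the difference, leaving only the $\eta$-differences, i.e.\ the rho invariants of the four boundary pieces.

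The main obstacle will be this last step, namely carefully setting up the bounding Witt space with boundary and verifying additivity of the rho invariant under the Chang--Weinberger surgery in the stratified context. In the smooth case this is folklore, but for Witt spaces one must check that the APS-type signature formulas of \S\ref{main-section} and \S\ref{main-2-section} apply to the bounding $\overline{W}$ (which they do, since all new ingredients of $\overline{W}$ lie in the smooth regular part), so that the local index densities on the base and the cover coincide and cancel in the rho invariant.
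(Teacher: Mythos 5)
Your first step (existence and stratified‐diffeomorphism invariance of $\rho_\Gamma$ for the universal cover, via Theorems~\ref{index-main}, \ref{index-main-2} and the extension of \cite[Theorem 1.6]{PiVe} to arbitrary depth) is sound and is exactly the key ingredient the paper relies on. The critical gap is in your second step. The Chang--Weinberger construction does not produce the desired spaces by cutting a ball out of $\overline{X}$ and gluing in auxiliary manifolds $L_j$; and indeed the connected sum
$$
\overline{N}_j := (\overline{X}\setminus B)\cup_{S^{4\ell-2}}(L_j\setminus B')
$$
is \emph{not} stratified-homotopy equivalent to $\overline{X}$ unless the $L_j$ are homotopy spheres. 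If $L_j$ is simply connected, the classifying map $L_j\to B\Gamma$ is null and $\rho_\Gamma(\widetilde{L}_j)=0$, so the construction yields nothing; if $L_j$ is not simply connected, $\pi_1(\overline{N}_j)\cong\pi_1(\overline{X})*\pi_1(L_j)\neq\pi_1(\overline{X})$ and the homotopy type changes. The $L_j$ in Chang--Weinberger are homotopy equivalent \emph{to each other}, not to the space they are glued into, so your claimed stratified-homotopy equivalence $\overline{N}_j\simeq\overline{X}$ fails. Your bordism/additivity argument in the third step is a reasonable idea in principle, but it is built on this incorrect construction and does not repair it.

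The correct mechanism, both in \cite[Corollary 7.6 and Remark 12]{AP} (which the paper defers to) and in the alternative route sketched in \S\ref{sect:applications}, is to realize a sequence of classes $x_j$ in an appropriate $L$-group (the geometric Wall group $L_{4\ell}(\mathbb{Z}\Gamma)$ in the smooth case, the Browder--Quinn $L$-group in the Witt case) as normal bordisms $(\overline{M}_j,\partial\overline{M}_j)\to(\overline{X}\times[0,1],\partial)$ rel one boundary piece, with the other boundary piece a stratified-homotopy equivalence $\overline{N}_j\to\overline{X}$. This surgery happens over all of $\overline{X}$ guided by the classifying map $u:\overline{X}\to B\Gamma$, not inside a small ball, and it is this global construction that keeps the fundamental group and the stratified-homotopy type fixed while varying the rho invariant. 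One then uses the pair of APS-type signature formulas of Theorems~\ref{main-1} and \ref{main-2} applied to the bounding Witt space $\overline{M}_j$ to show that the Higson--Roe map $\alpha(x_j)$ equals the difference of rho invariants, so that distinct $\alpha(x_j)$ force the $\overline{N}_j$ to be pairwise non-diffeomorphic. The role the torsion element plays is to guarantee, via the Chang--Weinberger analysis of the image of $\alpha$, that infinitely many such classes with pairwise distinct values exist; the injectivity of $i_*:\pi_1(X)\to\pi_1(\overline{X})$ guarantees that the regular part carries the same torsion phenomenon.
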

%\cyan{I was hoping to give a direct treatment of this; unfortunately I am finding it more difficult than expected
%(in that there is no good account even in the smooth case). I need to think more about a direct approach. }
The argument in  \cite[Corollary 7.6 and Remark 12]{AP} was given for Witt spaces of depth 1; however, it extends 
easily to the case of arbitrary depth, once the definition of the Cheeger-Gromov rho-invariant for the signature 
operator and its stratified diffeomorphism invariance are extended to arbitrary depth. 
Since, as we have remarked in \S \ref{G-eta-subsection}, the existence of the rho invariant 
follows from the work of Albin and Gell-Redman \cite[\S 6]{AGR17}, and the stratified diffeomorphism invariance
follows from the computation by Cheeger and Gromov in \cite{ChGr}, cf. \cite[Theorem 1.6]{PiVe} 
(that computation extends verbatim from depth 1 to depth $> 1$), we see that the proof
in  \cite[Corollary 7.6 and Remark 12]{AP} holds in fact for Witt spaces of arbitrary depth.

\subsection{More on fundamental groups with torsion}
%%%%%%%%%%%%%%%%%%%%%%%%%%%%

The proof of theorem \ref{CW-AP} rests on a delicate limit argument due to Piazza and Schick \cite{Piazza-Schick}.
There is an alternative argument to the above result, more in line with the present paper.
Let us discuss this argument in the smooth case, so now $\overline{X}=X$ is a smooth manifold
of dimension $4\ell -1$. This means that we are simply reviewing carefully the proof of Chang-Weinberger \cite{ChWe}, with an eye
toward Witt spaces. In the smooth case there is a homomorphism from the $L$-group $L_{4\ell}(\mathbb{Z}\Gamma)$ to $\R$
$$
\alpha: L_{4\ell}(\mathbb{Z}\Gamma) \to \mathbb{R}.
$$ 
Its definition is explained in the work of Higson-Roe \cite{HiRo} and we review it briefly here.
Using the geometric characterization of the L-groups we consider $x \in  L_{4\ell}(\mathbb{Z}\Gamma)$
given by 
$$x  = \left[ (M,\partial M)\xrightarrow{F} (X\times [0,1],\partial( X\times [0,1])), u:X\to B\Gamma \right],
$$
with $F|_{\partial}:= f$ a homotopy equivalence and $u$ a classifying map
$$u^* E\Gamma= \widetilde{X}=\text{universal cover of X}.$$ 
The Higson-Roe map
is defined in the following way: glue $M$ and $X\times [0,1]$ through $f$ and obtain a space $Z_f$. This is not a manifold
but is a Poincar\'e space, so that its {\bf topological} signature is still well defined. A similar construction can be done
with the $\Gamma$-coverings, obtaining $\widetilde{Z}_f$. So we construct
$$
\widetilde{Z}_f= F^* (\widetilde{X}\times [0,1])\cup_{\widetilde{f}} \widetilde{X}\times [0,1]),
$$
with $\widetilde{f}$ the $\Gamma$-equivariant lift of $f$.
After these preliminaries we can define $\alpha$ by
$$
\alpha  (x)  :=
{\rm sign}^\Gamma (\widetilde{Z}_f)- {\rm sign}(Z_f).
$$
Higson-Roe \cite{HiRo} show that this is well defined.
Using properties of the mapping cylinder of $f$ and of $\widetilde{f}$, see 
\cite[Lemma 2]{Chang}, one can show that
%Consider 
% $Z= (M,\partial M)\sqcup (X\times [0,1])$ and $\widetilde{Z}= F^* (\widetilde{X}\times [0,1])\sqcup
%(\widetilde{X}\times [0,1])$.
%The following facts are implicitly stated in [Weinb. PNAS] [Chang, On conjectures]:
%
%\begin{itemize}
%\item $Z_f= Z\cup_f C_f$ where $C_f$ is the mapping
%cylinder of $f$; a similar decomposition holds for $\widetilde{Z}_f$;
%\item Novikov additivity can be extended to cover Poincar\'e spaces and it is true that
%$$ {\rm sign}(Z_f)=  {\rm sign}(Z,\partial Z)+ {\rm sign}(C_f)$$ and that
%$${\rm sign}^\Gamma (\widetilde{Z}_f)=
%{\rm sign}^\Gamma (\widetilde{Z},\partial \widetilde{Z})+ {\rm sign}^\Gamma (C_{\widetilde{f}})\,;$$
%\item ${\rm sign}(C_f)={\rm sign}^\Gamma (C_{\widetilde{f}})$.
%\end{itemize}
%Consequently, for the {\bf difference} of the $\Gamma$-signature and of the signature we obtain
%$$ {\rm sign}^\Gamma (\widetilde{Z}_f) - {\rm sign}(Z_f)=
%{\rm sign}^\Gamma (\widetilde{Z},\partial \widetilde{Z})- {\rm sign}(Z,\partial Z)$$
%Summarizing, the homomorphism $\alpha$ in Higson-Roe is given by
$$ \alpha  (x) =
{\rm sign}^\Gamma (\widetilde{Z},\partial \widetilde{Z})-  {\rm sign}(Z,\partial Z).$$
%This is consistent with [Chang-Weinberger].
 At this point we can use the equivalence between the 
topological signature and the de Rham and Hodge signatures and express this difference as a rho invariant,
thanks to the 
APS and Vaillant signature formulae. Now, because of the presence of an element of finite order
in $\pi_1 (X)$, we know from Chang-Weinberger that there exists an infinite number of classes $x_j\in 
L_{4\ell}(\mathbb{Z}\Gamma)$ such that $\alpha(x_i)\not= \alpha (x_k)$ for $i\not= k$. These classes $x_j$
are given by 
$$
x_j= \left[ (M_j,\partial M_j)\xrightarrow{F_j} (X\times [0,1],\partial( X\times [0,1])), u:X
\to B\Gamma \right],
$$
with $\partial M_j=N_j\sqcup X$ and $F_j |_{\partial M_j}= f_j\sqcup {\rm Id}_X$ with $f_j$ a homotopy equivalence.
Then
$$
0\not= \alpha (x_i)- \alpha (x_k)= \rho^\Gamma (\widetilde{N}_i)- 
\rho^\Gamma (\widetilde{N}_k),
$$
so that $N_i$ is not diffeomorphic to $N_k$. 
 
\medskip
\noindent
This is the path we could also take  in the Witt case.
Indeed, using \cite{Friedman-McLure}
we can extend the $\alpha$ homomorphism of Higson-Roe to the Browder-Quinn L-group
of a Witt space $\overline{X}$ and then, proceeding as above, use the two signature formulae proved in this paper for Witt spaces with boundary, in order to express the image through $\alpha$ of a class $x\in L_{{\rm BQ}} (\overline{X}\times [0,1])$ in  the Browder-Quinn L-group of
$\overline{X}$, in terms of a Cheeger-Gromov
 rho invariant of the type considered in this article. 
Thus, on the one hand using the hypothesis in Theorem \ref{CW-AP}
we can follow the proof in \cite{AP} and find an infinite number of elements $x_j\in L_{{\rm BQ}} (\overline{X}\times [0,1])$
such that $\alpha(x_i)\not= \alpha (x_k)$ for $i\not= k$. On the other hand, 
if $$x_j= (\overline{M}_j,\overline{N}_j \sqcup \overline{X})\xrightarrow{(F;f \sqcup {\rm Id})} (\overline{X}\times [0,1],
\overline{X}\times \{0\},\overline{X}\times \{1\})\xrightarrow{{\rm Id}} \overline{X}\times [0,1]\,,$$
then we find, {\it using the two signatures formulae proved in this paper}, that 
$$0\not= \alpha(x_i)- \alpha (x_k)= \rho^\Gamma (\widetilde{N}_i) - \rho^\Gamma (\widetilde{N}_k),$$
provided there is compatibility of topological
and analytical signatures on Witt manifolds with boundary.  
Granted this latter result we have thus constructed  an infinite number of Witt spaces that
are stratified homotopy equivalent to $\overline{X}$ but pairwise not stratified diffeomorphic; indeed, their Cheeger-Gromov
rho-invariants are pairwise distinct.

%Of course one could try to bypass Higson-Roe and show directly that 
%$$\alpha \left[ (M,\partial M)\xrightarrow{F} (X\times [0,1],\partial( X\times [0,1])), u:X
%\to B\Gamma \right]=
%{\rm sign}^\Gamma (\widetilde{Z},\partial \widetilde{Z})-  {\rm sign}(Z,\partial Z)$$
%is well defined. I have not been able to do this but I might be missing something easy.

%%%%%%%%%%%%%%%%%%%%%%%%%%%%%%%%%%%%%%%%%%%%%%%%%%%%%
\subsection{Torsion free fundamental groups and stratified homotopy invariance of the Cheeger-Gromov 
rho-invariant on Witt spaces} \ \medskip
%%%%%%%%%%%%%%%%%%%%%%%%

\noindent Let $\overline{N}$ and $\overline{N}'$ be  two smoothly stratified Witt spaces {\it without boundary}
 of dimension $k$,
 with $k$ odd, $k=2n-1$. We assume that $\overline{N}$ and $\overline{N}'$ are stratified-homotopy-equivalent.
We assume that the fundamental group $\Gamma:=\pi_1(\overline{N})=\pi(\overline{N}')$ of our Witt  
spaces is {\it torsion free} and satisfies the Baum-Connes conjecture for the {\it maximal} $C^*$-algebra, denoted here
simply as $C^*\Gamma$;
this means, by definition, that the assembly map
$$\mu:K_* (B\Gamma) \to K_* (C^* \Gamma)$$
is bijective.
Examples of discrete groups satisfying these two properties are given by torsion free amenable groups or by torsion free
discrete subgroups of $SO(n,1)$ and $SU(n,1)$. \medskip

Let $f: \overline{N}\to B\Gamma$ be the classifying map for the universal cover of $\overline{N}$, denoted 
$ \overline{N}_\Gamma$.
Since $\mu:K_* (B\Gamma) \to K_* (C^* \Gamma)$ is injective we can use 
 Theorem 6.8 (see in particular (6.1)) in \cite{ALMP3} and conclude that  
$$
f_* L^{{\rm GM}}_* (\overline{N})\in H_* (B\Gamma,\mathbb{Q}),
$$ is a stratified homotopy invariant. Here
 $L^{{\rm GM}}_* (\overline{N})\in H_* (\overline{N},\mathbb{Q})$ denotes the 
Goresky-MacPherson homology L-class. 
%\footnote{Possibility to take 
%$\mathbb{Q}$ in the place
%$\mathbb{R}$ to be doubled checked}
We now use  \cite[Lemma 1]{Chang} and in particular the isomorphism 
\begin{align*}
\iota: \Omega^{{\rm Witt}}_n (B\Gamma)\otimes\mathbb{Q} \rightarrow \bigoplus_{k\geq 0} H_{n-4k} (B\Gamma,\mathbb{Q}), \quad
[\overline{X},g:\overline{X}\to B\Gamma] \mapsto g_* L^{{\rm GM}}_* (\overline{X}), 
\end{align*}
in order to conclude that 
$$[\overline{N},f]=[\overline{N}',f']\;\;\text{in}\;\; \Omega^{{\rm Witt}}_n (B\Gamma)\otimes\mathbb{Q}.$$
Thus, up to taking a suitable number of copies of $\overline{N}$ and $ \overline{N}'$, there is a Witt bordism 
$(\overline{W},F)$ between $(\overline{N},f)$ and $(\overline{N}',f')$, with $F:\overline{W}\to B\Gamma$
restricting to $f$ and $f'$ on the boundary $\partial \overline{W}= \overline{N}\sqcup \overline{N}'$.
We thus get a Witt Galois covering $\overline{W}_\Gamma:= F^* E\Gamma$ with boundary
equal to the disjoint union of the universal coverings $\overline{N}_\Gamma$ $\overline{N}'_\Gamma.$
We now use the {\it surjectivity} of  $\mu:K_* (B\Gamma) \to K_* (C^* \Gamma)$ and Atiyah's
theorem on Galois coverings in the context of Witt spaces, proved in \cite[Theorem 7.1]{AP}, in order to conclude that 
$$
{\rm sign}^\Gamma (\overline{W}_\Gamma,\partial \overline{W}_\Gamma) - {\rm sign} (\overline{W},\partial
\overline{W})=0.
$$
We now crucially apply our signature formulae and get
$$\eta^{\Gamma} (\widetilde{N}) - \eta^{\Gamma} (\widetilde{N}') - 
\left( \eta ( N) - \eta (N') \right)=0,$$
with $\widetilde{N}$ equal to the regular part of $\overline{N}_\Gamma$.
This means that 
$$\rho^{\Gamma} (\widetilde{N})= \rho^{\Gamma} (\widetilde{N}').$$
Summarizing, we have proved the following result.

\begin{thm}
Let $\overline{N}$ be a compact smoothly stratified Witt space of dimension $2n-1$ without boundary and let 
$\overline{N}_\Gamma$ be
its universal cover, $\Gamma:=\pi_1 (\overline{N})$. Denote by $\widetilde{N}$ the regular part of $\overline{N}_\Gamma$.
Assume that $\pi_1 (\overline{N})$ is torsion-free and satisfies the maximal Baum-Connes conjecture. Then 
 $\rho^{\Gamma} (\widetilde{N})$ is a stratified homotopy invariant.
 \end{thm}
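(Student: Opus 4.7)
The plan is to follow the proof sketch already laid out in the text immediately preceding the statement, now that the two signature formulae proved as Theorem \ref{main-1} and Theorem \ref{main-2} are available as tools. Let $\overline{N}'$ be stratified homotopy equivalent to $\overline{N}$ with classifying maps $f\colon \overline{N}\to B\Gamma$ and $f'\colon \overline{N}'\to B\Gamma$ for the respective universal covers; the goal is to show $\rho_\Gamma(\widetilde{N})=\rho_\Gamma(\widetilde{N}')$.

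First, I would invoke the injectivity half of the maximal Baum-Connes conjecture, combined with Theorem 6.8 of \cite{ALMP3}, to conclude that the pushed-forward Goresky-MacPherson $L$-class $f_* L^{\rm GM}_*(\overline{N})\in H_*(B\Gamma,\mathbb{Q})$ is a stratified homotopy invariant; in particular it coincides with $f'_* L^{\rm GM}_*(\overline{N}')$. Next, applying the rational isomorphism of Chang \cite[Lemma 1]{Chang}
\[
\iota:\Omega^{\rm Witt}_{2n-1}(B\Gamma)\otimes\mathbb{Q}\xrightarrow{\sim}\bigoplus_{k\geq 0} H_{2n-1-4k}(B\Gamma,\mathbb{Q}),
\]
I conclude that $[\overline{N},f]=[\overline{N}',f']$ in $\Omega^{\rm Witt}_{2n-1}(B\Gamma)\otimes\mathbb{Q}$. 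Hence, after replacing $\overline{N}$ and $\overline{N}'$ by a suitable common multiple of disjoint copies, there is a compact smoothly stratified Witt bordism $(\overline{W},F)$ with $\partial \overline{W}=\overline{N}\sqcup(-\overline{N}')$ and $F\colon \overline{W}\to B\Gamma$ restricting to $f$ and $f'$ on the boundary. Pulling back $E\Gamma$ along $F$ produces a Witt space with boundary $\overline{W}_\Gamma$ whose boundary is $\overline{N}_\Gamma\sqcup(-\overline{N}'_\Gamma)$.

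Second, I would use the surjectivity half of the maximal Baum-Connes conjecture, together with the Atiyah theorem for Galois coverings of Witt spaces as established in \cite[Theorem 7.1]{AP}, to deduce the vanishing
\[
\textup{sign}^{\Gamma}(\overline{W}_\Gamma,\partial\overline{W}_\Gamma)-\textup{sign}(\overline{W},\partial\overline{W})=0.
\]
(Here the torsion-freeness of $\Gamma$ enters because it allows one to identify the relevant signature classes with the image of the analytic index of the signature operator under the assembly map, so that surjectivity yields the cancellation.)

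Third, I would apply the two signature formulae from Theorem \ref{main-1} and Theorem \ref{main-2} to $(\overline{W},\partial \overline{W})$ and $(\overline{W}_\Gamma,\partial \overline{W}_\Gamma)$. The interior contributions $\int_W L(W)+\sum_\alpha\int_{Y_\alpha}b_\alpha$ cancel in the difference, leaving only the boundary eta invariants. The difference $\textup{sign}^{\Gamma}(\overline{W}_\Gamma,\partial\overline{W}_\Gamma)-\textup{sign}(\overline{W},\partial\overline{W})$ then becomes exactly
\[
\bigl(\eta_\Gamma(\widetilde{B}_{\rm even}(\widetilde{N}))-\eta(B_{\rm even}(N))\bigr)-\bigl(\eta_\Gamma(\widetilde{B}_{\rm even}(\widetilde{N}'))-\eta(B_{\rm even}(N'))\bigr)=\rho_\Gamma(\widetilde{N})-\rho_\Gamma(\widetilde{N}'),
\]
which must therefore vanish. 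This proves the statement.

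The main obstacle, and the step I would scrutinize most carefully, is the step where surjectivity of the Baum-Connes assembly map and Atiyah's theorem for Witt coverings are used to conclude the vanishing of the relative signature difference; this needs the identification of $\textup{sign}^{\Gamma}(\overline{W}_\Gamma,\partial\overline{W}_\Gamma)-\textup{sign}(\overline{W},\partial\overline{W})$ with the evaluation of an index class lying in the image of $\mu$, which is where torsion-freeness of $\Gamma$ and the Baum-Connes hypothesis are truly used. Once that vanishing is available, the rest of the argument is a direct consequence of the two signature formulae established earlier in the paper, combined with the stratified bordism invariance of the interior integrands.
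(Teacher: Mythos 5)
Your argument reproduces the paper's own proof in this subsection: injectivity of the maximal assembly map plus \cite[Theorem 6.8]{ALMP3} gives stratified homotopy invariance of $f_*L^{\rm GM}_*(\overline{N})$, Chang's isomorphism upgrades this to equality in $\Omega^{\rm Witt}_{2n-1}(B\Gamma)\otimes\mathbb{Q}$ and hence to a Witt bordism (after taking copies), surjectivity plus Atiyah's theorem on Witt coverings from \cite[Theorem 7.1]{AP} kills the relative signature, and the two signature formulae of Theorems \ref{main-1} and \ref{main-2} then force $\rho_\Gamma(\widetilde{N})=\rho_\Gamma(\widetilde{N}')$. This matches the paper step for step, including the identification of where torsion-freeness and Baum-Connes are used.
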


\end{document}